\DeclareSymbolFont{AMSb}{U}{msb}{m}{n}
   \renewcommand\@biblabel[1]{#1.}
\definecolor{identifiercolor}{rgb}{.4,.6,.56}
\definecolor{stringcolor}{gray}{0.5}
\definecolor{inactivecolor}{rgb}{0.15,0.15,0.5}
      \numberwithin{equation}{section}
\definecolor{antiquewhite}{rgb}{0.98, 0.92, 0.84}
\definecolor{buff}{rgb}{0.94, 0.86, 0.51}
\definecolor{palecopper}{rgb}{0.85, 0.54, 0.4}
\definecolor{fluorescentyellow}{rgb}{0.8, 1.0, 0.0}
\definecolor{bole}{rgb}{0.47, 0.27, 0.23}
\definecolor{cornellred}{rgb}{0.7, 0.11, 0.11}
\definecolor{britishracinggreen}{rgb}{0.0, 0.26, 0.15}
\definecolor{cobalt}{rgb}{0.0, 0.28, 0.67}
\DeclareSymbolFont{usualmathcal}{OMS}{cmsy}{m}{n}
\DeclareSymbolFontAlphabet{\mathcal}{usualmathcal}
\newcommand\li{\operatorname{li}}
\newcommand\fod{\mathfrak{d}}
\newcommand\fog{\mathfrak{g}}
\newcommand\fom{\mathfrak{m}}
\newcommand\foD{\mathfrak{D}}
\newcommand\Int{\operatorname{Int}}
\newcommand\Arg{\operatorname{Arg}}
\newcommand{\BC}{{\mathbb{C}}}
\newcommand{\BP}{{\mathbb{P}}}
\newcommand{\BQ}{{\mathbb{Q}}}
\newcommand{\BR}{{\mathbb{R}}}
\newcommand{\BZ}{{\mathbb{Z}}}
\newcommand{\CA}{{\mathcal A}}
\newcommand{\CC}{{\mathcal C}}
\newcommand{\CE}{{\mathcal E}}
\newcommand{\CF}{{\mathcal F}}
\newcommand{\CG}{{\mathcal G}}
\newcommand{\CH}{{\mathcal H}}
\newcommand{\CI}{{\mathcal I}}
\newcommand{\CL}{{\mathcal L}}
\newcommand{\CM}{{\mathcal M}}
\newcommand{\CO}{{\mathcal O}}
\newcommand{\CP}{{\mathcal P}}
\newcommand{\CQ}{{\mathcal Q}}
\newcommand{\CR}{{\mathcal R}}
\newcommand{\CS}{{\mathcal S}}
\newcommand{\CT}{{\mathcal T}}
\newcommand{\ch}{{\mathrm{ch}}}
\newcommand{\bdd}{{\mathrm{bdd}}}
\newcommand{\unbdd}{{\mathrm{unbdd}}}
\DeclareMathOperator{\Hilb}{Hilb}
\DeclareMathOperator{\Coh}{Coh}
\DeclareMathOperator{\D}{D}
\DeclareFontFamily{OT1}{rsfs}{}
\DeclareFontShape{OT1}{rsfs}{n}{it}{<-> rsfs10}{}
\DeclareMathAlphabet{\curly}{OT1}{rsfs}{n}{it}
\renewcommand\hom{\mathscr{H}\kern-0.3em\mathit{om}}
\newcommand\Ext{\operatorname{Ext}}
\newcommand\Hom{\operatorname{Hom}}
\DeclareMathOperator{\lHom}{\mathscr{H}\kern-0.3em\mathit{om}}
\DeclareMathOperator{\RRlHom}{\mathbf{R}\kern-0.025em\mathscr{H}\kern-0.3em\mathit{om}}
\DeclareMathOperator{\lExt}{{\mathscr{E}\kern-0.2em\mathit{xt}}}
\DeclareMathOperator{\pr}{pr}
\newcommand{\RR}{\mathbf R}
\tikzset{commutative diagrams/arrow style=math font}
\tikzset{commutative diagrams/.cd,
mysymbol/.style={start anchor=center,end anchor=center,draw=none}}
\tikzset{
shift up/.style={
to path={([yshift=#1]\tikztostart.east) -- ([yshift=#1]\tikztotarget.west) \tikztonodes}
}
}
\theoremstyle{definition}
\newtheorem*{lemma*}{Lemma}
\newtheorem*{theorem*}{Theorem}
\newtheorem*{example*}{Example}
\newtheorem*{fact*}{Fact}
\newtheorem*{notation*}{Notation}
\newtheorem*{definition*}{Definition}
\newtheorem*{prop*}{Proposition}
\newtheorem*{remark*}{Remark}
\newtheorem*{construction*}{Construction}
\newtheorem*{corollary*}{Corollary}
\newtheorem*{conventions*}{Conventions}
\newtheorem{definition}{Definition}[section]
\newtheorem{example}[definition]{Example}
\newtheorem{notation}[definition]{Notation}
\newtheorem{remark}[definition]{Remark}
\newtheorem{construction}[definition]{Construction}
\newtheoremstyle{thm} % <name> % (ambienti con dimostrazione)
        {3mm}% <Space above>
        {3mm}% <Space below>
        {\slshape}% <Body font> % 
        {0mm}% <Indent amount>
        {\bfseries}% <Theorem head font>
        {.}% <Punctuation after theorem head>
        {1mm}% <Space after theorem head>
        {}% <Theorem head spec (can be left empty, meaning 'normal')> 
\theoremstyle{thm}
\newtheorem{theorem}[definition]{Theorem}
\newtheorem{corollary}[definition]{Corollary}
\newtheorem{lemma}[definition]{Lemma}
\newtheorem{prop}[definition]{Proposition}
\newtheorem{thm}{Theorem}
\newtheoremstyle{ex} % <name> % (ambienti con dimostrazione)
        {3mm}% <Space above>
        {3mm}% <Space below>
        {}% <Body font> % \slshape
        {0mm}% <Indent amount>
        {\scshape}% <Theorem head font>
        {.}% <Punctuation after theorem head>
        {1mm}% <Space after theorem head>
        {}% <Theorem head spec (can be left empty, meaning 'normal')> 
\theoremstyle{ex}
\newtheoremstyle{sol} % <name> % (ambienti con dimostrazione)
        {3mm}% <Space above>
        {3mm}% <Space below>
        {}% <Body font> % 
        {0mm}% <Indent amount>
        {\scshape}% <Theorem head font>
        {.}% <Punctuation after theorem head>
        {1mm}% <Space after theorem head>
        {}% <Theorem head spec (can be left empty, meaning 'normal')> 
\theoremstyle{sol}
\newenvironment{Tableau}[1]{%
  \tikzpicture[scale=0.5,draw/.append style={thick,black},
                      baseline=(current bounding box.center)]
    % now draw the tableau
    \tableauRow=-1.5
    \foreach \Row in {#1} {
       \tableauCol=0.5
       \foreach\k in \Row {
         \draw[thin](\the\tableauCol,\the\tableauRow)rectangle++(1,1);
         \draw[thin](\the\tableauCol,\the\tableauRow)+(0.5,0.5)node{$\k$};
         \global\advance\tableauCol by 1
       }
       \global\advance\tableauRow by -1
    }
}{\endtikzpicture}
\newtheorem*{Acknowledgments*}{Acknowledgments}
\DeclareMathAlphabet\BCal{OMS}{cmsy}{b}{n}
\address{Centre for Mathematical Sciences, University of Cambridge, Wilberforce Road, CB3 0WA, Cambridge, United Kingdom}
\title[Geometry of the stability scattering diagram
]{Geometry of the stability scattering diagram for $\mathbb{P}^2$ and applications}
\author{Mark Gross}
\email{mgross@dpmms.cam.ac.uk}
\author{Fatemeh Rezaee}
\email{fr414@cam.ac.uk}
\date{}
\begin{document}
\maketitle

\begin{abstract}
We give a detailed analysis of the stability scattering diagram for $\BP^2$ 
introduced by Bousseau in \cite{Bousseau-scatteringP2-22}. This scattering
diagram lives in a subset of $\BR^2$, and we
decompose this subset into three regions, $R_{\Delta},R_{\Diamond}$ and 
$R_{\mathrm{unbdd}}$. The region $R_{\Delta}$ has a chamber structure
whose chambers are in one-to-one correspondence with strong exceptional triples.
No ray of the stability scattering diagram enters the interior of such a
triangle, replicating a result of Prince \cite{Prince-20} and generalizing
a result of Bousseau.
The region $R_{\Diamond}$ is decomposed into \emph{diamonds}, which
are in one-to-one correspondence with exceptional bundles. Each
diamond has a vertical diagonal corresponding to a rank zero object and is traversed by a dense set of rays. Crucially, however, 
there are no collisions of rays inside diamonds, making it still 
possible to control the scattering diagram in $R_{\Diamond}$. Finally, 
the behaviour of $R_{\mathrm{unbdd}}$
is chaotic, in that every rational point inside it is a collision
of an infinite number of rays. We show that
the \emph{bounded region} $R_{\mathrm{bdd}}=
R_{\Delta}\cup R_{\Diamond}$ has as upper boundary the Le Potier curve,
thus showing that this curve arises naturally through the algorithmic
scattering process. We give an application of these results
by describing the first wall-crossing for the moduli space of
one-dimensional rank zero objects on $\BP^2$. In the sequel
\cite{Gross-Rezaee2}, we apply these results to describe the full Bridgeland 
wall-crossing for $\Hilb^n(\mathbb {P}^2)$ for any $n$.

\end{abstract}

{\hypersetup{linkcolor=black}
\tableofcontents}

\section{Introduction} \label{Section: intro}

Moduli spaces of sheaves on projective varieties $X$ have been studied for decades and play a crucial role in algebraic and enumerative geometry. 
From a modern viewpoint, it has proved to be very fruitful to study these
moduli spaces by examining how they change under variation of stability
conditions. More specifically, fix a non-singular projective
variety $X$. If $\gamma$ is a fixed
Chern character, and $\sigma$ is a Bridgeland stability condition
\cite{Bridgeland-stability} on
$D^b(X)$, denote by
$\CM^{\sigma}_{\gamma}$ the moduli space of $S$-equivalence classes
of $\sigma$-semistable objects with Chern character $\gamma$ on $X$.
Then as $\sigma$ varies, this moduli space may undergo changes as
$\sigma$ passes through a ``wall'' in the space of Bridgeland stability 
conditions. These changes typically preserve the birational equivalence
class of the moduli space, but perform some interesting non-trivial
birational operation, such as a flop.  Wall-crossing of this nature for three-folds has been studied in  \cite{Sch, Sch14, Xia, GHS, R2, R1}, where the authors use complicated methods to even list the walls. An initial motivation for the current article was to introduce a correspondence between the space of stability conditions on $\BP^3$ and a ``stability scattering diagram for $\BP^3$'' , to handle the traditional wall-crossing more systematically. This motivation led us to study the stability scattering diagram for $\BP^2$ more closely and discover several interesting facts which should eventually be applicable to the higher dimensional case as well.

There is an even longer history of study of moduli spaces of sheaves 
specifically on $\BP^2$. The
pioneering work of Drezet and Le Potier \cite{Drezet-LePotier} initiated
a systematic study of these moduli spaces. They explored
the structure of exceptional bundles, whose moduli spaces consist of
reduced points, and determined when a moduli space of
semi-stable sheaves of fixed Chern character is non-empty.
More recently, the wall-crossing point of view has been explored extensively for $\BP^2$ in many works, including \cite{ABCH,LZ,LZ2,BMW14,MacA,Toda14,Toda13,CH14b,BC13,BHLRSWZ16}. In particular, the wall-crossing point of view 
underpins the program of study of 
the birational geometry of Hilbert schemes of points in $\BP^2$ initiated
in \cite{ABCH}.

The moduli space of Bridgeland stability conditions is a complex manifold
of dimension $3$, and thus it is rather difficult to visualize walls
and think about them. However, Bridgeland \cite{Bridgeland-stability}
constructed an action of
the universal cover of $\mathrm{GL}_2(\BR)^+$ on the moduli space of
stability conditions; this action preserves moduli spaces, and in particular
all walls are pulled back from the quotient, which is now of real dimension
$2$.

Work of Bousseau \cite{Bousseau-scatteringP2-22}
then was able to give a much cleaner picture of the structure of 
wall-crossing on $\BP^2$. He chose a particular two-dimensional
slice of the above-mentioned group action, and made a clever choice
of coordinates on this two-dimensional slice.
He showed that in this two-dimensional slice, one could build an
object called a \emph{scattering diagram} which encodes the topology
of moduli spaces of semi-stable objects. Scattering diagrams
are a notion arising in the study of mirror symmetry, initially
introduced by Kontsevich and Soibelman in \cite{KS-06} and then given a 
vast generalization 
by Gross and Siebert in \cite{GS11}. Since then scattering diagram
technology has found widespread applications. In particular,
Bousseau showed \cite{Bousseau-TakahashiConjecture-23}
that the scattering diagram constructed from moduli spaces of semi-stable
objects on $\BP^2$ was directly related to the
scattering diagram which arises in constructing the mirror to $\BP^2$.
This latter scattering diagram, in the case of the mirror to $\BP^2$, was first
studied by Carl, Pumperla and Siebert in \cite{CPS}, and further analyzed by
Prince in \cite{Prince-20}.
Further applications of the $\BP^2$ scattering diagram for enumerative
purposes were explored by Gr\"afnitz in \cite{GraefnitzTropical22},  
\cite{GraefnitzTheta22} and by Gr\"afnitz, Ruddat and Zaslow in
\cite{Graefnitz-Ruddat-Zaslow-22}.

We now turn to the specifics of this paper.
Fix a lattice $M=\BZ^2$ and $M_{\BR}=M\otimes_{\BZ}\BR$, and consider
the open subset 
\[
U:=\{(x,y)\,|\,y>-x^2/2\}
\]
where $x,y$ are coordinates on
$M_{\BR}$. For the purpose of this introduction,
one can view a scattering diagram
$\foD$ as a collection of \emph{rays} $(\fod,H_{\fod})$, where
$\fod\subseteq \BR^2$ is a ray of the form $(a_0,b_0)-\BR_{\ge 0}(a,b)
\subseteq \overline{U}$
with $a,b\in \BZ$ relatively prime,
and $H_{\fod}$ is a formal power series in the monomial $z^{(a,b)}\in\BQ[M]$ 
with no constant term.
Associated to crossing a ray from one side to the other is an automorphism
of a certain algebra. A consistent scattering diagram is one for which
a composition of automorphisms associated to wall-crossings around a closed
loop in $U$ is always the identity. \cite{KS-06}
showed that one could construct a consistent scattering
diagram given an initial set of rays via an algorithmic procedure. Even
though this procedure is quite simple (to the extent that it can easily be put
on a computer), the resulting scattering diagram
can be exceedingly complicated and it can be very difficult to control.

Bousseau's main result \cite{Bousseau-scatteringP2-22} says that there is
a scattering diagram $\foD^{\mathrm{stab}}$, constructed via the
Kontsevich-Soibelman algorithm from an initial choice of rays,  such that 
the functions attached to rays are generating functions for Euler 
characteristics of moduli spaces of coherent sheaves on $\BP^2$ with
respect to various Bridgeland stability conditions. For a review of the
precise statements, see \S\ref{subsec:Bousseau scat} and in
particular Theorem \ref{thm:bousseau main}. However, roughly, if
$\sigma\in U$ is contained in some ray $(\fod,H_{\fod})$, then
the function $H_{\fod}$ will encode the Euler characteristic of
the moduli space of Bridgeland stable objects $\CM^{\sigma}_{\gamma}$
for those Chern characters $\gamma$ such that $Z^{\sigma}(\gamma)\in
i\BR_{>0}$, where $Z^{\sigma}$ is the central charge associated to the stability
condition $\sigma$.

The fact that $\foD^{\mathrm{stab}}$ essentially
coincides with the scattering diagram arising from mirror symmetry
as studied in \cite{CPS} and \cite{Prince-20} is a surprisng observation.
See \cite[\S7]{Bousseau-TakahashiConjecture-23} for
a heuristic explanation for why these two scattering diagrams should
agree. This proved to be a powerful
tool in \cite{Bousseau-TakahashiConjecture-23}, where Bousseau used sheaf
theory to solve a long-standing problem about enumeration of certain kinds
of curves on $\BP^2$. 
Going backwards, one may use results obtained about
the mirror symmetry scattering diagram to understand $\foD^{\mathrm{stab}}$.
For example, Prince in \cite{Prince-20} proved
some structure results for the scattering diagram arising from mirror
symmetry. Let us still write $\foD^{\mathrm{stab}}$ for the diagram that
Prince studied, even though it arose in a completely different way.
Prince showed that there was a region of $\BR^2$
split into a countable number of triangles, such that the interior of
each triangle is disjoint from any ray in $\foD^{\mathrm{stab}}$. 

From our point of view, $\foD^{\mathrm{stab}}$ gives us a powerful way of
visualizing wall-crossing for moduli spaces of sheaves. The basic idea
is as follows. Fix a possible Chern character $\gamma$ for a sheaf of
interest (excluding the Chern character of skyscraper sheaves).  Define
\begin{align*}L_{\gamma}:=\{\sigma\in M_{\BR}\,|\, Z^{\sigma}(\gamma)\in i\BR\};
\end{align*}
this is in fact a line in $M_{\BR}$.
Then $L_{\gamma}\cap U$ has two connected components provided 
$\CM^{\sigma}_{\gamma}$ is non-empty for some $\sigma$ (this follows
from the Bogomolov-Gieseker inequality).
If $\sigma\in L_{\gamma}$
and the moduli space $\CM^{\sigma}_{\gamma}$ is non-empty (and with
non-zero Euler characteristic), then there
will be a ray $(\fod,H_{\fod})\in \foD^{\mathrm{stab}}$ 
with $\sigma \in\fod$ and $\fod\subseteq L_{\gamma}$. Typically, for 
$\sigma$ near the endpoint of one of the two connected components of
$L_{\gamma}\cap U$, the moduli space
$\CM^{\sigma}_{\gamma}$ is empty, but for $\sigma$ very far from the
endpoint, the moduli space stabilizes and remains unchanged as $\sigma$ goes
off to infinity in the plane. As $\sigma$ moves from one extreme to the
other, the moduli space $\CM^{\sigma}_{\gamma}$ is expected to undergo
a number of birational changes. This may or may not result in a change
of Euler characteristic, so may or may not be reflected in a change of
the functions $H_{\fod}$. Regardless, this gives us a strong hint as to
where to look for wall-crossing. 

With this motivation in mind, in this paper we prove some finer points
of the structure of $\foD^{\mathrm{stab}}$. To state the results, we
introduce some notation. We define
\[
R:=\big\{(x,y)\in \BR^2\,|\, \hbox{$y\ge nx -3n^2/2$ for some $n\in \BZ$}\big\}.
\]
This set has the feature that all rays of $\foD^{\mathrm{stab}}$
are contained in $R$, and the initial rays needed to construct
$\foD^{\mathrm{stab}}$ inductively are contained in the lines
$y=nx-3n^2/2$ for various $n$, so that the boundary of $R$ is
contained in the union of these rays.
We then have a decomposition of $R$ into three regions,
\[
R=R_{\Delta}\cup R_{\Diamond}\cup R_{\mathrm{unbdd}}.
\]

Here, $R_{\Delta}$ decomposes as a countable union of triangles, 
indexed by strong exceptional triples containing at most two line bundles, with the
property that the interior of each triangle is disjoint from any ray
of $\foD^{\mathrm{stab}}$. This is Prince's chamber structure. 

The region
$R_{\Diamond}$, on the other hand, consists of a countable union of 
diamond shapes. There is a one-to-one correspondence between these diamonds
and exceptional bundles. Further, each diamond $\Diamond_v$ has a vertex
$v$ we call the \emph{base} of the diamond, and this vertex determines
the diamond $\Diamond_v$, hence the notation. Most crucially, the only rays
of $\foD^{\mathrm{stab}}$ which meet the interior of $\Diamond_v$ have
end point at $v$, and if $\fod:=v-\BR_{\ge 0}(a,b)$ with $a,b\in\BZ$
intersects the interior of $\Diamond_v$, then there is a ray of
$\foD^{\mathrm{stab}}$ with support $\fod$. Hence, $\foD^{\mathrm{stab}}$
is in fact very complicated inside $\Diamond_v$ in the sense that it is
dense, but nevertheless it is quite controlled.

Finally, $R_{\mathrm{unbdd}}$ is truly chaotic. We show that \emph{every}
rational point of $R_{\mathrm{unbdd}}$ is contained in an infinite
number of distinct rays of $\foD^{\mathrm{stab}}$, and hence we expect
that it will be impossible to make any general statements in 
$R_{\mathrm{unbdd}}$. Nevertheless, as we shall see in the sequel paper
\cite{Gross-Rezaee2}, where we apply the results in this paper to study wall-crossing
for $\mathrm{Hilb}^{n}(\BP^2)$,
it is still possible to get useful information in $R_{\mathrm{unbdd}}$.
Here,  in \S\ref{section:rankZero}, we apply our results
to describe the first walls for moduli spaces of sheaves of rank $0$ but
non-zero first Chern class. To the best of our knowledge, this is the first description of such moduli spaces, thanks to the powerful machinery of the scattering diagram.

Another lovely aspect of the scattering diagram technology is that many 
classical aspects of the theory of coherent sheaves on $\BP^2$ appear
algorithmically. As already mentioned, the triangles of $R_{\Delta}$
are indexed by strong exceptional triples. The rays which make up the boundaries
of the triangles are indexed by exceptional bundles. The \emph{Le Potier
curve} $C_{LP}$ \cite{Drezet-LePotier}, \cite[Def.~1.6]{LZ2}
is visible as the union over all diamonds of the upper two
edges of the diamond (what we call the \emph{roof} of the diamond). We
explain this last point in \S\ref{Section: LePotier}.

We now turn to more precise statements of our main results.

\subsection{Main results}

As mentioned above, the scattering diagram $\foD^{\mathrm{stab}}$ is
generated by a set $\foD^{\mathrm{stab}}_{\mathrm{in}}$ 
of initial rays, this set being
in one-to-one correspondence with $\{\CO(d),\CO(-d)[1]\,|\,d\in\BZ\}$.
We construct an additional set of rays, which we call the \emph{discrete
rays}, $\foD_{\mathrm{discrete}}$, which are in one-to-one correspondence
with the set
\[
\{\CE, \CE[1]\,|\, \hbox{$\CE$ is an exceptional vector bundle of rank $>1$}\}.
\]
We also define a set $\foD_{\mathrm{dense}}$ of rays which are the rays
with endpoints being bases of diamonds. Then one helpful way of describing our
analysis is:

    \begin{thm}[See Corollaries \ref{Cor:equivofScatPartial}
and \ref{Thm:Equivalence}] The two scattering diagrams $\foD^{\mathrm{stab}}$ and
$\foD^{\mathrm{stab}}_{\mathrm{in}}\cup\foD_{\mathrm{discrete}} \cup \foD_{\mathrm{dense}}$
are equivalent (see Definition~\ref{def:equivalent}) in $R_{\mathrm{bdd}}:=R_{\Delta}\cup R_{\Diamond}$.
    \end{thm}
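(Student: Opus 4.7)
The plan is to establish the equivalence on $R_{\mathrm{bdd}}$ by treating the two regions $R_{\Delta}$ and $R_{\Diamond}$ separately, since each has a distinct structural description. Recall that equivalence of scattering diagrams reduces, after passing to a common refinement of supports, to matching the wall-crossing automorphism attached to each ray. So the job splits into (i) identifying the supports of rays of $\foD^{\mathrm{stab}}$ with those of $\foD^{\mathrm{stab}}_{\mathrm{in}}\cup\foD_{\mathrm{discrete}}\cup\foD_{\mathrm{dense}}$ in each region, and (ii) verifying that the attached functions agree.

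For $R_{\Delta}$, the strategy is to invoke Prince's chamber structure: since the interior of every triangle is disjoint from any ray of $\foD^{\mathrm{stab}}$, every ray of $\foD^{\mathrm{stab}}$ meeting $R_{\Delta}$ is contained in the 1-skeleton of the triangulation. By the description of $R_{\Delta}$, these boundary edges are indexed by exceptional bundles occurring in strong exceptional triples: edges corresponding to line bundle summands lie on rays of $\foD^{\mathrm{stab}}_{\mathrm{in}}$, while those corresponding to higher-rank exceptional bundles lie on rays of $\foD_{\mathrm{discrete}}$. It remains to check that on each such support the two attached functions coincide, which follows because exceptional bundles have one-point moduli and the corresponding ray function is determined by this fact via Bousseau's main theorem.

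For $R_{\Diamond}$, the structure theorem for diamonds is doing the essential work. Fix a diamond $\Diamond_v$ with base $v$. Its boundary rays again come from exceptional bundles, and so are accounted for by $\foD^{\mathrm{stab}}_{\mathrm{in}}\cup\foD_{\mathrm{discrete}}$ as in the previous paragraph. In the interior, the main results stated above assert that (a) every ray of $\foD^{\mathrm{stab}}$ meeting $\Int(\Diamond_v)$ has endpoint $v$, and (b) for every primitive $(a,b)\in\BZ^2$ such that $v-\BR_{\geq 0}(a,b)$ enters $\Int(\Diamond_v)$, there is a ray of $\foD^{\mathrm{stab}}$ with this support. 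Since $\foD_{\mathrm{dense}}$ is defined as the collection of rays with endpoints at bases of diamonds, this yields a bijection of supports. Finally, the crucial non-collision property in the interior of each diamond implies that the attached functions can be read off at $v$ and propagate unchanged through $\Int(\Diamond_v)$, so that the functions on the two sides of the purported equivalence agree on each ray.

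The principal obstacle will be the last point: since the rays in $\foD_{\mathrm{dense}}$ are produced as a limit of the Kontsevich--Soibelman algorithm, and are dense in $\Diamond_v$, one cannot extract them in a single combinatorial step. The plan is to bootstrap from the boundary of $\Diamond_v$, where the rays and functions are already determined by the analysis of $R_{\Delta}$, and then use the no-collision statement inside the diamond to conclude that any ray entering the interior does so through $v$ and maintains its attached function. A subsidiary difficulty is verifying that the parametrization of dense rays by primitive directions into $\Diamond_v$ from $v$ really exhausts the rays of $\foD^{\mathrm{stab}}$ meeting $\Int(\Diamond_v)$; this should reduce to a careful application of the diamond structure theorem together with the compatibility between exceptional bundles and the bases $v$.
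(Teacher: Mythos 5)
Your overall architecture coincides with the paper's: split $R_{\mathrm{bdd}}$ into $R_{\Delta}$ and $R_{\Diamond}$, dispose of $R_{\Delta}$ by emptiness of triangle interiors plus identification of the edge rays (this is Corollary~\ref{Cor:equivofScatPartial}, resting on Theorem~\ref{thm:mu build}(3)), and dispose of $R_{\Diamond}$ by vertex-freeness of diamonds (Corollary~\ref{Thm:Equivalence}, resting on Theorem~\ref{Thm:nonEmptiness} and Theorem~\ref{thm:local structure}). Two points, however, deserve attention.

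First, the genuine gap is in your verification of the wall functions on the triangle edges. You assert that the attached function on an edge is pinned down ``because exceptional bundles have one-point moduli,'' citing Theorem~\ref{thm:bousseau main}. But that theorem expresses $H_{\foD^{\mathrm{stab}},\sigma}$ as a sum over \emph{all} classes $\gamma$ (equivalently all multiples $\ell\gamma_0$ of the primitive class whose line contains the edge), so knowing that the exceptional bundle itself is rigid does not yet exclude extra contributions: one must show that for every $\ell$ the moduli space $\CM^{\sigma}_{\ell\gamma_0}$ contains nothing beyond the polystable object $\CS_i^{\oplus\ell}$, and that no other classes contribute at points of the interior of the edge. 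This is exactly the content of Theorem~\ref{thm:bousseau generalization}: one identifies $\CA^{\sigma}[Z^{\sigma},1/2]$ with the module category $\CA_0$ of the quiver attached to the strong exceptional triple via Bondal's equivalence, checks that the three simples $\CS_0,\CS_1,\CS_2$ lie in this heart with central charges in the upper half-plane, and concludes that the only stable object with $\Re Z^{\sigma}=0$ on the edge is the single simple $\CS_i$. Without some version of this argument (or an appeal to Bousseau's Lemma~4.8 generalized to non-initial triples), the claimed equality of functions on the edges is unproved, and equivalence in the sense of Definition~\ref{def:equivalent} requires equality of functions, not just of supports.

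Second, a comparison of routes. For the emptiness of triangle interiors you propose to import Prince's chamber structure; this is a legitimately different path from the paper, which deliberately reproves the statement (via the moduli-emptiness argument above) both because its proof is independent of the mirror-symmetry identification and because it yields the stronger sheaf-theoretic statement (Theorem~\ref{thm:bousseau generalization}) needed elsewhere. For the purely numerical equivalence statement Prince's result would suffice, at the cost of routing through the comparison of $\foD^{\mathrm{stab}}$ with the mirror diagram. Finally, your worry about ``propagating'' functions of dense rays from $v$ through $\Int(\Diamond_v)$ is largely moot: by Definition~\ref{DDense}, $\foD_{\mathrm{dense}}$ consists of rays \emph{of} $\foD^{\mathrm{stab}}$ with endpoints at the relevant vertices, so their functions agree tautologically; the only content needed is that no \emph{other} rays of $\foD^{\mathrm{stab}}$ meet the diamonds, which is Theorem~\ref{Thm:nonEmptiness} together with Lemma~\ref{lem:no cross} for the roofs.
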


   \begin{thm}[See Corollary \ref{Cor:unboundedCharacterization}]
   Every point in $R_{\mathrm{unbdd}}$ with rational coordinates 
is contained in the
interior of a countably infinite number of non-trivial rays of 
$\foD^{\mathrm{stab}}$.
    \end{thm}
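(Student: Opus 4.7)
Fix a rational point $p=(x_0,y_0)\in R_{\mathrm{unbdd}}$. The plan is to exhibit infinitely many distinct non-trivial rays of $\foD^{\mathrm{stab}}$ whose supports contain $p$ in their interior, using an inductive construction based on the Kontsevich–Soibelman consistency together with the translation symmetry of $\foD^{\mathrm{stab}}$.

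The first step is to produce one such ray. Since $p$ lies strictly above $C_{LP}$ (the upper boundary of $R_{\mathrm{bdd}}$), and since by the preceding analysis of $R_{\Diamond}$ the diamond bases $v_{\CE}$ accumulate along $C_{LP}$ with associated rays in a dense set of rational directions, a suitable choice of exceptional bundle $\CE$ together with an application of the translation symmetry $T$ induced by $\CO(1)\otimes -$ yields at least one ray of $\foD^{\mathrm{stab}}$ whose support passes through $p$.

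For the inductive step, suppose we have produced $k$ distinct rays $\fod_1,\ldots,\fod_k$ whose supports contain $p$. The supports of these rays collide with many other rays of $\foD^{\mathrm{stab}}$ at points $q\neq p$; such collisions are dense in $R_{\mathrm{unbdd}}$ because of the chaotic nature of the scattering there. At every such collision, consistency of $\foD^{\mathrm{stab}}$ forces the existence of a new non-trivial ray with endpoint $q$ whose direction is determined, via the Euler pairing, by the directions of the colliding rays. By carefully selecting auxiliary rays whose direction data is matched to the line $\overline{qp}$, the newly produced ray has support passing through $p$. Iterating this procedure produces the desired countably infinite family.

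The main obstacle lies in the arithmetic control in the inductive step: one must arrange for the direction produced at a collision $q$ to lie exactly along $p-q$, and must verify that $p$ falls into the interior of the new ray rather than at its endpoint. I would handle this by exploiting the density of rational ray directions realized in $R_{\mathrm{unbdd}}$ to provide the flexibility needed in the choice of auxiliary rays, and then by performing a further iteration of scattering at collisions located strictly on one side of $p$ so that the produced ray is forced to continue through, rather than terminate at, $p$. A final combinatorial check verifies that the supports produced across iterations are pairwise distinct, so that the rays are genuinely distinct members of $\foD^{\mathrm{stab}}$.
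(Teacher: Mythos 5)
Your overall strategy differs fundamentally from the paper's, and the inductive step contains a gap that I do not see how to close. The paper does not build the infinite family of rays one at a time by chasing collisions; it produces all of them simultaneously from a single structural fact. By Theorem~\ref{thm:roof projection}, the rational point $\sigma$ lies over a unique dense diamond $\Diamond_v$, and Lemma~\ref{Lem:DensnessNonApex} (denseness at degenerate diamonds) shows that at \emph{every} rational point $\sigma'$ of the roof of $\Diamond_v$ there is an explicit cone $C_{\sigma'}$ in which \emph{every} ray of rational slope with endpoint $\sigma'$ is a non-trivial ray of $\foD^{\mathrm{stab}}$. One then checks by a short computation that $\sigma$ lies in $C_{\sigma'}$ for all $\sigma'$ in a suitable subinterval of the roof; since $\sigma-\sigma'$ is a rational direction, each such $\sigma'$ is the endpoint of a non-trivial ray through $\sigma$, and the countably many choices of $\sigma'$ give countably many distinct rays. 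The whole content is in the local scattering computation of Lemma~\ref{Lem:DensnessNonApex} (via Lemma~\ref{lem:dense scattering}), not in any iteration.

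The concrete problem with your inductive step is this: consistency at a collision point $q$ of two rays only forces the existence of outgoing rays whose opposite vectors are non-negative integer combinations of the opposite vectors of the rays entering $q$, and which of those combinations actually carry a non-trivial wall function is dictated by the scattering algorithm, not by choice. You cannot ``carefully select auxiliary rays whose direction data is matched to the line $\overline{qp}$''---the rays through $q$ either exist in $\foD^{\mathrm{stab}}$ or they do not, and there is no mechanism forcing any ray produced at $q$ to point exactly at the prescribed point $p$. Moreover, your justification that collisions are ``dense in $R_{\mathrm{unbdd}}$ because of the chaotic nature of the scattering there'' is circular: that chaotic behaviour is precisely the statement being proved. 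The base step is also too weak as written: density of the directions of rays emanating from diamond bases near $C_{\mathrm{roofs}}$ does not produce a ray whose support passes through the \emph{specific} point $p$; for that you need the much stronger statement that from a fixed rational source on a roof, \emph{all} rational directions in an explicit cone are realized, which is exactly what Lemma~\ref{Lem:DensnessNonApex} supplies and what your argument never establishes.
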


These statements are really statements at the numerical level, in the sense
that the scattering diagram only records the Euler characteristic of
moduli spaces and may not see non-empty moduli spaces of Euler characteristic
zero. For applications to wall-crossing, we need stronger results, and show
the following:

\begin{thm}[See Theorems \ref{thm:bousseau generalization} and \ref{Thm:nonEmptiness}]\label{Thm: MainScattering}
We have the following additional information for moduli spaces for 
$\sigma\in R_{\mathrm{bdd}}$:
\begin{itemize}
\item[(i)] For $\sigma$ in the interior of a triangle chamber of $R_{\Delta}$
and $\gamma$ a Chern character with $Z^{\sigma}(\gamma)\in i\BR_{>0}$, the
moduli space $\CM_\gamma^\sigma$ is empty.
\item[(ii)] If $\sigma$ is in the interior of a diamond chamber $\Diamond_v$ of
$R_{\Diamond}$ and $\gamma$ a Chern character with $Z^{\sigma}(\gamma)\in
i\BR_{>0}$, then $\CM_{\gamma}^{\sigma}$ is non-empty only if $L_{\gamma}$
passes through $v$. 
\end{itemize}
\end{thm}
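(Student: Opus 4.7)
The plan for both parts is to upgrade the scattering-diagram statement, which is \emph{a priori} numerical (concerning only Euler characteristics), to an actual emptiness statement for moduli spaces. This upgrade requires combining the explicit chamber decomposition of $R_{\bdd}$ from Corollary~\ref{Thm:Equivalence} with categorical descriptions of hearts and a wall-crossing argument.

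For part~(i), let $T$ be a triangle chamber with vertices labelled by a strong exceptional triple $(A,B,C)$, and fix $\sigma \in T^\circ$. The first step is to realize $T$ as an \emph{algebraic chamber}: there is a heart $\CA_T \subset \Db(\BP^2)$, obtained by tilting $\Coh(\BP^2)$ using the exceptional triple, whose simple objects (up to appropriate shifts) are $A$, $B$, $C$, and such that every $\sigma$-semistable object of phase in $(0,1]$ lies in $\CA_T$; this construction is standard for strong exceptional triples. Any hypothetical $E \in \CM^\sigma_\gamma$ therefore lies in $\CA_T$, and a Jordan--H\"older filtration in $\CA_T$ gives $\gamma = a[A] + b[B] + c[C]$ with $(a,b,c) \in \BZ_{\ge 0}^3 \setminus \{0\}$. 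The remaining input is the geometric characterization of $T^\circ$ as precisely the chamber of the stability-parameter arrangement for which no such non-trivial dimension vector admits a $\sigma$-semistable representation of phase $1/2$ in $\CA_T$---this is the representation-theoretic content of Prince's triangulation, and combining it with the Jordan--H\"older constraint yields the contradiction.

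For part~(ii), let $\sigma \in \Diamond_v^\circ$ and suppose $\CM^\sigma_\gamma \ne \emptyset$ with $Z^\sigma(\gamma) \in i\BR_{>0}$; assume, toward contradiction, that $v \notin L_\gamma$. By Corollary~\ref{Thm:Equivalence}, every ray of $\foD^{\mathrm{stab}}$ meeting $\Diamond_v^\circ$ emanates from $v$, so the segment $L_\gamma \cap \Diamond_v$ is disjoint from the support of every ray in $\Diamond_v$. Combined with the fact that in $R_{\bdd}$ every genuine Bridgeland wall for $\gamma$ appears as a ray of $\foD^{\mathrm{stab}}$ (not merely those of non-vanishing BPS invariant), this implies $\CM^{\sigma'}_\gamma \cong \CM^\sigma_\gamma$ for all $\sigma'$ on that segment. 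Transporting $\sigma'$ to the boundary of $\Diamond_v$ brings us into an adjacent chamber---either a triangle, where part~(i) gives emptiness, or an adjacent diamond $\Diamond_{v'}$. A well-founded induction on diamonds (ordered, for instance, by rank of the associated exceptional bundle or by position relative to the Le Potier curve) then closes the argument.

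The principal obstacle in both parts is the upgrade from numerical to geometric emptiness. In part~(i), the heart $\CA_T$ supplies the structural non-negativity constraint on $\gamma$, bypassing any concern about vanishing BPS invariants. In part~(ii), the crucial ingredient is that $\foD^{\mathrm{stab}}$ in $R_{\bdd}$ captures \emph{every} actual destabilizing wall---the geometric content of Corollary~\ref{Thm:Equivalence}---while making the induction on diamonds genuinely well-founded for arbitrary $\Diamond_v$ is the other technical point that will require care.
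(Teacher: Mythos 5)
Your part (i) has the right general shape --- pass to the heart of quiver representations attached to the strong exceptional triple and exploit positivity of the classes of its simple objects --- and this is indeed how the paper argues (Theorem \ref{thm:bousseau generalization}, modelled on Bousseau's Lemma 4.8). But two points are wrong or missing. First, the simple objects of that heart are \emph{not} shifts of $\CE_0,\CE_1,\CE_2$: they form the dual collection, e.g.\ $(\CS_0,\CS_1,\CS_2)=(\CE_1',\CE_0'[1],\CE_2'(-3)[2])$ in the right-mutation case, where $\CC'$ is the triple one mutated from; identifying them precisely is what lets one match the lines $L_{\CS_j}$ with the edges of $\Delta_{\CC}$. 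Second, the step you outsource to ``the representation-theoretic content of Prince's triangulation'' is exactly the point that must be proven, and Prince does not prove it: his result is the absence of rays in the (mirror) scattering diagram, which is strictly weaker than moduli emptiness, since a ray can be absent merely because a signed Euler characteristic vanishes. The actual content is the computation $-r_i\,\Re Z^{V_{\CE_i}}(\CS_j)=\chi(\CE_i,\CS_j)=\delta_{ij}$, which by convexity forces $\Re Z^{\sigma}(\CS_j)<0$ for all $j$ and all $\sigma\in\Int(\Delta_{\CC})$, together with the verification (via the slope estimates for exceptional triples and Macr\`i's $\Ext$-exceptional-collection lemma) that $\CA^{\sigma}[Z^{\sigma},1/2]$ \emph{is} the quiver heart. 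Without these your argument is circular.

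Part (ii) has more serious gaps. (a) The segment $L_{\gamma}\cap\Diamond_v$ is \emph{not} disjoint from the supports of rays when $v\notin L_{\gamma}$: the rays through $v$ are dense in $\Diamond_v$, so $L_{\gamma}$ crosses infinitely many of them transversally; what is true is only that no ray is \emph{contained} in that segment. (b) The assertion that every actual wall appears as a ray of $\foD^{\mathrm{stab}}$ is precisely what can fail (Remark \ref{rem:Duv}): a moduli space may be non-empty with vanishing signed Euler characteristic, so absence of a ray of $\foD^{\mathrm{stab}}$ does not preclude an actual wall, and you cannot conclude $\CM^{\sigma'}_{\gamma}\cong\CM^{\sigma}_{\gamma}$ along the segment. (c) The proposed induction is not well-founded: the roofs of $\Diamond_v$ lie on $C_{\mathrm{roofs}}$, so exiting through a roof lands you in $R_{\mathrm{unbdd}}$, where no emptiness statement holds (every rational point there lies on infinitely many rays), and the diamonds accumulate at the irrational lower vertices, so there is no terminating order. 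The paper's mechanism is different: from a hypothetical $\CE\in\CM^{\sigma}_{\gamma}$ it iterates the left/right-generator (Harder--Narasimhan) construction to build a chain of edges that must exit $\Diamond_v$, rules out exit through the lower edges by part (i), and rules out exit through a roof by Lemma \ref{lem:no cross} --- no $\sigma$-semistable object on a discrete ray can have direction vector pointing to the outer side of that ray --- which is itself proven by a descent of destabilizing quotients terminating on the parabola $y=-x^2/2$ together with a convexity/loop argument. That lemma is the engine of part (ii) and has no counterpart in your proposal.
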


In particular, the first item yields Prince's chamber structure as 
a consequence of the properties of exceptional triples. The second item
is more involved and requires a careful combinatorial analysis. 
In general, the non-empty moduli spaces for
$\sigma$ in the interior of a diamond are generalizations of Steiner bundles.

Here, we state several new interesting 
consequences for wall-crossing for rank 
zero sheaves. We note that the line $L_{\gamma}$ is vertical if and only
if $\gamma$ is the Chern character of a rank zero sheaves with
non-zero first Chern class. 
Furthermore,
we show in Theorem \ref{thm: verticalDiagonal} that the line segments connecting
the base of a diamond to its apex are also vertical. We call this line segment
the \textit{vertical diagonal} of a diamond.

\begin{thm}[See Theorem \ref{Thm: FirstWallRank0}]
Writing a Chern character $\gamma$ as the triple $(\ch_0,\ch_1,\ch_2)$,
let $\gamma=(0,u,v)$ or $(0,-u,-v)$, for $u\in \BZ^{> 0}, v\in\frac{\BZ}{2}$. 
Suppose $\gamma$ is a primitive Chern character and $L_{\gamma}$ 
contains a vertical diagonal of a diamond. Then the value of
$\sigma\in L_{\gamma}$ with the smallest $y$-coordinate such that 
$\CM_{n\gamma}^{\sigma}\not=\emptyset$ for some $n>0$ is given by the 
the base of the diamond, with value
$\sigma=(\frac{v}{u},\frac{5}{8}-\frac{1+v^2}{2u^2})$.
\end{thm}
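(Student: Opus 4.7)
The plan is to combine the structural results of Theorem~\ref{Thm: MainScattering} with the vertical-diagonal result Theorem~\ref{thm: verticalDiagonal}. The hypothesis that $L_\gamma$ contains a vertical diagonal singles out a unique diamond $\Diamond_v$: the $x$-coordinate $v/u$ of $L_\gamma$ is read off from $Z^\sigma(\gamma)\in i\BR$ for $\gamma=(0,u,v)$, and together with the primitivity of $\gamma$ this identifies the exceptional bundle indexing $\Diamond_v$. The $y$-coordinate of the base of $\Diamond_v$ can then be extracted from the explicit parametrization of diamond bases (developed earlier in the paper in terms of the Chern data of the indexing exceptional bundle), producing the closed-form value $5/8-(1+v^2)/(2u^2)$.

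To establish non-emptiness at $\sigma=v$, I would appeal to Theorem~\ref{Thm: MainScattering}(ii) together with Bousseau's interpretation of ray-functions as generating series of Euler characteristics (Theorem~\ref{thm:bousseau main}): non-triviality of the function attached to the vertical ray emanating from $v$ guarantees $\CM_{n\gamma}^v\neq\emptyset$ for some $n>0$. An explicit semistable representative is provided by the generalized Steiner-type rank-zero object associated to the exceptional bundle that indexes $\Diamond_v$.

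The core content of the proof is to show emptiness of $\CM_{n\gamma}^\sigma$ for every $\sigma\in L_\gamma$ with $y(\sigma)<y(v)$ and every $n>0$. For such $\sigma$ lying in a chamber interior there are two cases. If the chamber is a triangle of $R_\Delta$, Theorem~\ref{Thm: MainScattering}(i) gives emptiness directly. If the chamber is another diamond $\Diamond_{v'}$, Theorem~\ref{Thm: MainScattering}(ii) forces emptiness unless $L_\gamma$ passes through $v'$, which would contradict the fact that each vertical line is the vertical diagonal of at most one diamond. Points of $L_\gamma$ on the shared boundary between two chambers are handled by semicontinuity: a non-empty moduli space at a boundary point produces a non-empty moduli space in at least one adjacent chamber interior, returning us to the previous cases.

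The main obstacle I anticipate is the uniqueness claim used above, namely that no two diamonds share a common vertical diagonal. This amounts to showing that the map sending an exceptional bundle on $\BP^2$ to the primitive rank-zero class attached to its diamond's vertical diagonal is injective, which follows from tracking the Chern-character data through the diamond construction (ultimately rooted in the Drezet--Le Potier classification). A secondary technical point is the verification of the explicit formula for the $y$-coordinate of the base, which amounts to a bookkeeping exercise in Chern data for exceptional bundles on $\BP^2$.
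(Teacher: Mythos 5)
Your outline follows the paper's route: identify the diamond from the $x$-coordinate $v/u$, compute the base via the Chern data of the indexing exceptional bundle (the paper's Lemma~\ref{lem: verticelPositive} pins down $(r,d,e)=(u,\,v+\tfrac{3}{2}u,\,\tfrac{1+v^2}{2u}+\tfrac{5}{8}u+\tfrac{3}{2}v)$ from primitivity together with $r^2-d^2+2re=1$, and then \eqref{eq: VertexPE} gives the stated point), get emptiness below the base from the triangle/diamond structure theorems, and get non-emptiness at the base from non-triviality of the vertical ray through $v$. Two remarks on the details. First, below $v$ the vertical line never meets another dense diamond at all: the projections of the dense diamonds to the $x$-axis are the pairwise disjoint intervals $I_{\CE}$ of Theorem~\ref{thm:roof projection}, and $v/u\in I_{\CE}$ for the unique $\CE$ indexing your diamond, so your second chamber case is vacuous and only $R_{\Delta}$ needs to be handled. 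Second, the non-triviality of the vertical ray at $v$ is not what Theorem~\ref{Thm: MainScattering}(ii) gives you; it comes from Lemma~\ref{lem:prince} (the vertical direction lies in the dense cone $C_v$) combined with the local scattering analysis of Theorem~\ref{thm:local structure}, after which Bousseau's Theorem~\ref{thm:bousseau main} converts non-triviality into non-emptiness.

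The one step that does not hold up as stated is the treatment of boundary points by ``semicontinuity.'' The implication ``$\CM^{\sigma_0}_{n\gamma}\neq\emptyset$ at a wall point $\sigma_0$ implies $\CM^{\sigma}_{n\gamma}\neq\emptyset$ in an adjacent chamber interior'' is false in general: a strictly semistable object can be destabilized on both sides of a wall (its Jordan--H\"older factors persist nearby, but they need not have class proportional to $n\gamma$), and indeed the semistable locus for a fixed object is closed, not open. The correct argument at a point $\sigma_0$ in the interior of a triangle edge is the direct one from (the proof of) Theorem~\ref{thm:bousseau generalization}: there the heart is identified with a quiver category whose three simples have central charges with strictly negative real part except for the single simple $\CS_i$ supported on that edge, so any semistable object with purely imaginary central charge has class a positive multiple of $[\CS_i]$, which has non-zero rank and hence cannot equal $n\gamma$. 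With that substitution the argument closes up and matches the paper's proof.
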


\begin{thm}[See Theorem \ref{SufficientRank0Dene}] Let $\CE$ be
an exceptional bundle, and consider the diamond $\Diamond_v$ whose base
$v$ corresponds to the exceptional bundle $\CE$. We call the upper right
edge of $\Diamond_v$ the \emph{right roof} of $\Diamond_v$. Then the
image of the projection of the right roof onto the $x$-axis is the closed
interval
\[
\left[
\frac{d}{r}-\frac{3}{2},
\frac{d}{r}
-\frac{\sqrt{9r^2-4}}{2r}
\right].
\]

If $\gamma=(0,u,v)$ is a Chern character of a rank zero sheaf such that
$v/u$ lies in the interior of this interval, then the value of
$\sigma\in L_{\gamma}$ with the smallest $y$-coordinate such that
$\CM^{\sigma}_{n\gamma}\not=\emptyset$ for some $n>0$ is given by
the intersection of $L_{\gamma}$ with the right roof of $\Diamond_v$.
\end{thm}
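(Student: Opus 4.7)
The plan is to establish the two assertions separately. For the projection interval, I would use the explicit description of $\Diamond_v$ in terms of the Chern character $(r,d,\ch_2)$ of $\CE$ developed earlier in the paper. The two endpoints of the right roof project to the boundary of the stated interval: one endpoint arises from the intersection of the roof with an initial ray $y = nx - 3n^2/2$ of $\foD^{\mathrm{stab}}_{\mathrm{in}}$ (giving the bound $d/r - 3/2$ from the $3n^2/2$ constant), and the other from the discriminant relation $\Delta(\CE) = (r^2-1)/(2r^2)$ satisfied by an exceptional bundle of rank $r$ (giving $d/r - \sqrt{9r^2 - 4}/(2r)$ after solving the relevant quadratic).

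For the main statement, since $\gamma = (0,u,v)$ has rank zero, $L_{\gamma}$ is the vertical line $\{x = v/u\}$ and meets the right roof transversally at a single point $\sigma_0$. To show emptiness of $\CM_{n\gamma}^{\sigma}$ for $\sigma \in L_{\gamma}$ with $y$-coordinate less than that of $\sigma_0$, note that such $\sigma$ lies either outside $R$ (where there is no scattering), in a triangle chamber of $R_{\Delta}$ (empty by Theorem~\ref{Thm: MainScattering}(i)), or in a diamond chamber. For the last case, Theorem~\ref{Thm: MainScattering}(ii) reduces emptiness to showing that $L_{\gamma}$ does not pass through the relevant base. The key combinatorial observation is that exceptional slopes coincide with endpoints of roof projection intervals along the Le Potier curve; since $v/u$ lies in the interior of a roof projection, it is not an exceptional slope, so $L_{\gamma}$ passes through no diamond base below $\sigma_0$, and Theorem~\ref{Thm: MainScattering}(ii) yields emptiness throughout.

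To verify nonemptiness of $\CM^{\sigma_0}_{n\gamma}$ for some $n > 0$, I would use that the right roof is itself a ray of $\foD^{\mathrm{stab}}$ with attached Chern character naturally determined by $\CE$. A wall-crossing analysis at $\sigma_0$ produces a $\sigma_0$-semistable object of class $n\gamma$ as an extension involving a multiple of the roof's character (closely related to $\CE$ and its Steiner-type generalizations) together with a rank-zero complement, for $n$ dictated by the wall-crossing formula.

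The main obstacle is this final step. While the emptiness half is a direct application of Theorem~\ref{Thm: MainScattering} together with the combinatorial observation about exceptional slopes, isolating the correct multiple $n$ and constructing an explicit $\sigma_0$-semistable representative of class $n\gamma$ requires careful bookkeeping of the wall-crossing formula across the right roof, generalizing the argument for the vertical-diagonal case in Theorem~\ref{Thm: FirstWallRank0}.
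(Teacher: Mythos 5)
Your proposal has genuine gaps in both halves, and the one step you flag as the "main obstacle" is precisely where the paper's argument takes a completely different and much shorter route.

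For the projection interval, your derivation of the left endpoint is wrong: $d/r-3/2$ has nothing to do with intersecting an initial ray $y=nx-3n^2/2$. It is the $x$-coordinate of the apex of $\Diamond_v$, which coincides with the $x$-coordinate of the base $V_{\CE}$ because the diagonal is vertical (Theorem~\ref{thm: verticalDiagonal}); the $3/2$ enters through the Serre twist relating the two roofs $L_{\CE}$ and $L_{\CE(-3)[1]}$, not through the initial rays. The right endpoint is the intersection of $L_{\CE}$ with the irrational-slope boundary of the cone $C_v$, and computing it requires knowing the Chern characters of the two generating rays of $\Diamond_v^{\mathrm{out}}$ in terms of the Markov determinants $D',D''$ (Theorem~\ref{thm:VerticalPositiveDiamond}) and then the quadratic identity $R^2=3DR-1$ satisfied by $r_\infty(D)^{-1}$ together with Lemma~\ref{lem:slope inequalities}(6); your appeal to "the discriminant relation" names one ingredient but no argument. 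In the emptiness half, your conclusion is right but the stated key observation is false: exceptional slopes are the \emph{centers} of the roof-projection intervals $I_{\CE}$, not their endpoints (the endpoints are quadratic irrationals). What you actually need is the disjointness statement $I_{\CE}\cap(S-3/2)=\{\mathrm{pr}_1(V_{\CE})\}$ from Theorem~\ref{thm:roof projection}, which guarantees that the vertical line $x=v/u$ meets only the one dense diamond $\Diamond_v$ and misses its base.

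The non-emptiness at $\sigma_0$ does not require constructing any semistable object or tracking a wall-crossing formula. The paper's argument is: $\sigma_0$ is a degenerate diamond, so by Lemma~\ref{Lem:DensnessNonApex} (whose proof reduces the local scattering at $\sigma_0$ to the two-line computation of Lemma~\ref{lem:dense scattering}) every rational direction in the dense cone $C_{\sigma_0}$ supports a non-trivial ray of $\foD^{\mathrm{stab}}$ with endpoint $\sigma_0$; a short computation (carried out in the proof of Corollary~\ref{Cor:unboundedCharacterization}) shows the vertical direction $(0,1)$ lies in $C_{\sigma_0}$; and Theorem~\ref{thm:bousseau main} then converts the non-vanishing wall function on that vertical ray into non-emptiness of $\CM^{\sigma}_{n\gamma}$ for some $n>0$. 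Without this mechanism — existence of a ray in the scattering diagram forces a non-zero Euler characteristic, hence a non-empty moduli space — your proof cannot be completed as outlined.
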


See \S\ref{section:rankZero} for more results, which give a complete 
description of first walls for (multiples of) Chern characters of
rank zero sheaves. The above result is also used in \S\ref{Section: LePotier}
to show how the Le Potier curve arise as the union of roofs of all
diamonds.
In particular, the following theorem
follows immediately from \cite{Drezet-LePotier}
and our own results, but is a good summation of these results:

\begin{thm}[See Corollary \ref{cor:vertex location}]
Let $(x_0,y_0)\in U$ be a point with rational coordinates. Then 
$(x_0,y_0)$ is the intersection of two rays of 
$\foD^{\mathrm{stab}}$ with distinct tangent lines if and only if
$(x_0,y_0)=\left(\frac{\ch_1(\CE)}{\ch_0(\CE)}-\frac{3}{2}, \frac{3d-\chi(\CE)}
{\ch_0(\CE)}\right)$ for 
some slope-stable coherent sheaf of positive rank $\CE$.
\end{thm}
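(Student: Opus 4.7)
The strategy is a case analysis on which region of the decomposition $R=R_{\Delta}\cup R_{\Diamond}\cup R_{\mathrm{unbdd}}$ contains $(x_0,y_0)$, exploiting the structural results already established for each region; the backward direction then matches slope-stable sheaves to the appropriate region via the Drezet--Le Potier existence theorem combined with our identification of the Le Potier curve in Section~\ref{Section: LePotier}. Note first that any intersection point of two rays of $\foD^{\mathrm{stab}}$ with distinct tangent lines automatically has rational coordinates, since each ray is rational; so the rationality hypothesis costs nothing.

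For the forward direction, suppose $(x_0,y_0)$ is an intersection of two rays with distinct tangent lines. If $(x_0,y_0)$ lies in the interior of a triangle of $R_{\Delta}$, this is ruled out by Theorem~\ref{Thm: MainScattering}(i), which forbids rays from entering such triangles. Hence $(x_0,y_0)$ must be a vertex of such a triangle (edges of the triangles are single rays, so interior points on an edge carry only one tangent direction). Under the bijection between triangles and strong exceptional triples, each vertex is naturally identified with an exceptional sheaf $\CE$, which is slope-stable. If instead $(x_0,y_0)$ lies in the interior of a diamond $\Diamond_v$, then by Theorem~\ref{Thm: MainScattering}(ii) every ray of $\foD^{\mathrm{stab}}$ interior to $\Diamond_v$ emanates from $v$; two rays from the common endpoint $v$ have distinct tangents iff they are distinct rays, so $(x_0,y_0)=v$, which corresponds to the exceptional bundle $\CE$ of rank $>1$. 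Finally, if $(x_0,y_0)\in R_{\mathrm{unbdd}}$, then by Corollary~\ref{Cor:unboundedCharacterization} it lies on infinitely many rays, and in particular on two with distinct tangents.

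For the backward direction, let $\CE$ be a slope-stable sheaf of positive rank and let $(x_0,y_0)$ be the point associated to $\CE$ by the stated formula. By the Drezet--Le Potier existence theorem, the locus of such points is precisely the closed region lying on or above the Le Potier curve $C_{LP}$. If $\CE$ is exceptional, then $(x_0,y_0)\in C_{LP}$, so by Section~\ref{Section: LePotier} it either is a vertex of a triangle of $R_{\Delta}$ (when $\mathrm{rk}\,\CE=1$, yielding a triple point of three rays bounding the triangle) or is the base $v$ of a diamond $\Diamond_v$ (when $\mathrm{rk}\,\CE>1$, yielding several discrete rays emanating from $v$ with distinct directions); in either case $(x_0,y_0)$ is a ray-intersection with distinct tangents. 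If $\CE$ is slope-stable but not exceptional, then $(x_0,y_0)$ lies strictly above $C_{LP}$, hence in $R_{\mathrm{unbdd}}$, and Corollary~\ref{Cor:unboundedCharacterization} again provides the required intersection.

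The main obstacle is the matching of the explicit formula with the actual location of the ray-intersection in each case. Using the Riemann--Roch expansion $\chi(\CE)=\ch_2(\CE)+\tfrac{3}{2}\ch_1(\CE)+\ch_0(\CE)$ on $\BP^2$, the formula can be rewritten as $(x_0,y_0)=(\mu(\CE)-\tfrac{3}{2},\,\tfrac{3}{2}\mu(\CE)-\ch_2(\CE)/\ch_0(\CE)-1)$, which I expect to agree with Bousseau's parametrization of the characteristic point assigned to a Chern character in the scattering-diagram coordinates (this can be cross-checked against the location of diamond bases in Theorem~\ref{thm: verticalDiagonal}). The subtler point is the non-exceptional case: one must ensure that the identification of $R_{\mathrm{unbdd}}$ with the strictly super--Le Potier region is precise enough that every rational point of $R_{\mathrm{unbdd}}$ indeed comes from some slope-stable sheaf and vice versa. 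This is where the Section~\ref{Section: LePotier} identification of $C_{LP}$ with the union of roofs of diamonds, together with Drezet--Le Potier's converse, is essential.
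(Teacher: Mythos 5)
Your proposal is correct and follows essentially the same route as the paper: triangle-interior emptiness and diamond vertex-freeness pin the ray-intersections in $R_{\mathrm{bdd}}$ down to the points $V_{\CE}$ for $\CE$ exceptional, Corollary~\ref{Cor:unboundedCharacterization} handles every rational point on or above $C_{\mathrm{roofs}}$, and the Drezet--Le Potier classification (as in \cite[Thm.~1.8]{LZ2}) supplies the dictionary with slope-stable sheaves. One small correction: for $\CE$ exceptional, $V_{\CE}$ does \emph{not} lie on $C_{LP}$/$C_{\mathrm{roofs}}$ --- it is the base of a diamond, sitting $1/\ch_0(\CE)^2$ strictly below the apex on the roof --- but since you independently identify these points as triangle vertices/diamond bases, the argument is unaffected.
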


\begin{remark}
Since our main motivation for this work is to understand wall-crossing for
moduli spaces, we point out that \cite{CoskunHuizengaWoolf} described the
first wall, and \cite[Thm.~0.1]{LZ2} gives a different description of this
first wall. Meanwhile, \cite[Thm.~0.4]{LZ2} yields an algorithm for 
determining all other walls. For certain moduli spaces, such as
the Hilbert scheme of points in $\BP^2$, we will be able to read off 
similar information, see the forthcoming \cite{Gross-Rezaee2}. However,
the slice of the stability moduli space used by Bousseau, and hence in this 
paper, is quite different than
that of \cite{LZ2}, and as a result the statements take a quite different
form. 

The stability scattering diagram has also been studied further from different
points of view in \cite{Bousseau-Descombes-LeFloch-Pioline-22}.
\end{remark}

\subsection{Outline}
\S\ref{Section: background} recalls basics about coherent sheaves on 
$\BP^2$, Bridgeland stability conditions, and finally recalls the main results
of \cite{Bousseau-scatteringP2-22}. In \S\ref{sec:discrete}, we recall basic
facts about exceptional collections of bundles on $\BP^2$, and then use
this to provide an analysis of the region $R_{\Delta}$ and 
$\foD_{\mathrm{discrete}}$. In particular, these results
reproduce Prince's results of \cite{Prince-20} about the chamber
structure on $R_{\Delta}$, but we prove a stronger result of moduli
emptiness of the interior of each triangle, modelled after a result and
proof of
Bousseau in \cite{Bousseau-scatteringP2-22}. Finally we make use of results
of \cite{Graefnitz-Luo2023} to analyze the structure of $\foD^{\mathrm{stab}}$
at each vertex of a triangle and introduce $\foD_{\mathrm{dense}}$.

In \S\ref{Section: diamonds}, we introduce diamonds. On the other hand,
\S\ref{Section: verticalDiagonal} shows that we have now accounted for all
rays intersecting each diamond. We call this ``vertex freeness'', as 
this means that in the interior of a diamond, there are no rays meeting
transversally. This is what allows us to have good control inside the diamonds.

In \S\ref{sec:unbounded}, we move on to prove our results about $R_{\mathrm{unbdd}}$. In \S\ref{section:rankZero}, we give an application to studying wall-crossing for
moduli of sheaves of rank $0$ and non-zero first Chern class. Finally, in
\S\ref{Section: LePotier}, we show how classical aspects of the theory of sheaves
on $\BP^2$, such as the Le Potier curve, show up naturally in the structure
of $\foD^{\mathrm{stab}}$.

\subsection*{Conventions} If $r=\ch_0=\ch_0(\CE)$, $d=\ch_1=\ch_1(\CE)$, $e=\ch_2=\ch_2(\CE)$ and $\chi=\ch_2(\CE)+3/2\ch_1(\CE)+\ch_0(\CE)$, for some $\CE\in \mathrm{D}^b(\BP^2)$, then we denote $L_{\ch(\CE)}$ by $L_{\CE}$. 
Also, we alternatively use $(\ch_0,\ch_1,\ch_2)$, $(r,d,e)$ or $(r,d,\chi)$, for the corresponding information of an object in the derived category. Sometimes, we use $(0,u,v)$ for the Chern character of one-dimensional rank-0 objects. We call an object with primitive Chern character a \emph{primitive object}. We call a rational number $s/t$ so that $s,t\in\BZ$ are coprime a \emph{reduced rational number}. 

\subsection*{Acknowledgments} Both authors were supported by the ERC Advanced Grant MSAG. FR was also supported by UKRI grant No. EP/X032779/1. In addition, MG was supported by the Research Institute for Mathematical Sciences,
an International Joint Usage/Research Center located in Kyoto  University, and FR was hosted by ETH Z\"urich, when some of this research was carried out.
The authors thank Arend Bayer, Pierrick Bousseau and Tim Gr\"afnitz for helpful correspondence and discussions.

\subsection*{Notation}
\begin{center}
   \begin{tabular}{ r l }

   $\foD^{\mathrm{stab}}$: &Stability scattering diagram (Definition \ref{Def: stabilityScattering}).\\

   $\foD^{\mathrm{stab}}_{\mathrm{in}}$: &Initial rays in  $\foD^{\mathrm{stab}}$ (Definition \ref{def:path ordered}).\\

   $\foD_{\mathrm{discrete}}$: &Discrete rays in  $\foD^{\mathrm{stab}}$ (Section \ref{susection: construction}).\\

      $\foD_{\mathrm{dense}}$: &Dense rays in  $\foD^{\mathrm{stab}}$ (Definition \ref{DDense}).\\
   
    $(\fod,H_{\fod})$: &Stability ray (Definition \ref{Def: stabilityRay}).\\

    $L_{\gamma}$ or $L_E$:& The line associated to an object $E$ with $\ch(E)=\gamma$ (see \eqref{eqn:LGamma}).\\

    $\CT$:& The infinite binary tree (Construction \ref{susection: construction}).\\

$\Delta_w$:& The associated triangle to a vertex $w$ of $\CT$ (Construction \ref{susection: construction}).\\
$\CT_M$:& The Markov tree (Construction \ref{susection: construction}).\\
     
     $\Diamond_v$ or $\Diamond$ : & Dense diamond  based at the vertex $v$. For simplicity, we omit $v$, when there\\&  is no confusion (Definition \ref{def: diamond})\\

   \end{tabular}
   \end{center}
     \begin{center}
   \begin{tabular}{ r l }

   $\Diamond^{\mathrm{out}}_v$ or $\Diamond^{\mathrm{out}}$ : & Outer diamond  based at the vertex $v$. For simplicity, we omit $v$, when there\\& is no confusion (Definition \ref{def: diamond}).\\

      $\Diamond^{\mathrm{init}}_v$ or  $\Diamond^{\mathrm{init}}$ : & Initial diamond generated by initial objects  based at the vertex $v$. \\&For simplicity, we omit $v$, when there is no confusion (Definition \ref{Def:OuterInitial}).\\
    
     $\Diamond^{\mathrm{sup}}_v$ or $\Diamond^{\mathrm{sup}}$ : & Supper-diamond  based at the vertex $v$. For simplicity, we omit $v$, when\\& there is no confusion. Note that a super-diamond contains infinitely\\& many diamonds, and it is not a diamond itself (Definition \ref{Def: superDiamond}).\\
     $R_{\Delta}$: &Triangle structure of the scattering diagram (Corollary \ref{Cor: RDelta}).\\
      $R_{\Diamond}$: &Diamond structure of the scattering diagram (Definition \ref{Def: RDiamond}).\\

            $R_{\mathrm{bdd}}$:& The bounded region which is define to be $R_{\Delta}\cup R_{\Diamond}$ (Definition \ref{Def: RDiamond}).\\
            $R_{\mathrm{unbdd}}$:& The  unbounded region, which is the complement of $R_{\mathrm{bdd}}$ in $\foD^{\mathrm{stab}}$\\& (Definition \ref{Def: RDiamond}).\\
        
  $C_{\mathrm{roofs}}$ : & The fractal upper boundary of the bounded region $R_{\mathrm{bdd}}$ in the scattering\\ 
     & diagram (Corollary \ref{Cor: boundedness}).\\

   \end{tabular}
\end{center}

\section{Background: Stability conditions and scattering diagram for $\mathbb{P}^2$} \label{Section: background}

\subsection{Preliminaries and stability conditions on $\mathbb{P}^2$}
We set 
\begin{equation}
\label{eq:Gamma def}
\Gamma:=K_0(D^b(\BP^2)).
\end{equation}
We identify the latter with a rank
3 lattice in two different ways. First, the map
\[
\Gamma \rightarrow \BZ^3, \quad \CE\mapsto (r(\CE), d(\CE), \chi(\CE))
\]
is an isomorphism, where $r,d,\chi$ denote rank, degree and Euler
characteristic respectively. Second, we use
\[
\ch:\Gamma \rightarrow \BZ^2\oplus (1/2)\BZ, \quad
\CE\mapsto (\ch_0(\CE),\ch_1(\CE),\ch_2(\CE)),
\]
where $\ch_i$ denotes the $i$th Chern character. Note that
$\ch_0(\CE)=r(\CE)$ and $\ch_1(\CE)=d(\CE)$, but by Riemann-Roch, 
\begin{equation}
\label{eq:RR1}
\chi(\CE)=\ch_2(\CE)+{3\over 2}\ch_1(\CE)+\ch_0(\CE)=\ch_2(\CE)+{3\over 2}d(\CE)
+r(\CE).
\end{equation}
In particular, $\ch$ induces an isomorphism with its image
\begin{equation}
\label{eq:chern image}
\{(r,d,e)\in \BZ^{\oplus 2}\oplus (1/2)\BZ)\,|\, d/2+e\in \BZ\}.
\end{equation}
We often write $(r,d,e)$ instead of 
$(\ch_0,\ch_1,\ch_2)$.

The lattice $\Gamma$ has a pairing $(\cdot,\cdot)$ given by
\begin{equation}\label{eqn: PairingGamma}
   \left((r,d,\chi),(r',d',\chi')\right) = -3dr'-rr'-dd'+r\chi'+\chi r'. 
\end{equation}
This arises from
 Riemann-Roch (see \cite[Lemma 2.2]{Bousseau-scatteringP2-22}):
if $\CE,\CE'$ are sheaves on $\BP^2$, then 
\begin{equation}
\label{eq:RR}
\chi(\CE,\CE')=\sum_{i=0}^2 (-1)^i\dim \Ext^i(\CE,\CE') =
\left((r(\CE),d(\CE),\chi(\CE)),(r(\CE'),d(\CE'),\chi(\CE'))\right).
\end{equation}

\medskip

Denote by $\mathrm{Stab}(\mathbb{P}^{2})$ the space of all Bridgeland
stability conditions on $D^b(\BP^2)$. We recall from 
\cite{BM-LocalProjPlane} 
 a two-dimensional
subfamily of $\mathrm{Stab}(\BP^2)$ parameterized by two real parameters
$\alpha\in \mathbb{R}^+, \beta \in \mathbb{R}$. 

First define the twisted Chern characters 
\[
\mathrm{ch}^{\beta}(\CE)=(\mathrm{ch}_0^{\beta}(\CE),\mathrm{ch}_1^{\beta}(\CE),\mathrm{ch}_2^{\beta}(\CE))= e^{-\beta H}.\mathrm{ch}(\CE),
\]
where $\mathrm{H}$ denotes the hyperplane class. 
Define a torsion pair of subcategories of $\Coh(\BP^2)$ by
\begin{equation*}
\begin{aligned}\label{(2.1)}
\mathcal{T}_{\beta}:= \text{{\{$\CE \in \mathrm{Coh}(\mathbb{P}^{2})\colon \mu_{\beta}(\CG)>0$  for all $\CE  \twoheadrightarrow \CG $}\}},   \\
\mathcal{F}_{\beta}:= \text{{\{$\CE \in \mathrm{Coh}(\mathbb{P}^{2})\colon \mu_{\beta}(\CF) \leq 0$ for all $\CF  \hookrightarrow \CE$}\}},
\end{aligned}
\end{equation*}
where $\mu_{\beta}(\CE):=  \frac{\ch^{\beta}_{1}(\CE)}{\mathrm{ch}_{0}(\CE)}$  if $\mathrm{ch}_{0}(\CE) \neq 0$, and $\mu_{\beta}(\CE)=+ \infty$ otherwise. Note
that $\mu_{\beta}(\CE)=\mu(\CE)-\beta$, where $\mu(\CE)=\ch_1(\CE)/\ch_0(\CE)$ is
the standard slope.
These produce via tilting a
\textit{heart of a bounded t-structure} on $D^b(\BP^2)$  given as
\[
\mathrm{Coh}^{\beta}(\mathbb{P}^{2}):= \langle \mathcal{F}_{\beta}[1], \mathcal{T}_{\beta} \rangle.
\]
We recall this is the subcategory of $D^b(\BP^2)$ consisting of objects
$\CE$ with $\CH^i(\CE)=0$ for $i\not=-1,0$, $\CH^{-1}(\CE)\in \mathcal{F}_{\beta}$
and $\CH^0(\CE)\in \mathcal{T}_{\beta}$.

The central charge of the stability condition parameterized by
$\alpha,\beta$ is defined to be
\begin{equation}\label{eqn: Zab}
Z_{\alpha, \beta}:=-(\mathrm{ch}^{\beta}_{2})+(\alpha^{2}/2)\mathrm{ch}^{\beta}_{0}+i(\mathrm{ch}^{\beta}_{1}){\alpha}. 
\end{equation}
The relevant \textit{slope function} is defined as usual as
\begin{equation*}
\nu_{\alpha, \beta}:=-\frac{\mathrm{Re}(Z_{\alpha, \beta})}{\mathrm{Im}(Z_{\alpha, \beta})}=\frac{\mathrm{ch}^{\beta}_{2}-(\alpha^{2}/2)\mathrm{ch}^{\beta}_{0}}{\mathrm{ch}^{\beta}_{1}{\alpha}}
\end{equation*}
if $\mathrm{ch}^{\beta}_{1}(E)\neq0$, and $\nu_{\alpha, \beta}(E)=+\infty$ if $\mathrm{ch}^{\beta}_{1}(E)=0$.

Then the pair $(\mathrm{Coh}^{\beta}(\BP^2),Z_{\alpha,\beta})$ form a 
Bridgeland stability condition on $D^b(\BP^2)$, see \cite{BM-LocalProjPlane}.

\begin{theorem}[Bogomolov-Gieseker inequality {\cite[Corollary 7.3.2]{BMT14:stability_threefolds}}] \label{BG}
 Any $\nu_{\alpha, \beta}$-semistable object $\CE \in \mathrm{Coh}^{\beta}(\mathbb{P}^{2})$ satisfies
 $$2\mathrm{ch}_{0}(E)\mathrm{ch}_{2}(E) \leq \mathrm{ch}^{2}_{1}(E).$$
\end{theorem}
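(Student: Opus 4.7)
The plan is to reduce the statement to the classical Bogomolov--Gieseker inequality for slope-semistable torsion-free sheaves on $\BP^2$, and then extend across the tilt operation defining $\Coh^\beta(\BP^2)$. A crucial preliminary remark is that the discriminant
\[
\Delta(\CE) := \ch_1(\CE)^2 - 2\ch_0(\CE)\ch_2(\CE)
\]
is invariant under the substitution $\ch \mapsto \ch^{\beta} = e^{-\beta H}\cdot\ch$, so it suffices to verify $\Delta(\CE)\ge 0$ using the twisted Chern characters; this reparametrization is what makes the arguments of $\nu_{\alpha,\beta}$-stability directly compatible with the usual Bogomolov--Gieseker framework.

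First I would dispose of the trivial cases. If $\ch_0(\CE)=0$ then $\Delta(\CE) = \ch_1(\CE)^2\ge 0$ and we are done; this covers all pure dimension-one objects and skyscrapers. Next, if $\CE$ is a torsion-free coherent sheaf lying in $\CT_\beta$ and slope-semistable, then $\Delta(\CE)\ge 0$ is the classical Bogomolov inequality on $\BP^2$. For a torsion-free sheaf $\CE\in\CT_\beta$ that is $\nu_{\alpha,\beta}$-semistable but \emph{not} slope-semistable, I would take its slope-Harder--Narasimhan filtration with graded pieces $\CG_1,\dots,\CG_n$. Each $\CG_i$ satisfies classical Bogomolov, and a convexity/Hodge-index argument on the quadratic form $\Delta$ (viewed as a Lorentzian signature form on the Chern lattice of $\BP^2$) then yields $\Delta(\CE)\ge 0$, because $\nu_{\alpha,\beta}$-semistability of $\CE$ forces the twisted slopes of the $\CG_i$ to cluster on the correct side of $\beta$.

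Finally, for a general object $\CE\in\Coh^{\beta}(\BP^2)$, I would use the canonical short exact sequence
\[
0\to\CH^{-1}(\CE)[1]\to\CE\to\CH^0(\CE)\to 0
\]
in the tilted heart. By definition, $\CH^{-1}(\CE)\in\CF_\beta$ and $\CH^0(\CE)\in\CT_\beta$. The $\nu_{\alpha,\beta}$-semistability of $\CE$, together with $\ch_1^\beta(\CH^{-1}(\CE)[1])\le 0$ and $\ch_1^\beta(\CH^0(\CE))\ge 0$, constrains both cohomology sheaves to have the same $\nu_{\alpha,\beta}$-slope as $\CE$ and to be themselves slope-semiclassical enough that the sheaf case applies. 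I would conclude by summing the contributions, again appealing to the Hodge-index-style inequality for $\Delta$ to ensure additivity across the extension.

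The main obstacle is the extension step: the discriminant $\Delta$ is a non-additive quadratic form, so one cannot naively combine $\Delta(\CG_i)\ge 0$ termwise. The key is to interpret $\Delta$ as a Lorentzian form on the image of $\ch$ inside the lattice \eqref{eq:chern image} and to exploit that the central charge $Z_{\alpha,\beta}$ picks out a timelike direction; the Cauchy--Schwarz inequality in this signature, applied to pairs of HN factors with close $\nu_{\alpha,\beta}$-slopes, is what makes the additivity robust. This is precisely the maneuver carried out in \cite{BMT14:stability_threefolds}, and is the technical heart of the inequality.
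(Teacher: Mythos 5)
The paper does not prove this statement at all: it is quoted verbatim from \cite[Cor.~7.3.2]{BMT14:stability_threefolds}, so there is no internal proof to compare against. Judged on its own terms, your sketch correctly identifies the two real ingredients (the classical Bogomolov inequality for slope-semistable torsion-free sheaves, and the fact that $\Delta=\ch_1^2-2\ch_0\ch_2$ is a Lorentzian-signature quadratic form invariant under $\ch\mapsto e^{-\beta H}\ch$), and the $\ch_0=0$ case is indeed immediate. But the reduction you propose has two genuine gaps.

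First, in the last step you assert that $\nu_{\alpha,\beta}$-semistability of $\CE$ forces $\CH^{-1}(\CE)[1]$ and $\CH^0(\CE)$ to have the \emph{same} tilt-slope as $\CE$. That is false: the exact sequence in the tilted heart only gives $\nu_{\alpha,\beta}(\CH^{-1}(\CE)[1])\le\nu_{\alpha,\beta}(\CE)\le\nu_{\alpha,\beta}(\CH^0(\CE))$, and neither cohomology sheaf need be tilt- or slope-semistable, so the sheaf case cannot be applied to them and the superadditivity of $\Delta$ (which only holds for classes of \emph{equal} slope, via Hodge index) is not available. Second, your treatment of a tilt-semistable sheaf in $\CT_\beta$ that is not slope-semistable — ``the twisted slopes of the $\CG_i$ cluster on the correct side of $\beta$'' — is not an argument; at a fixed $(\alpha,\beta)$ there is no direct structural decomposition that terminates in slope-semistable pieces. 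The actual proof in \cite{BMT14:stability_threefolds} is a wall-crossing induction in the $(\alpha,\beta)$-plane: one first proves the inequality in the large-volume limit $\alpha\to\infty$, where tilt-semistable objects are identified with slope-semistable sheaves, torsion sheaves, or shifts thereof, and then propagates it across the locally finite set of walls using the key lemma that if $\CE$ is strictly semistable on a wall with Jordan--H\"older factors $\CF_i$ of equal tilt-slope, then $\Delta(\CE)\ge\sum_i\Delta(\CF_i)$. Without that dynamic induction, the static decomposition you describe does not close up.
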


Following Bousseau \cite{Bousseau-scatteringP2-22}, we make a change of coordinates from $\alpha,\beta$ to $x,y$ with
\[
\alpha = \sqrt{x^2+2y}, \quad \beta = x.
\]
Now $(x,y)$ takes values in
\begin{align}\label{eqn: U}
 U:=\left\{(x,y)\,\Big|\, y>-{x^2\over 2}\right\}.   
\end{align}
For the remainder of the paper, we only work with stability conditions parameterized
by $\sigma=(x,y)\in U$.

From this, the central charge \eqref{eqn: Zab} becomes
\begin{align}
\label{eq:central charge}
\begin{split}
    Z^{\sigma}=Z^{(x,y)}:=& -\ch_2+\beta \ch_1-\frac{\beta^2}{2}\ch_0+\frac{\alpha^2}{2}\ch_0+i(\ch_1-\beta \ch_0)\alpha\\ 
=&ry+dx-e+i(d-rx){\sqrt{x^2+2y}}.
\end{split}
\end{align}
We then obtain a stability condition 
\[
(\CA^{\sigma},Z^{\sigma}) := (\mathrm{Coh}^{x}(\BP^2),Z^{\sigma}).
\]

\subsection{Bousseau's scattering diagram for stability conditions in
$\BP^2$}
\label{subsec:Bousseau scat}

We fix $M=\BZ^2$ and write $M_{\BR}=M\otimes_{\BZ} \BR$, with basis
$e_1,e_2$, and set $N=\Hom_{\BZ}(M,\BZ)$ with dual basis $e_1^*,e_2^*$.
We define a skew-symmetric bilinear form on $M$ given by
\[
\langle e_1,e_2\rangle = -3.
\]

For discussions of the stability scattering diagram, we first fix
a graded Lie algebra
\[
\mathfrak{g}= \bigoplus_{m\in M}\mathfrak{g}_m,
\]
so that $[\mathfrak{g}_m,\mathfrak{g}_{m'}]\subseteq \mathfrak{g}_{m+m'}$.
For $m\in M\setminus\{0\}$ primitive, $k$ a positive integer, define
\[
\fog_{m,k}:=\bigoplus_{j=1}^k \fog_{jm}
\]
with obvious surjections $\fog_{m,k+1}\rightarrow \fog_{m,k}$, and
set
\[
\hat \fog_m=\lim_{\stackrel{\longleftarrow}{k}} \fog_{m,k}.
\]
Write
\[
\pi_{m,k}:\hat\fog_m\rightarrow \fog_{m,k}
\]
for the projection.

\begin{definition}\label{Def: stabilityRay}
A \emph{ray or line} is a pair $(\fod,H_{\fod})$ where
\begin{enumerate}
\item There exists an $m_0\in M_{\BR}$ and $m_{\fod}\in M$ primitive
such that $\fod=m_0-\BR_{\ge 0}m_{\fod}$ in the case of a ray,
and $\fod=m_0-\BR m_{\fod}$ in the case of a line. We view $m_{\fod}$
as part of the data of a ray or line although it is not made explicit.
We refer to $-m_{\fod}$ as the \emph{direction vector} of the ray and
$m_{\fod}$ as the \emph{opposite vector} of the ray. 
\item 
$H_{\fod}\in \hat\fog_{m_{\fod}}$.
\end{enumerate}
The \emph{support} of a ray or line $(\fod,H_{\fod})$ is the set $\fod$.
A ray or line $(\fod,H_{\fod})$ is \emph{non-trivial} if $H_{\fod}\not=0$.
\end{definition}

Let $\BQ[t^{\BR_{\ge 0}}]$ denote a polynomial ring with coefficients in
$\BQ$ and arbitrary non-negative real exponents. 
For any positive integer $k$, let $\fom_k$ be the ideal generated by
$t^r$ for $r>k$, so that $\fom=\fom_0$ is a maximal ideal. Set
\[
R_k:=\BQ[t^{\BR_{\ge 0}}]/\fom_k,
\]
and
\[
\widehat R:=\lim_{\stackrel{\longleftarrow}{k}} R_k
\]

We write the completed tensor
products
\[
\mathfrak{g} \widehat\otimes \widehat R:=
\lim_{\stackrel{\longleftarrow}{k}} \mathfrak{g}\otimes R_k, \quad
\mathfrak{g} \widehat\otimes \mathfrak{m}:=
\lim_{\stackrel{\longleftarrow}{k}} 
\mathfrak{g}\otimes \mathfrak{m}/\mathfrak{m}^{k+1}.
\]

\begin{definition}
Given $H\in \mathfrak{g}\widehat \otimes\mathfrak{m}$, we obtain an automorphism
\[
\theta_H:=\exp([H,\cdot]):
\mathfrak{g}\widehat \otimes\widehat R \rightarrow 
\mathfrak{g}\widehat \otimes\widehat R
\]
given by
\[
\theta_H(f) := \sum_{k=0}^{\infty} 
{[H,[H,\ldots [H,f]\cdots]]\over k!},
\]
where the $k^{th}$ term involves $k$ Lie brackets. This infinite sum makes sense
in $\mathfrak{g}\widehat\otimes\widehat R$
as $H\in\fog\widehat\otimes\fom$.
\end{definition}

For the most part, we will work with $\mathfrak{g}=\BQ[M]$, with the obvious
$M$-grading and Lie bracket defined as follows.

\begin{definition}
The skew-form $\langle \cdot,\cdot\rangle$ induces a Lie
bracket
on $\BQ[M]$, with
\[
[z^{m_1},z^{m_2}]=\langle m_1,m_2\rangle z^{m_1+m_2}.
\]
\end{definition}

\begin{remark}
\label{rem:H auto}
We can unwind the construction of $\theta_H$ for $H=H(z^{m},t)\in 
\BQ[z^m]\widehat\otimes\fom$ as follows, assuming that $H\equiv 0 \mod z^m$.

First, we introduce some additional notation.
For $m\in M$, $\langle m,\cdot
\rangle\in N$. Explicitly, if $m=ae_1+be_2$, then $\langle m,\cdot\rangle
= 3b e_1^*-3a e_2^*$. We write this as $\check m\in N$.
Thus
\[
[z^m,z^{m'}] = \check m(m')z^{m+m'}.
\]

Writing $H=\sum_{k=1}^{\infty} h_k(t) z^{km}$, we then see that
\[
[H,z^{m'}] = \sum_{k=1}^{\infty}  h_k(t) k\check m(m')z^{km+m'}
=\check m(m')z^{m'}\left(\sum_{k=1}^{\infty} kh_k(t)z^{km}\right).
\]
Repeating, and noting that $[H,z^m]=0$, we obtain
\[
\theta_H(z^{m'}) = z^{m'} \exp\left(\check m(m') \sum_{k=1}^{\infty}
k h_k(t)z^{km}\right).
\]
If we set 
\begin{equation}
\label{eq:H to f}
f:= \exp\left(3\sum_{k=1}^{\infty} kh_k(t)z^{km}\right),
\end{equation}
then we can write 
\[
\theta_H=\bar\theta_f,
\]
where
\[
\bar\theta_f(z^{m'}) = z^{m'}f^{\check m(m')/3}.
\]
The reason for the factor of $3$ in the definition of $f$ and
dividing by $3$ above is that if $m$ is primitive, so is
$\check m/3$.
\end{remark}

\begin{definition}
\label{def:varphi sigma}
For $\sigma=(x,y)\in M_{\BR}$, define $\varphi_{\sigma}:M\rightarrow\BR$
by
\[
\varphi_{\sigma}(a,b)=2(-ax-b).
\]
\end{definition}

\begin{definition}\label{Def: stabilityScattering}
A \emph{(stability) scattering diagram} $\foD$ for a Lie algebra
$\mathfrak{g}$ is a set of rays or lines whose support is contained in
the closure $\overline{U}$ of $U$ (where $U$ is defined in \eqref{eqn: U}),
satisfying the following conditions:
\begin{enumerate}
\item For $(\fod,H_{\fod})\in \foD$ and $\sigma\in\fod\cap U$,
one has $\varphi_{\sigma}(m_{\fod})>0$.
\item
 For $(\fod,H_{\fod})\in
\foD$, let $\deg H_{\fod}$ be the smallest $k$ for which 
$\pi_{m_{\fod},k}(H_{\fod})\not=0$.
Then for every compact set $K$
in $U$ and every $r\in \BR_{\ge 0}$, the set of rays
$(\fod,H_{\fod})\in \foD$ such that there exists $\sigma\in\fod\cap K$
with $(\deg H_{\fod})\varphi_{\sigma}(m_{\fod})\le k$ is finite.
\end{enumerate}
\end{definition}

\begin{definition}
We define the \emph{support} of $\foD$ to be
\[
\mathrm{Supp}(\foD):=\bigcup_{\fod\in\foD} \fod
\] 
and define the \emph{singular locus} of $\foD$ to be 
\[
\mathrm{Sing}(\foD):=\bigcup_{\fod\in\foD}\partial \fod
\cup \bigcup_{\fod_1,\fod_2\in\foD \atop \dim \fod_1\cap\fod_2=0}
\fod_1\cap\fod_2,
\]
the union of endpoints of rays in $\foD$ and intersections of
non-collinear rays in $\foD$.
\end{definition}

\begin{definition}
\label{def:total sum}
For $x\in \mathrm{Supp}(\foD)\setminus\mathrm{Sing}(\foD)$, we define
\[
H_{\foD,x}:=\sum_{(\fod,H_{\fod})\in\foD, x\in\fod} H_{\fod}.
\]
We define $\mathrm{Supp}'(\foD)$ to be the closure of the set 
\[
\{x\in \mathrm{Supp}(\foD)\setminus\mathrm{Sing}(\foD)\,|\, H_{\foD,x}\not=0\}.
\]
\end{definition}

\begin{definition}
\label{def:equivalent}
We say two scattering diagrams $\foD_1$ and $\foD_2$ are 
\emph{equivalent} if 
$\mathrm{Supp}'(\foD_1)=\mathrm{Supp}'(\foD_2)$ and for any 
$x\in \mathrm{Supp}'(\foD_i)\setminus (\mathrm{Sing}(\foD_1)\cup
\mathrm{Sing}(\foD_2))$ we have
\[
H_{\foD_1,x}=H_{\foD_2,x}.
\]
\end{definition}

\begin{definition}
\label{def:local scatter}
Let $\foD$ be a stability scattering diagram,
and $\sigma\in\mathrm{Supp}(\foD)$.
For $(\fod,H_{\fod})\in\foD$ with $\sigma\in \fod$, write
$H_{\fod}$ as a formal sum $\sum_{k\ge 1} H_{\fod,k}$ where $H_{\fod,k}
\in \fog_{km_{\fod}}$. Then we write
\[
H_{\fod}^{\sigma} :=\sum_{k\ge 1} H_{\fod,k}t^{\varphi_{\sigma}(km_{\fod})}
\in \fog\widehat\otimes\fom.
\]
Here we use the condition $\varphi_{\sigma}(m_{\fod})>0$ of Definition~\ref{Def: stabilityScattering},(1).

We define the 
\emph{local scattering diagram at $\sigma$} to be
\[
\foD_{\sigma}:=
\{(T_{\sigma}(\fod),H^{\sigma}_{\fod})\,|\,\hbox{$(\fod,H_{\fod})\in \foD$ such that $\sigma\in\fod$}\}.
\]
Here, $T_{\sigma}(\fod)$ denotes the tangent cone to $\fod$ at $\sigma$.
This coincides with the tangent line to $\fod$ if $\sigma\in\Int(\fod)$,
and coincides with $\fod-\sigma$ if $\sigma$ is the endpoint of $\fod$.
\end{definition}

\begin{definition}
\label{def:path ordered}
Let $\foD$ be a stability scattering diagram and let $\sigma\in\mathrm{Sing}(\foD)\cap U$.
For a given positive integer $k$, we write
\[
\foD_{\sigma}^k:=\{(\fod,H_{\fod}^{\sigma})\,|\, (\fod,H_{\fod}^{\sigma})
\in\foD_{\sigma},
H_{\fod}^{\sigma}\not\equiv 0 \mod \fom_k\}.
\]
By the finiteness condition of Definition~\ref{Def: stabilityScattering}, 
$\foD^k_{\sigma}$ is a finite set.
\begin{enumerate}
\item
Let $\gamma:[0,1]\rightarrow M_{\BR}$ be a small parameterized circle 
centred at $\sigma$ 
with $\gamma(0)=\gamma(1)\in M_{\BR}
\setminus\mathrm{Supp}(\foD)$.
We define
the \emph{path-ordered product} $\theta_{\gamma,\foD^k_{\sigma}}:\fog\otimes R_k\rightarrow
\fog\otimes R_k$ as follows. Let $(\fod_1,H_1),
\ldots,(\fod_n,H_n)$ be the elements of $\foD^k_{\sigma}$ traversed by $\gamma$,
with $\gamma(t_i)\in\fod_i$,
in the order traversed. We define 
\[
\theta_{\gamma,\foD_{\sigma}^k}:=\theta_{H_n}^{\pm 1}\circ\cdots\theta_{H_1}^{\pm 1},
\]
where the sign $\pm 1$ is $+1$ provided 
$\check m(\gamma'(t_i))<0$.\footnote{This is the opposite sign convention
from \cite[Def.~1.6]{Bousseau-scatteringP2-22}, correcting an oversight
in that paper.}
\item
We define
$\theta_{\gamma,\foD_{\sigma}}:\mathfrak{g}\widehat\otimes \widehat R\rightarrow 
\mathfrak{g}
\widehat \otimes \widehat R$ as the limit of the 
$\theta_{\gamma,\foD_{\sigma}^k}$.
\item
We say $\foD$ is \emph{consistent at $\sigma$} if 
$\theta_{\gamma,\foD_{\sigma}}=\mathrm{id}$.
\item
For an open subset $V\subseteq M_{\BR}$, we say $\foD$ is \emph{consistent
inside $V$} if it is consistent at each $\sigma\in V\cap \mathrm{Sing}(\foD)$.
\end{enumerate}
\end{definition}

We can now define the stability scattering diagram for $\BP^2$ with respect
to $\fog=\BQ[M]$.
 This will be the scattering diagram
written as ${\mathfrak D}^{\BP^2}_{\mathrm{cl}^+}$ in 
\cite{Bousseau-scatteringP2-22}.

Identifying $M_{\BR}$ with the $(x,y)$-plane, we recall the open subset 
$U\subseteq M_{\BR}$ given in \eqref{eqn: U}.
For $n\in\BZ$, define two rays,
\begin{align*}
\fod^+_n := {} & \left((n,-n^2/2)-\BR_{\ge 0}(-1,n), 
-\li_2(-z^{(-1,n)})\right)
\\
\fod^-_n := {} & \left((n,-n^2/2)-\BR_{\ge 0}(1,-n),
-\li_2(-z^{(1,-n)})\right),
\end{align*}
where the dilogarithm is defined by
\[
\li_2(z)=\sum_{k=1}^{\infty} {z^k\over k^2}.
\]
Set 
\[
\foD^{\mathrm{stab}}_{\mathrm{in}}:=\{\fod_n^+,\fod_n^-\,|\, n\in \BZ\}.
\]
\cite[Prop.~1.9]{Bousseau-scatteringP2-22}, following
\cite[Thm.~6]{KS-06}, proves the following statement:

\begin{theorem} 
There exists a stability scattering diagram
$\foD^{\mathrm{stab}}$ consistent in $U$ and equivalent to 
$\foD^{\mathrm{stab}}_{\mathrm{in}}$ in small neighbourhoods of the endpoints
of the rays $\fod^{\pm}_n$. Further, this diagram is unique up to equivalence.
\end{theorem}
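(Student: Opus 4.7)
The plan is to construct $\foD^{\mathrm{stab}}$ via the Kontsevich--Soibelman algorithm \cite{KS-06}, adapted to the $\varphi_\sigma$-weight filtration that underlies Definition~\ref{Def: stabilityScattering}(2). For each positive integer $r$, one works modulo $\fom_{r+1}$: the finiteness requirement will ensure that only finitely many rays contribute to any compact $K \subset U$ at this truncation level, and within each ray only finitely many terms of $H_\fod$ are visible (since $\varphi_\sigma(km_\fod) = k\varphi_\sigma(m_\fod) \to \infty$ with $k$). I would build truncated diagrams $\foD^{(r)}$, consistent modulo $\fom_{r+1}$ in $U$, containing the truncation of $\foD^{\mathrm{stab}}_{\mathrm{in}}$. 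Note that the endpoints $(n,-n^2/2)$ of the initial rays sit on the parabolic boundary $\partial U$ rather than inside $U$; hence no correcting rays need be added in a neighbourhood of any such endpoint, and equivalence with $\foD^{\mathrm{stab}}_{\mathrm{in}}$ near these endpoints is automatic. The final $\foD^{\mathrm{stab}}$ is assembled by letting $r \to \infty$, with the function attached to each ray reconstructed from its successive truncations.

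The inductive step $\foD^{(r)} \to \foD^{(r+1)}$ is the technical heart. Fix $\sigma \in \mathrm{Sing}(\foD^{(r)}) \cap U$ and a small loop $\gamma$ around $\sigma$. Since $\foD^{(r)}$ is consistent modulo $\fom_{r+1}$, the path-ordered automorphism $\theta_{\gamma, \foD^{(r)}_\sigma}$ differs from the identity only modulo $\fom_{r+2}$, and by the Baker--Campbell--Hausdorff identity in the nilpotent quotient it equals $\exp([H_\sigma,\cdot])$ for a unique $H_\sigma \in \fog \otimes (\fom_{r+1}/\fom_{r+2})$. Decompose $H_\sigma$ according to its $M$-grading into finitely many components $H_{\sigma,m_i}$ with $m_i$ primitive and $\varphi_\sigma(m_i) > 0$, and adjoin to $\foD^{(r)}$ the correcting rays $(\sigma - \BR_{\ge 0} m_i, -H_{\sigma,m_i})$. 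The resulting $\foD^{(r+1)}$ is consistent modulo $\fom_{r+2}$ at every such $\sigma$ by construction; any new singular points formed at this order contribute obstructions only in strictly higher weight, to be cancelled at the next stage. Uniqueness up to equivalence is the standard argument: given any other $\foD'$ with the same properties, induction on $r$ compares $\foD^{\mathrm{stab}}$ and $\foD'$ modulo $\fom_{r+1}$, using consistency of both to propagate equality of $H_{\foD,x}$ from the neighbourhoods of the initial endpoints, where the two diagrams agree by hypothesis, across every intersection.

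The hardest step is verifying that the local finiteness condition of Definition~\ref{Def: stabilityScattering}(2) is preserved in the inverse limit. Because $\foD^{\mathrm{stab}}_{\mathrm{in}}$ is already infinite and the directions of generated rays are a priori unbounded, one must uniformly control, over each compact $K \subset U$ and each weight bound, how many rays can arise. The key mechanism is monotonicity of weight under collision: a correcting ray produced at $\sigma = \fod_1 \cap \fod_2$ has weight at least the sum of the weights of $\fod_1$ and $\fod_2$, while the rapid growth of $\varphi_\sigma$ as $\sigma$ approaches $\partial U$ bounds the contribution of far-away initial rays. Careful bookkeeping of these bounds, uniformly over compact subsets and transferable across the inductive step, is what makes the passage to the limit possible and constitutes the principal technical burden of the construction.
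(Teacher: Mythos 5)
Your proposal is correct and follows essentially the same route as the paper, which does not give its own proof but cites \cite[Prop.~1.9]{Bousseau-scatteringP2-22}, itself following \cite[Thm.~6]{KS-06}: the order-by-order construction of correcting rays via the nilpotent/BCH argument, the observation that the endpoints $(n,-n^2/2)$ lie on $\partial\overline{U}$ so no consistency is imposed there, and the weight-filtration finiteness control are exactly the standard Kontsevich--Soibelman argument being invoked. The only slight imprecision is attributing the finiteness of contributing initial rays to ``growth of $\varphi_\sigma$ near $\partial U$''; the correct mechanism is that $\varphi_\sigma(m_{\fod_n^{\pm}})$ grows linearly in the distance from $\sigma$ to the endpoint $(n,-n^2/2)$, so initial rays with $|n|$ large have large weight on any fixed compact $K\subset U$.
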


Note that the endpoints $(n,-n^2/2)$ of $\fod_n^{\pm}$ 
lie in the boundary of $\overline{U}$,
and thus there is no consistency for loops around these points, as they are
not contained in $U$.

One of the main theorems of \cite{Bousseau-scatteringP2-22} 
relates the functions attached
to rays of $\foD^{\mathrm{stab}}$ to Euler characteristics of 
moduli spaces of stable
objects in $D^b(\BP^2)$. Explicitly, let $\sigma\in U$, and let
$\gamma\in\Gamma$ as defined in \eqref{eq:Gamma def}. We may write
$\gamma$ either as $(r,d,\chi)$ or $(\ch_0,\ch_1,\ch_2)$.

\begin{definition}
\label{def:exp line}
For $\gamma=(r,d,\chi)\in\Gamma$ or $\CE$ an object of the derived category
$D^b(\BP^2)$ with invariants $\gamma$, we 
define:
\begin{enumerate}
\item the \emph{opposite vector} of $\gamma$ or $\CE$ to be
\begin{equation}
\label{eq:mgamma def}
m_{\gamma}=m_{\CE}:=(r,-d)\in M.
\end{equation}
\item 
We define the line in $M_{\BR}$
\begin{align}\label{eqn:LGamma}
\begin{split}
L_{\gamma} = L_{\CE} :=& \{(x,y)\in U\,|\,ry+dx+r+{3\over 2}d - \chi = 0\}\\
=&\{(x,y)\in U\,|\,\ch_0y+\ch_1x=\ch_2\},
\end{split}
\end{align}
using \eqref{eq:RR1}. Note that $m_{\gamma}$ is tangent to $L_{\gamma}$.
The line $L_{\gamma}$ can be viewed as
the set of stability conditions $\sigma\in U$ for which
the central charge $Z^{\sigma}_{\gamma}$ of $\gamma$ is purely imaginary.

An object $\CE$ with Chern character $\gamma=(\ch_0,\ch_1,\ch_2)$ is called an \textit{object corresponding} to the line $L_{\gamma}=L_{\CE}$.
\item
We denote by $\CM^{\sigma}_{\gamma}$ the moduli space of $S$-equivalence
classes of $\sigma$-semistable objects of class $\gamma$ in 
$\BP^2$.
\end{enumerate}
\end{definition}
 
The following is \cite[Thm.~5.10]{Bousseau-scatteringP2-22}:

\begin{theorem}
\label{thm:bousseau main}
For a point $\sigma\in \mathrm{Supp}(\foD^{\mathrm{stab}})
\setminus\mathrm{Sing}(\foD^{\mathrm{stab}})$,
\[
H_{\foD^{\mathrm{stab}},\sigma} = \sum_{\gamma: \sigma\in L_{\gamma}} (-1)^{(\gamma,\gamma)-1}
\left(\sum_{\gamma'\in\Gamma_{\gamma}\atop \gamma=\ell\gamma'}
{1\over \ell^2} Ie^-_{\gamma',\sigma}\right) z^{m_{\gamma}}.
\]
Here
\[
\Gamma_{\gamma} = \{\gamma'\in\Gamma\,|\,\hbox{$\gamma=\ell\gamma'$
for some $\ell\in\BZ_{\ge 1}$}\}
\]
and $Ie^-_{\gamma',\sigma}$ is a signed Euler characteristic of the
intersection cohomology of $\CM^{\sigma}_{\gamma'}$, see 
\cite[\S2.4]{Bousseau-scatteringP2-22}.
\end{theorem}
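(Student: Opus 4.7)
The plan is to construct a candidate stability scattering diagram $\foD'$ whose functions are given by the right-hand side of the claimed formula, and then to invoke uniqueness to identify $\foD'$ with $\foD^{\mathrm{stab}}$. Concretely, for each primitive $m \in M$ arising as $m_\gamma$ for some $\gamma \in \Gamma$ and each line $L_\gamma$ in direction $m$, I would attach a ray supported on the appropriate half-line of $L_\gamma \cap \overline{U}$ (determined by the sign condition $\varphi_\sigma(m_\fod) > 0$), with attached function collecting all contributions indexed by those $\gamma$'s sharing this line and opposite vector. The inner sum in the formula is the usual Joyce--Song relation expressing the Donaldson--Thomas invariant of $\gamma$ in terms of BPS-type invariants of the primitive divisors of $\gamma$, so one may equivalently think of $\foD'$ as a DT-type generating function diagram.

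Three properties of $\foD'$ must be checked: the finiteness axiom of Definition~\ref{Def: stabilityScattering}, agreement with $\foD^{\mathrm{stab}}_{\mathrm{in}}$ near endpoints of initial rays, and consistency at every singular point in $U$. Finiteness follows from standard boundedness of the set of classes $\gamma$ admitting $\sigma$-semistable objects with mass bounded on compact subsets of $U$, itself a consequence of the Bogomolov--Gieseker inequality (Theorem~\ref{BG}). For the initial data, near the endpoint $(n,-n^2/2)$ of $\fod_n^-$ a direct analysis of $\mathrm{Coh}^{x}(\BP^2)$ should show that the only $\sigma$-semistable objects with class a positive multiple of $\ch(\CO(n))$ are direct sums of copies of $\CO(n)$; rigidity of $\CO(n)$ gives $Ie^-_{\ch(\CO(n)),\sigma}=1$ and vanishing of BPS invariants on proper multiples, while the sign computed from~\eqref{eqn: PairingGamma} yields $(-1)^{(\ell\gamma_0,\ell\gamma_0)-1} = (-1)^{\ell-1}$ for $\gamma_0=\ch(\CO(n))$. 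Summing over $\ell$ produces $\sum_{\ell \geq 1}\frac{(-1)^{\ell-1}}{\ell^2}z^{\ell(1,-n)} = -\li_2(-z^{(1,-n)})$, matching $\fod_n^-$; the symmetric analysis for $\CO(-n)[1]$ handles $\fod_n^+$.

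The crucial step is consistency of $\foD'$ at every $\sigma \in \mathrm{Sing}(\foD') \cap U$. Around such a $\sigma$, the rays passing through $\sigma$ correspond to classes $\gamma$ whose central charges $Z^\sigma(\gamma)$ all lie on a common real line through $0$ in $\BC$; these classes generate a finite-rank sublattice of $\Gamma$, and the subcategory of $\mathrm{Coh}^{x}(\BP^2)$ consisting of objects whose Harder--Narasimhan factors have class in this sublattice and phase in a small arc carries a noetherian--artinian abelian structure. Consistency is then precisely the Kontsevich--Soibelman/Joyce--Song wall-crossing formula for the DT-type invariants of this local category, transported to the Lie algebra $\BQ[M]$ via an integration map that sends the characteristic function of $\CM^\sigma_{\gamma}$ to (the appropriate refined invariant times) $z^{m_\gamma}$. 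The skew form $\langle e_1, e_2\rangle = -3$ is designed to reproduce the Euler form on $\Gamma$ modulo its radical, which is exactly what identifies the Hall-algebra commutator with the bracket on $\BQ[M]$.

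The main obstacle is this wall-crossing verification at the refined intersection-cohomology level, so that the correct invariants on the right-hand side are the signed intersection-cohomology Euler characteristics $Ie^-_{\gamma',\sigma}$ rather than naive ones. A natural route is to exploit the derived equivalence of $D^b(\BP^2)$ with representations of the Beilinson quiver with three arrows to translate the problem into a quiver setting, where the cohomological Hall algebra of Kontsevich--Soibelman and the integrality/positivity of BPS invariants (through the work of Davison--Meinhardt) are directly available. One must also verify that the integration map is compatible across walls and that the constructible sheaf used to define the refined invariant matches the IC sheaf on $\CM^\sigma_{\gamma'}$. Once these pieces are in place, consistency at each $\sigma$ becomes formal, and uniqueness of the consistent extension of $\foD^{\mathrm{stab}}_{\mathrm{in}}$ yields the identification $\foD' = \foD^{\mathrm{stab}}$, which is the theorem.
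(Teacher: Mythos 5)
This theorem is not proved in the paper at all: it is quoted verbatim as \cite[Thm.~5.10]{Bousseau-scatteringP2-22}, so there is no internal argument to compare yours against. Your outline is a faithful high-level reconstruction of Bousseau's actual strategy — build a candidate diagram from moduli invariants, match the initial rays via rigidity of $\CO(n)$ and the sign computation $(-1)^{\ell^2-1}=(-1)^{\ell-1}$ giving $-\li_2(-z^{(1,-n)})$, prove consistency through the Hall-algebra/wall-crossing machinery transported to the Beilinson quiver, and conclude by uniqueness of the consistent completion — though the substantive content lives precisely in the steps you defer, namely the construction of an integration map compatible with intersection cohomology and the integrality input from the quiver-theoretic BPS theory.
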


For us, the precise definition of $Ie^-_{\gamma',\sigma}$
will not be important, other than the observation that if this number is
non-zero, then $\CM^{\sigma}_{\gamma'}$ is non-empty.

\begin{lemma}\label{Lem: invarianceDStab}
$\foD^{\mathrm{stab}}$ is invariant under the affine linear transformation
\[
T:M_{\BR}\rightarrow M_{\BR},\quad 
T
=\begin{pmatrix} 1 & 0\\-1&1\end{pmatrix}
\begin{pmatrix}x\\ y\end{pmatrix}
+\begin{pmatrix} 1 \\ -1/2\end{pmatrix}.
\]
We define the action of $T$ on a ray via $T(\fod,H(z^m))=(T(\fod),H(z^{T(m)}))$.
\end{lemma}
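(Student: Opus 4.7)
The plan is to invoke the uniqueness clause of the theorem stated just above the lemma: $\foD^{\mathrm{stab}}$ is characterized up to equivalence as a stability scattering diagram consistent in $U$ and equivalent to $\foD^{\mathrm{stab}}_{\mathrm{in}}$ in small neighbourhoods of the endpoints of the $\fod^{\pm}_n$. So it suffices to check that the pushforward $T(\foD^{\mathrm{stab}})$ (interpreted ray by ray as in the statement) is again such a diagram. Write $T_0=\begin{pmatrix} 1&0\\-1&1\end{pmatrix}$ for the linear part and $v_0=(1,-\tfrac12)$ for the translation, and let $\phi\colon\BQ[M]\to\BQ[M]$ be the algebra map $z^m\mapsto z^{T_0 m}$.

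The first set of checks is essentially bookkeeping. First, $T(U)=U$: if $y>-x^2/2$, then $(y-x-\tfrac12)>-(x+1)^2/2$, and conversely. Second, $T_0$ preserves the skew-form $\langle\cdot,\cdot\rangle$ on $M$, since $T_0 e_1=e_1-e_2$ and $T_0 e_2=e_2$ give $\langle T_0 e_1,T_0 e_2\rangle=\langle e_1,e_2\rangle=-3$; hence $\phi$ is a Lie-algebra automorphism of $\BQ[M]$ and satisfies $\theta_{\phi(H)}=\phi\circ\theta_H\circ\phi^{-1}$. Third, the function $\varphi_\sigma$ is $T$-invariant in the sense $\varphi_{T\sigma}(T_0 m)=\varphi_\sigma(m)$, as a one-line computation shows; combined with compactness being preserved, this gives that Definition~\ref{Def: stabilityScattering} is preserved by $T$. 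Fourth, $T$ permutes the initial rays: the endpoint $(n,-n^2/2)$ maps to $(n+1,-(n+1)^2/2)$ (using $-(n+1)^2/2=-n^2/2-n-\tfrac12$), while $T_0(-1,n)=(-1,n+1)$ and $T_0(1,-n)=(1,-(n+1))$; the attached functions $-\li_2(-z^{(-1,n)})$ and $-\li_2(-z^{(1,-n)})$ become $-\li_2(-z^{(-1,n+1)})$ and $-\li_2(-z^{(1,-(n+1))})$ respectively. Thus $T(\fod^{\pm}_n)=\fod^{\pm}_{n+1}$ and in particular $T(\foD^{\mathrm{stab}}_{\mathrm{in}})=\foD^{\mathrm{stab}}_{\mathrm{in}}$.

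Putting this together, $T(\foD^{\mathrm{stab}})$ is a stability scattering diagram in $\overline U$, is equivalent to $T(\foD^{\mathrm{stab}}_{\mathrm{in}})=\foD^{\mathrm{stab}}_{\mathrm{in}}$ near each endpoint $(n+1,-(n+1)^2/2)$, and is consistent in $U$: at a singular point $T\sigma\in U$, the local path-ordered product for $T(\foD^{\mathrm{stab}})$ is the conjugate by $\phi$ of the local path-ordered product for $\foD^{\mathrm{stab}}$ at $\sigma$, which is the identity. The uniqueness clause then gives $T(\foD^{\mathrm{stab}})\sim\foD^{\mathrm{stab}}$. The only potential obstacle is verifying that the automorphism $\theta_H$ transforms correctly under $\phi$, and more generally that the path-ordered product is equivariant; this follows cleanly once one observes that preservation of $\langle\cdot,\cdot\rangle$ makes $\phi$ a Lie-algebra automorphism, so the risk is essentially notational.
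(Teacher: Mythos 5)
Your proof is correct and follows the same route as the paper: the paper's own argument simply notes that $T(\fod^{\pm}_n)=\fod^{\pm}_{n+1}$ and invokes the uniqueness of $\foD^{\mathrm{stab}}$ as obtained from $\foD^{\mathrm{stab}}_{\mathrm{in}}$. Your additional verifications (that $T(U)=U$, that $T_0$ preserves the skew-form and hence that path-ordered products are $\phi$-equivariant, and that $\varphi_{T\sigma}(T_0m)=\varphi_\sigma(m)$) are exactly the bookkeeping the paper leaves implicit, and they all check out.
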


\begin{proof}
To see this invariance, it is sufficient to note that $T(\fod^\pm_n)=
\fod^\pm_{n+1}$ and then apply uniqueness of $\foD^{\mathrm{stab}}$ 
as obtained from $\foD^{\mathrm{stab}}_{\mathrm{in}}$.
\end{proof}

\begin{remark}
This affine linear transformation is induced by the action of the
autoequivalence $T:D^b(\BP^2)\rightarrow D^b(\BP^2)$ given via
$\otimes\CO_{\BP}(1)$. According to \cite[Lem.~2.24]{Bousseau-scatteringP2-22},
this autoequivalence acts on $\mathrm{Stab}(\BP^2)$ via 
$(Z,\CA)\mapsto (Z\circ T^{-1},T(\CA))$, and this action preserves
the subset of $\mathrm{Stab}(\BP^2)$ parameterized by $U\subseteq M_{\BR}$.
This action takes $\sigma\in U$ to $T(\sigma)$ with $T$ as in 
Lemma~\ref{Lem: invarianceDStab}.
\end{remark}

\begin{remark}\label{Rem: invarianceDStab}
For future reference, we note that
\[
T^d\begin{pmatrix}x\\ y\end{pmatrix}
=\begin{pmatrix} 1 & 0\\-d&1\end{pmatrix}
\begin{pmatrix}x\\ y\end{pmatrix}
+\begin{pmatrix} d \\ -d^2/2\end{pmatrix}.
\]
\end{remark}

\begin{remark}
We note that $\foD^{\mathrm{stab}}$ is also obviously invariant under
the transformation $(x,y)\mapsto (-x,y)$.
\end{remark}

\begin{remark}
\label{rem:Duv}
In fact, in \cite{Bousseau-scatteringP2-22}, Bousseau constructs a
scattering diagram which he writes as $\foD_{u,v}^{\BP^2}$. We will
not give the definition here, as it is somewhat more complicated than
$\foD^{\mathrm{stab}}$. Crucially, however, rays in this scattering
diagram encode more information about each moduli space $\CM^\sigma_{\gamma}$,
namely the Hodge polynomial associated to the intersection cohomology
of $\CM^{\sigma}_{\gamma}$. Since this Hodge polynomial is non-zero if 
$\CM^\sigma_{\gamma}$ is non-empty,

the scattering diagram sees all non-empty moduli spaces. It
is possible that in passing to $\foD^{\mathrm{stab}}$ we lose
some information as some moduli spaces might have Euler characteristic
zero. Bousseau's main result is in fact about $\foD_{u,v}^{\BP^2}$, and
the result for $\foD^{\mathrm{stab}}$ is then obtained after taking
Euler characteristics. In particular, $\foD_{u,v}^{\BP^2}$ is obtained
from an intial scattering diagram $\foD_{u,v}^{\mathrm{in}}$ in the
same way that $\foD^{\mathrm{stab}}$ is. 

Most of the results
of this paper stated for $\foD^{\mathrm{stab}}$ also hold for
$\foD_{u,v}^{\BP^2}$. Indeed, results about the non-existence of certain rays
are proved by showing the corresponding moduli spaces are empty, and hence these
rays don't exist in either $\foD^{\BP^2}_{u,v}$ or $\foD^{\mathrm{stab}}$. Results
about the existence of certain rays, however, are proven in 
$\foD^{\mathrm{stab}}$ using results about scattering, and this implies
existence of rays in $\foD^{\BP^2}_{u,v}$. 

On the other hand, $\foD^{\mathrm{stab}}$ is easier to work with
as the type of scattering which occurs there has been much studied (and
in particular the results of \cite{Graefnitz-Luo2023} can be easily
applied) and is algorithmically simple to implement.
\end{remark}

\subsection{Wall-crossing and destablizing subobjects and quotient
objects}
\label{subsec:wall crossing}

The motivating question of this paper is 
to understand the behaviour of the moduli spaces $\CM^{\sigma}_{\gamma}$
as $\sigma$ varies. 
Here we explain how Bousseau's work gives us insight into this question.
Consider a Chern character $\gamma\in\Gamma$. 
Suppose that this moduli space is non-empty for
some $\sigma=(x_0,y_0)\in U$. Then it follows from the Bogomolov-Gieseker
inequality (Theorem~\ref{BG}) that the line $L_{\gamma}$ intersects
the boundary of $\overline{U}$, namely the parabola $y=-x^2/2$. 
Indeed, replacing $y$ with $-x^2/2$ in the equation for $L_{\gamma}$,
we obtain $-\ch_0 x^2/2+\ch_1 x-\ch_2=0$, and this quadratic equation
has discriminant $\ch_1^2-2\ch_0\ch_2$, which is non-negative by the
Bogomolov-Gieseker inequality. In particular, we have solutions
\[
x= \frac{\ch_1}{\ch_0} \pm \sqrt{\left(\frac{\ch_1}{\ch_0}\right)^2-2
\frac{\ch_2}{\ch_0}}.
\]

As a consequence, we can write $L_{\gamma}\cap \overline U=L^+_{\gamma}
\cup L^-_{\gamma}$, where $L^{\pm}_{\gamma}$ is a ray with endpoint
on $\partial\overline U$. From the above formula for the $x$-coordinate at
the points of $L_{\gamma}\cap\partial\overline{U}$
and the formula \eqref{eq:central charge} for $Z^{\sigma}$, we can
choose $L^+_{\gamma}$ and $L^-_{\gamma}$ so that
for $\sigma\in \Int(L^+_{\gamma})$, $Z^{\sigma}(\gamma)
\in i\BR_{>0}$ and for $\sigma\in\Int(L^-_{\gamma})$, $Z^{\sigma}(\gamma)
\in i\BR_{<0}$. Note that replacing $\gamma$ by $-\gamma$, which
corresponds to a shift $[1]$ in $D^b(\BP^2)$, interchanges
these two rays. 

Assuming now that $\gamma$ satisfies the Bogomolov-Gieseker inequality,
we can write $L^+_{\gamma}=p+\BR_{\ge 0} m$
for some $m\in M$. For $\sigma=p+tm$ for $t$ small, the moduli space
$\CM^{\sigma}_{\gamma}$ will be empty (assuming that $L^+_{\gamma}$
does not coincide with an initial ray), as follows from 
\cite[Lemmas~4.8,4.10]{Bousseau-scatteringP2-22}.
As $t$ increases, the moduli space
$\CM^{\sigma}_{\gamma}$ may become non-empty. We call
the point on $L^+_{\gamma}$ where this first occurs \emph{the first actual wall}
for $\gamma$. \emph{Further (actual) walls} are points of $L^+_{\gamma}$
at which $\CM^{\sigma}_{\gamma}$
changes again. This leads to the following definition.

\begin{definition}\label{Def: FirstGeneratingPoint}
Let $\gamma$ be a Chern character, and let $\sigma_0=L^+_{\gamma}\cap
\partial\overline{U}$.
We define the \emph{first
generating point of $\gamma$} to be $\sigma_0-t_0 m_{\gamma}$, where
\[
t_0=\inf\{t \,|\, \hbox{$\sigma_0-t m_{\gamma}\in \fod$ for
some $(\fod,H_{\fod})\in \foD^{\mathrm{stab}}$ with $\fod\subseteq 
L^+_{\gamma}$}\}.
\]
Note this only depends on $\gamma$ up to positive multiples.
\end{definition}

Note that this is not the usual picture for walls in stability moduli space.
Rather, typically walls are viewed as codimension one sets in the space of
stability conditions. \emph{Potential walls} arise via a choice of
(non-proportional) Chern characters $\gamma_1,\gamma_2$, with the
potential wall being
\[
W_{\gamma_1,\gamma_2}:=
\{\sigma \in U\,|\,\Arg(Z^{\sigma}(\gamma_1))=\Arg(Z^{\sigma}(\gamma_2))\}.
\]
This is a conic section, see e.g., the proof of 
\cite[Lem.~3.3]{Bousseau-scatteringP2-22}.
Let $\sigma_0$ be the point of intersection between
$L_{\gamma_1}$ and $L_{\gamma_2}$. Then clearly $\sigma_0\in
 W_{\gamma_1,\gamma_2}$. \cite[Lem.~3.3]{Bousseau-scatteringP2-22}
shows that the tangent line $\ell_{\gamma_1,\gamma_2}$ 
to $W_{\gamma_1,\gamma_2}$ at $\sigma_0$ is given by
the equation $\varphi_{\sigma_0}=0$ (see Definition~\ref{def:varphi sigma}), 
after identifying the tangent
space to $U$ at $\sigma_0$ with $M_{\BR}$. See Figure 
\ref{Fig:Lgamma}.

\begin{figure}[h]
    \centering
   \includegraphics[width=11.4cm]{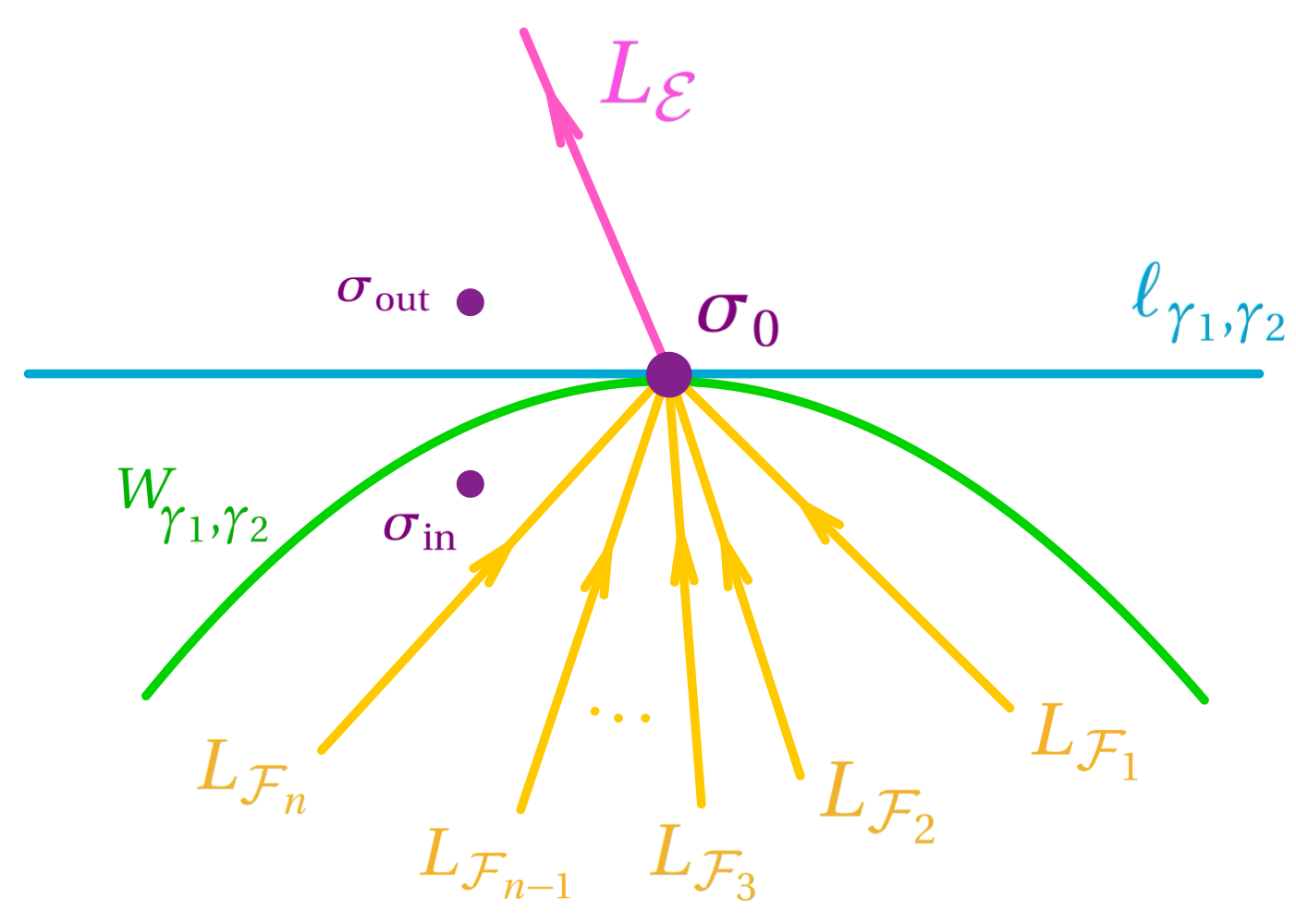}
   \caption{}
    \label{Fig:Lgamma}
\end{figure}

As in \cite{Bousseau-scatteringP2-22}, 
we choose general stability
conditions $\sigma_{\mathrm{in}}$ and $\sigma_{\mathrm{out}}$ sufficiently
close to $\sigma_0$ and below
and above $\ell_{\gamma_1,\gamma_2}$ respectively. This makes sense since 
$\ell_{\gamma_1,\gamma_2}$ is never vertical. Further,
we may choose these two stability conditions to have the same $x$-coordinate,
so that the hearts $\CA^{\sigma_{\mathrm{in}}}$ and 
$\CA^{\sigma_{\mathrm{out}}}$ of these stability 
conditions coincide, as the hearts only depend on the $x$-coordinate.
Further, as explained in \cite[\S3.2]{Bousseau-scatteringP2-22}, for
any ray $\fod$ coming into $\sigma_0$, we have 
\begin{equation}
\label{eq:varphi increasing}
\varphi_{\sigma_0}(m_{\fod})>0.
\end{equation}
(Note this is required by Definition~\ref{Def: stabilityScattering},(1).)

Now fix $\gamma\in\Gamma$ satisfying the Bogomolov-Gieseker inequality,
and let $\sigma_0$ be the intersection of $L^+_{\gamma}$
with a potential wall $W_{\gamma_1,\gamma_2}$ as above.
The key argument in \cite[\S3.5]{Bousseau-scatteringP2-22} makes use of the
following. Let $\CE$ be an object in the moduli space
$\CM_{\gamma}^{\sigma_{\mathrm{out}}}$. If $\CE$ is not stable with
respect to $\sigma_{\mathrm{in}}$, then $\sigma_0$ is an actual wall
in our language. In this case, 
$\CE$ has a Harder-Narasimhan filtration with respect to the stability
condition $\sigma_{\mathrm{in}}$, with graded pieces of the
filtration being $\CF_1,\ldots,\CF_n$, and with $\Arg(Z^{\sigma_{\mathrm{in}}}
(\CF_i))$ decreasing as $i$ increases. 
In particular, $\CF_1$ is a subobject
of $\CE$ and $\CF_n$ is a quotient object. 
The lines $L_{\CF_i}$ all pass through $\sigma_0$, as the objects
$\CF_i$ will all have the same slope as each other and as $\CE$ with
respect to the stability condition $\sigma_0$. Further, as the argument
of the central charge is decreasing,
$L_{\CF_1}$ lies to the
left of $L_{\CE}$ (relative to the direction vector $-m_{\CE}$) after 
passing $\sigma_0$
this intersection point, and $L_{\CF_n}$ lies to the right, see Figure \ref{Fig:Lgamma}. 
Thus, we will refer to the subobject $\CF_1$ as the 
\textit{left generator} and the quotient object $\CF_n$ as the 
\textit{right generator}.
We also refer to these as \textit{generators}. 
We call the point $\sigma_0=L_{\CF_1}\cap L_{\CF_n}$, the \textit{first generating point}.

We note that if $\gamma_i$ is the Chern character of $\CF_i$, then
$\gamma=\sum_i\gamma_i$ and $m_{\gamma}=\sum_i m_{\gamma_i}$. Further,
$m_{\gamma}$ is a positive rational linear combination of $m_{\gamma_1}$
and $m_{\gamma_n}$. 

This gives us the following procedure to visualize how various sheaves
are generated by the scattering process.
For a semistable object $\CF\in\CA^{\sigma}$ with $\sigma\in L_{\CF}$, 
define the line segment $E(\CF,\sigma)\subseteq L_{\CF}$ to be
the line segment with endpoints $\sigma$ and $\sigma+\alpha m_{\CF}$,
where 
\[
\alpha:=\sup \{\alpha'\,|\,\hbox{$\CF$ is $\sigma+\beta
m_{\CF}$-semistable for all $0\le \beta\le \alpha'$}\}.
\]

Now given $\CE\in \CA^{\sigma_0}$ a $\sigma_0$-semistable object,
let $E_0=E(\CE,\sigma_0)$,
and let $\sigma_1$ be the other endpoint of $E_0$. 
Then $\CE$ is unstable for a stability condition $\sigma_1+\epsilon m_{\CE}$
for small $\epsilon>0$,
and let $\CQ_1$ be the right generator in the sense of 
\S\ref{subsec:wall crossing}. We proceed similarly, taking $E_1=E(\CQ_1,
\sigma_1)$. Let $\sigma_2$ be the other endpoint of $E_1$. We
continue in this fashion, always using the right generator.
Because each of these line segments are contained in rays of $\foD^{\BP^2}_{u,v}$
as discussed in Remark \ref{rem:Duv}, as the moduli spaces of
$\CQ_i$ are non-empty for stability conditions in the interior of $E_i$,
we see that this process must terminate
when we reach some $\sigma_{n}= (d,-d^2/2)$ for some $d$. 
In particular, this gives a sequence of successive quotient objects.

Similarly, we can use the left generators to get a different sequence
of edges and this time a sequence of subobjects $\CS_i$. 
See Figure \ref{Fig: Convexity}.

\begin{figure}[h]
 \subcaptionbox*{}[0.59\linewidth]{%
    \includegraphics[width=\linewidth]{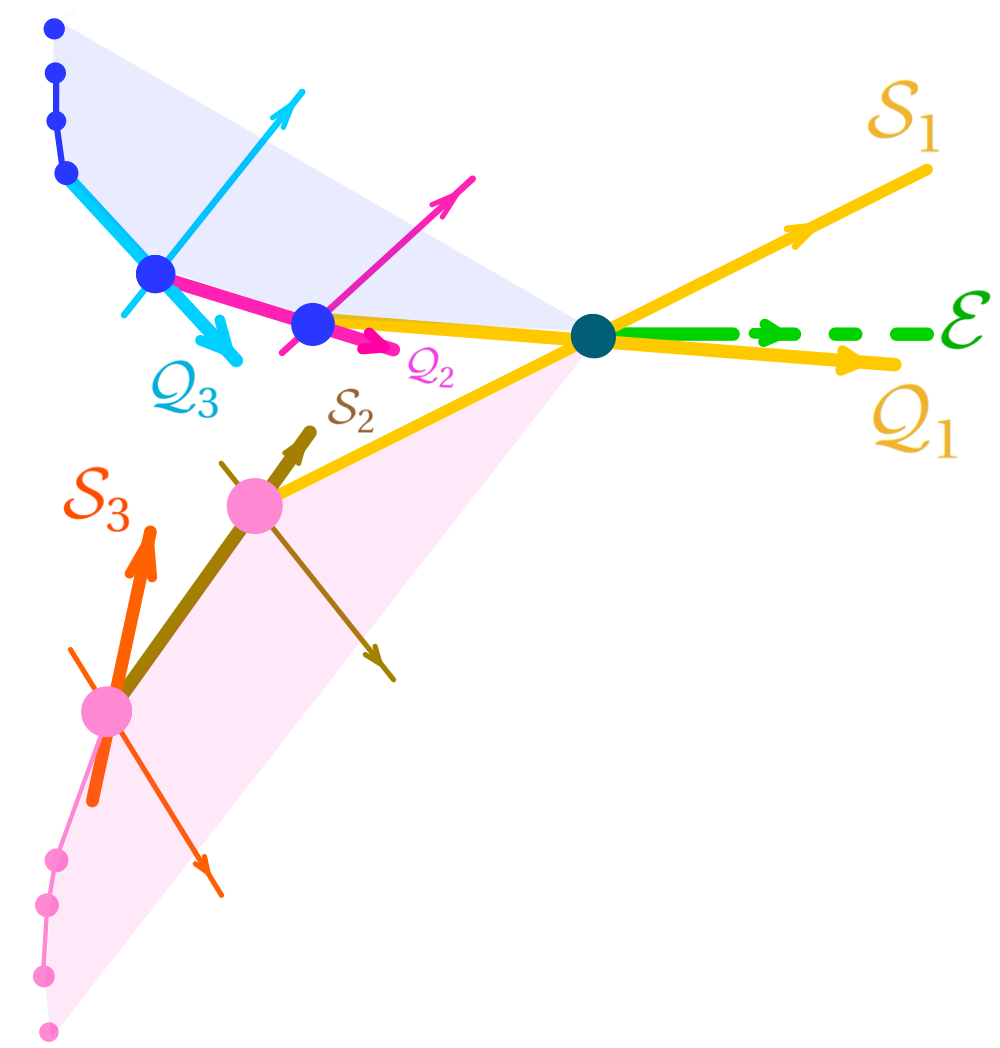}%
  }%

  \caption{The repeated subobjects and the repeated quotients. 
}
  \label{Fig: Convexity} 
\end{figure}

\section{Structure of the scattering diagram: the discrete region} 
\label{sec:discrete}
In this section we discuss the discrete region of $\foD^{\mathrm{stab}}$,
defining the set of discrete rays $\foD_{\mathrm{discrete}}\subseteq
\foD^{\mathrm{stab}}$ and showing that this gives a complete 
description of $\foD^{\mathrm{stab}}$ in a region $R_{\Delta}$ which
decomposes into an (infinite) union of triangular chambers of
$\foD^{\mathrm{stab}}$. The results in this section partially overlap with
results of \cite{Prince-20}, but we need
the refined description here, and our proofs are completely different.
We give dual descriptions of the triangles, first by describing the rays
which bound them via an inductive process, and second by describing their
vertices in terms of exceptional bundles.

\subsection{The discrete rays of $\text{\:}\foD^{\mathrm{stab}}$ and the triangles}

\medskip

\begin{construction}
\label{susection: construction}
Let $\mathcal{T}$ be an infinite binary tree with one root $w_0$ and 
every vertex having two children. Label each edge of the tree as being
either \emph{hybrid} or \emph{outgoing}. This is done so that each edge
connected to $w_0$ is hybrid, but the two edges attaching any other vertex $w$
to its children consist of one hybrid and one outgoing edge. See Figure \ref{Fig:Tree}.

\begin{figure}[h]
    \centering
   \includegraphics[width=11.9cm]{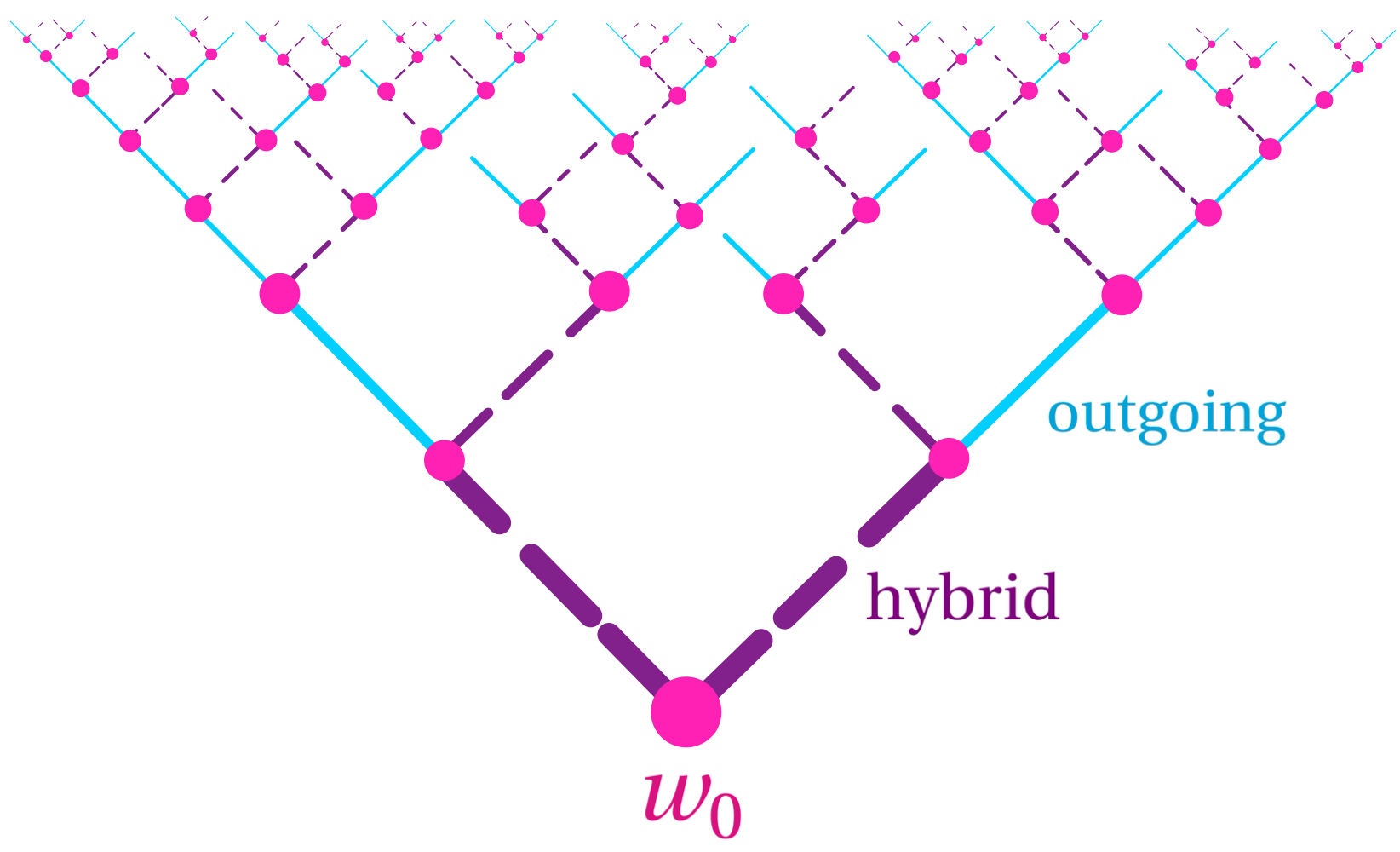}
   \caption{The infinite binary tree $\mathcal{T}$. The hybrid edges are depicted by dashed lines.}
    \label{Fig:Tree}
\end{figure}

We will associate a triangle $\Delta_w\subseteq M_{\BR}$ 
for each vertex $w$ of $\mathcal{T}$. These triangles 
will be constructed inductively, starting with $\Delta_{w_0}$. The tree
$\mathcal{T}$ can then be thought of as being dual to a two-dimensional
simplicial complex composed
of these triangles. The vertices of $\mathcal{T}$ correspond to
triangles, while edges of $\mathcal{T}$ with endpoints $w_1,w_2$ correspond
to an edge $\Delta_{w_1}\cap \Delta_{w_2}$ of both triangles.
In addition, for each vertex $w$ which is not the root, we will associate
a ray $\fod_w$. Once we do this, we may define
\begin{align}
\label{eq:diagram discrete}
\begin{split}
\foD^0_{\mathrm{discrete}}:= {} &
\{\fod_w\,|\, \hbox{$w\not=w_0$ a vertex of $\mathcal{T}$}\},\\
\foD_{\mathrm{discrete}}:= {} & \bigcup_{k\in\BZ} 
T^k(\foD^0_{\mathrm{discrete}}),
\end{split}
\end{align}
where $T$ is as defined in Lemma~\ref{Lem: invarianceDStab}.
The boundary of $\Delta_w$ consists of three intervals each contained in a 
ray of 
$\{\fod_{-1}^+,\fod_1^-\}\cup \foD^0_{\mathrm{discrete}}$, except for $\Delta_{w_0}$, 
which is slightly special. 
Explicitly, $\Delta_{w_0}$ is the triangle 
\begin{equation}
\label{eq:Tw0}
\Delta_{w_0}:= \mathrm{Conv}((-1/2,0), (1/2,0), (0,1/2)).
\end{equation}

   \begin{figure}[h]
 \subcaptionbox*{(1)}[.48\linewidth]{%
    \includegraphics[width=\linewidth]{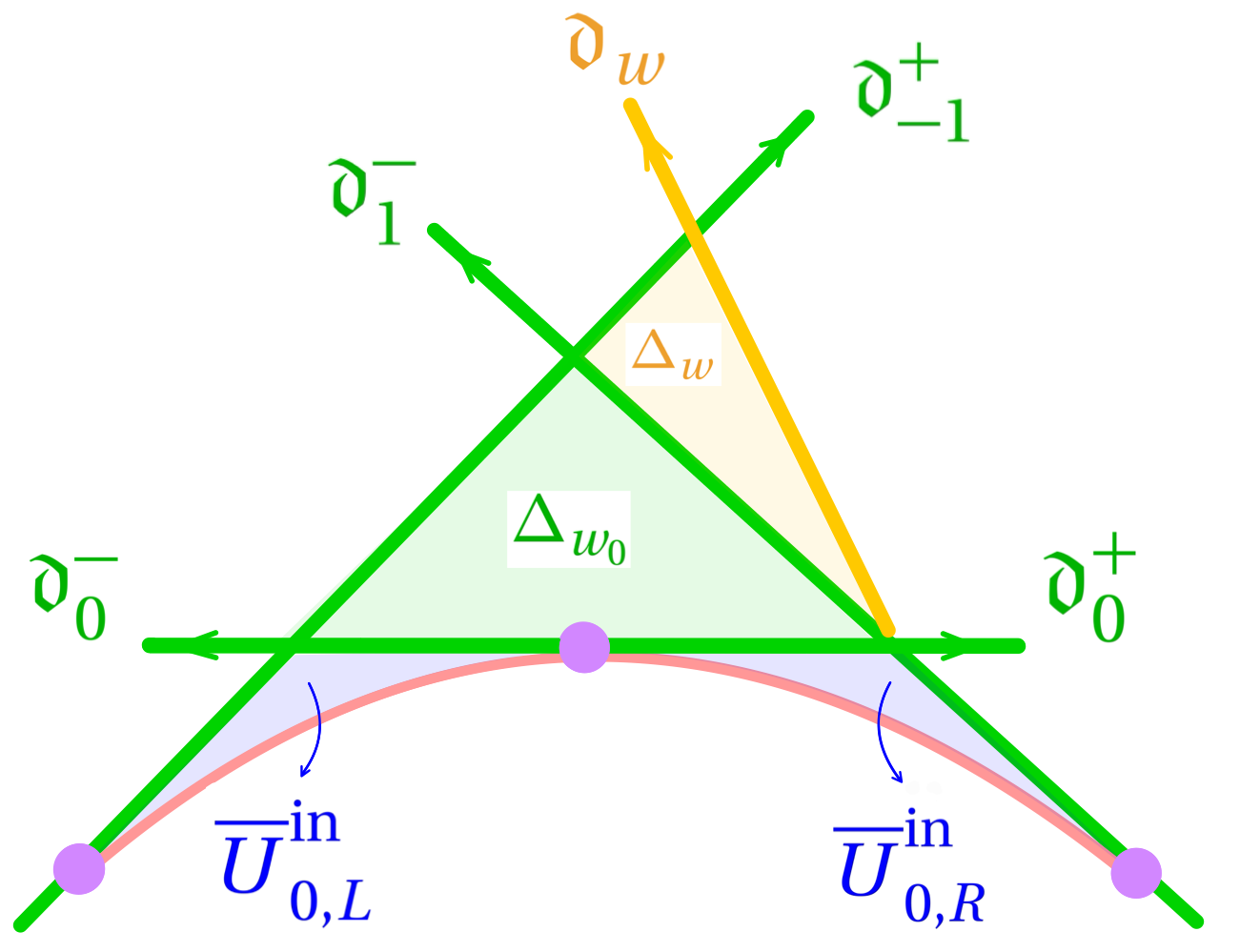}%
  }%
  \hskip5.0ex
  \subcaptionbox*{(2)}[.44\linewidth]{%
    \includegraphics[width=\linewidth]{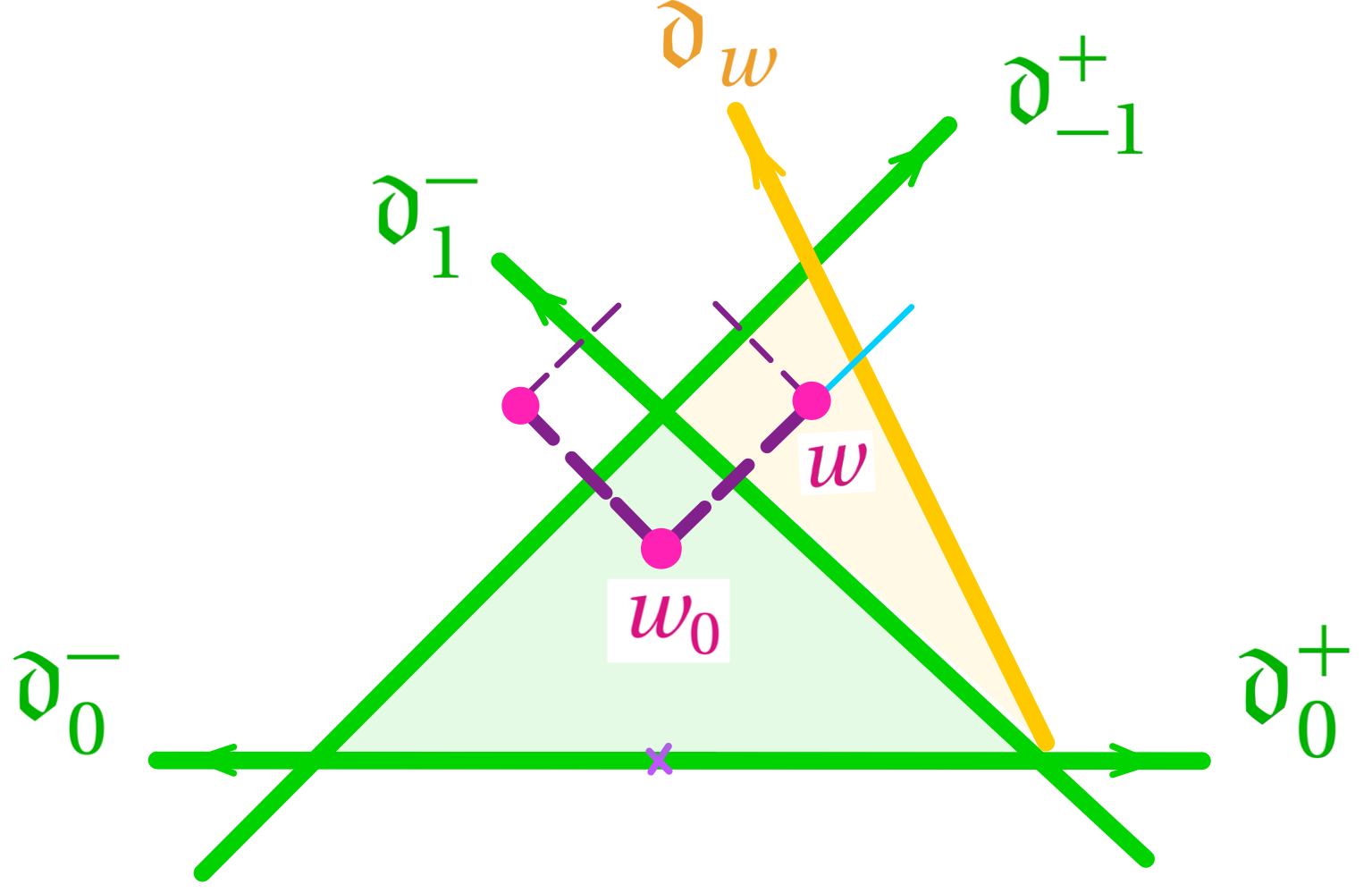}%
  }
  \caption{
}
  \label{Fig: InitialTriangle}
\end{figure}

Note that the two edges of this triangle with endpoint $(0,1/2)$ are
contained in $\fod_{-1}^+$ and $\fod_{1}^-$, while the lower
edge with endpoints $(\pm 1/2,0)$ is contained in $\fod_0^+\cup\fod_0^-$. See Figure \ref{Fig: InitialTriangle}.

We orient each ray $\fod$ in the direction
pointing out from the endpoint of the ray, i.e., in the direction of the
direction vector $-m_{\fod}$.
This induces orientations on edges of triangles contained in these rays.
We classify vertices of a given triangle based on the orientations of the
edge of the triangle. We say a vertex is \emph{outgoing} if its adjacent
edges are oriented away from the vertex. We say a vertex is \emph{incoming}
if the adjacent edges are oriented towards the vertex. Otherwise, the
vertex is said to be \emph{hybrid}. 

We note that $\Delta_{w_0}$ has two hybrid vertices and one incoming vertex,
while it will be clear by the inductive construction that all other triangles 
will have one vertex of each type.

For a vertex $v$ of $\Delta_w$, let $m_1,m_2$ 
be primitive tangent vectors to the edges adjacent to $v$. 
We define the 
\emph{determinant} of the vertex $v$ to be 
\begin{equation}
\label{eq:det def}
D_v:=
|m_1\wedge m_2|,
\end{equation}
i.e., the absolute value of the determinant of the $2\times 2$ matrix with
rows given by $m_1,m_2$. A priori, $D_v$ depends not just on $v$ but the
triangle $\Delta_w$ it is a vertex of. In Proposition \ref{Lem: degree=rank},
we will show that in fact $D_v$ only depends on the $x$-coordinate
of $v\in M_{\BR}$. If $v$ is clear, for simplicity, we denote the determinant by $D$.

Given a triangle $\Delta_w$, let $v$ be the unique incoming vertex, and let
$E_1,E_2$ be the adjacent edges. We call $E_i$ \emph{hybrid} if its other
vertex is hybrid, and \emph{outgoing} if its other vertex is outgoing.
We may then equate $E_1,E_2$ with the edges of
$\mathcal{T}$ attaching $w$ to its two children, $w_1,w_2$, in a way
which respects the labeling of hybrid and outgoing.
 
Denote by $v_i$ the other vertex of $E_i$, and by $\fod_i$ the ray
containing $E_i$. Let $m_i$ be the primitive tangent vector to
$E_i$, oriented in the opposite direction of $E_i$ (hence $m_i$
agrees with the opposite vector $m_{\fod_i}$).
Let $E$ be the remaining edge of $\Delta_w$.
To obtain the triangle $\Delta_{w_j}$, let $j'$ be such that $\{j,j'\}=\{1,2\}$.
Let $m$ be a primitive tangent vector to $E$ pointing towards $v_j$.
Define
\begin{equation}
\label{eq:dwj def}
\fod_{w_j}:=\left((v_j-\BR_{\ge 0} (3D_{v_j} m_j - m), 
-\li_2(-z^{ 3D_{v_j}m_j-m})\right).
\end{equation}

Let $v'_j$ be the intersection of the ray $\fod_{j'}$ and the
ray $\fod_{w_j}$. We then define
\[
\Delta_{w_j}:=\mathrm{Conv}(E_j\cup \{v'_j\}),
\]
where $\mathrm{Conv}$ denotes convex hull.
See Figure \ref{Fig: Tw}.

  \begin{figure}[h]
    \centering
    \includegraphics[width=9.7cm]{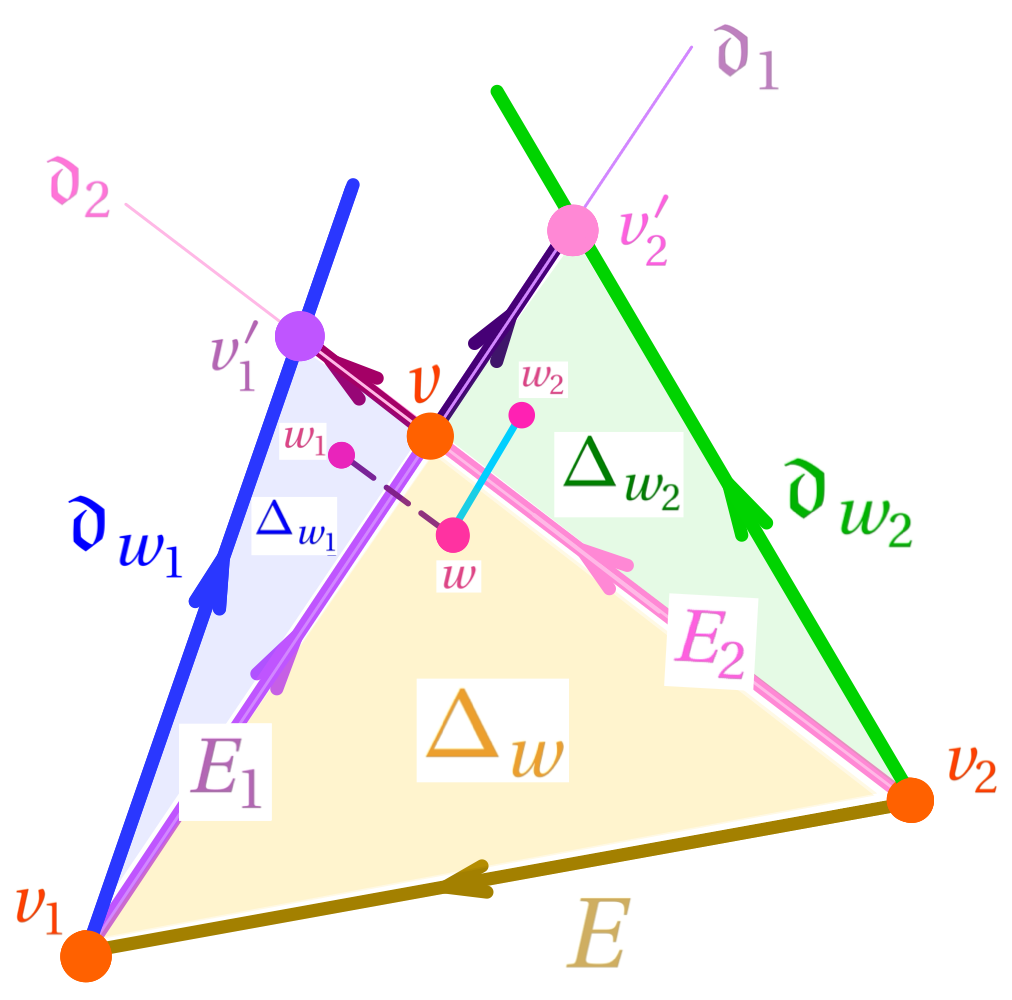}
    \caption{$\Delta_w$ along with $\Delta_{w_1}$ and $\Delta_{w_2}$. Note that $E_1$ is hybrid and $E_2$ is outgoing. Part of the tree corresponding to this diagram is also pictured.}
    \label{Fig: Tw}
\end{figure}

Note that the vertices of $\Delta_{w_j}$ are $v, v_j$ and $v'_j$, and the
edge joining $v$ and $v'_j$ is contained in the ray $\fod_{j'}$.
Further, $v'_j$ is now (in $\Delta_{w_j}$) an incoming vertex and $v$ is a hybrid vertex.
\end{construction}

\begin{definition}[Mutation]\label{Def: mutation}
We call the construction of $\Delta_{w_j}$ from $\Delta_w$ the \emph{mutation of
$\Delta_w$ at the vertex $v_{j'}$} or the \emph{mutation of $\Delta_w$ along the ray $\fod_j$}, so that $\fod_j$ remains an edge during the mutation process. Also, sometimes we call $\fod_{w_j}$ the \emph{mutation of $\text{\:}\fod_j$ at $v_j$ or along $\text{\:}\fod_{j'}$}. 
\end{definition}

\bigskip

\subsection{Moduli emptiness of the triangles}
\label{subsec:moduli emptiness}
In this subsection, we will connect the triangles constructed in the
previous subsection to strong exceptional triples. This will allow us, in particular,
to give a proof of Theorem \ref{Thm: MainScattering}(i), as well as give a 
powerful additional tool for understanding $\foD^{\mathrm{stab}}$.

We begin by recalling basics about exceptional bundles on $\BP^2$,
drawing from \cite{Drezet-LePotier}, \cite{Goro-Rud}, 
\cite{Rudakov89}.

\begin{definition}
\begin{enumerate}
\item An \emph{exceptional bundle} on $\BP^2$ is a bundle $\CE$ such that
$\Hom(\CE,\CE)=\BC$ and $\Ext^1(\CE,\CE)=\Ext^2(\CE,\CE)=0$.
\item An ordered tuple of exceptional bundles
$(\CE_0,\ldots,\CE_n)$ is a \emph{exceptional collection} 
if we have $\Ext^{\bullet}(\CE_i,\CE_j)=0$ for $i>j$.
\item An exceptional collection is a \emph{strong exceptional collection}
if $\Ext^k(\CE_i,\CE_j)=0$ for $k>0$.
\end{enumerate}
\end{definition}

Given a strong exceptional pair $(\CE,\CF)$, 
one defines the left and right mutations via the exact sequences
\begin{align*}
0\rightarrow & \CL_{\CE} \CF \rightarrow \Hom(\CE,\CF)\otimes\CE \rightarrow \CF
\rightarrow 0,\\
0\rightarrow & \CE\rightarrow \Hom(\CE,\CF)^*\otimes \CF\rightarrow \CR_{\CF}
\CE\rightarrow 0,
\end{align*}
with the second and first non-zero maps in the two rows respectively
given canonically by evaluation.

It is well-known (see \cite{Goro-Rud},\cite{Rudakov89})
that all strong exceptional collections on $\BP^2$ can
be produced by a process of mutation, starting an \emph{initial
exceptional collection} with
$(\CE_0,\CE_1,\CE_2)=(\CO(m),\CO(m+1),\CO(m+2))$, replacing at each step
a strong exceptional collection $\CC=(\CE_0,\CE_1,\CE_2)$ with
either 
\[
\hbox{$\CL_{\CC}:=(\CE_0, \CL_{\CE_1}\CE_2, \CE_1)$ or $\CR_{\CC}:=
(\CE_1,\CR_{\CE_1}\CE_0, \CE_2)$.}
\]

We recall a few facts. For a sheaf $\CE$, as usual we write the slope 
$\mu(\CE)=d(\CE)/r(\CE)$. Note that the
slope triple for the initial exceptional collection is $(\mu_0,\mu_1,\mu_2)
= (m,m+1,m+2)$. 

\begin{lemma}[Properties of strong exceptional triples]
\label{lem:slope inequalities}
Let $\CC=(\CE_0,\CE_1,\CE_2)$ be a strong exceptional triple with slopes
$\mu_0,\mu_1,\mu_2$, ranks $r_0,r_1,r_2$, 
and first Chern classes $d_0,d_1,d_2$. 
Then
\begin{enumerate}
\item The rank triple of $\CL_{\CC}$ is $(r_0, 3r_0r_1-r_2, r_1)$
and the rank triple of $\CR_{\CC}$ is $(r_1,3r_1r_2-r_0,r_2)$.
\item $\mu_0<\mu_1<\mu_2$.
\item Provided that $\CC$ is not an initial exceptional collection,
then
\[
\mu_1-\mu_0 \le 1/2,\quad \mu_2-\mu_1\le 1/2.
\]
\item
There are exact sequences
\begin{align*}
0\rightarrow & \CL_{\CE_1} \CE_2 \rightarrow \Hom(\CE_1,\CE_2)\otimes\CE_1 
\rightarrow \CE_2
\rightarrow 0,\\
0\rightarrow & \CE_0\rightarrow \Hom(\CE_0,\CE_1)^*\otimes \CE_1\rightarrow 
\CR_{\CE_1}
\CE_0\rightarrow 0,\\
0\rightarrow & \CE_2(-3)\rightarrow \Hom(\CE_2(-3),\CE_0)^*\otimes \CE_0
\rightarrow \CL_{\CE_1}
\CE_2\rightarrow 0,\\
0\rightarrow & \CR_{\CE_1} \CE_0 \rightarrow \Hom(\CE_2,\CE_0(3))\otimes\CE_2 
\rightarrow \CE_0(3)
\rightarrow 0.
\end{align*}
\item $\dim \Hom(\CE_0,\CE_1)=3(d_1r_0-d_0r_1)$ and $\dim\Hom(\CE_1,\CE_2)
= 3(d_2r_1-d_1r_2)$. Also, 
\[
\dim \Hom(\CE_2(-3),\CE_0)=
\dim\Hom(\CE_2,\CE_0(3)) = 3(3r_0r_2+d_0r_2-d_2r_0).
\]
\item For any exceptional bundle with Chern character $(r,d,e)$, we have
$r^2-d^2+2re=1$. Further, $\gcd(r,d)=1$.
\item There is at most one exceptional bundle of a given slope.
\item For $\CE$ an exceptional bundle, there is at most one strong
exceptional triple $(\CF_0,\CE,\CF_2)$ containing $\CE$ as its middle
member.
\end{enumerate}
\end{lemma}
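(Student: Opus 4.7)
The plan is to prove the items in an order reflecting their logical dependence: first the quadratic identity (6), which feeds into the rigidity statement (7); then the dimension formulas (5), which determine the rank formulas (1); and finally the structural assertions (4), (2), (3), (8). For (6), I would apply the Riemann--Roch pairing \eqref{eq:RR} to $(\CE,\CE)$, giving $\chi(\CE,\CE)=r^2-d^2+2re$, which equals $1$ by exceptionality. For the coprimality claim, any common divisor $k$ of $r$ and $d$ divides both $r^2-d^2$ and $2re$ (an integer by \eqref{eq:chern image}), hence divides $r^2-d^2+2re=1$. Then (7) is a direct consequence: (6) shows that the slope $\mu=d/r$ together with $\gcd(r,d)=1$ recovers $(r,d)$, and then $e$ is forced by $r^2-d^2+2re=1$, so two exceptional bundles of equal slope share a Chern character and satisfy $\chi(\CE,\CE')=1$. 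Combined with the classical fact that exceptional bundles on $\BP^2$ are slope-stable, both $\Hom(\CE,\CE')$ and $\Ext^2(\CE,\CE')\cong \Hom(\CE',\CE(-3))^*$ vanish unless $\CE\cong\CE'$, contradicting $\chi=1$ in the non-isomorphic case.

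For (5), the vanishing of higher Exts between adjacent members of a strong exceptional triple yields $\dim\Hom(\CE_i,\CE_{i+1})=\chi(\CE_i,\CE_{i+1})$. Expanding the right side via Riemann--Roch and substituting the quadratic relation of (6), the asserted formula $3(d_{i+1}r_i-d_i r_{i+1})$ reduces to the Markov identity
\[
r_i^2 + r_{i+1}^2 + (d_{i+1}r_i-d_i r_{i+1})^2 = 3\, r_i r_{i+1}(d_{i+1}r_i-d_i r_{i+1}).
\]
I would verify this identity for the initial triple $(\CO(m),\CO(m+1),\CO(m+2))$, where the underlying rank triple is $(1,1,1)$, and then check it is preserved under both mutations $\CL_{\CC}$ and $\CR_{\CC}$; by induction along the mutation tree of strong exceptional triples it holds for every such triple. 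The formula for $\Hom(\CE_2(-3),\CE_0)$ follows from the same argument applied to the rotated triple $(\CE_2(-3),\CE_0,\CE_1)$, which is again strong exceptional by the Serre-functor helix structure on $\Db(\BP^2)$.

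For (4), the first two sequences are the defining sequences of the mutations $\CL_{\CE_1}\CE_2$ and $\CR_{\CE_1}\CE_0$. The third is the defining sequence of the right mutation $\CR_{\CE_0}(\CE_2(-3))$ in the rotated triple $(\CE_2(-3),\CE_0,\CE_1)$, after identifying $\CR_{\CE_0}(\CE_2(-3))=\CL_{\CE_1}\CE_2$ via the fact that both complete the triple $(\CE_0,\,\cdot\,,\CE_1)$, with uniqueness coming from (8). The fourth follows symmetrically using the rotated triple $(\CE_1,\CE_2,\CE_0(3))$. Statement (1) is then immediate by taking ranks in the sequences of (4) and substituting the dimensions from (5). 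Statement (2) uses the vanishings $\Ext^\bullet(\CE_j,\CE_i)=0$ for $j>i$: (7) rules out $\mu_j=\mu_i$, and a stability argument combined with the vanishing of $\Hom(\CE_j,\CE_i)$ rules out $\mu_j<\mu_i$. Statement (3) is the Drezet--Le Potier slope bound, which I would quote from the cited literature. Finally (8) follows from (4) together with the slope constraints: mutation across $\CE$ determines $\CF_0$ from $\CF_2$ and vice versa, and the bounds of (2)--(3) leave at most one admissible neighbour on either side.

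The main obstacle is verifying that the Markov identity underlying (5) is preserved under mutation; the arithmetic is routine but essential, as it is exactly what ties the algebraic content of this lemma to the Markov-tree combinatorics used throughout \S\ref{sec:discrete}. The Drezet--Le Potier slope bound (3) is genuinely deep but is taken here as a black box.
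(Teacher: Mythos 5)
Your self-contained derivations of (6) and (7) from Riemann--Roch and slope-stability of exceptional bundles are correct (the paper simply cites \cite{Drezet-LePotier} for (7)), and the reduction of (5) to the Markov identity $r_i^2+r_{i+1}^2+h^2=3r_ir_{i+1}h$ with $h=d_{i+1}r_i-d_ir_{i+1}$ is sound algebra. But there are genuine gaps. The most serious is (8): the bounds of (2)--(3) only confine $\mu(\CF_2)$ to the interval $(\mu(\CE),\mu(\CE)+1/2]$, and the set of slopes of exceptional bundles is \emph{dense} in $\BR$, so ``at most one admissible neighbour on either side'' does not follow from those bounds. The paper gets uniqueness from Rudakov's parametrization \eqref{eq:mu epsilon}, which shows that the slope of the middle member literally determines the slopes of the outer members, after which (7) finishes; some input of this strength is unavoidable. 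This gap is compounded by a circularity: you use (8) to identify $\CR_{\CE_0}(\CE_2(-3))$ with $\CL_{\CE_1}\CE_2$ in the third sequence of (4), while deriving (8) ``from (4).''

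Second, (1) does not follow from (4) and (5) as stated: taking ranks in the first sequence of (4) and substituting (5) gives $\rank(\CL_{\CE_1}\CE_2)=3(d_2r_1-d_1r_2)r_1-r_2$, which equals the claimed $3r_0r_1-r_2$ only if $d_2r_1-d_1r_2=r_0$ (equivalently $\dim\Hom(\CE_1,\CE_2)=3r_0$, and symmetrically $d_1r_0-d_0r_1=r_2$). These identities are true but are not part of (5) and need their own inductive proof --- they say precisely that your Markov triple $(r_i,r_{i+1},h)$ is the rank triple itself. Relatedly, the proposed order ``(5) before (1)'' cannot be realized: to propagate the Markov identity through a mutation you must already know how $(r,d)$ transforms, which is the content of (1)/(4), so the induction has to establish (1), (5) and the extra identities simultaneously. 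Finally, the justification of (2) is off: slope-stability gives $\Hom(\CE_j,\CE_i)=0$ when $\mu_j>\mu_i$, so the vanishing of $\Hom(\CE_j,\CE_i)$ (which holds anyway by exceptionality) does not exclude $\mu_j<\mu_i$. One can repair this by noting that $\mu_j<\mu_i$ together with stability and strongness would force $\chi(\CE_i,\CE_j)=\chi(\CE_j,\CE_i)=0$, contradicting the positivity $r_ir_j\bigl(\chi(\CE_i,\CE_j)+\chi(\CE_j,\CE_i)\bigr)=r_i^2+r_j^2+(r_id_j-r_jd_i)^2>0$; the paper instead quotes \cite[Prop.~(5.1)]{Drezet-LePotier}.
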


\begin{proof}
(1) is standard, see e.g., \cite[Thm.~4.2,3)]{Rudakov89}.
For (2), the statement is certainly true for an initial triple.
The proof of \cite[Thm.~4.3(3)]{Rudakov89} shows in any event that
the slopes satisfy, for some odd integer $a$ and non-negative integer $b$,
the equalities
\begin{equation}
\label{eq:mu epsilon}
\mu_0=\varepsilon\left({a-1\over 2^b}\right),\quad
\mu_1=\varepsilon\left({a\over 2^b}\right),\quad
\mu_2=\varepsilon\left({a+1\over 2^b}\right),
\end{equation}
where $\varepsilon$ is defined in \cite[\S5.1]{Drezet-LePotier}.
Note $\mu_0<\mu_2$ follows inductively from the construction of
a mutation of the triple, and by definition of $\varepsilon$,
$\mu_1=\mu_0.\mu_2$, where this notation
is as defined in \cite[\S5.1]{Drezet-LePotier}. 
By \cite[Prop.~(5.1)]{Drezet-LePotier}, $\mu_0<\mu_0.\mu_2<\mu_2$, hence
the claim in (2).

(3) now follows from \cite[Lem.~(5.4)]{Drezet-LePotier}.

The first two exact sequences of (4) are the definitions of the mutation,
and the last two are \cite[Thm.~4.2,2)]{Rudakov89}, correcting the obvious
typographical errors.

(5) is \cite[Lem.~4.1,2),5)]{Rudakov89}, bearing in mind 
the vanishing
of $\Ext$'s required for a strong exceptional collection.

For (6), we have for an exceptional bundle $1=\chi(\CE,\CE)$, and the
claimed equality then follows from Riemann-Roch \eqref{eq:RR}. Since
$e\in \BZ/2$, the equality immediately implies $\gcd(r,d)=1$.

(7) is \cite[Lemme~(4.3)]{Drezet-LePotier}.

For (8), note that \eqref{eq:mu epsilon} implies that the slopes of
$\CF_0$ and $\CF_2$ are determined by the slope of $\CE$. The result then
follows from (7).
\end{proof}

\begin{definition}[Associated vertex to an exceptional object]\label{Def: V_E}
For $\CE$ an object of rank $r\not=0$, first Chern class $d$, and
Euler characteristic $\chi$, define
\begin{align}\label{eq: VertexPE}
    V_{\CE}:=\left( {d\over r}-{3\over 2}, {3d-\chi\over r}\right)\in M_{\BR}.
\end{align}

Given a strong exceptional triple $\CC=(\CE_0,\CE_1,\CE_2)$, we define
$\Delta_{\CC}$ to be the triangle in $M_{\BR}$ with vertices $V_{\CE_0},
V_{\CE_1},V_{\CE_2}$.
\end{definition}

\begin{example}
\label{ex:initial triangle}
Starting with $\CC=(\CO,\CO(1),\CO(2))$, we have
$\CR_{\CC}=(\CO(1),\CT_{\BP^2},\CO(2))$. The vertices of $\Delta_{\CR_{\CC}}$
are $(-1/2,0),(0,1/2)$ and $(1/2,0)$. We call this triangle an \emph{initial
triangle}.
\end{example}

The following theorem generalizes \cite[Lem.~4.8]{Bousseau-scatteringP2-22},
and the proof is very similar.

\begin{theorem}[Moduli emptiness]
\label{thm:bousseau generalization}
Let $\CC$ be a strong exceptional triple not equal to 
$(\CO(m),\CO(m+1),\CO(m+2))$ for any $m$, i.e., not an initial exceptional
collection. Then for $\sigma$ in the interior of an edge of
$\Delta_{\CC}$, we have
\[
H_{\foD^{\mathrm{stab}},\sigma}=-\li_2(-z^{m_{\CF}})
\]
for some object
$\CF=\CE$ or $\CE[1]$, for $\CE$ an exceptional bundle.
Further, for any $\sigma\in\Int(\Delta_{\CC})$, there are no stable
objects $\CE$ in $\CA^{\sigma}$ with $\Re Z^{\sigma}(\gamma(\CE))=0$. 
\end{theorem}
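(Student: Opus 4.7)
The plan is to prove both assertions simultaneously by induction on the mutation distance of $\CC$ from an initial exceptional collection $(\CO(m),\CO(m+1),\CO(m+2))$. First, I would match the triangles $\Delta_\CC$ constructed from strong exceptional triples (Definition \ref{Def: V_E}) with the triangles $\Delta_w$ produced combinatorially in Construction \ref{susection: construction}. Concretely, the mutation rule $(r_0,r_1,r_2)\mapsto(r_0,3r_0r_1-r_2,r_1)$ of Lemma \ref{lem:slope inequalities}(1) should translate, via the formula for $V_\CE$, into exactly the recipe \eqref{eq:dwj def}, identifying $V_\CE$ for the mutated bundle with the newly constructed vertex $v_j$. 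The base case, where $\CC$ is obtained by a single mutation from the initial collection (yielding an initial triangle as in Example \ref{ex:initial triangle}), is essentially Bousseau's \cite[Lem.~4.8]{Bousseau-scatteringP2-22}, which already supplies both the dilogarithm formula on each edge and the emptiness in the interior.

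For the inductive step on the edge formula, assume $\Delta_\CC$ arises from a simpler triangle $\Delta_{\CC'}$ by mutation across a shared edge $E$. The two new edges of $\Delta_\CC$ emanate from the new vertex $V_\CF$, where $\CF$ is one of $\CL_{\CE_1}\CE_2$ or $\CR_{\CE_1}\CE_0$. Using the four mutation exact sequences of Lemma \ref{lem:slope inequalities}(4), I would verify that each new edge lies on a line $L_{\CG}$ where $\CG$ is itself an exceptional bundle (or its shift) appearing as a kernel/cokernel in one of those sequences, and that the opposite vector of this line matches the direction prescribed by \eqref{eq:dwj def}. The dilogarithm $-\li_2(-z^{m_\CG})$ then comes directly from the construction of $\foD_{\mathrm{discrete}}$ in \eqref{eq:diagram discrete}, while the inductive hypothesis on the shared edge $E$ transfers to $\Delta_\CC$ unchanged.

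For the interior emptiness, I would adapt the argument of Bousseau's proof of \cite[Lem.~4.8]{Bousseau-scatteringP2-22}. Suppose for contradiction that some $\sigma\in\Int(\Delta_\CC)$ supports a stable object $\CE\in\CA^{\sigma}$ with $\Re Z^{\sigma}(\gamma(\CE))=0$, so $\sigma\in L_{\CE}\cap\Int(\Delta_\CC)$. The line $L_\CE$ must leave $\Delta_\CC$ through two boundary edges. Applying the wall-crossing procedure of \S\ref{subsec:wall crossing} (tracking left and right generators as in Figure \ref{Fig: Convexity}) and pushing $\CE$ along $L_\CE$ to each boundary edge, one shows that the Jordan--H\"older factors of $\CE$ on the boundary must be copies of the exceptional bundle $\CG$ associated with that edge by the previous paragraph (since only one dilogarithm ray is present there). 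Combining the constraints from both boundary exits and invoking uniqueness of exceptional bundles with a given slope (Lemma \ref{lem:slope inequalities}(7)) forces $\CE$ to be numerically a multiple of a single $\CG$ up to shift, so $L_\CE=L_\CG$ is a boundary edge, contradicting $\sigma\in\Int(\Delta_\CC)$.

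The main obstacle will be the delicate combinatorial bookkeeping required to ensure that the quotient/subobject iteration really does terminate at the boundary edges of $\Delta_\CC$ (rather than escaping into some adjacent triangle where the inductive hypothesis does not yet apply), and that the slope inequalities $\mu_1-\mu_0\le 1/2$, $\mu_2-\mu_1\le 1/2$ of Lemma \ref{lem:slope inequalities}(3) genuinely prevent any spurious destabilizing factor of nontrivial rank. In particular, I expect to need a careful analysis, using the sign convention $\varphi_\sigma(m_{\fod})>0$ (Definition \ref{Def: stabilityScattering}) and the intersection geometry of the lines $L_{\CE_i}$ with the parabola $y=-x^2/2$, to exclude "diagonal'' lines $L_\CE$ that would thread the triangle without respecting the mutation exact sequences. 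Once this geometric confinement is established, the two assertions follow in parallel from the inductive step.
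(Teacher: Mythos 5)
Your proposal diverges substantially from the paper's proof, and the divergence exposes a genuine gap in the interior-emptiness argument. The paper does not induct on mutation distance for the emptiness statement at all. Instead, for each non-initial $\CC=(\CE_0,\CE_1,\CE_2)$ it uses the tilting bundle $\mathbf{T}=\CE_0\oplus\CE_1\oplus\CE_2$ and Bondal's equivalence $D^b(\BP^2)\cong D^b(\CA_0)$ with the module category of a quiver algebra, identifies the three simples of $\CA_0$ explicitly as shifts/twists of the bundles in the \emph{previous} collection $\CC'$ (e.g.\ $(\CE_1',\CE_0'[1],\CE_2'(-3)[2])$ in the right-mutation case), checks sign conditions on $\Re Z^{\sigma}(\CS_i)$ and $\Im Z^{\sigma}(\CS_i)$ using the slope inequalities of Lemma~\ref{lem:slope inequalities}, and then invokes Macr\`i's complete $\Ext$-exceptional collection lemma to conclude $\CA^{\sigma}[Z^{\sigma},1/2]=\CA_0$. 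Since every object of $\CA_0$ is an iterated extension of the three simples, every stable object's central charge is a non-negative combination of $Z^{\sigma}(\CS_i)$, each of which has strictly negative real part at interior $\sigma$. This single structural fact replaces your entire wall-crossing contradiction.

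Your contradiction argument cannot be closed as stated. First, the claim that the Jordan--H\"older factors of $\CE$ at a boundary point $\sigma_0$ ``must be copies of the exceptional bundle $\CG$ of that edge since only one dilogarithm ray is present there'' does not follow: the ray data of $\foD^{\mathrm{stab}}$ records only (signed intersection) Euler characteristics, so a moduli space can be non-empty while invisible to the scattering diagram; and in any case the HN/JH factors $\CF_i$ at $\sigma_0$ satisfy only $\sigma_0\in L_{\CF_i}$ --- their lines pass through $\sigma_0$ with arbitrary slopes and need not coincide with the edge line, so nothing numerically pins them to $\CG$. Second, the confinement you flag as ``delicate bookkeeping'' is actually fatal to the induction: $L_{\CE}$ through an interior point exits $\Delta_{\CC}$ through two of the three edges, and when both exits are through edges shared with the \emph{children} $\Delta_{w_1},\Delta_{w_2}$, the inductive hypothesis gives you nothing on the other side. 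Third, your edge formula is circular: attributing the dilogarithm to ``the construction of $\foD_{\mathrm{discrete}}$'' assumes the combinatorial diagram agrees with $\foD^{\mathrm{stab}}$ on that edge, which is precisely what must be proved; one must actually show the relevant exceptional object is $\sigma$-stable (it is, being a simple of $\CA_0$) and that no other class of purely imaginary central charge admits stable objects there, which again comes out of the heart identification. The missing idea is thus the identification of the tilted heart with the finite-dimensional module category via the complete $\Ext$-exceptional collection of simples; without it, neither assertion of the theorem is reachable by the route you describe.
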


\begin{proof}
{\bf Step I.} \emph{Identifying $D^b(\BP^2)$ with 
a quiver representation category.}
Since $\CC=(\CE_0,\CE_1,\CE_2)$ is not an initial exceptional collection, 
we can write it as the mutation of another strong exceptional collection
$\CC'=(\CE_0',\CE_1',\CE_2')$. 

Let $\mathbf{T}=\CE_0\oplus\CE_1\oplus\CE_2$, and let 
\[
A_0:=\Hom(\mathbf{T},\mathbf{T})^{\mathrm{op}}.
\]
Let $\CA_0$ be the abelian category of finitely generated left $A_0$-modules.
By a result due to Bondal, see \cite[Thm.~3.11]{Macri-Curves-07},
there is an equivalence of categories
\[
D^b(\BP^2)\rightarrow D^b(\CA_0), \quad E\mapsto {\bf R}\Hom(\mathbf{T},E).
\]
This allows us to view $\CA_0$ as a subcategory of $D^b(\BP^2)$.

We note that $A_0$ is the quotient of the path algebra associated
to a quiver with vertices $v_0,v_1,v_2$ and with
$\dim \Hom(\CE_i,\CE_{i+1})$ arrows from $v_{i+1}$ to $v_i$.
We may label these arrows with a choice of basis $\{e_{i,j}\}$ for
$\Hom(\CE_i,\CE_{i+1})$.
 We divide out
by an ideal of relations coming from the kernel of the composition
map
\[
\Hom(\CE_1,\CE_2)\otimes \Hom(\CE_0,\CE_1)\rightarrow\Hom(\CE_0,\CE_2).
\]
In other words, if $\sum_{j_1,j_0} c_{j_1,j_0} e_{1,j_1}\otimes e_{0,j_0}$
lies in the kernel, then we include the corresponding linear combination
of paths $\sum_{j_1,j_0} c_{j_1,j_0}e_{1,j_1}e_{0,j_0}$ in the ideal.
Thus a finite-dimensional left $A_0$-module consists of the data
of finite-dimensional $\BC$-vector spaces $V_0,V_1,V_2$, and for
every arrow between $v_i$ and $v_j$ a linear transformation $V_i\rightarrow
V_j$. These must satisfy the given relations. 

Note that $\CA_0$ then has simple objects $S_0,S_1,S_2$ with dimension vectors
$(1,0,0)$, $(0,1,0)$ and $(0,0,1)$ respectively, with $S_j$ given by
$V_j=\BC$ and $V_k=0$, $j\not=k$.

\medskip

{\bf Step II.} \emph{Identifying simple objects.}
We analyze right and left mutations separately.
\medskip

{\bf Right mutations.} Suppose $\CC=\CR_{\CC'}$. 
In this case 
\[
(\CE_0,\CE_1,\CE_2)=(\CE_1',\CR_{\CE_1'}\CE_0',\CE_2').
\] 
The following triple
\[
(\CS_0,\CS_1,\CS_2)=(\CE_1', 
\CE_0'[1],
\CE_2'(-3)[2]) 
\]
corresponds to the three simple objects. To see this, we just need to show
that 
$\mathbf{R}\Hom(\mathbf{T},\CS_i)$ defines the $A_0$-module $S_i$
for $i=1,2,3$. In other words, we need to check that
\[
\Ext^k(\CE_i,\CS_j)=\begin{cases} 0 & \hbox{if $k\not=0$ or $i\not=j$}\\
\BC&k=0, i=j
\end{cases}
\]
These are all mostly straightforward using the fact that $\CC$ and
$\CC'$ are strong exceptional collections. For example,
$\Ext^*(\CE_0,\CS_0)=\Ext^*(\CE_1',\CE_1')$ which is $\BC$ in degree $0$
and $0$ otherwise. On the other hand, $\Ext^*(\CE_0,\CS_2)=
\Ext^*(\CE_1',\CE_2'(-3)[2])=\Ext^{*+2}(\CE_1',\CE_2'(-3))
=\Ext^{-*}(\CE_2',\CE_1')^{\vee} = 0$ by Serre duality and again the
fact that $\CC'$ is an strong exceptional collection.

The only slightly subtle calculation is that of
$\Ext^*(\CE_1,\CS_1) = \Ext^{*+1}(\CR_{\CE_1'}\CE_0',\CE_0')$. Applying
$\Ext^*(\cdot,\CE_0')$ to the defining exact sequence for
$\CR_{\CE_1'}\CE_0'$, we have a long exact sequence
\[
\rightarrow \Hom(\CE_0',\CE_1')^*\otimes \Ext^i(\CE_1',\CE_0')
\rightarrow \Ext^i(\CE_0',\CE_0')\rightarrow \Ext^{i+1}(\CR_{\CE_1'}\CE_0',
\CE_0')\rightarrow
\Hom(\CE_0',\CE_1')^*\otimes \Ext^{i+1}(\CE_1',\CE_0')\rightarrow,
\]
using
$ \Ext^i(\Hom(\CE_0',\CE_1')^*\otimes \CE_1',\CE_0')=\Hom(\CE_0',\CE_1')^*
\otimes \Ext^i(\CE_1',\CE_0')$.
Since $\Ext^*(\CE_1',\CE_0')=0$ and $\CE_0'$ is exceptional, we obtain
\[
\Ext^i(\CE_1,\CS_1)=\begin{cases} \BC & i=0\\ 0 & i\not=0.\end{cases}
\]

    \begin{figure}[h]
 \centering    \includegraphics[width=9.6cm]{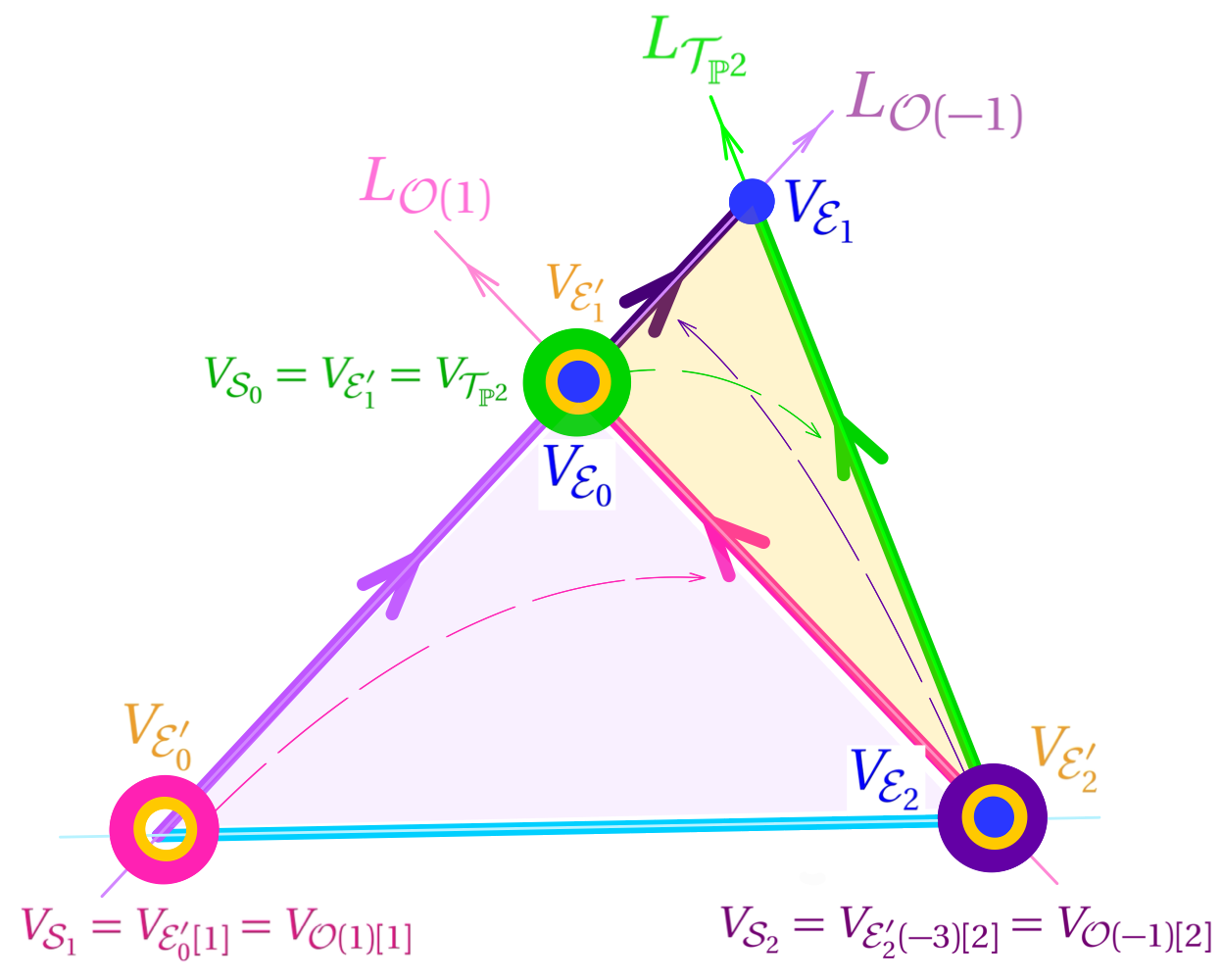}
  \caption{Right mutation of the central initial triangle. Dashed arrows show the corresponding edges to the simple vertices $V_{\CS_i}$. Here, $\CE_0=\CT_{\BP^2}$,  $\CE_2=\CO(2)$, and $\CE_1$ is an object of Chern character $(5,8,4)$.}
  \label{fig: RightMutation}
\end{figure}

{\bf Left mutations}. Suppose
$\CC=\CL_{\CC'}$. In this case 
\[
(\CE_0,\CE_1,\CE_2)=(\CE_0', \CL_{\CE_1'}\CE_2',\CE'_1).
\]
Now the simples are
\[
(\CS_0,\CS_1,\CS_2)=(
\CE_0',
\CE_2'(-3)[1], 
\CE_1'(-3)[2]). 
\]
This is checked similarly.

\medskip

{\bf Step III.} \emph{The central charges of the simple objects
and the categories $\CA^{\sigma}[Z^{\sigma},\phi]$.}

Write $(r_i,d_i,\chi_i)=
(r(\CE_i),d(\CE_i),\chi(\CE_i))$ and $(r_i',d_i',\chi_i')=
(r(\CS_i),d(\CS_i),\chi(\CS_i))$.
Following \eqref{eqn:LGamma}, set
\[
L_i=L_{(r_i',d_i',\chi_i')}=L_{\CS_i}.
\]
Recall from \eqref{eq:central charge} that, with $\sigma=(x,y)$,
\begin{equation}
\label{eq:recall Z}
Z^{\sigma}(\CS_j)=
r'_jy+d'_jx+r'_j+{3\over 2}d'_j-\chi_j'
+ i(d'_j-r'_jx)\sqrt{x^2+2y}.
\end{equation}
On the other hand, by the calculations of $\Ext^i(\CE_j,\CS_k)$ of 
Step II for the first equality 
and Riemann-Roch \eqref{eq:RR} for the second,
\begin{align}\label{eq: Kronrcker}
   \delta_{ij}=\chi(\CE_i,\CS_j)=
-3d_ir_j'-r_ir_j'-d_id_j'+r_i\chi_j'+\chi_i r_j' 
\end{align}
where $\delta_{ij}$ is the Kronecker delta.
Note that
\begin{align}
\label{eq:Z chi}
\begin{split}
-r_i\Re Z^{V_{\CE_i}}(\CS_j)= {} & -r_ir_j'{3 d_i-\chi_i \over r_i}-r_i
d'_j\left({d_i\over r_i}-{3\over 2}\right)-r_ir'_j-{3\over 2}r_id'_j+r_i\chi_j'
\\
={}&\chi(\CE_i,\CS_j).
\end{split}
\end{align}
Thus, if $\{i,j,k\}=\{0,1,2\}$, we see that $V_{\CE_i}= L_j \cap L_k$  and
that 
\begin{equation}
\label{eq:real negative}
\Re Z^{V_{\CE_i}}(\CS_i) < 0.
\end{equation}
In particular, the $L_j$ are the extensions of the three edges of
$\Delta_{\CC}$ to lines.

We first show that if $\sigma\in\Int(\Delta_{\CC})$, then there
are no stable objects $\CE$ in $\CA^{\sigma}$ with $\Re Z^{\sigma}(\CE)=0$.
This was already shown for the triangles obtained when $\CC'$ is an
initial strong exceptional triple in \cite[Lemma 4.8]{Bousseau-scatteringP2-22},
and therefore we will assume that $\CC'$ is not an initial triple. This
will enable us to avoid a special case.

Recall from \cite[Section 4.3]{Bousseau-scatteringP2-22}, the construction of a new stability
condition $(Z[\phi], \CA[Z,\phi])$ from a stability condition 
$(Z,\CA)$ for $\phi\in\BR$. One 
defines $Z[\phi](\gamma):=e^{-i\pi\phi} Z(\gamma)$.
Let $\CQ_{\phi}$ be the subcategory of $\CA$ generated via extensions
of the semistable objects $\CE$ with ${1\over\pi}\Arg Z(\CE)>\phi$,
and $\CF_{\phi}$ the subcategory of $\CA$ similarly generated by the
semistable objects $\CE$ with ${1\over\pi} \Arg Z(\CE) \le \phi$. Then 
denoting $\CH^i_{\CA}$ the cohomology functors with respect to the bounded
$t$-structure of heart $\CA$, the category $\CA[Z,\phi]$ is the subcategory of
$D^b(\BP^2)$ of objects with $\CH_{\CA}^i=0$ for $i\not=-1,0$, 
$\CH_{\CA}^{-1}$ an object of $\CF_{\phi}$, and $\CH_{\CA}^0$ an object
of $\CQ_{\phi}$.

We can write $(x,y)=\sigma\in\Int(\Delta_{\CC})$ as $\sum_{i=0}^2 a_iV_{\CE_i}$ for
$a_i>0$, $a_0+a_1+a_2=1$. 

Then $\sum_{i=0}^2 a_i \Re Z^{V_{\CE_i}}(\CE)
=\Re Z^{\sigma}(E)$, so by \eqref{eq:real negative}, 
\begin{equation}
\label{eq:real negative 2}
\hbox{$\Re Z^{\sigma}(\CS_i)<0$ for $i=0,1,2$.}
\end{equation}

We also need to understand the sign of $\Im Z^{\sigma}(\CS_i)$, which
requires slightly different analyses in the left and right mutation cases.
Note that the sign of $\Im Z^{\sigma}(\CS_i)$ agrees with the sign of
$\operatorname{sign}(r_j')\left({d_j'\over r_j'}-x\right)$, 
and $d_j'/r_j'$ is the slope of $\CS_i$. (Bear in mind that the rank of 
$\CE[1]$ is the negative of the rank of $\CE$).

{\bf The right mutation case.} 
Suppose $\CC=\CR_{\CC'}$. Let $\mu_i'$ be the slope of $\CE_i'$.
Then the slopes of $\CS_i$, $i=0,1,2$, are $\mu_1', \mu_0', \mu_2'-3$
respectively. Note further that by the definition of $V_{\CE_i}$ and
Lemma~\ref{lem:slope inequalities}, (2), $\mu_0-3/2 < x < \mu_2-3/2$,
or $\mu_1'-3/2 < x < \mu_2'-3/2$. By Lemma~\ref{lem:slope inequalities},
(3), $\mu_2'-\mu_1'=\mu_2-\mu_0\le 1$. Thus we have 
\begin{equation}
\label{eq:mu ineq right}
\mu_1' > x, \quad \mu_2'-3 < x.
\end{equation}
Provided that $\CC'$ was not an initial exceptional collection,
then again $\mu_2'-\mu_0'\le 1$ and 
\begin{equation}
\label{eq:mu ineq right2}
\mu_0' > x.
\end{equation}
Thus we see that (taking into account the shift and twist on $\CS_1,\CS_2$)
\[
\Im Z^{\sigma}(\CS_0) >0,\quad \Im Z^{\sigma}(\CS_1)<0,
\quad \Im Z^{\sigma}(\CS_2) <0.
\]
Note further that the inequalities \eqref{eq:mu ineq right}, 
\eqref{eq:mu ineq right2} imply that $\CE_0',\CE_1'\in \CA^{\sigma}$
and $\CE'_2(-3)[1]\in\CA^{\sigma}$. 

Putting this all together, taking $\phi=1/2$, we see that
$\CE_1'\in \CQ_{\phi}$, $\CE_0'\in\CF_{\phi}$ (remembering that
$\CS_1=\CE_0'[1]$), $\CE_2'(-3)[1]\in \CF_{\phi}$, so we get
$\CS_i\in \CA^{\sigma}[Z^{\sigma},\phi]$. 

\medskip

{\bf The left mutation case.} Suppose $\CC=\CL_{\CC'}$ with $\CC'$
not an initial collection. 

Similarly as in the right mutation case, we obtain
\[
\Im Z^{\sigma}(\CS_0) >0,\quad \Im Z^{\sigma}(\CS_1)>0,
\quad \Im Z^{\sigma}(\CS_2) <0.
\]
In addition, $\CE_0'\in \CA^{\sigma}$
and $\CE_1'(-3)[1],\CE'_2(-3)[1]\in\CA^{\sigma}$. 

Putting this together, taking $\phi=1/2$, we see that
$\CE_0',\CE_2'(-3)[1]\in \CQ_{\phi}$, $\CE_1'(-3)[1]\in\CF_{\phi}$,
so we get $\CS_i\in \CA^{\sigma}[Z^{\sigma},\phi]$. 

\medskip

{\bf Step IV.} \emph{Completing the argument.}
We may now finish the argument as in the proof of \cite[Lemma 4.8]{Bousseau-scatteringP2-22}. In both cases, $\CA^{\sigma}[Z^{\sigma},\phi]$
contains the simple objects of the quiver representation category $\CA_0$
and ${1\over \pi} \Arg Z^{\sigma}[\phi](\CS_i)\in (0,1)$. Further,
$(\CS_0,\CS_1,\CS_2)$ form a complete $\Ext$-exceptional collection 
in the sense of \cite[Def.~3.10]{Macri-Curves-07}.
Thus by \cite[Lem.~3.16]{Macri-Curves-07},
$\CA^{\sigma}[Z^{\sigma},\phi]=
\CA_0$. We deduce that the central charge with respect to $Z^{\sigma}[\phi]$
of any stable object in 
$\CA^{\sigma}[Z^{\sigma},\phi]$, being a non-negative linear combination
of the central charges of the $\CS_i$, lie in the open upper half-plane.
Correspondingly, any stable object of $\CA^{\sigma}$ has $Z^{\sigma}$-central
charge lying in the open left half-plane, and hence has negative real part.
In particular, the real part of the central charge is non-zero. This proves
the emptiness of $\Int(\Delta_{\CC})$.

If instead $\sigma$ lies in the interior of an edge of $\Delta_{\CC}$,
say contained in the line $L_i$, containing $V_{\CE_j},V_{\CE_k}$,
$\{i,j,k\}=\{0,1,2\}$, then $\Re Z^{\sigma}(\CS_i)=0$. Further, 
the argument above in the two cases still shows that 
$\CS_i\in \CA^{\sigma}[Z^{\sigma},1/2]$; indeed, the inequalities
\eqref{eq:mu ineq right}, \eqref{eq:mu ineq right2}
remain strict inequalities, as well as the analogous inequalities in 
the left mutation case. Since $\CS_i$ is
simple,
it must be stable. Thus $\CS_i$ corresponds to a stable 
object in $\CA^{\sigma}$,
necessarily (a shift) of an exceptional bundle. So there is a one-point
moduli space and the edge is thus contained in a non-trivial ray. The
precise form of the function attached to the ray then follows from 
Theorem~\ref{thm:bousseau main}.
\end{proof}

\begin{remark}
\label{rmk:left right exceptionals}
For future reference, we observe that in Step III of the proof,
the object of $\CA^{\sigma}$
corresponding to $\CS_0$ is $\CE_0$ in both the left and right mutation
cases. The object corresponding to $\CS_2$ in $\CA^{\sigma}$ is
$\CE_2(-3)[1]$ in both cases.
\end{remark}

\begin{remark}
\label{rem:bottom exceptionals}
The object $\CS_1$ cannot be read
off from the strong exceptional collection $\CC$, as it depends on whether
$\CC$ is obtained by right or left mutation. However, 
$\CS_1=\CE_0'[1]$ or $\CE_2'(-3)[1]$ in the right or left mutation 
case, and thus in either case $r_1'<0$. From \eqref{eq:recall Z}, 
\eqref{eq: Kronrcker}
and \eqref{eq:Z chi}, we then see that $V_{\CE_1}$ always lies above
the line $L_{\CS_i}$. Thus 
$V_{\CE_1}$ always lies above the line containing $V_{\CE_0},V_{\CE_2}$.
\end{remark}

\begin{definition}[Vertex-line duality]\label{Def:vertexRayDuality}
Let $\CE_i$, $i=0,1,2$, be as in the proof of Theorem \ref{thm:bousseau generalization}. We say that the vertex $V=V_{\CE_i}$ (defined in Definition \ref{Def: V_E}) is the \emph{dual vertex to $\CE_i$}, or the \emph{dual vertex to  $L_{\CE_i}$}. On the other hand,  $\CE_i$, or $L_{\CE_i}$ is called the \emph{dual object/line to $V_{\CE_i}$}. 
\end{definition}

Recall the tree $\CT$ of Construction \ref{susection: construction}. Write
$\CT_E$ for a copy of this tree. We decorate each vertex $w\in V(\CT_E)$
with a strong exceptional triple $\CC(w)$ in the following way. First, we
assign to the root of $\CT_E$ the strong exceptional triple
$(\CO(1),\CT_{\BP^2},\CO(2))$. Inductively, if $\CC(w)$ has been defined,
we assign the triples $\CL_{\CC(w)}$ and $\CR_{\CC(w)}$ to the two children
of $w$.

We now identify the triangles constructed algorithmically in
Construction~\ref{susection: construction} with the triangles associated
to strong exceptional triples.

\begin{theorem}
\label{thm:mu build}
\begin{enumerate}
\item
There exists an isomorphism 
$\eta_E:\CT\rightarrow\CT_E$ so that
for $w$ a vertex of $\CT$, $\Delta_w=\Delta_{\CC(\eta_E(w))}$.
\item
This isomorphism has the property that if $\CC(\eta_E(w))=(\CE_0,\CE_1,\CE_2)$,
then $V_{\CE_1}$ is the incoming vertex of $\Delta_w$.
\item
Except for the horizontal edge of $\Delta_{w_0}$, which is contained in
$\fod_0^-\cup\fod_0^+$, every edge $E$ of every triangle $\Delta_w$ is 
contained in precisely one ray $(\fod,H_{\fod})$ of
$\{\fod_{-1}^+,\fod_1^-\}
\cup \foD^0_{\mathrm{discrete}}$. Furthermore, if 
$\sigma\in U\subseteq M_{\BR}$ lies in the interior of $E$, then
$H_{\fod}=H_{\foD^{\mathrm{stab}},\sigma}$, in the sense
of Definition \ref{def:total sum}.
\end{enumerate}
\end{theorem}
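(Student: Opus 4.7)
The plan is to prove all three claims by a joint induction on the depth of $w$ in $\CT$, constructing $\eta_E$ along the way. For the base case $w = w_0$, I assign $\eta_E(w_0)$ to the root of $\CT_E$, which carries $\CC_0 = (\CO(1), \CT_{\BP^2}, \CO(2))$. A direct computation via Definition \ref{Def: V_E} using the Chern data $(r, d, \chi) = (1, 1, 3), (2, 3, 8), (1, 2, 6)$ gives $V_{\CO(1)} = (-1/2, 0)$, $V_{\CT_{\BP^2}} = (0, 1/2)$, and $V_{\CO(2)} = (1/2, 0)$, matching the vertices of $\Delta_{w_0}$ in \eqref{eq:Tw0}. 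Both $\fod_{-1}^+$ and $\fod_1^-$ are oriented into $V_{\CT_{\BP^2}}$, so this is the incoming vertex, confirming (2); their tangent directions also match the two slanted edges of $\Delta_{w_0}$, confirming (3) for those edges.

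For the inductive step, assume $\Delta_w = \Delta_\CC$ with $\CC = (\CE_0, \CE_1, \CE_2)$ and $V_{\CE_1}$ the incoming vertex. The two children of $w$ in $\CT$ correspond to mutations at the non-incoming vertices $V_{\CE_0}$ and $V_{\CE_2}$; I extend $\eta_E$ by sending them to $\CR_\CC$ and $\CL_\CC$ respectively, chosen so that the preserved edge of the combinatorial mutation matches the shared edge of the exceptional-triple mutation. The content is then to verify that the combinatorially produced vertex $v_j'$ from \eqref{eq:dwj def} coincides with $V_{\CR_{\CE_1}\CE_0}$ (respectively $V_{\CL_{\CE_1}\CE_2}$), and that the orientations of $\fod_{w_j}$ together with the extension of $\fod_{j'}$ make $v_j'$ the incoming vertex of the child triangle. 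By Step III of the proof of Theorem \ref{thm:bousseau generalization}, each edge of $\Delta_\CC$ lies on a line $L_{\CS_i}$ dual to the opposite vertex, so its primitive tangent equals $m_{\CS_i} = (r_{\CS_i}, -d_{\CS_i})$ up to sign; the same result applied to the child triple identifies its two new edges, one lying on the extension of the pre-existing ray $\fod_{j'}$ and the other on a newly introduced line $L_{\CS_?^{\mathrm{new}}}$.

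The principal obstacle is the arithmetic matching: showing that the combinatorial opposite vector $3 D_{v_j} m_j - m$ equals, up to sign, the primitive opposite vector $m_{\CS_?^{\mathrm{new}}}$ attached to the new edge of the child triangle. This rests on three ingredients --- the Chern character of the relevant mutated simple supplied by Lemma \ref{lem:slope inequalities}(4),(5), inductive control of $D_v$ via the Markov-type rank identities of Lemma \ref{lem:slope inequalities}(1),(6), and careful sign bookkeeping dictated by the orientation conventions of Construction \ref{susection: construction} --- and once the vectors match, the new vertex $v_j'$ is forced by linear algebra to coincide with $V_{\CR_{\CE_1}\CE_0}$ or $V_{\CL_{\CE_1}\CE_2}$. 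Claim (1) is then immediate from the matching of triangles, claim (2) propagates from the orientation check, and claim (3) follows because each edge of each $\Delta_w$ (other than the horizontal edge of $\Delta_{w_0}$) has been placed on a unique ray of $\{\fod_{-1}^+, \fod_1^-\} \cup \foD^0_{\mathrm{discrete}}$, with uniqueness coming from the distinctness of primitive Chern characters of exceptional bundles (Lemma \ref{lem:slope inequalities}(6)--(8)); the attached function agrees with $H_{\foD^{\mathrm{stab}}, \sigma}$ because both sides equal $-\li_2(-z^{m_\CF})$ for the relevant shifted exceptional $\CF$, by Theorem \ref{thm:bousseau generalization} combined with the explicit formula in \eqref{eq:dwj def}.
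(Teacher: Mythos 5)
Your proposal follows essentially the same route as the paper: the same base-case vertex computation for $(\CO(1),\CT_{\BP^2},\CO(2))$, the same inductive matching of $3D_{v_j}m_j-m$ with the opposite vector of the mutated exceptional object via the exact sequences and dimension formulas of Lemma \ref{lem:slope inequalities}(4),(5), and the same uniqueness argument for (3) via Lemma \ref{lem:slope inequalities}(7),(8). The only detail you pass over is the function agreement on the horizontal edge of $\Delta_{w_0}$, which the paper settles by citing \cite[Lem.~4.11]{Bousseau-scatteringP2-22}; otherwise the argument is the paper's.
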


\begin{proof}
We construct $\eta_E$ inductively, and prove statement (2)
at the same time. We check most of (3) after this is done.

\medskip

{\bf Step I.} \emph{The base case}.
The base case is the vertex $w_0$. From
Example~\ref{ex:initial triangle}, we have that 
$\Delta_{w_0}=\Delta_{\CC}$
with $\CC=(\CO(1),\CT_{\BP^2},\CO(2))$. Note that $V_{\CT_{\BP^2}}$
is the incoming vertex of $\Delta_{w_0}$ and $\Delta_{w_0}$
has a horizontal edge $E$ contained in a union of the two rays
$\fod_0^-$ and $\fod_0^+$. \cite[Lem.~4.11]{Bousseau-scatteringP2-22}
shows that $H_{\foD^{\mathrm{stab}},\sigma}$ agrees with the 
attached wall functions to $\fod_0^{\pm}$ for $\sigma$ lying in
$\Int(E)\cap\Int(\fod^{\pm}_0)$. Except for the uniqueness
statements, this shows (3) for $\sigma\in E\cap U$.

\medskip
{\bf Step II.} \emph{The induction step}.
Now assume that for a vertex $w\in V(\CT)$, we have an assignment
$\eta_E(w)\in V(\CT_M)$ so that $\Delta_w = \Delta_{\CC(w)}$, with
$\CC(w)=(\CE_0,\CE_1,\CE_2)$ and $V_{\CE_1}$ the incoming vertex of
$\Delta_w$. Let $\CS_0,\CS_1,\CS_2$ be the
corresponding simple objects as described in Step II of the proof
of Theorem~\ref{thm:bousseau generalization}, so that $V_{\CE_i}
\in L_{\CS_j}$ for $i\not=j$. 

From the precise description of
the $\CS_i$ in the right and left mutation cases, we see that
the edges  of $\Delta_{\CC(w)}$ joining $V_{\CE_1}$ to $V_{\CE_0}$ 
and $V_{\CE_2}$ are contained in the lines
$L_{\CE_2(-3)[1]}$ and $L_{\CE_0}$ respectively, see Remark~\ref{rmk:left right exceptionals}.
Again, from the explicit
description of the simples for $\CR_{\CC(w)}$ and $\CL_{\CC(w)}$, one sees
that $V_{\CR_{\CE_1}\CE_0}$ is also contained in the line 
$L_{\CE_2(-3)[1]}$ and the vertex $V_{\CL_{\CE_1}\CE_2}$ is also contained
in the line $L_{\CE_0}$. Further, the edge joining $V_{\CE_0}$
and $V_{\CL_{\CE_1}\CE_2}$ is contained in $L_{\CE_1(-3)[1]}$ and
the edge joining $V_{\CE_2}$ and $V_{\CR_{\CE_1}\CE_0}$ is contained
in $\CL_{\CE_1}$. See Figure \ref{Fig: RL}.

\begin{figure}[h]
    \centering
    \includegraphics[width=8.9cm]{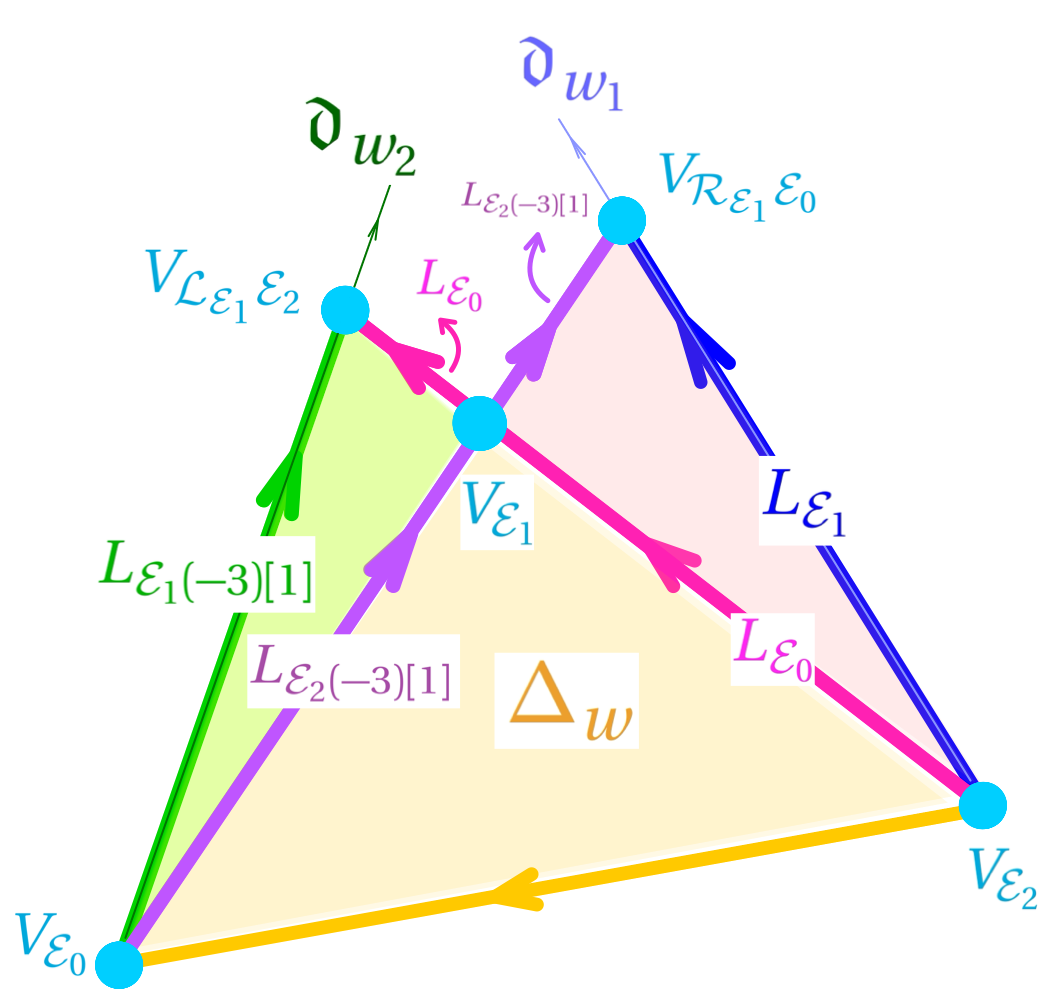}
    \caption{
}
    \label{Fig: RL}
\end{figure}

Since we have assumed that $V_{\CE_1}$ is the incoming vertex,
in the language of Construction~\ref{susection: construction}, 
we have $v=V_{\CE_1}$, and can take
$v_1=V_{\CE_0}$, $v_2=V_{\CE_2}$. 
Again in the notation of 
Construction~\ref{susection: construction}, $m_i$ is a primitive
tangent vector to the
edge joining $v$ to $v_i$, oriented from $v$ to $v_i$. Then we see that
\[
m_1=m_{\CE_2(-3)[1]}=\big(-r(\CE_2(-3)),d(\CE_2(-3))\big),\quad 
m_2=m_{\CE_0}=\big(r(\CE_0),
-d(\CE_0)\big),
\] 
recalling the notation of \eqref{eq:mgamma def}. To see these equalities,
first note that the opposite vectors $m_{\CE_2(-3)[1]}$ and $m_{\CE_0}$
are tangent to $L_{\CE_2(-3)[1]}$ and $L_{\CE_0}$ 
respectively. They are have the same orientation as $m_1$ and $m_2$
by the fact that the $x$-coordinates of
$v_1,v,v_2$ are increasing by Lemma~\ref{lem:slope inequalities}(2).
Finally, they are primitive by Lemma~\ref{lem:slope inequalities},(6).

We may order the children $w_1,w_2$  of $w$ so that
$\Delta_{w_j}$ is the mutation of $\Delta_w$ at $v_j$. We will
define $\eta_E(w_1):=\CR_{\CC(w)}$ and $\eta_E(w_2)=\CL_{\CC(w)}$.
Thus we need to check that $\Delta_{w_i}=\Delta_{\CC(\eta_E(w_i))}$.

Further analysis requires two cases, depending on whether
$(\CE_0,\CE_1,\CE_2)$ is obtained as a right or left mutation from
$(\CE_0',\CE_1',\CE_2')$. 

{\bf Right mutation.}
Suppose $(\CE_0,\CE_1,\CE_2)=(\CE_1', \CR_{\CE_1'}\CE_0',
\CE_2')$. Then the edge of $\Delta_{w}$ joining $V_{\CE_0}$ and
$V_{\CE_2}$ is $L_{\CE'_0[1]}$, see Remark~\ref{rem:bottom exceptionals}.
A primitive tangent vector to this
edge, pointing from $v_1$ to $v_2$, is 
\[
m=m_{\CE_0'}=\big(r(\CE_0'),-d(\CE_0')\big).
\] 
If we mutate $\Delta_w$ at the vertex $v_1$, we obtain the triangle 
$\Delta_{w_1}$ with
vertices $V_{\CE_1}, V_{\CE_2}$, and the third vertex at the intersection
of $L_{\CE_2(-3)[1]}$ and $\fod_{w_1}=V_{\CE_2}-\BR_{\ge 0} (3D_{v_2}m_2-m)$.
If we show that this intersection is $V_{\CR_{\CE_1}\CE_0}$, this
will show that $\Delta_{w_1}=\Delta_{\CR_{\CC(w)}}$ as needed.
Thus
it is sufficient to show that $\fod_{w_1}$ 
is contained in the line
$L_{\CE_1}$, i.e.,
that the tangent vector to $L_{\CE_1}$ oriented from
$V_{\CR_{\CE_1}\CE_0}$ to $V_{\CE_2}=v_2$ is $3D_{v_2} m_2 - m$. Put another
way, we require
\begin{equation}
\label{eq:ddr1}
m_{\CE_1}=3D_{v_2}m_{\CE_0}-m_{\CE_0'}
\end{equation}
However, using $\CE_1'=\CE_0$, we have the second exact sequence
of Lemma~\ref{lem:slope inequalities}(4) is
\[
0\rightarrow  \CE'_0\rightarrow \Hom(\CE'_0,\CE_0)^*\otimes \CE_0\rightarrow 
\CE_1
\rightarrow 0.
\]
By Lemma~\ref{lem:slope inequalities},(5),
$\dim\Hom(\CE_0',\CE_0) = 3(d(\CE_0)r(\CE_0')-d(\CE_0')r(\CE_0))
= 3|m\wedge m_2| = 3D_{v_2}$. Then \eqref{eq:ddr1} follows
from the exact sequence.

Similarly, if 
instead we mutate $\Delta_w$ at the vertex $v_2$ to get 
the triangle $\Delta_{w_2}$, we wish to show $\Delta_{w_2}=\Delta_{\CL_{\CC(w)}}$. For this analysis, we need to change
the orientation of $m$, replacing it with 
\[
m=m_{\CE_0'[1]}=\big(-r(\CE_0'),d(\CE_0')\big).
\]
Noting that $\fod_{w_2}= V_{\CE_0}-\BR_{\ge 0}(3D_{v_1}m_1-m)$, 
we need to show that $\fod_{w_2}\subseteq L_{\CE_1(-3)[1]}$, i.e., that
\begin{equation}
\label{eq:ddr2}
m_{\CE_1(-3)[1]} = 3D_{v_1}m_{\CE_2(-3)[1]} - m_{\CE_0'[1]}.
\end{equation}
By twisting the fourth exact sequence of 
Lemma~\ref{lem:slope inequalities}(4) by $-3$, we get the exact sequence
\[
0\rightarrow \CE_1(-3)\rightarrow \Hom(\CE_2,\CE_0'(3))\otimes\CE_2(-3)
\rightarrow\CE_0'\rightarrow 0.
\]
By Lemma \ref{lem:slope inequalities},(5),
\begin{align}
\label{eq:HomE2E0(3)}
\begin{split}
\dim\Hom(\CE_2,\CE_0'(3))= {} & 3\left[3r(\CE_2)r(\CE_0')+d(\CE_0')r(\CE_2)
-d(\CE_2)r(\CE_0')\right]\\
= {}&
 3\left[d(\CE_0')r(\CE_2)-r(\CE_0')\big(d(\CE_2)-3r(\CE_2)\big)\right]\\
= {} & 3\left[d(\CE_0')r(\CE_2(-3))-r(\CE_0')d(\CE_2(-3))\right]\\
= {} & 3|m_1\wedge m|\\
= {} & 3D_{v_1}.
\end{split}
\end{align}
Thus the exact sequence implies the desired equality on tangent vectors
again. 

{\bf Left mutation.}
Suppose instead that $(\CE_0,\CE_1,\CE_2)=(\CE_0', \CL_{\CE_1'}\CE_2',
\CE_1')$. Then the edge of $\Delta_{w}$ joining $V_{\CE_0}$ and
$V_{\CE_2}$ is $L_{\CE'_2(-3)[1]}$, again by 
Remark~\ref{rem:bottom exceptionals}, and a primitive tangent vector to this
edge, pointing from $v_1$ to $v_2$, is 
\[
m=m_{\CE_2'(-3)}=\big(r(\CE_2'(-3)),-d(\CE_2'(-3))\big).
\] 
If we mutate $\Delta_w$ at the vertex $v_1$ to get the triangle
$\Delta_{w_1}$, we have as previously $\fod_{w_1}=V_{\CE_2}-
\BR_{\ge 0}(3D_{v_2}m_2-m)$ and we need to show that
$\fod_{w_1}\subseteq L_{\CE_1}$, i.e., that
\begin{equation}
\label{eq:ddr3}
m_{\CE_1}= 3D_{v_2} m_{\CE_0}-m_{\CE_2'(-3)}.
\end{equation}
As in the right mutation case, this follows from
the third exact sequence of Lemma~\ref{lem:slope inequalities}(4).

If instead we mutate $\Delta_w$ at the vertex $v_2$ to get the triangle
$\Delta_{w_2}$, we need to change
the orientation of $m$, replacing it with 
\[
m=m_{\CE_2'(-3)[1]}=\big(-r(\CE_2'(-3)), d(\CE_2'(-3))\big).
\]
We have as previously $\fod_{w_2}=V_{\CE_0}-\BR_{\ge 0}(3D_{v_1}m_1-m)$
and need to show that $\fod_{w_2}\subseteq L_{\CE_1(-3)[1]}$, i.e., that
\begin{equation}
\label{eq:ddr4}
m_{\CE_1(-3)[1]}=3D_{v_1} m_{\CE_2(-3)[1]}-m_{\CE_2'(-3)[1]}.
\end{equation}
This time this follows by twisting the first exact sequence of 
Lemma~\ref{lem:slope inequalities}(4) by $-3$.

{\bf Step III}. \emph{Proof of (3).}
Let $\Delta_w$ be a triangle as above, with $(\CE_0,\CE_1,\CE_2)$
the corresponding strong exceptional collection. By the inductive construction
of the triangles, Construction \ref{susection: construction}, 
the edge $E_i$ joining $v$ to $v_i$, $i=1,2$,
is contained in a ray $(\fod_i,H_{\fod_i})$ of $\{\fod_{-1}^+,\fod_{1}^-\}\cup
\foD_{\mathrm{discrete}}^0$, with $H_{\fod_i}= -\li_2(-z^{m_i})$.
But by Remark~\ref{rmk:left right exceptionals} and 
Theorem~\ref{thm:bousseau generalization}, this agrees with
$H_{\foD^{\mathrm{stab}},\sigma}$ for $\sigma\in\Int(E_i)$.

It remains to show uniqueness of this ray. We first observe that if an
edge of a triangle is contained in some $L_{\CE}$ for $\CE$ an exceptional
bundle, in fact $\CE$ is uniquely determined. Indeed,
the slope of
$\CE$ is determined by the slope of the line $L_{\CE}$, and $\CE$ is
uniquely determined by its slope by Lemma~\ref{lem:slope inequalities},(7).

We have seen in Step II that every ray
of $\foD_{\mathrm{discrete}}^0$ is contained in a line
of the form $L_{\CE}$ or $L_{\CE(-3)[1]}$ for $V_{\CE}$ an incoming
vertex of a triangle $\Delta_w$, or equivalently, the middle member
of a strong exceptional triple. The middle member of the triple uniquely
determines the strong exceptional triple by Lemma~\ref{lem:slope inequalities},(8)
hence the triangle. Note that the ray is contained in a line of the
form $L_{\CE}$ if its direction vector has negative $x$-coordinate and
is contained in a line of the from $L_{\CE(-3)[1]}$ if its direction
vector has positive $x$-coordinate.
Thus the map which assigns to 
$(\fod,H_{\fod})\in\foD_{\mathrm{discrete}}^0$ the unique 
exceptional bundle $\CE$ or shifted exceptional bundle $\CE[1]$
with $\fod\subseteq L_{\CE}$ is an injective map
into the set of exceptional bundles and their shifts.
In particular, an edge of
a triangle $\Delta_w$ cannot be contained in two different such rays.
Note also that none of the exceptional bundles which appear in the image
of this map are line bundles. Since the exceptional bundles whose corresponding
lines contain $\fod_{-1}^+$ and $\fod_1^-$ are the line bundles $\CO(-1)$
and $\CO(1)$ respectively, we see that
no edge of any triangle $\Delta_w$
can be contained in more than one ray of $\{\fod_{-1}^+,
\fod_1^-\}\cup \foD_{\mathrm{discrete}}^0$. This complete the proof of (3).
\end{proof}

\begin{corollary}[Collection of triangles] \label{Cor: RDelta}
\begin{enumerate}
\item The collection of triangles $\{\Delta_w\,|\,w\in V(\CT)\}$
forms a triangulation of the region
\[
R^0_{\Delta}:=\bigcup_{w\in V(\CT)} \Delta_w.
\]
\item The collection of triangles $\{T^k(\Delta_w)\,|\, w\in V(\CT), k\in\BZ\}$
forms a triangulation of the region
\[
R_{\Delta}:=\bigcup_{k\in\BZ} T^k(R^0_{\Delta}).
\]
\item
$T^k(R^0_{\Delta}) = R_{\Delta}\cap \{(x,y)\in M_\BR\,|\,(k-1)/2\le x\le (k+1)/2
\}$.
\end{enumerate}
\end{corollary}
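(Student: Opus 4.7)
The strategy is to leverage Theorem~\ref{thm:mu build}, which identifies each $\Delta_w$ with $\Delta_{\CC(\eta_E(w))}$ for a strong exceptional triple $\CC = (\CE_0, \CE_1, \CE_2)$. The vertices then have $x$-coordinates $\mu_i - 3/2$, where $\mu_i$ is the slope of $\CE_i$, reducing geometric questions about the triangles to combinatorial facts about mutations of strong exceptional triples governed by Lemma~\ref{lem:slope inequalities}.

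The first step is to show that $\Delta_w$ lies in the strip $\{-1/2 \le x \le 1/2\}$ for every $w \in V(\CT)$, by induction on tree depth. The root $w_0$ corresponds to $(\CO(1), \CT_{\BP^2}, \CO(2))$ with slopes $(1, 3/2, 2)$, giving $x$-range $[-1/2, 1/2]$. By Lemma~\ref{lem:slope inequalities}(2), right mutation produces a triple with slopes $(\mu_1, \mu', \mu_2)$ for some $\mu' \in (\mu_1, \mu_2)$, and left mutation produces $(\mu_0, \mu'', \mu_1)$ with $\mu'' \in (\mu_0, \mu_1)$. Iterating, all descendant slopes remain in $[1, 2]$, so the $x$-projections stay within $[-1/2, 1/2]$, and the tree $\CT$ indexes a strict binary refinement of $[-1/2, 1/2]$ by middle slopes.

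For (1), adjacent triangles $\Delta_w$ and $\Delta_{w_j}$ share the edge $E_j$ by construction, and $\Delta_{w_j}$ lies on the opposite side of $E_j$ from $\Delta_w$: the new vertex $v'_j$ is the intersection of $\fod_{w_j}$ with the extension of $\fod_{j'}$ beyond $v_{j'}$, and the slope refinement above places its $x$-coordinate strictly between those of $v_j$ and $v_{j'}$, hence outside the closure of $\Delta_w$. For non-adjacent triangles $\Delta_w, \Delta_{w'}$ with neither indexing a descendant of the other, the nested $x$-refinement makes their $x$-projections either disjoint or meeting only at the single middle-slope point of their least common ancestor; by Lemma~\ref{lem:slope inequalities}(7), which gives bijectivity between exceptional bundles and slopes, such coincidence actually corresponds to a common exceptional bundle, and therefore to a shared edge via Theorem~\ref{thm:mu build}(3), not to an interior overlap. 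Vertical non-overlap is ensured by Remark~\ref{rem:bottom exceptionals}: each apex $V_{\CE_1}$ lies strictly above the segment from $V_{\CE_0}$ to $V_{\CE_2}$, so descendants stack upward from their ancestors and cannot collide from below.

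For (3), once all $\Delta_w$ have $x$-projection in $[-1/2, 1/2]$, the transformation $T$ of Lemma~\ref{Lem: invarianceDStab} (which translates the $x$-coordinate) puts $T^k(\Delta_w)$ into the appropriate shifted strip, yielding the stated equality. For (2), the $T$-invariance of $\foD^{\mathrm{stab}}$ from Lemma~\ref{Lem: invarianceDStab} sends triangles to triangles, so $\{T^k(\Delta_w) : k \in \BZ,\, w \in V(\CT)\}$ tiles $R_\Delta$; adjacent strips $T^k(R^0_\Delta)$ and $T^{k+1}(R^0_\Delta)$ meet along a vertical line where the boundary edges of $T^k(\Delta_{w_0})$ and $T^{k+1}(\Delta_{w_0})$ match. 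The main obstacle throughout is the non-overlap of non-adjacent triangles in (1): while local adjacency is immediate from Construction~\ref{susection: construction}, globally ruling out collisions between distant branches of $\CT$ requires combining strict slope refinement (Lemma~\ref{lem:slope inequalities}(2)(3)), the slope-bundle bijection (Lemma~\ref{lem:slope inequalities}(7)), and the upward-stacking property (Remark~\ref{rem:bottom exceptionals}).
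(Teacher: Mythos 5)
Your proposal takes a genuinely different route from the paper. The paper's proof of (1) is short and leans on Theorem~\ref{thm:bousseau generalization}: every edge of every $\Delta_w$ is contained in a non-trivial ray of $\foD^{\mathrm{stab}}$ (Theorem~\ref{thm:mu build}(3)), while the interiors of the triangles contain no non-trivial rays; hence if two triangles met in something other than a common face, an edge of one would cross the interior of another (or of one of its neighbours), which is impossible. You instead attempt a purely combinatorial/geometric argument via nested $x$-projections and half-plane separation, never invoking moduli emptiness. That strategy can be made to work, and the disjoint-subtree case is essentially fine (though the projections meeting at a point yields a shared \emph{vertex}, not a ``shared edge'' as you write), but as written there are two genuine gaps.

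First, your justification that $v'_j$ lies outside $\overline{\Delta_w}$ — ``its $x$-coordinate strictly between those of $v_j$ and $v_{j'}$, hence outside the closure of $\Delta_w$'' — is a non sequitur: points with $x$-coordinate between two vertices can perfectly well lie inside the triangle. The correct reason is that $v'_j$ lies on $\fod_{j'}$ \emph{past the incoming vertex $v$}, hence on the opposite side of the line through $E_j$ from $v_{j'}$. Second, and more seriously, the ancestor--descendant case is the crux and is not actually proved. Remark~\ref{rem:bottom exceptionals} only places the incoming vertex $V_{\CE_1}$ above the line through $V_{\CE_0},V_{\CE_2}$, i.e.\ above the \emph{bottom} edge of its own triangle; what you need is that the \emph{entire} subtree hanging off a child $w_j$ stays in the closed half-plane bounded by the line through the shared \emph{side} edge $E_j$, on the far side from $\Delta_w$, and touches that line only in $E_j$ or its endpoints. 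This requires its own induction (each new vertex lies on the extension of an edge of the current triangle beyond its incoming vertex, which pushes it strictly into the correct open half-plane), and ``descendants stack upward'' does not substitute for it — ``vertically above'' and ``on the far side of the tilted line $L(E_j)$'' are different conditions. Either supply that induction, or do what the paper does and quote Theorem~\ref{thm:bousseau generalization} (together with \cite[Lem~4.10]{Bousseau-scatteringP2-22} for the region below the horizontal edge of $\Delta_{w_0}$, which has no adjacent triangle). Parts (2) and (3) are handled essentially as in the paper.
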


\begin{proof}
For (1), by Theorem~\ref{thm:mu build}, it is enough to observe that
$\bigcup_{w\in V(\CT_M)} \Delta_{\CC(w)}$ is triangulated by
$S:=\{\Delta_{\CC(w)}\,|\,w\in V(\CT(M))\}$. By construction,
each triangle $\Delta_{\CC(w)}$ meets three other triangles in $S$
along the three edges of $\Delta_{\CC(w)}$ determined
by the parent and the two children of $w$. There is of course one
exception, namely when $w=w_0$ is the root, in which case there are
only two triangles. To show $S$ triangulates the given region, we just
need to check that for any other triangle $\Delta$ of $S$, $\Delta\cap
\Delta_{\CC(w)}$ is a face of both if non-empty. However, if this intersection
is not a face, then one of the edges of $\Delta$ must meet the interior
of either $\Delta_{\CC(w)}$ or one of the three adjacent triangles. However,
since the edges of $\Delta$ are contained in non-trivial rays 
of $\foD^{\mathrm{stab}}$ and the interior
of the triangles $\Delta_{\CC(w)}$ contain no non-trivial rays of
$\foD^{\mathrm{stab}}$, this is 
not possible.
Again, there is a special case if $w=w_0$, in which case the
edge with vertices $(-1/2,0), (1/2,0)$ does not have an adjacent
triangle. But the region below this edge also does not contain any
rays, by \cite[Lem~4.10]{Bousseau-scatteringP2-22} (where this region
is split into two regions $\overline{U}_{0,L}^{\mathrm{in}}$ and
$\overline{U}_{0,R}^{\mathrm{in}}$, see Figure \ref{Fig: InitialTriangle}(1)).
This proves (1).

For (2), every non-initial strong exceptional
collection is a twist of a non-initial strong exceptional collection obtained
from mutating $(\CO(1),\CT_{\BP^2},\CO(2))$. Thus
the collection of triangles $\{T^k(\Delta_w)\,|\, w\in V(\CT)\}$
coincides with the collection of triangles 
\[
\{\Delta_{\CC}\,|\,\hbox{$\CC$
is a non-initial strong exceptional collection}\}.
\]
Given the previous paragraph,
this clearly triangulates $R_{\Delta}$.

For (3), it is sufficient to note that the initial
triangle $\Delta_{w_0}$ has $x$-coordinate lying in $[-1/2,1/2]$,
and by Lemma~\ref{lem:slope inequalities},(2), Definition~\ref{Def: V_E},
and Theorem~\ref{thm:mu build},
all vertices of triangles obtained via mutation from $\Delta_{w_0}$ also
have $x$-coordinate lying in $[-1/2,1/2]$.
\end{proof}

The following now follows immediately from Theorem \ref{thm:mu build}(3):

\begin{corollary}\label{Cor:equivofScatPartial}
The two scattering diagrams $\foD^{\mathrm{stab}}$ and
$\foD^{\mathrm{stab}}_{\mathrm{in}}\cup\foD_{\mathrm{discrete}}$
are equivalent in $R_{\Delta}$.
\end{corollary}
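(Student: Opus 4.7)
The plan is to reduce the equivalence claim to the two main theorems already proved, namely the moduli emptiness result (Theorem \ref{thm:bousseau generalization}) together with the identification of edges with rays (Theorem \ref{thm:mu build}(3)), and then invoke $T$-equivariance (Lemma \ref{Lem: invarianceDStab}) to pass from $R_\Delta^0$ to $R_\Delta$.

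First I would work inside $R_\Delta^0$ and fix a point $\sigma\in R_\Delta^0\setminus(\mathrm{Sing}(\foD^{\mathrm{stab}})\cup\mathrm{Sing}(\foD^{\mathrm{stab}}_{\mathrm{in}}\cup\foD_{\mathrm{discrete}}))$. By Corollary \ref{Cor: RDelta}(1), $\sigma$ lies either in the interior of some triangle $\Delta_w$ or in the interior of one of its edges. In the first case, Theorem \ref{thm:bousseau generalization} says there are no $\sigma$-stable objects $\CE$ with $\operatorname{Re} Z^\sigma(\gamma(\CE))=0$, so Theorem \ref{thm:bousseau main} gives $H_{\foD^{\mathrm{stab}},\sigma}=0$; on the other hand, by the inductive construction of the triangulation (Construction \ref{susection: construction}) together with Theorem \ref{thm:mu build}(3), every ray of $\foD^{\mathrm{stab}}_{\mathrm{in}}\cup\foD^0_{\mathrm{discrete}}$ meeting $R_\Delta^0$ is contained in the $1$-skeleton of the triangulation, so it cannot pass through $\sigma$; hence $H_{\foD^{\mathrm{stab}}_{\mathrm{in}}\cup\foD_{\mathrm{discrete}},\sigma}=0$ as well.

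In the second case, $\sigma$ lies in the interior of an edge $E$ of some $\Delta_w$. Theorem \ref{thm:mu build}(3) supplies a unique ray $(\fod,H_\fod)\in\foD^{\mathrm{stab}}_{\mathrm{in}}\cup\foD^0_{\mathrm{discrete}}$ with $E\subseteq\fod$ and with $H_\fod=H_{\foD^{\mathrm{stab}},\sigma}$, while the uniqueness part of that statement ensures no other ray of $\foD^{\mathrm{stab}}_{\mathrm{in}}\cup\foD^0_{\mathrm{discrete}}$ contains $\sigma$. Therefore
\[
H_{\foD^{\mathrm{stab}}_{\mathrm{in}}\cup\foD_{\mathrm{discrete}},\sigma}=H_\fod=H_{\foD^{\mathrm{stab}},\sigma}.
\]
Taking closures of the loci where these common sums are non-zero gives the equality $\mathrm{Supp}'(\foD^{\mathrm{stab}})\cap R_\Delta^0=\mathrm{Supp}'(\foD^{\mathrm{stab}}_{\mathrm{in}}\cup\foD_{\mathrm{discrete}})\cap R_\Delta^0$, which, combined with the pointwise equality above, is precisely the equivalence of Definition \ref{def:equivalent} restricted to $R_\Delta^0$.

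Finally, to upgrade from $R_\Delta^0$ to all of $R_\Delta$, I apply Lemma \ref{Lem: invarianceDStab}: both $\foD^{\mathrm{stab}}$ and $\foD^{\mathrm{stab}}_{\mathrm{in}}\cup\foD_{\mathrm{discrete}}$ are invariant under $T$ (the former by the lemma itself, the latter by the very definition \eqref{eq:diagram discrete}), and by Corollary \ref{Cor: RDelta}(2)--(3) the $T^k$-translates of $R_\Delta^0$ cover $R_\Delta$. Hence the equivalence established on $R_\Delta^0$ propagates to all of $R_\Delta$. The only point requiring a bit of care is justifying that rays of $\foD^{\mathrm{stab}}_{\mathrm{in}}\cup\foD^0_{\mathrm{discrete}}$ cannot re-enter the interior of a triangle in $R_\Delta^0$ from outside $R_\Delta^0$; this is immediate, however, since each such ray is supported on an edge of the triangulation by the construction, and I expect no genuine obstacle beyond bookkeeping.
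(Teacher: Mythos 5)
Your argument is correct and follows the same route as the paper, which simply derives the corollary as an immediate consequence of Theorem~\ref{thm:mu build}(3); you have just made explicit the two supporting facts that the paper leaves implicit, namely the vanishing of $H_{\foD^{\mathrm{stab}},\sigma}$ on triangle interiors via Theorem~\ref{thm:bousseau generalization} and the propagation to all of $R_{\Delta}$ via $T$-invariance. No gaps.
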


\subsection{Determinants of vertices and Markov numbers.}

We recall a Markov triple $(x,y,z)$ is a triple of positive integers satisfying
the equation
\[
x^2+y^2+z^2=3xyz.
\]
Given a Markov triple $(x,y,z)$, any permutation is also a Markov triple.
Also, if $(x,y,z)$ is a Markov triple, then so is $(x,y,3xy-z)$; 
this is \emph{the mutation of $(x,y,z)$ at $z$}. The mutations at $x$ and $y$
are defined similarly. The
Markov tree is obtained by taking as vertices Markov triples up to permutation,
and connecting two Markov triples by an edge
if they are related by permutation and one mutation.
The Markov tree has one univalent vertex, corresponding to $(1,1,1)$,
an adjacent bivalent vertex, corresponding to $(1,1,2)$, also adjacent
to $(1,2,5)$. All remaining vertices, including $(1,2,5)$, are trivalent.

\begin{definition}
\label{def:modified Markov tree}
We define a modified Markov tree $\CT_M$ as follows.
Let $\mathcal{T}'$ be the subtree obtained
by deleting the two vertices $(1,1,1)$ and $(1,1,2)$. Take two copies
of $\mathcal{T}'$ and attach them to a vertex labeled with $(1,1,2)$,
to obtain an infinite binary tree with root given by $(1,1,2)$. We call
this tree $\mathcal{T}_M$. 
\end{definition}

\begin{lemma} \label{Lem: TE to TM}
There is an isomorphism  $\eta_M:\mathcal{T}_E\rightarrow\mathcal{T}_M$ with
the property that for $w\in V(\CT_E)$ with $\CC(w)=(\CE_0,\CE_1,\CE_2)$,
the Markov triple $(x,y,z)$ associated to the vertex $\eta_M(w)$
is a permutation
of $(r(\CE_0),r(\CE_1),r(\CE_2))$. 
\end{lemma}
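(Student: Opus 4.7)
The plan is to build $\eta_M$ inductively on the levels of $\CT_E$, exploiting the rank formulas in Lemma~\ref{lem:slope inequalities}(1). For the base case, the root $w_0$ of $\CT_E$ corresponds to $(\CO(1),\CT_{\BP^2},\CO(2))$ with rank triple $(1,2,1)$, a permutation of $(1,1,2)$, so I set $\eta_M(w_0)=(1,1,2)$, the root of $\CT_M$. The crucial numerical input is that, given $\CC=(\CE_0,\CE_1,\CE_2)$ with rank triple $(r_0,r_1,r_2)$, the rank triple $(r_0,3r_0r_1-r_2,r_1)$ of $\CL_{\CC}$ is, as an unordered triple, precisely the Markov mutation of $\{r_0,r_1,r_2\}$ at $r_2$, while $(r_1,3r_1r_2-r_0,r_2)$ of $\CR_{\CC}$ is the Markov mutation at $r_0$. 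In particular, neither mutation is at the middle rank $r_1$, so the two mutations are both legitimately ``downward'' in a Markov-tree sense.

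The main step is to verify that traversing the edge from $w$ to its parent in $\CT_E$ (when $w\neq w_0$) corresponds to the third, missing Markov mutation: the one at the middle rank $r_1$. If $\CC(w)=\CR_{\CC(w')}$, write the rank triple as $(r_0,r_1,r_2)=(r_1',3r_1'r_2'-r_0',r_2')$; solving for $(r_0',r_1',r_2')$ yields $r_0'=3r_0r_2-r_1$, so $\{r_0',r_1',r_2'\}=\{r_0,3r_0r_2-r_1,r_2\}$, which is exactly the Markov mutation of $\{r_0,r_1,r_2\}$ at $r_1$. The left-mutation case gives the same conclusion by a symmetric computation. For the root, the would-be mutation at the middle rank $r_1=2$ of $(1,2,1)$ produces $3\cdot 1\cdot 1-2=1$, i.e.\ the triple $(1,1,1)$: this is precisely the vertex deleted in forming $\CT_M$ from the classical Markov tree, confirming that the root of $\CT_E$ correctly matches the root of $\CT_M$. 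Moreover, the two children of the root in $\CT_E$ both have rank triple $(1,2,5)$, matching the two copies of $(1,2,5)$ attached to $(1,1,2)$ in the construction of $\CT_M$.

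Combining these ingredients, the two children of $\eta_M(w)$ in $\CT_M$ --- by construction, the two Markov neighbours other than the parent --- agree, as unordered triples, with the rank triples of $\CL_{\CC(w)}$ and $\CR_{\CC(w)}$. This lets me extend $\eta_M$ to both children of $w$, and inductively to all of $\CT_E$. Since the extension sends the two children bijectively to the two children at every step and fixes the root, $\eta_M$ is automatically an isomorphism of rooted binary trees, and by construction its defining property --- the rank triple of $\CC(w)$ is a permutation of the Markov triple at $\eta_M(w)$ --- holds at every vertex. The main obstacle is the parent-mutation identification of the second paragraph, which requires a careful case split on whether $\CC(w)$ arose from $\CC(w')$ by a left or a right mutation and careful bookkeeping of the reindexing these operations impose on the rank triple.
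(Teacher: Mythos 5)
Your proof is correct and follows essentially the same route as the paper's, which is a one-line induction with base case the rank triple $(1,2,1)$ of $(\CO(1),\CT_{\BP^2},\CO(2))$ and induction step given by the rank formulas of Lemma~\ref{lem:slope inequalities}(1). The only difference is that you make explicit the (correct) verification that the parent edge realizes the Markov mutation at the middle rank $r_1$, a point the paper leaves implicit as ``well-known.''
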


\begin{proof}
This is well-known, and follows immediately inductively. The base case
is just fact that the 
rank triple of $\CC(w_0)=(\CO(1),\CT_{\CP^2},\CO(2))$ is $(1,2,1)$ and
the induction step is Lemma~\ref{lem:slope inequalities},(1).
\end{proof}

\begin{lemma} 
\label{Lem: degree=rank}
For $w\in V(\CT)$, let $v$ be a vertex of $\Delta_w$, and let $D_v$
(as defined in \eqref{eq:det def})
be calculated using the triangle $\Delta_w$. Then there is an exceptional
bundle $\CE$ such that $V_{\CE}=v$ and $D_v=r(\CE)$. Further, if $v=(x,y)$,
then $D_v$ is the denominator of the rational number $x+3/2$ written
in reduced form. In particular, $D_v$ only depends on $v$.
\end{lemma}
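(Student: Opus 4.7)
The plan is to identify the exceptional bundle associated to each vertex via Theorem~\ref{thm:mu build}, and then compute $D_v$ using a lattice-theoretic argument built around the Euler pairing on $\Gamma$.

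By Theorem~\ref{thm:mu build}, $v = V_{\CE_i}$ for some member $\CE_i$ of the strong exceptional triple $\CC = (\CE_0, \CE_1, \CE_2) = \CC(\eta_E(w))$, and the task reduces to showing $D_v = r(\CE_i)$. Step~II of the proof of Theorem~\ref{thm:bousseau generalization} produces three simple objects $\CS_0, \CS_1, \CS_2$ in the quiver representation category $\CA_0$ (identified with a full subcategory of $D^b(\BP^2)$ via Bondal's equivalence), each a shift or shift-twist of an exceptional bundle, satisfying $\chi(\CE_i, \CS_j) = \delta_{ij}$. As established in the proof of Theorem~\ref{thm:mu build}, the edge of $\Delta_w$ opposite $V_{\CE_j}$ is contained in $L_{\CS_j}$, so the two edges adjacent to $v$ lie in $L_{\CS_j}$ and $L_{\CS_k}$ with $\{i,j,k\} = \{0,1,2\}$. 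Applying Lemma~\ref{lem:slope inequalities}(6) to the underlying exceptional bundle of each $\CS_j$ shows that $m_{\CS_j}$ is primitive in $M$, hence $D_v = |m_{\CS_j} \wedge m_{\CS_k}|$.

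Consider the surjective homomorphism $\phi\colon \Gamma \to M$, $\CF \mapsto m_\CF$, whose kernel is generated by $[\CO_\pt]$. Since $\chi(\CE_i, [\CO_\pt]) = r(\CE_i) \neq 0$, the restriction $\phi|_{\CE_i^\perp}$ is injective. The sublattice $\langle \CS_j, \CS_k\rangle_\BZ$ is saturated in $\Gamma$, as $\{\CS_0,\CS_1,\CS_2\}$ is a $\BZ$-basis of $K_0(\CA_0) = \Gamma$, and $\CE_i^\perp$ is saturated as the kernel of a non-trivial linear form; combined with $\langle \CS_j,\CS_k\rangle \subseteq \CE_i^\perp$ (from the orthogonalities $\chi(\CE_i,\CS_j) = \chi(\CE_i,\CS_k)=0$), this forces equality of sublattices, so $D_v = [M : \phi(\CE_i^\perp)]$. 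The image of $\chi(\CE_i,\cdot)\colon \Gamma \to \BZ$ contains $r(\CE_i)$ and $2r(\CE_i)-d(\CE_i)$ (evaluating on $[\CO_\pt]$, $[\CO]$, $[\CO(1)]$), whose gcd divides $\gcd(r_i,d_i) = 1$ by Lemma~\ref{lem:slope inequalities}(6); thus $\chi(\CE_i,\cdot)$ is surjective, and a short cokernel computation using $[\CO_\pt] \mapsto r(\CE_i)$ yields $[M : \phi(\CE_i^\perp)] = r(\CE_i)$. So $D_v = r(\CE_i)$.

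For the final statement, $V_{\CE_i}$ has $x$-coordinate $x = d(\CE_i)/r(\CE_i) - 3/2$; since $\gcd(r_i,d_i) = 1$, the rational number $x+3/2$ is already in reduced form with denominator $r(\CE_i) = D_v$, and hence $D_v$ depends only on the $x$-coordinate of $v$, hence only on $v$. The main obstacle is the index calculation in the key step, which nevertheless reduces cleanly to the coprimality property of exceptional bundles via Lemma~\ref{lem:slope inequalities}(6).
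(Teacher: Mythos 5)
Your proof is correct, but it takes a genuinely different route from the paper's. The paper proves the lemma by induction on the tree $\CT$: the base case $\Delta_{w_0}$ is checked by hand, and the induction step uses the mutation identities \eqref{eq:ddr1}--\eqref{eq:ddr4} to show that the vertex determinants obey the same recursion $3DD'-D''$ as the ranks in Lemma~\ref{lem:slope inequalities}(1), so the two quantities propagate identically down the tree. You instead give a closed-form, non-inductive argument: after using Theorem~\ref{thm:mu build} and Step~III of the proof of Theorem~\ref{thm:bousseau generalization} to identify the two edges at $v=V_{\CE_i}$ with $L_{\CS_j}, L_{\CS_k}$, you recognize $D_v$ as the index $[M:\phi(\CE_i^{\perp})]$ for the projection $\phi\colon\Gamma\to M$ with kernel $\BZ[\CO_{\pt}]$, and compute that index as $r(\CE_i)$ from $\chi(\CE_i,\CO_{\pt})=r(\CE_i)$ together with surjectivity of $\chi(\CE_i,\cdot)$. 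All the lattice inputs check out: $\langle\CS_j,\CS_k\rangle$ is saturated because the simples of the heart $\CA_0$ form a $\BZ$-basis of $K_0$, it sits inside the saturated rank-two lattice $\CE_i^{\perp}$, hence equals it, and $M/\phi(\CE_i^{\perp})\cong\Gamma/(\CE_i^{\perp}+\BZ[\CO_{\pt}])\cong\BZ/r(\CE_i)\BZ$. Your approach is shorter and explains conceptually why the determinant equals the rank, at the cost of leaning on the $K$-theoretic machinery; the paper's induction is more pedestrian but has the side benefit of exhibiting the Markov recursion on determinants explicitly, which is the form in which the fact gets reused later (e.g.\ in the remark following the lemma and in Corollary~\ref{cor: decreasingSlope}). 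The final statement about the denominator of $x+3/2$ is handled the same way in both arguments, via $\gcd(r,d)=1$ from Lemma~\ref{lem:slope inequalities}(6).
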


\begin{proof}
We proceed by induction. 
The root $w=w_0$ of $\mathcal{T}$ corresponds to the triangle
of \eqref{eq:Tw0}, and also has vertices $V_{\CO(1)},V_{\CT_{\BP^2}},V_{\CO(2)}$
by Example~\ref{ex:initial triangle}.
The primitive tangent vectors at the vertex $v=(0,1/2)=V_{\CT_{\BP^2}}$ 
are $(1,-1)$ and $(-1,-1)$, $D_v=|(1,-1)\wedge (-1,-1)|=2=
\rank(\CT_{\BP^2})$.
The determinants of the vertices 
$(\pm 1/2,0)$ are similarly calculated to be $1$,
the ranks of $\CO(1)$ and $\CO(2)$. 

For the induction step, suppose that we have shown the statement
for a given triangle $\Delta_w$, $w\in V(\CT)$. Let $w'$ be a child of
$w$, and assume first that $\CC(\eta_E(w'))$ obtained via left mutation.
Thus by Theorem~\ref{thm:mu build}, if $\CC(\eta_E(w))=(\CE_0,\CE_1,\CE_2)$,
then $\Delta_w$ has vertices $V_{\CE_0},V_{\CE_1},V_{\CE_2}$ and
$\Delta_{w'}$ as vertices $V_{\CE_0},V_{\CL_{\CE_1}\CE_2},V_{\CE_1}$.
Consulting Figure \ref{Fig: RL}, we see that the number $D_{V_{\CE_1}}$
is the same when calculated in either $\Delta_w$ or
$\Delta_{w'}$ as it involves the wedge product of the same primitive tangent
vectors. 

On the other hand, we note that in the notation of
the proof of Theorem~\ref{thm:mu build}, by \eqref{eq:ddr2} and \eqref{eq:ddr4}
we have that $3D_{V_{\CE_0}}m_1-m = m_{\CE_1(-3)[1]}$. Thus by
Lemma~\ref{lem:slope inequalities},(6), $3D_{V_{\CE_0}}m_1-m$ is primitive. So 
we may compute $D_{V_{\CE_0}}$ in the triangle $\Delta_{w'}$ as 
\[
|m_1 \wedge (3D_{V_{\CE_0}} m_1-m)| = |m_1\wedge m| = D_{V_{\CE_0}},
\]
the latter computed in $\Delta_w$. Thus $D_{V_{\CE_0}},D_{V_{\CE_1}}$
are unchanged between the two triangles, and it remains
to compute $D_{V_{\CL_{\CE_1}\CE_2}}$. This is then
$| m_2\wedge (3D_{V_{\CE_0}}m_1-m)|$. We note that $m_2\wedge m_1$ and
$m_2\wedge m$ have the same sign with our orientation 
conventions. See Figure \ref{Fig: TreeMarkov} (2).
Thus we obtain $D_{V_{\CL_{\CE_1}\CE_2}}=3D_{V_{\CE_0}}D_{V_{\CE_1}} - 
D_{V_{\CE_2}}=\rank(\CL_{\CE_1}\CE_2)$ by Lemma~\ref{lem:slope inequalities},(1)
and the induction hypothesis. This is sufficient to prove the induction step
for right mutations.

The argument assuming $\CC(\eta_E(w'))$ is obtained via right mutation
of $\CC(\eta(w))$ is the same, this time using \eqref{eq:ddr1} and
\eqref{eq:ddr3}.
\end{proof}

\begin{remark}
In particular, this shows that for a triangle $\Delta_w$ with vertices
$v_0,v_1,v_2$, the triple $(D_{v_0},D_{v_1},D_{v_2})$ forms a Markov triple. See Figure \ref{Fig: TreeMarkov}.
\end{remark}

   \begin{figure}[h]
 \subcaptionbox*{(1) $\mathcal{T}_{M}$}[.59\linewidth]{%
    \includegraphics[width=\linewidth]{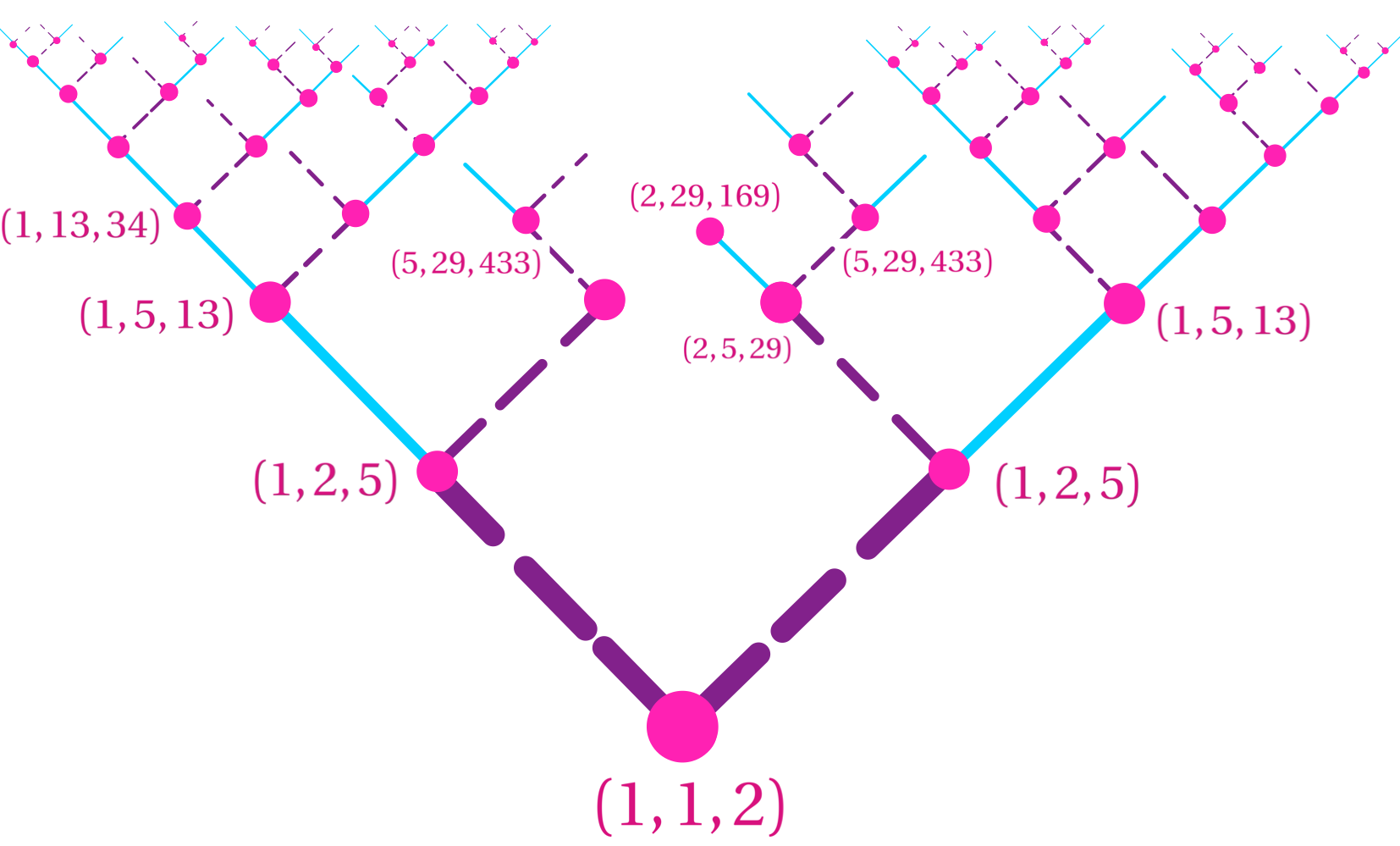}%
  }%
   \hspace{-0.75cm}
  \hskip0.59ex
  \subcaptionbox*{(2)}[.44\linewidth]{%
    \includegraphics[width=\linewidth]{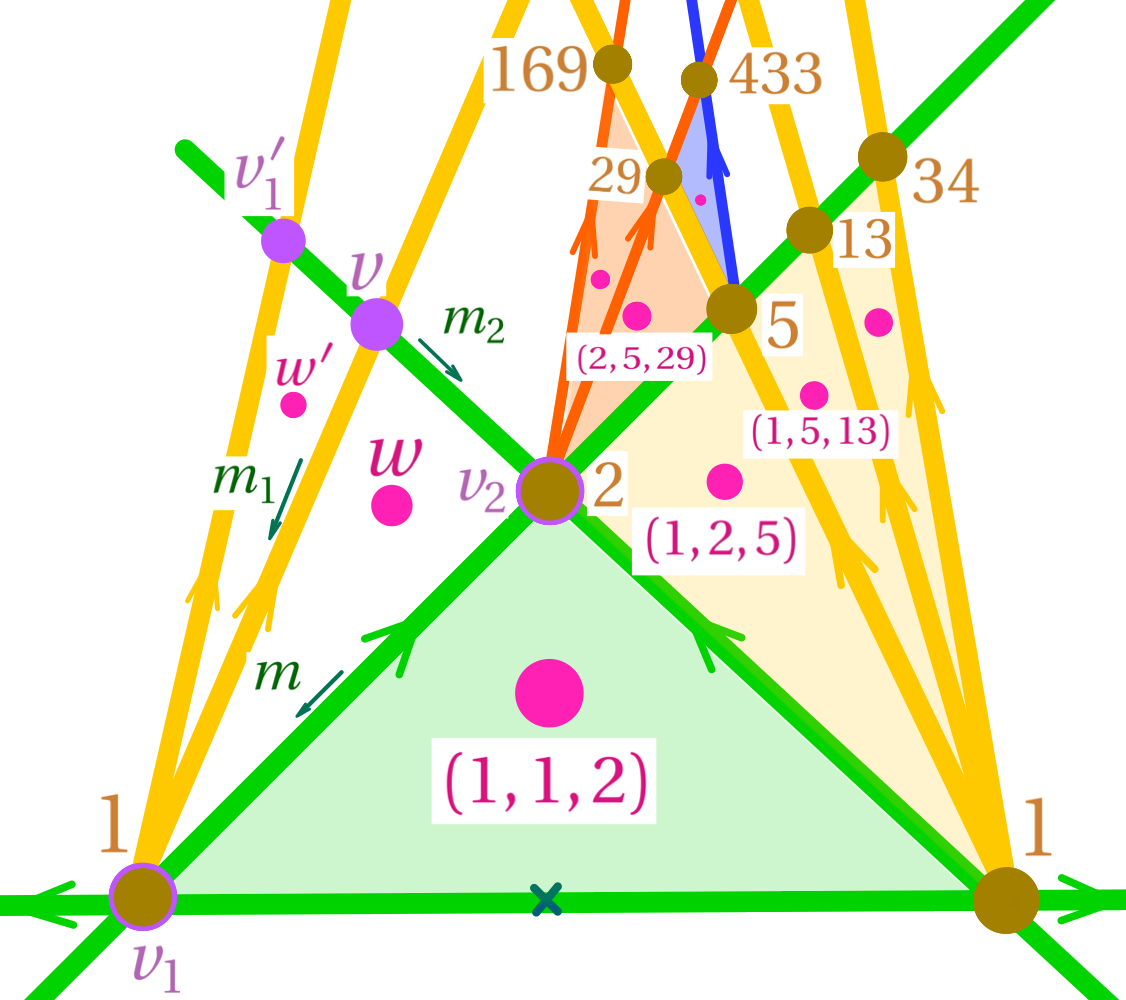}%
  }
  \caption{(1) The corresponding Markov tree $\mathcal{T}_{M}$ to $\CT$ in Figure \ref{Fig:Tree}.
  (2) The corresponding triangles to $\CT_M$: The numbers at the vertices are determinants at each vertex, and each triangle corresponds to a Markov triple given by the determinants at each of its vertices.
}
  \label{Fig: TreeMarkov}
\end{figure}

\begin{prop}
\label{prop:det order}
Let $w$ be a vertex of $\CT$ with $w\not=w_0$. Let $D,D''$ and $D'$ be
the determinants of the incoming, hybrid and outgoing vertices of 
$\Delta_w$ respectively. Then $D>D''>D'$.
\end{prop}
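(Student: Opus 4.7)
The proof is by induction on the depth of $w$ in $\CT$. By Lemma~\ref{Lem: degree=rank} and the remark following it, the determinants at the three vertices of any triangle $\Delta_w$ form a Markov triple, and passing from $\Delta_w$ to a child $\Delta_{w_j}$ corresponds to the standard Markov mutation $D_{v_{j'}} \mapsto 3 D_v D_{v_j} - D_{v_{j'}} =: D_{v'_j}$. For the base case, take $w_j$ a child of $w_0$: from Example~\ref{ex:initial triangle} the Markov triple at $\Delta_{w_0}$ is $(2,1,1)$ with $v$ incoming of determinant $2$, so the new triangle has determinants $(2,1,5)$. From Construction~\ref{susection: construction}, $v'_j$ is incoming and $v$ is hybrid, and since a non-root triangle has one vertex of each type, $v_j$ must be outgoing, yielding $5 > 2 > 1$.

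For the induction step, suppose $w \ne w_0$ and $w_j$ is a child of $w$, with labels so that (by induction) $D_v = D$ at the incoming vertex, $D_{v_1} = D''$ at the hybrid vertex, $D_{v_2} = D'$ at the outgoing vertex, and $D > D'' > D'$. Construction~\ref{susection: construction} tells us $v'_j$ is incoming and $v$ is hybrid in $\Delta_{w_j}$. I would verify that $v_j$ becomes outgoing in $\Delta_{w_j}$ by tracking ray orientations: since $v$ was incoming in $\Delta_w$, the edge $E_j$ is oriented from $v_j$ toward $v$ (so outgoing at $v_j$), and since $\fod_{w_j}$ has endpoint $v_j$, the new edge in $\Delta_{w_j}$ is also oriented away from $v_j$. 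Hence both edges at $v_j$ point outward.

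It then remains to show $D_{v'_j} > D > D_{v_j}$. The inequality $D > D_{v_j}$ is precisely one of the induction hypotheses ($D > D''$ if $j=1$, $D > D'$ if $j=2$). For the other, write $D_{v'_j} = 3 D D_{v_j} - D_{v_{j'}}$; since $D_{v_j} \ge 1$ we have $3 D D_{v_j} \ge 3D$, and by induction $D > D_{v_{j'}}\ge 1$, so $3 D D_{v_j} - D_{v_{j'}} \ge 3D - D_{v_{j'}} > D$. The main obstacle is the structural part: confirming that the retained vertex $v_j$ always becomes the outgoing vertex of the child triangle, which requires carefully unwinding the orientation conventions in Construction~\ref{susection: construction}. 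Once that is in place, the required inequality is an immediate consequence of the Markov mutation formula and the induction hypothesis.
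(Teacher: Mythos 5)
Your proof is correct and follows essentially the same route as the paper: an induction over $\CT$ that tracks how the incoming/hybrid/outgoing labels transform under mutation (the old incoming vertex becomes the hybrid vertex of the child, the retained vertex $v_j$ becomes outgoing, so the new hybrid dominates the new outgoing by the inductive hypothesis) combined with the Markov recursion for the determinants. The only cosmetic difference is that you get $D_{v'_j}>D$ directly from $3DD_{v_j}-D_{v_{j'}}\ge 3D-D_{v_{j'}}>D$, whereas the paper invokes the rank inequality $r(\CE_1)\ge r(\CE_0),r(\CE_2)$ for strong exceptional triples via Lemma~\ref{lem:slope inequalities}(1) and Lemma~\ref{Lem: degree=rank}; since the rank triple obeys the same mutation recursion, these are the same computation.
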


\begin{proof}
From Lemma~\ref{lem:slope inequalities},(1), one sees inductively
that for a strong exceptional triple $\CC=(\CE_0,\CE_1,\CE_2)$,
we have $r(\CE_1)\ge r(\CE_0),r(\CE_2)$, with equality 
if and only if $\CC$ is an initial strong
exceptional triple. Since $V_{\CE_1}$ is always the incoming vertex
of the corresponding triangle $\Delta_{\CC}$, we see that $D>D',D''$.
The ordering of $D'$ and $D''$ is proved inductively by observing,
e.g., from Figure~\ref{Fig: RL}, that if $w_1,w_2$ are the children
of $w$ in $\CT$, then the incoming vertex of $\Delta_w$ becomes
a hybrid vertex of $\Delta_{w_i}$, while the outgoing 
vertex of $\Delta_w$ becomes the outgoing vertex of one of $\Delta_{w_1}$,
$\Delta_{w_2}$, and the hybrid vertex of $\Delta_w$ becomes the outgoing
vertex of the other one of $\Delta_{w_1},\Delta_{w_2}$.
\end{proof}

\subsection{The structure of scattering at discrete points of 
$\text{\:}\foD^{\mathrm{stab}}$}
\label{subsec:discrete scat structure}
The discrete region of $\foD^{\mathrm{stab}}$ consists of triangles, and
each vertex of each triangle has two rays coming into the vertex. This
will always produce scattering of a standard, now much-studied form. We
review this here.

We first recall a slightly different notion of scattering diagram in 
$M_{\BR}$ than that of \S\ref{subsec:Bousseau scat}. 
A ray or line is of the form $(\fod,f_{\fod})$ with
$\fod$ as in Definition~\ref{Def: stabilityRay}, but $f_{\fod}(z^{m_{\fod}},
t_1,t_2)
\in \BQ[M][[t_1,t_2]]$ is a formal power series which satisfies 
$f_{\fod}\equiv 1 \bmod (t_1,t_2)$ and
$f_{\fod}\equiv 1\mod z^{m_{\fod}}$. 
Given a scattering diagram $\foD$ consisting
of such rays and a path $\gamma$ as in 
Definition~\ref{def:path ordered}, we may define the path-ordered
product $\theta_{\gamma,\foD}:\BQ[M][[t_1,t_2]]\rightarrow \BQ[M][[t_1,t_2]]$
by taking a composition of automorphisms associated to the rays or lines
crossed by $\gamma$. If at time $t_0$ the path $\gamma$ crosses 
$(\fod,f_{\fod})$, and $n\in N$ is the unique primitive element 
annihilating
$m_{\fod}$
and with $\langle n,\gamma'(t_0)\rangle <0$, then the associated
automorphism is given by $z^{m'}\mapsto z^{m'}f_{\fod}^{\langle n,m'\rangle}$.

Given a scattering diagram $\foD$, it is \emph{consistent} if $\theta_{\gamma,
\foD}$ is the identity for any loop for which $\theta_{\gamma,\foD}$ is
defined. If $\foD$ is a scattering diagram, there is a consistent scattering
diagram $\mathsf{S}(\foD)$ containing $\foD$ and such that 
$\mathsf{S}(\foD)\setminus\foD$ consists only of rays. Again, this is
\cite[Thm.~6]{KS-06} or \cite[Thm.~1.4]{GPS-10}.

\begin{definition}\label{Def: Rj}
Let $D$ be a positive integer. Define the sequence of integers $R_i(D)$
for $i\ge 0$ recursively by 
\[
\hbox{$R_0(D)=0, \quad R_1(D) = 1, \quad R_i(D)= 3DR_{i-1}(D)-R_{i-2}(D)$
for $i\ge 2$.}
\]
Define
\[
r_i(D):= R_{i+1}(D)/R_i(D)
\]
and 
\[
r_{\infty}(D):=\lim_{i\rightarrow \infty} r_i(D),
\]
which is easily seen to be
\[
r_{\infty}(D)={3D+\sqrt{9D^2-4}\over 2}.
\]
\end{definition}

We then have the following now well-studied example.

\begin{definition}
\label{def:notation for tims diagram}
Let $D$ be a positive integer. We define
\[
\foD'_{\mathrm{in}}:=\left\{\left(\BR (1,0), (1+t_1z^{(1,0)})^{3D}\right), 
\left(\BR (0,1), (1+t_2z^{(0,1)})^{3D}\right)\right\}.
\]
Define 
\[
m'_{i,1}=(R_{i+1}(D),R_i(D)),\quad m'_{i,2} = (R_i(D), R_{i+1}(D)),
\]
for $i\ge 1$, and define the ray
\[
\fod'_{i,j} := \left(\BR_{\le 0}m'_{i,j}, (1+t_1^{R_i(D)}t_2^{R_{i+1}(D)}
z^{m'_{i,j}})^{3D}\right),
\]
for $i\ge 1$, $j=1,2$.
\end{definition}

\begin{lemma}
\label{lem:basic scattering}
With notation as in Definition \ref{def:notation for tims diagram},
we may write
\[
{\mathsf S}(\foD')= \foD'_{\mathrm{in}} \cup \{\fod'_{i,j}\,|\, i\ge 1, j=1,2\}
\cup \foD'_{\mathrm{dense}},
\]
where $\foD'_{\mathrm{dense}}$ is a scattering diagram containing one 
non-trivial ray with support $\BR_{\le 0}(a,b)$ for $a,b$ relatively prime 
precisely when $a,b>0$ and
\begin{equation}
\label{eq:dense range}
r_{\infty}(D)^{-1}< b/a < r_{\infty}(D).
\end{equation}
None of the $\fod_{i,j}$ have slope in this range. Finally,
if $a,b$ are relatively prime and satisfy the inequality 
\eqref{eq:dense range} then the function $f$
attached to the ray $\BR_{\le 0}(a,b)$ in $\foD'_{\mathrm{dense}}$ takes
the form
\[
f=\prod_{k=1}^{\infty} (1+t_1^{ka}t_2^{kb}z^{(ka,kb)})^{c_{ka,kb}}
\]
for $c_{ka,kb}$ strictly positive integers.
\end{lemma}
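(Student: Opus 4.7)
The plan is to recognize $\foD'_{\mathrm{in}}$ as the well-studied initial scattering diagram for the $3D$-arrow Kronecker quiver, and either cite or reproduce the known structure theorem for its consistent completion (see for instance \cite{Graefnitz-Luo2023}, where exactly this family is analyzed). All three assertions---the explicit list of discrete rays $\fod'_{i,j}$, the characterization of the dense range via \eqref{eq:dense range}, and the positive-exponent product form on each dense ray---are standard features of this diagram.

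Working from first principles, I would first establish the discrete rays $\fod'_{i,j}$ by induction on $i$. The base case is the elementary Kontsevich--Soibelman computation of local scattering at the origin between $(1+t_1z^{(1,0)})^{3D}$ and $(1+t_2z^{(0,1)})^{3D}$, which produces the rays $\fod'_{1,1}$ and $\fod'_{1,2}$ with the asserted function. The inductive step exploits the fact that each pair of adjacent rays (either $\fod'_{i,j}$ together with $\fod'_{i-1,j}$, or one of the initial rays together with $\fod'_{1,j}$) has local functions of the same Kronecker form up to a unimodular change of coordinates; scattering between them therefore reproduces the same pattern and yields the next ray $\fod'_{i+1,j}$ with opposite vector $m'_{i+1,j} = 3D\,m'_{i,j} - m'_{i-1,j}$. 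This is precisely the recurrence defining $R_i(D)$, so the slopes $r_i(D)$ monotonically approach $r_\infty(D)$ from outside the range \eqref{eq:dense range}, and in particular none of the $\fod'_{i,j}$ enters the dense range.

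For the rays in $\foD'_{\mathrm{dense}}$, I would use the DT-theoretic interpretation of the scattering functions: on a ray of primitive slope $(a,b)$ in the wedge bounded by the limiting slopes $r_\infty(D)^{\pm 1}$, the coefficients $c_{ka,kb}$ are refined BPS invariants for the $3D$-Kronecker quiver with dimension vector $(ka,kb)$. The product form of $f$ is the standard exponential-of-BPS presentation, and integrality plus non-negativity follows either from Reineke's positivity for Euler characteristics of Kronecker quiver moduli, or from the general Gross--Hacking--Keel positivity theorem for two-dimensional consistent scattering diagrams. The main obstacle I anticipate is establishing \emph{strict} positivity of the $c_{ka,kb}$ for every primitive $(a,b)$ in the dense range---that is, showing that a non-trivial ray genuinely occurs in every admissible rational direction. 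This reduces to non-emptiness of the relevant moduli of semistable Kronecker representations, which holds precisely in the stated range; conveniently, the irrationality of $r_\infty(D)$ ensures that no primitive slope lies on the boundary, so the discrete and dense regimes separate cleanly.
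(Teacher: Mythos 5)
Your primary route---citing the known structure theory of the $3D$-Kronecker scattering diagram, in particular \cite{Graefnitz-Luo2023}---is exactly what the paper does: its entire proof is a one-line reference to \cite[Thm.~7, Lemmas~5.2, 5.3]{Gross-Pandharipande-10} together with \cite[Thm.~1.4]{Graefnitz-Luo2023}. Your supplementary first-principles sketch is a fair outline of how those references argue (the inductive production of the discrete rays via the diagram's linear symmetry, and positivity of the dense exponents via Kronecker moduli), but the paper does not reproduce any of it, so no further comparison is needed.
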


\begin{proof}
This follows from \cite[Thm.~7, Lemmas~5.2,
Lemma~5.3]{Gross-Pandharipande-10}  and \cite[Thm.~1.4]{Graefnitz-Luo2023}.
\end{proof}

\begin{figure}[h]
 \centering
    \includegraphics[width=9.2cm]{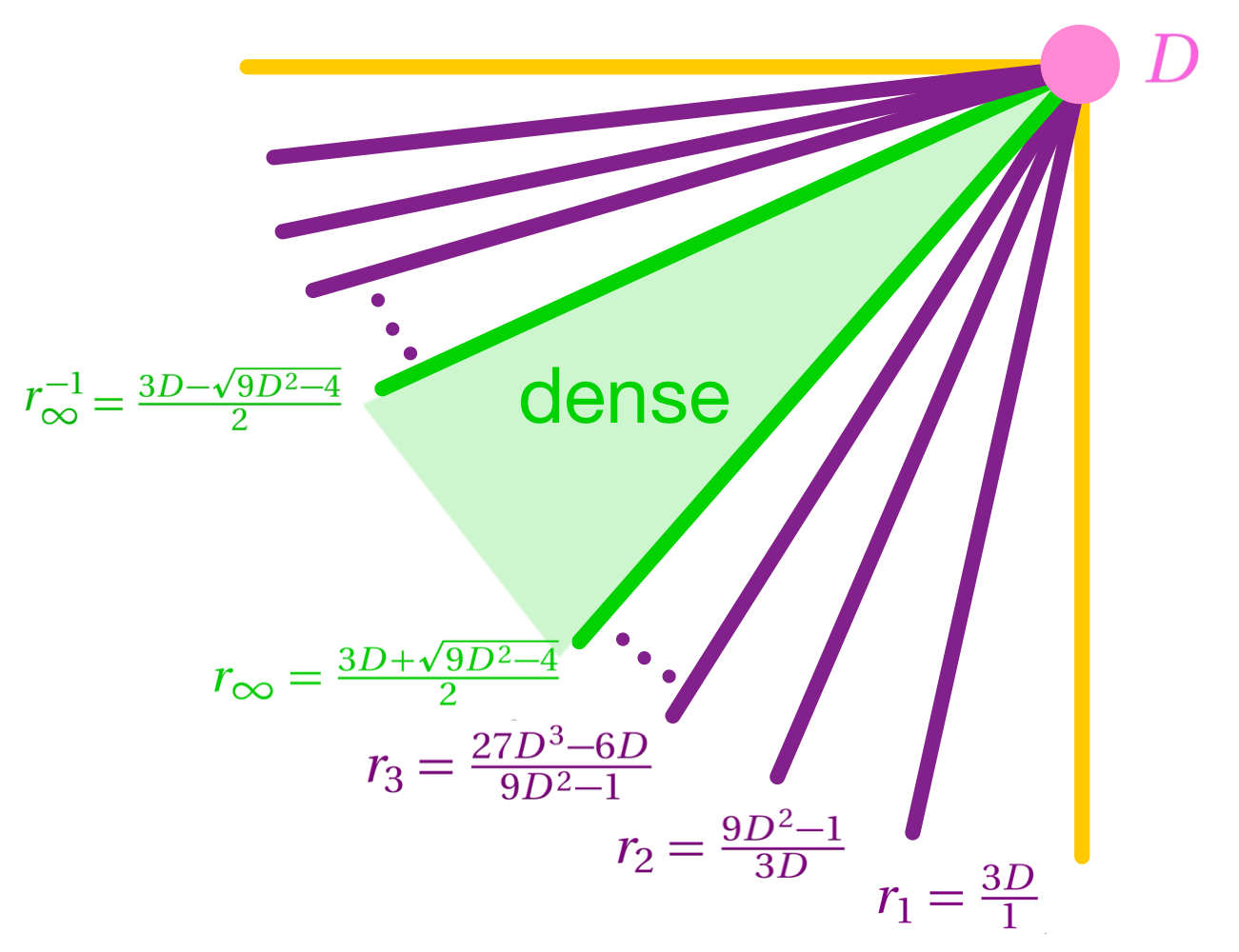}
  \caption{The scattering diagram of Lemma~\ref{lem:basic scattering}.
  }
  \label{Fig: dense region}
\end{figure}

We may then use this to analyze the scattering which will happen in
$\foD^{\mathrm{stab}}$. Assume given a stability scattering diagram $\foD$
consistent at a point
$\sigma\in M_{\BR}$, with local scattering diagram $\foD_{\sigma}$
as in Definition~\ref{def:local scatter}.
Let $m_1,m_2\in M$ be primitive and linearly independent. 
Assume that there are only two lines (as opposed to rays) in 
$\foD_{\sigma}$, with opposite vectors $m_1,m_2\in M$, primitive and
linearly independent. Further
assume these lines are of the form $(\fod_i,H^{\sigma}_i)$, 
where
\[
\fod_i=\BR m_i, \quad H_i = -\li_2(-t^{\varphi_{\sigma}(m_i)}z^{m_i}).
\]
Let $D=|m_1\wedge m_2|$, and set for $i\ge 0$,
\[
(a_{i,1},b_{i,1}) = (R_{i+1}(D),R_i(D)),\quad
(a_{i,2},b_{i,2}) = (R_i(D),R_{i+1}(D)),
\]
and 
\begin{equation}
\label{eq:mi1}
m_{i,j} = a_{i,j} m_1 + b_{i,j} m_2
\end{equation}
for $i\ge 0$, $j=1,2$.
By the definition of $R_i(D)$, it follows that
\begin{align}
\label{eq:recursive mij}
\begin{split}
m_{0,1}= m_1, \quad  m_{1,1}=3Dm_1+m_2, \quad & m_{i+1,1}
= 3D m_{i,1}-m_{i-1,1}\\
m_{0,2}= m_2, \quad  m_{1,2}=3Dm_2+m_1, \quad & m_{i+1,2}
= 3D m_{i,2}-m_{i-1,2}
\end{split}
\end{align}
Define
\[
\fod_{i,j}:=\left(-\BR_{\ge 0}m_{i,j}, 
-\li_2(-t^{\varphi_{\sigma}(m_{i,j})}z^{m_{i,j}})\right).
\]

\begin{lemma}
\label{lem:basic scattering2}
In the above situation, we may write
\begin{equation}
\label{eq:D from D'}
\foD_{\sigma}= \{(\fod_1,H^{\sigma}_1),(\fod_2,H^{\sigma}_2)\} 
\cup \{\fod_{i,j}\,|\, i\ge 1, j=1,2\}
\cup \foD_{\sigma,\mathrm{dense}},
\end{equation}
where $\foD_{\sigma,\mathrm{dense}}$ is a scattering diagram containing one
non-trivial ray with support 
$-\BR_{\ge 0} (am_1+bm_2)$ for $a,b$ relatively prime precisely when
$a,b>0$ and
\begin{equation}
\label{eq:slope range}
r_{\infty}(D)^{-1}< b/a < r_{\infty}(D).
\end{equation}
None of the $\fod_{i,j}$ have slope in this range.
\end{lemma}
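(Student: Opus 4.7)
The plan is to reduce Lemma~\ref{lem:basic scattering2} to Lemma~\ref{lem:basic scattering} via two steps: first, restricting to the rank-two sublattice $M'=\BZ m_1\oplus \BZ m_2 \subseteq M$; second, converting the Lie-algebra ``dilogarithm'' formalism to the ``multiplicative'' formalism by Remark~\ref{rem:H auto}, applied inside $M'$ where the skew form on the basis has absolute value $|\langle m_1,m_2\rangle|=3D$ rather than $3$. Once this is done, Lemma~\ref{lem:basic scattering} describes the scattering produced, and the claim follows by the uniqueness up to equivalence of the consistent extension of any initial scattering diagram (\cite[Thm.~6]{KS-06}).

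For the first step, $\BQ[M']\subseteq\BQ[M]$ is a sub-Lie-algebra since $M'$ is closed under addition, and since the two initial walls $(\fod_i,H_i^\sigma)$ are supported on monomials in $M'$, every wall produced by the scattering algorithm is also supported in $M'$; thus $\foD_\sigma$ may be computed entirely inside $\BQ[M']$. For the second step, repeating the computation of Remark~\ref{rem:H auto} inside the sublattice, the prefactor $3$ there is replaced by the absolute value of the skew form on the basis $m_1,m_2$, namely $3D$. Consequently, a Lie-algebra wall of the form $H=-\li_2(-t^{\varphi_\sigma(m)}z^m)$ corresponds to a multiplicative wall with function $(1+t^{\varphi_\sigma(m)}z^m)^{3D}$. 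Applied to the initial data and after specializing $t_j\mapsto t^{\varphi_\sigma(m_j)}$ together with the identification $e_j\leftrightarrow m_j$, this matches exactly the initial data $\foD'_{\mathrm{in}}$ of Lemma~\ref{lem:basic scattering}.

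Transporting the output of Lemma~\ref{lem:basic scattering} under this identification and specialization then yields the statement. Concretely, using $m'_{i,j}\mapsto a_{i,j}m_1+b_{i,j}m_2=m_{i,j}$, the ray $\fod'_{i,j}$ becomes a ray with support $-\BR_{\ge 0}m_{i,j}$ carrying the multiplicative function $(1+t^{\varphi_\sigma(m_{i,j})}z^{m_{i,j}})^{3D}$, which by the inverse of the Remark~\ref{rem:H auto} calculation corresponds to the Lie-algebra wall $-\li_2(-t^{\varphi_\sigma(m_{i,j})}z^{m_{i,j}})$---precisely the definition of $\fod_{i,j}$. The dense scattering rays of slope inside $(r_\infty(D)^{-1},r_\infty(D))$ transfer verbatim, as does the absence of other rays outside this slope range.

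The main technical point requiring care is the compatibility of the specialization $t_j\mapsto t^{\varphi_\sigma(m_j)}$ with the finite-order truncations that underlie the scattering algorithm: one must check that the $k$-truncated outputs on the two sides correspond under this substitution. This is guaranteed by the finiteness condition in Definition~\ref{Def: stabilityScattering}(2) together with the positivity $\varphi_\sigma(m_\fod)>0$ of Definition~\ref{Def: stabilityScattering}(1), which together ensure that only finitely many walls contribute at each finite $t$-order, so the two scattering algorithms may be matched level by level, and hence in the inverse limit.
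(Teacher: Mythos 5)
Your proposal is correct and follows essentially the same route as the paper's proof: the change-of-lattice trick to $M'=\BZ m_1\oplus\BZ m_2$, conversion between the dilogarithm and multiplicative formalisms via Remark~\ref{rem:H auto}, an appeal to Lemma~\ref{lem:basic scattering}, and uniqueness of consistent completions. The only substantive difference is bookkeeping: by normalizing intrinsically in $M'$ with the uniform exponent $3D$ (legitimate because $\langle m,\cdot\rangle/(3D)$ is primitive in $N'$ for \emph{every} primitive $m\in M'$, a one-line check worth recording), you sidestep the paper's explicit gcd computations verifying that the primitive normal to each output direction $m_{i,j}$ in $N$ is exactly $D$ times the primitive normal in $N'$, as well as its closing roots-of-unity argument for upgrading consistency on $\BQ[M']$ to consistency on $\BQ[M]$.
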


\begin{proof}
This will follow from Lemma~\ref{lem:basic scattering} via
the \emph{change of lattice trick},
see \cite[Proposition C.13, Step IV]{GHKK} or 
\cite[Section 2.3]{Graefnitz-Luo2023}.

Let $\foD_{\mathrm{in}}=\{(\fod_1,H^{\sigma}_1),(\fod_2,H_2^{\sigma})\}$, 
recalling that
$H_i^{\sigma}=-\li_2(-t^{\varphi_{\sigma}(m_i)}z^{m_i})$.
Note that if we
instead define $\foD_{\mathrm{in}}'=\{(\fod_1,H_1'),
(\fod_2,H_2'))\}$ with $H_i'=-\li_2(-t_iz^{m_i})$,  working in
$\BQ[M][[t_1,t_2]]$ rather than $\BQ[M][t^{\BR_{\ge 0}}]$,
we may apply \cite[Thm.~6]{KS-06} or 
\cite[Thm.~1.4]{GPS-10} to obtain a consistent diagram $\foD'_{\sigma}$ from
$\foD_{\mathrm{in}}'$. We may then obtain
$\foD_{\sigma}$ from $\foD'_{\sigma}$ by applying the ring homomorphism
$\BQ[M][[t_1,t_2]]\rightarrow \BQ[M][t^{\BR_{\ge 0}}]$ given by
$t_i\mapsto t^{\varphi_{\sigma}(m_i)}$.

We then further replace
$H^{\sigma}_i$ with the corresponding function
$f_i$ in \eqref{eq:H to f}, 
in order to convert $\foD'_{\mathrm{in}}$ into
a scattering diagram of the sort used in Lemma~\ref{lem:basic scattering}.
Note that
$f_i= (1+t_iz^{m_i})^3$, so that
$\theta_{H_i'}=\bar\theta_{f_i}$ 
in the notation of the remark.

We now apply the change of lattice trick. We replace the
lattice $M$ with the sublattice $M'$ generated by
$m_1$ and $m_2$. The index of this sublattice in $M$
is $D$. The dual lattice $N'$ of $M'$ is
a superlattice of $N$. Note that if $n_i\in N$
is a primitive vector with $\langle n_i,m_i\rangle = 0$,
$\langle n_i,m_{i'}\rangle >0$ for $\{i,i'\}=\{1,2\}$, then one
checks easily, e.g., by writing $m_i, n_i$ in the standard basis 
for $M$ and $N$,
that $n_i = D n_i'$ for $n_i'\in N'$ primitive. 

Consider for $i=1,2$ the line
$\fod_i':=\big(\BR m_i, (1+t_iz^{m_i})^{3D}\big)$ viewed as a line in
a scattering diagram in the lattice $M'$. Because
$n_i=Dn_i'$, in fact 
the wall-crossing automorphism of $\BQ[M][[t_1,t_2]]$
associated to $\fod_i$ agrees, after restriction to $\BQ[M'][[t_1,t_2]]$,
with the wall-crossing automorphism associated to $\fod_i'$.

We may now consider the initial scattering diagram $\foD'_{\mathrm{in}}$
consisting of the
lines $\fod_1',\fod_2'$, again with respect to the lattice $M'$.
Let $\foD'=\mathsf{S}(\foD'_{\mathrm{in}})$. This is the scattering
diagram analyzed in Lemma~\ref{lem:basic scattering}. We just need to 
check that the rays described in Lemma~\ref{lem:basic scattering} for
the lattice $M'$ then correspond to the rays described in the current
lemma.

Using the notation $m_{i,j}$ of the statement, we note that if
we write an element of $M'$ as vector using the basis $m_1,m_2$,
in fact $m_{i,j}\in M$ agrees with $m'_{i,j}\in M'$ of
Lemma~\ref{lem:basic scattering} under the inclusion
$M'\subseteq M$. Thus the rays $\fod'_{i,j}$ and $\fod_{i,j}$ have
the same support in $M'_{\BR}=M_{\BR}$. 

We may then recover $\foD$ from $\foD'$ by modifying each function attached
to a ray $\fod'$ in $\foD'$ to give a ray $\fod$ so that the automorphism
associated to $\fod'$ is the restriction 
to $\BQ[M'][[t_1,t_2]]$ of the automorphism associated to $\fod$, 
see \cite[Prop.~2.16]{Graefnitz-Luo2023}. Thus it only remains to check
that $\fod'_{i,j}$ induces the same automorphism of $\BQ[M'][[t_1,t_2]]$
as $\fod_{i,j}$ does. For this, note that in $N'$, a primitive
vector annihilating $m_{i,j}$ is $n'_{i,j}=(-a_{i,j} n'_2 + b_{i,j} n'_1)/\gcd(a_{i,j},
b_{i,j})$. However, say
for $j=1$, we have
\[
\gcd(a_{i,1},b_{i,1})=\gcd(3D a_{i-1,1}-b_{i-1,1}, a_{i-1,1})=
\gcd(a_{i-1,1},b_{i-1,1})=\cdots=\gcd(a_{0,1},b_{0,1})=1.
\]
Similarly by induction, note that for each $i$, either $3D|a_{i,j}$
and $\gcd(3D,b_{i,j})=1$, or $3D|b_{i,j}$ and $\gcd(3D,a_{i,j})=1$, with
the two choices depending on the parity of $i$. Without loss of generality,
assuming the former
of the two choices, we see that $-a_{i,j} n'_2 + b_{i,j} n'_1 \equiv
b_{i,j} n'_1 \mod N$, and thus this normal vector 
represents an element of order $D$ in $N'/N$.
From this it follows that 
the primitive normal vector to $m_{i,j}$ in $N$
must be $D n'_{i,j}$. 
Hence indeed $\fod_{i,j}$ and $\fod'_{i,j}$ induce
the same automorphism on $\BC[M'][[t_1,t_2]]$. 

By following a similar process for dense rays $(\fod',f_{\fod'})\in
\foD'_{\mathrm{dense}}$ to obtain $(\fod,f_{\fod})$ inducing the
same automorphism of $\BC[M'][[t_1,t_2]]$, we obtain a scattering diagram 
$\foD_{\sigma,\mathrm{dense}}$. 
We note that the relationship between $f_{\fod'}$ and $f_{\fod}$ may
not be as simple as described above, but we do not need the precise
form. This then defines a scattering diagram $\foD$ as in \eqref{eq:D from D'}
with the property that, for a loop $\gamma$ around the origin,
$\theta_{\gamma,\foD}$ and $\theta_{\gamma,\foD'}$ agree on 
$\BC[M'][[t_1,t_2]]$, i.e., are both the identity. Since $\theta_{\gamma,\foD}$
is a ring automorphism, this implies that for $m\in M$,
$\theta_{\gamma,\foD}(z^m) = \xi z^m$ for some root of unity $\xi\in\BC$.
Since $\theta_{\gamma,\foD}$ is the identity modulo $\mathfrak{m}$,
we conclude $\xi=1$ always. Thus $\theta_{\gamma,\foD}$ is the identity.
\end{proof}

The following formula will be useful later:

\begin{prop}[Closed formula for $R_j(D)$]\label{Prop: Rj}

Let $m =\lfloor j/2\rfloor$.
Then 
    \begin{align}\label{eq:Rk}
    R_{j}(D)=\sum_{l=0}^{m}(-1)^l{j-l-1 \choose l}\big(3D\big)^{j-2l-1}.
\end{align}
Here we use the convention that ${m-1\choose m} =0$.
\end{prop}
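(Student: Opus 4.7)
The plan is to prove the formula by induction on $j$, verifying directly that the right-hand side satisfies the same linear recurrence and initial conditions as $R_j(D)$. Let
\[
S_j := \sum_{l=0}^{\lfloor j/2 \rfloor}(-1)^l\binom{j-l-1}{l}(3D)^{j-2l-1},
\]
with the convention $\binom{m-1}{m}=0$. First I would verify the base cases: $S_1 = \binom{0}{0}(3D)^0 = 1 = R_1(D)$ and $S_2 = \binom{1}{0}(3D)^1 = 3D = R_2(D)$.

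For the induction step, assuming the formula holds for indices less than $j$, I compute
\[
3D\,S_{j-1} - S_{j-2}
= \sum_{l\ge 0}(-1)^l\binom{j-l-2}{l}(3D)^{j-2l-1}
- \sum_{l\ge 0}(-1)^l\binom{j-l-3}{l}(3D)^{j-2l-3}.
\]
Reindexing the second sum via $l\mapsto l-1$ converts it into $\sum_{l\ge 1}(-1)^l\binom{j-l-2}{l-1}(3D)^{j-2l-1}$ (with opposite sign absorbed), so that
\[
3D\,S_{j-1} - S_{j-2}
= \sum_{l\ge 0}(-1)^l\left[\binom{j-l-2}{l} + \binom{j-l-2}{l-1}\right](3D)^{j-2l-1}.
\]
By Pascal's identity, the bracket equals $\binom{j-l-1}{l}$, and the resulting sum is exactly $S_j$ (one checks that terms for $l>\lfloor j/2\rfloor$ vanish using the convention on binomials). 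Combined with the base cases, this matches the recursion $R_j(D)=3DR_{j-1}(D)-R_{j-2}(D)$ and yields $S_j = R_j(D)$.

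The only subtle step is the bookkeeping of the index ranges at the boundary: depending on the parity of $j$, the upper limits $\lfloor j/2\rfloor$, $\lfloor (j-1)/2\rfloor$ and $\lfloor (j-2)/2\rfloor+1$ differ by one, and one must check that the binomial-coefficient convention $\binom{m-1}{m}=0$ absorbs the discrepancy so that the three sums can legitimately be combined under a common range. Once this is verified, Pascal's identity closes the argument and no further calculation is needed.
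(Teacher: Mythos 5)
Your proposal is correct and follows essentially the same route as the paper's own proof: induction on $j$ using the recurrence $R_j(D)=3DR_{j-1}(D)-R_{j-2}(D)$, a reindexing of the second sum, and Pascal's identity (which the paper writes in the rearranged form $\binom{p-l-2}{l-1}=\binom{p-l-1}{l}-\binom{p-l-2}{l}$ to exhibit the cancellation). The only differences are cosmetic, e.g.\ your base cases are $j=1,2$ versus the paper's $j=0,1$.
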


\begin{proof}
    We prove this by induction. For $j=0$ and $1$, the formula obviously holds. Now, we assume that the formula holds for $j\leq p-1$, and we want to show that it holds for $j=p$ as well. 

Let $m':=\lfloor (p-1)/2\rfloor$ and $m''=\lfloor (p-2)/2\rfloor$. Then
$m''=m-1$ and either $m=m'$ or $m'=m''$ depending on whether $p$ is odd
or even respectively. We get
\begin{align*}
R_{p}(D)={}& (3D)R_{p-1}(D)-R_{p-2}(D)\\
={}&\sum_{l=0}^{m'}(-1)^l{p-l-2 \choose l}\big(3D\big)^{p-2l-1}-\sum_{l=0}^{m''}(-1)^l{p-l-3 \choose l}\big(3D\big)^{p-2l-3}\\
={}&\sum_{l=0}^{m'}(-1)^l{p-l-2 \choose l}\big(3D\big)^{p-2l-1}-\sum_{l=1}^{m}(-1)^{l-1}{p-l-2 \choose l-1}\big(3D\big)^{p-2l-1},
\end{align*}
where in the last summation we have shifted the index of summation by $1$.
Using that ${p-l-2 \choose l-1}={p-l-1 \choose l}-{p-l-2 \choose l}$ in the
last term, 
the above yields
\begin{align*}
 &(3D)^{p-1}+\sum_{l=1}^{m'}(-1)^l{p-l-2 \choose l}\big(3D\big)^{p-2l-1}\\
-&\sum_{l=1}^{m}(-1)^{l-1}{p-l-1 \choose l}\big(3D\big)^{p-2l-1}+\sum_{l=1}^{m}(-1)^{l-1}{p-l-2 \choose l}\big(3D\big)^{p-2l-1}.
\end{align*}
The first and third sums cancel, noting that in the case that $m'=m-1$,
the $l=m$ contribution in the third term vanishes. Thus we obtain
\[
(3D)^{p-1}+\sum_{l=1}^{m}(-1)^{l}{p-l-1 \choose l}\big(3D\big)^{p-2l-1}=\sum_{l=0}^{m}(-1)^{l}{p-l-1 \choose l}\big(3D\big)^{p-2l-1},
\]
as claimed.
\end{proof}

\begin{corollary}\label{cor: RjIncreasing} For each $j$, we have $$R_{j+1}(D)>R_{j}(D).$$  
\end{corollary}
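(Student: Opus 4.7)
The plan is to proceed by a simple induction on $j$, exploiting the linear recursion $R_{j+1}(D) = 3D R_j(D) - R_{j-1}(D)$ from Definition~\ref{Def: Rj} together with the hypothesis $D \ge 1$. I would prove simultaneously the two statements (i) $R_j(D) > R_{j-1}(D)$ and (ii) $R_j(D) > 0$ for all $j \ge 1$, since the strict positivity of the terms is what lets the recursion preserve monotonicity.

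For the base cases, we have $R_1(D) = 1 > 0 = R_0(D)$ and $R_2(D) = 3D \ge 3 > 1 = R_1(D)$, establishing both (i) and (ii) for $j = 1, 2$. For the inductive step, assume $R_j(D) > R_{j-1}(D) > 0$ for some $j \ge 2$. Then
\begin{equation*}
R_{j+1}(D) - R_j(D) \;=\; (3D - 1)R_j(D) - R_{j-1}(D) \;\ge\; 2R_j(D) - R_{j-1}(D) \;>\; R_{j-1}(D) \;>\; 0,
\end{equation*}
where the first inequality uses $3D - 1 \ge 2$ (since $D \ge 1$) and the second uses the inductive hypothesis $R_j(D) > R_{j-1}(D)$. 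This gives both $R_{j+1}(D) > R_j(D)$ and $R_{j+1}(D) > 0$, closing the induction.

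There is no genuine obstacle here: the key observation is just that the recursion coefficient $3D$ is large enough (at least $3$) that, even after subtracting the previous term, the sequence remains strictly increasing. Alternatively, one could read the result off directly from the closed formula of Proposition~\ref{Prop: Rj} or from the fact, used implicitly elsewhere in the paper, that $r_i(D) = R_{i+1}(D)/R_i(D)$ converges increasingly to $r_\infty(D) = (3D + \sqrt{9D^2 - 4})/2 > 1$; but the two-line induction above is the cleanest route.
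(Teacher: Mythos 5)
Your proof is correct. The paper actually states this as an unproved corollary immediately after the closed formula of Proposition~\ref{Prop: Rj}, so there is no argument in the text to compare against; implicitly the reader is meant to extract monotonicity either from that alternating-sum formula or from the recursion. Your route is the cleaner of the two: a direct simultaneous induction on positivity and monotonicity using only $R_{j+1}(D)=3DR_j(D)-R_{j-1}(D)$ and $D\ge 1$, which avoids any bookkeeping with the alternating signs in the closed formula. The base cases and the chain $R_{j+1}(D)-R_j(D)=(3D-1)R_j(D)-R_{j-1}(D)\ge 2R_j(D)-R_{j-1}(D)>R_{j-1}(D)>0$ are all valid (the middle inequality is exactly the inductive hypothesis $R_j(D)>R_{j-1}(D)$, and positivity of $R_{j-1}(D)$ is carried along in the induction), so the argument is complete and self-contained.
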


\begin{theorem}
\label{thm:local structure}
Let $v\in M_{\BR}$ be either an incoming vertex of a triangle 
$\Delta_w$ or any vertex of $\Delta_{w_0}$. 
Then the local scattering diagram $\foD^{\mathrm{stab}}_v$ contains 
precisely two lines,
$(\fod_1,H_1)$, $(\fod_2,H_2)$,
corresponding to the two rays of $\text{\:}\foD^{\mathrm{stab}}_{\mathrm{in}}
\cup\foD_{\mathrm{discrete}}$ containing the two edges of $\Delta_w$
containing $v$. It is equivalent to the scattering diagram
$\foD_{\sigma}$ described in Lemma~\ref{lem:basic scattering2} with $D=D_v$. Further,
the rays of $\text{\:}\foD^{\mathrm{stab}}_v$ arising from elements of
$\text{\:}\foD_{\mathrm{discrete}}$ are precisely the rays $\{\fod_{i,j}\,|\,
i\ge 1, j=1,2\}$ of Lemma~\ref{lem:basic scattering2}.
\end{theorem}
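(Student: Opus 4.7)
The strategy has three stages: identify the two lines at $v$, enumerate the discrete rays based at $v$, and combine with consistency and uniqueness of the scattering diagram extending a pair of initial lines. I treat the case when $v$ is the incoming vertex of some $\Delta_w$; vertices of $\Delta_{w_0}$ are handled analogously, with $T$-invariance accounting for the two hybrid vertices shared with the translates $T^{\pm 1}(\Delta_{w_0})$. Let $E_1, E_2$ be the edges of $\Delta_w$ meeting $v$, with opposite vertices $v_1, v_2$. By Theorem~\ref{thm:mu build}(3), each $E_i$ is contained in a unique ray $(\fod_i, H_{\fod_i}) \in \foD^{\mathrm{stab}}_{\mathrm{in}} \cup \foD^0_{\mathrm{discrete}}$ with $H_{\fod_i} = -\li_2(-z^{m_i})$ and endpoint $v_i \ne v$; since $v$ is incoming, $v$ lies in the interior of each $\fod_i$, so $T_v(\fod_i)$ is a line, and by Lemma~\ref{Lem: degree=rank} we have $|m_1 \wedge m_2| = D_v =: D$. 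To rule out further lines, Corollary~\ref{Cor:equivofScatPartial} allows us to work within $\foD^{\mathrm{stab}}_{\mathrm{in}} \cup \foD_{\mathrm{discrete}}$; any other line through $v$ would arise as a union of collinear edges of the triangulation of Corollary~\ref{Cor: RDelta}, and enumerating the star of $v$ via Construction~\ref{susection: construction} (edges from $\Delta_w$, from $\Delta_{w_1}, \Delta_{w_2}$, and from $v$-preserving descendants) shows only the pairs $(E_i, v v'_{i'})$ are collinear, yielding exactly the two lines $\fod_1, \fod_2$.

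Next, to identify the rays of $\foD_{\mathrm{discrete}}$ with endpoint $v$, note that the $v$-containing triangles form an infinite subtree of $\CT$ rooted at $\Delta_w$: both children $\Delta_{w_1}, \Delta_{w_2}$ contain $v$, and below them each $v$-containing triangle has exactly one $v$-containing child (mutation at a vertex other than $v$ preserves $v$, while mutation at $v$ itself removes it). A new ray based at $v$ is created precisely when $v$ plays the role of the kept vertex $v_j$ in Construction~\ref{susection: construction}, which happens once at each step along either of the two infinite branches from $\Delta_w$. Applying the identities \eqref{eq:ddr1}--\eqref{eq:ddr4} iteratively, and using that $D_v = D$ is invariant (Lemma~\ref{Lem: degree=rank}), the successive opposite vectors $m^v_{k,j}$ satisfy
\begin{equation*}
m^v_{0,j} = m_j,\qquad m^v_{1,j} = 3D\,m_j + m_{j'},\qquad m^v_{k+1,j} = 3D\,m^v_{k,j} - m^v_{k-1,j}\qquad (\{j,j'\}=\{1,2\}),
\end{equation*}
which is precisely the recursion \eqref{eq:recursive mij}, and the attached functions $-\li_2(-z^{m^v_{k,j}})$ match those of Lemma~\ref{lem:basic scattering2}. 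This identifies the set of such rays with $\{\fod_{i,j} \mid i \ge 1,\ j = 1, 2\}$.

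Finally, $\foD^{\mathrm{stab}}_v$ is consistent (as $\foD^{\mathrm{stab}}$ is consistent on $U$), contains the two lines, and contains the discrete rays just identified. Invoking the uniqueness, up to equivalence, of a consistent scattering diagram extending a pair of initial lines---which is computed explicitly in Lemma~\ref{lem:basic scattering2} for lattice determinant $D$---we conclude that $\foD^{\mathrm{stab}}_v$ is equivalent to the scattering diagram of Lemma~\ref{lem:basic scattering2}. Any remaining rays of $\foD^{\mathrm{stab}}_v$ outside $\foD_{\mathrm{discrete}}$ are thereby constrained to lie in the dense cone described there. The main obstacle is the explicit recursion in the second paragraph: the mutation identities \eqref{eq:ddr1}--\eqref{eq:ddr4} come in two forms (right- and left-mutation), so I expect the bookkeeping to require a case split in both the base and the inductive step, with the invariance provided by Lemma~\ref{Lem: degree=rank} being essential for the recursion constant to stay equal to $D$ regardless of which case is active.
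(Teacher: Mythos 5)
Your overall architecture matches the paper's proof: identify the two lines via Theorem~\ref{thm:mu build}(3), enumerate the discrete rays with endpoint $v$ by following the chain of $v$-containing descendants in $\CT$ and matching the resulting recursion against \eqref{eq:recursive mij}, and then invoke consistency plus uniqueness of scattering to conclude equivalence with the diagram of Lemma~\ref{lem:basic scattering2}. The second and third stages of your argument are essentially the paper's, and are fine.

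The gap is in your first stage, where you rule out additional lines through $v$. You assert that ``any other line through $v$ would arise as a union of collinear edges of the triangulation'' and justify working inside $\foD^{\mathrm{stab}}_{\mathrm{in}}\cup\foD_{\mathrm{discrete}}$ by Corollary~\ref{Cor:equivofScatPartial}. But that corollary only controls $\foD^{\mathrm{stab}}$ inside $R_{\Delta}$, and $v$ sits on the boundary of $R_{\Delta}$: a priori a ray of $\foD^{\mathrm{stab}}$ could contain $v$ in its interior with one half entering the cone $C^{\mathrm{out}}_v$ (where the dense rays live, outside $R_\Delta$), and your enumeration of the star of $v$ says nothing about such a ray. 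What closes this case is a geometric observation you never make: near $v$ the complement of $\Delta_w\cup\Delta_{w_1}\cup\Delta_{w_2}$ is a \emph{strictly convex} cone, so for any line through $v$ at least one of its two half-rays must enter one of these three triangles; if it is not along $E_1$, $E_2$ or their extensions, it meets a triangle interior, which is forbidden by Theorem~\ref{thm:bousseau generalization}. (In particular, a line with one half in the dense cone has its opposite half in $\Int(\Delta_w)$.) This is the argument the paper uses, and without it your claim that every line is supported on edges of the triangulation is unjustified. The same issue recurs, in sharper form, for the hybrid vertices of $\Delta_{w_0}$: there the region below $\fod_{n-1}^+\cup\fod_n^-$ is not triangulated at all, and its ray-freeness is a separate input (Bousseau's Lemma~4.10, i.e., emptiness of $\overline{U}^{\mathrm{in}}_{0,L}$ and $\overline{U}^{\mathrm{in}}_{0,R}$), which your appeal to $T$-invariance does not supply.
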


\begin{proof}
We first observe that the only lines in $\foD^{\mathrm{stab}}_v$ have
support as advertised. 
Note that if $w_1,w_2$ are the two children of $w$, then
locally near $v$, $M_{\BR}\setminus (\Delta_{w_1}\cup\Delta_{w_2}\cup
\Delta_w)$ is a convex cone, see Figure \ref{Fig: Tw}. Thus any ray $\fod$
containing $v$ for which $\fod$ is not an endpoint must intersect
one of these three triangles. However, since there are no rays
of $\foD^{\mathrm{stab}}$ intersecting the interiors of these triangles
by Theorem \ref{thm:bousseau generalization}, the only choice is
that $\fod$ contains one of the two edges of $\Delta_w$ containing $v$.

If $v$ is instead one of the hybrid vertices of $\Delta_{w_0}$, then it is
the point of intersection of $\fod_{n-1}^+$ and $\fod_n^-$ for $n=0$ or $n=1$.
The argument of the
first paragraph also applies in this case, as the regions
$\overline{U}^{\mathrm{in}}_{0,L},\overline{U}^{\mathrm{in}}_{0,R}$
(see Figure~\ref{Fig: InitialTriangle})
below these
two rays is empty by \cite[Lem.~4.10]{Bousseau-scatteringP2-22}.

In either case, we know from Theorem \ref{thm:mu build} that the wall functions
attached to these two lines are as given in Lemma~\ref{lem:basic scattering2}.
Given this, by uniqueness of the construction of scattering diagrams
$\foD_v^{\mathrm{stab}}$ must be equivalent to the scattering diagram
given in Lemma~\ref{lem:basic scattering2}. 

The only thing remaining to
check is that all the rays $\{\fod_{i,j}\,|\,i\ge 1, j=1,2\}$
are accounted for in $\foD_{\mathrm{discrete}}$. This follows, however,
from the construction of $\foD_{\mathrm{discrete}}$. Assuming that
$v$ is an incoming vertex of $\Delta_w$, we obtain all the discrete
rays with endpoint $v$ as follows. First one chooses one of the children
$w_1$
of $w$ in $\CT$.
One then repeatedly follows hybrid edges in $\CT$ to obtain
an infinite chain of vertices $w_2,w_3,\cdots$. Then $\fod_{w_j}-v$
coincides with $\fod_{i,j}$ for some $i=1$ or $2$. Which we get depends
on which child of $w$ was chosen. This follows by
comparing \eqref{eq:dwj def} and \eqref{eq:recursive mij}.

If instead $v$ is one of the hybrid vertices of $\Delta_{w_0}$, the
situation is slightly more complicated. Without loss of generality, 
take $v=(-1/2,0)$. Then by first choosing the child $w_1$ of $w_0$
which corresponds to mutating at the vertex $(1/2,0)$ and then following
a path of outgoing edges, we obtain the rays $\fod_{i,j}$ for one choice
of $i$. To get the other choice of $i$, we follow the same procedure at
$(1/2,0)$ instead of $(-1/2,0)$ and then apply $T^{-1}$.

  \begin{figure}[h]
    \centering
    \includegraphics[width=7.8cm]{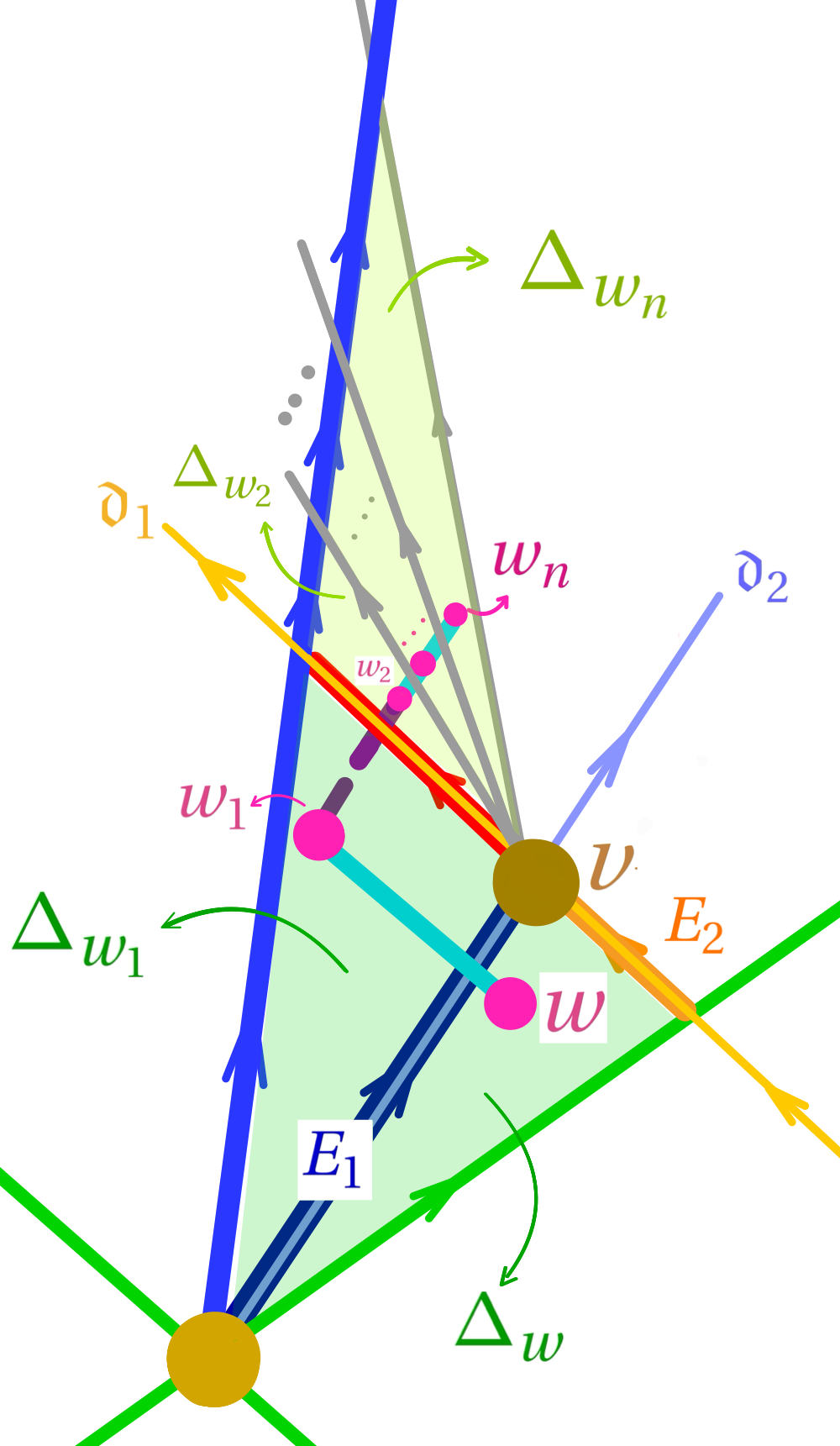}
    \caption{$\Delta_{w_1},\ldots,\Delta_{w_n}$ together with $\Delta_{w}$ with a common vertex $v$. Also, the corresponding part of $\CT$ is pictured here. 
} 
    \label{Fig: Twn}
\end{figure}

\end{proof}

\begin{definition}[ $\foD_{\mathrm{dense}}$]\label{DDense}
We define $\foD_{\mathrm{dense}}$ to be the set of rays of 
$\foD^{\mathrm{stab}}$ which are not elements of
$\foD^{\mathrm{stab}}_{\mathrm{in}}\cup \foD_{\mathrm{discrete}}$
but have endpoint a vertex of some triangle $T^k(\Delta_w)$.
\end{definition}

It follows from Theorem~\ref{thm:local structure} 
that the local scattering diagram $\foD_{\mathrm{dense},v}$ agrees with the 
diagram $\foD_{v,\mathrm{dense}}$ of Lemma~\ref{lem:basic scattering2}.

\section{Diamonds}\label{Section: diamonds}
In this brief section, we provide definitions related to the structure of diamonds.

\begin{definition}
\label{Def:dense cone}
Let $w$ be a vertex of $\mathcal{T}$ and $v$ the incoming vertex of
$\Delta_w$. Then $v$ is the intersection of two rays of 
$\{\fod_{-1}^+,\fod_1^-\}\cup\foD^0_{\mathrm{discrete}}$,
say $\fod_1,\fod_2$, with $v$ lying in the interior of each, see
Theorem~\ref{thm:local structure}. If $m_1,m_2$ are the
opposite vectors of $\fod_1,\fod_2$, define the cone
\[
C_v:=\operatorname{closure}\left(\left\{v-(am_1+bm_2)\,\Big|\, a,b\in\BR_{>0}, \quad
{3D_v-\sqrt{9D_v^2-4}\over 2}<{b\over a} <
{3D_v+\sqrt{9D_v^2-4}\over 2}\right\}\right).
\]
We can then define $C_{T^k(v)}=T^k(C_v)$ for any
$k\in\BZ$. 

If instead $v$ is the intersection of
$\fod^+_n$ and $\fod^-_{n+1}$ for some $n$,\footnote{We note that
such a vertex is never an incoming vertex of a triangle.}
then we may define
\[
C_v := \operatorname{closure}\left(\left\{a (1,-n) + b (-1, (n+1))\,\Big|\,
a,b\in \BR_{>0},
\quad
{3-\sqrt{5}\over 2} <{b\over a}<{3+\sqrt{5}\over 2}\right\}\right).
\]
We call such a vertex an \emph{initial vertex}. 

See Figure \ref{Fig: Cone}.
\end{definition}

\begin{remark}
By Theorem~\ref{thm:local structure} every ray of rational slope with endpoint
$v$ and contained in $C_v$ is in fact a non-trivial ray of
$\foD^{\mathrm{stab}}$.
\end{remark}

\begin{lemma}
\label{lem:prince}
\begin{enumerate}
\item
Let $v$ be an incoming vertex of a triangle $\Delta_w$, with $w_1,w_2$ the
children of $w$ in $\mathcal{T}$. Then
$\fod_{w_1}\cap\fod_{w_2}$ is non-empty and lies in $C_v$. See Figure \ref{Fig: Cone}.
\item
If $v$ is an initial vertex, the intersection of
$\text{\:}\fod_n^+$ and $\text{\:}\fod_{n+1}^-$,
then (1) is still true after replacing 
$\text{\:}\fod_{w_1},\fod_{w_2}$ with $\fod_{n-1}^+,\fod_{n+2}^-$ respectively.
\end{enumerate}
\end{lemma}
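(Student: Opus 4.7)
The plan is a direct computation in coordinates, driven by the recursive form of the mutated rays in Construction \ref{susection: construction} and the Markov identity on the determinants $D_v, D_{v_1}, D_{v_2}$.

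For part (1), I translate so that $v = 0$ and work in the rational basis $\{m_1, m_2\}$ of $M_\BQ$ given by the opposite vectors of the rays $\fod_1, \fod_2$ containing the two edges of $\Delta_w$ at $v$. Writing $v_j = \lambda_j m_j$ with $\lambda_j > 0$ and expressing the primitive tangent of the opposite edge as $m^{(1)} = \mu_1(\lambda_1 m_1 - \lambda_2 m_2)$ with $\mu_1 > 0$, two wedge-product calculations (namely, the primitive tangents at $v_j$ must wedge to $\pm D_{v_j}$) give the key identities $\mu_1 \lambda_2 D = D_{v_1}$ and $\mu_1 \lambda_1 D = D_{v_2}$, where $D := D_v = |m_1 \wedge m_2|$.

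According to Construction \ref{susection: construction}, the rays $\fod_{w_1}, \fod_{w_2}$ are parametrized by $v_1 + t(m^{(1)} - 3D_{v_1} m_1)$ and $v_2 + s(-m^{(1)} - 3D_{v_2} m_2)$. I would solve the resulting linear system for $(t, s)$; substituting the identities above and invoking the Markov relation
\[
D_v^2 + D_{v_1}^2 + D_{v_2}^2 = 3 D_v D_{v_1} D_{v_2}
\]
(valid by Lemmas \ref{Lem: degree=rank} and \ref{Lem: TE to TM}) causes the right terms to collapse, and I expect to find $t = \lambda_1 D_{v_2}/D_v > 0$, $s = \lambda_2 D_{v_1}/D_v > 0$ (so the intersection is non-empty), and an intersection point of the form
\[
P = v - (\alpha m_1 + \beta m_2), \qquad \alpha = \lambda_1 D_{v_1}^2/D_v^2, \quad \beta = \lambda_1 D_{v_1} D_{v_2}/D_v^2,
\]
with $\alpha, \beta > 0$; in particular $P$ has exactly the shape required by Definition \ref{Def:dense cone}.

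The last step is to check that the slope $\beta/\alpha = D_{v_2}/D_{v_1}$ lies in the open interval $(r_\infty(D_v)^{-1}, r_\infty(D_v))$. Setting $(p, q, r) := (D_{v_1}, D_{v_2}, D_v)$, the desired inequality $q/p < (3r + \sqrt{9r^2 - 4})/2$ reduces, after isolating the radical and squaring (the case $2q \le 3pr$ being immediate), to $p^2 + q^2 < 3pqr$, which follows from Markov since $r \ge 1$ implies $p^2 + q^2 = 3pqr - r^2 < 3pqr$; the other bound is symmetric under $p \leftrightarrow q$. For part (2), the $T$-invariance of $\foD^{\mathrm{stab}}$ (Lemma \ref{Lem: invarianceDStab}) reduces the check to $n = 0$, where a short direct calculation locates $v = (1/2, 0)$ and $\fod_{-1}^+ \cap \fod_2^- = (1/2, 1) = v + (0,1)$; writing $(0,1) = 1\cdot(1, 0) + 1\cdot(-1, 1)$ places this in $C_v$ at slope $1 \in ((3-\sqrt{5})/2, (3+\sqrt{5})/2)$. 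The main obstacle throughout is the careful bookkeeping in solving the linear system with the auxiliary parameters $\mu_1, \lambda_j$; once the wedge-product identities and Markov are invoked, everything collapses cleanly.
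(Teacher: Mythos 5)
Your proof is correct, but it takes a genuinely different route from the paper. The paper argues qualitatively: it identifies $\fod_{w_1}$ and $\fod_{w_2}$ with rays inside the lines $L_{\CE_1}$ and $L_{\CE_1(-3)[1]}$, observes that one is left-moving and the other right-moving (hence they meet), and then rules out the intersection point escaping $C_v$ by noting that it would otherwise lie on a triangle edge and force one of the two rays to cross the interior of a triangle $\Delta_{w'}$, contradicting the moduli-emptiness statement of Theorem~\ref{thm:bousseau generalization}; part (2) is likewise checked by hand at $n=0$. You instead solve the intersection explicitly in the frame $\{m_1,m_2\}$, and I have verified your bookkeeping: the identities $\mu_1\lambda_2 D = D_{v_1}$, $\mu_1\lambda_1 D = D_{v_2}$ (hence $\lambda_1 D_{v_1}=\lambda_2 D_{v_2}$, which is what makes the $2\times 2$ system consistent with your claimed $t=\lambda_1 D_{v_2}/D_v>0$, $s=\lambda_2 D_{v_1}/D_v>0$), the nondegeneracy of the system (its determinant is $3D_v\neq 0$), the values $\alpha=\lambda_1 D_{v_1}^2/D_v^2$, $\beta=\lambda_1 D_{v_1}D_{v_2}/D_v^2=\lambda_2 D_{v_2}^2/D_v^2$, and the reduction of $D_{v_2}/D_{v_1}<r_\infty(D_v)$ to $D_{v_1}^2+D_{v_2}^2<3D_{v_1}D_{v_2}D_v$, which is exactly the Markov relation minus $D_v^2$, all check out (e.g.\ on $\Delta_{w_0}$ they reproduce the apex $(0,3/4)$). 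What each approach buys: the paper's argument is short but leans on the sheaf-theoretic input of Theorem~\ref{thm:bousseau generalization}, whereas yours is purely combinatorial, using only Construction~\ref{susection: construction} and the Markov identity from Lemmas~\ref{Lem: degree=rank} and~\ref{Lem: TE to TM}; moreover your computation hands you the intersection point explicitly, $P=v-\frac{\lambda_1 D_{v_1}}{D_v^2}(D_{v_1}m_1+D_{v_2}m_2)$, from which the verticality and length $1/D_v^2$ of the diagonal (Theorem~\ref{thm: verticalDiagonal} and Lemma~\ref{Lem: lengthDiagonal}) can be read off, so you essentially get those later results as a byproduct. The only polish I would ask for is to record the consistency identity $\lambda_1 D_{v_1}=\lambda_2 D_{v_2}$ and the nonvanishing of the system's determinant explicitly, since "I expect to find" should become an actual verification in the written version.
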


\begin{proof}
For (1), let
$(\CE_0,\CE_1,\CE_2)$ be the strong exceptional triple associated to $\Delta_w$.
By \eqref{eq:ddr1},\eqref{eq:ddr2},\eqref{eq:ddr3}, and \eqref{eq:ddr4},
we have (see Figure~\ref{Fig: Cone}), with suitable ordering of $w_1,w_2$,
\[
\fod_{w_1}=V_{\CE_2}-\BR_{\ge 0} m_{\CE_1},
\quad \fod_{w_2} = V_{\CE_0}-\BR_{\ge 0} m_{\CE_1(-3)[1]}.
\]
Since $m_{\CE_1(-3)[1]}=-m_{\CE_1(-3)}=\big(-r(\CE_1(-3)),d(\CE_1(-3))\big)$, 
we see that $\fod_{w_1}$ heads to the left and $\fod_{w_2}$ heads to the
right, as depicted in Figure \ref{Fig: Cone}. This makes it clear that
these two rays intersect. Suppose their intersection point does not lie
in $C_v$. In this case, they must intersect at a point contained in
the edge of some triangle $\Delta_{w'}$ with $w'$ a descendent of $w$
and with the property that
one of $\fod_{w_1}$ or $\fod_{w_2}$ 
contains this edge. 
However, this is impossible by Theorem~\ref{thm:bousseau generalization}.

We can check (2) by hand, say for $n=0$, where $C_v$
is the cone based at $v=(1/2,0)$ generated by the two vectors 
$\big((-1+\sqrt{5})/2, (3-\sqrt{5})/2 \big)$ and
$\big((-1-\sqrt{5})/2, (3+\sqrt{5})/2 \big)$. These have
slope $\mu_1=(\sqrt{5}-1)/2>0$ and $\mu_2=-(\sqrt{5}+1)/2<0$ 
respectively. The intersection of the rays $\fod_{-1}^+$ and $\fod_2^-$ is $(1/2,1)$,
which lies vertically over $v$, and hence
clearly lies in $C_v$. 

 \begin{figure}[h]
    \centering
    \includegraphics[width=9.4cm]{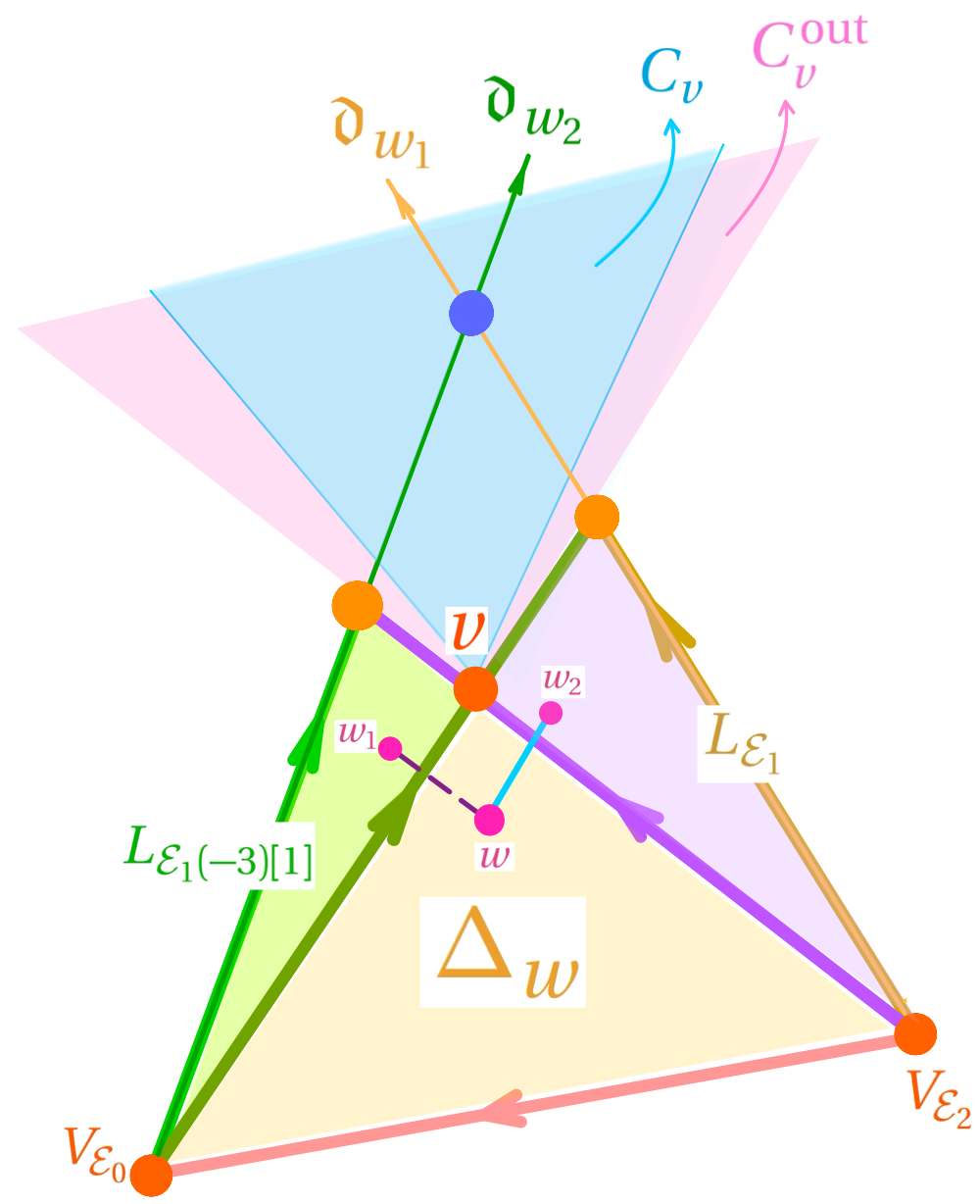}
    \caption{}
    \label{Fig: Cone}
\end{figure}
\end{proof}

\begin{definition}[Dense diamond]\label{def: diamond}
If $v$ is not an initial vertex, 
we define the \emph{(dense) diamond based at $v$ (or at $v$)}, $\Diamond_v$, to be, in the notation of Lemma~\ref{lem:prince}, the
closure of the connected component of $C_v\setminus(\fod_{w_1}\cup
\fod_{w_2})$ containing the vertex $v$. If $v$ is an initial vertex,
the intersection of $\fod_n^+$ and $\fod_{n+1}^-$, 
we instead take $\Diamond_v$ to be the closure of the
connected component of $C_v\setminus(\fod_{n-1}^+\cup \fod_{n+2}^-)$.
\end{definition}

In other words, for any vertex $v$, we can define the dense diamond at $v$ as a quadrilateral with sides
as follows. The lower sides are the two limits of the discrete rays from
the left side and right side at $v$; these have an irrational slope.
The upper right and left sides, which we call the \textit{roofs} of
the diamond, are $\fod_{w_1}$ and $\fod_{w_2}$ in the notation above
if $v$ is not an initial vertex, and are $\fod_{n-1}^+$ and $\fod_{n+2}^-$
if $v$ is an initial vertex.
We denote the diamond based at $v$ by $\Diamond_v$ or 
$\Diamond$ (when there is no ambiguity about the vertex).

\begin{definition}[Outer and initial diamond]\label{Def:OuterInitial}
    Let $v$ be an incoming vertex of a triangle $\Delta_w$. Define the
cone 
\begin{equation}
\label{eq:outer cone}
C_v^{\mathrm{out}}:=
v-(\BR_{\ge 0}m_1+\BR_{\ge 0}m_2),
\end{equation}
where $m_1,m_2$ are
tangent vectors to the edges of $\Delta_w$ containing $v$ oriented away from $w$ as usual.
We define the \emph{outer diamond
based at $v$ (or at $v$)}, $\Diamond_v^{\mathrm{out}}$, to be the closure of the
connected component of $C_v^{\mathrm{out}}\setminus (\fod_{w_1}\cup
\fod_{w_2})$ containing the vertex $v$. 

Similarly, if $v$ is an initial vertex, the intersection of 
$\fod_n^+$ and $\fod_{n+1}^-$, take $m_1,m_2$ to be the opposite vectors
of $\fod_n^+$ and $\fod_{n+1}^-$ and define
$C_v^{\mathrm{out}}$ as in \eqref{eq:outer cone}. Then the \emph{outer
diamond based at $v$} in this case is the closure of the connected component
of $C_v^{\mathrm{out}}\setminus(\fod_{n-1}^+\cup\fod_{n+2}^-)$ containing
the vertex $v$.
We call this diamond an \textit{initial diamond} and denote it by $\Diamond^{\mathrm{init}}$ or  $\Diamond_v^{\mathrm{init}}$.

Alternatively, an initial diamond $\Diamond^{\mathrm{init}}$ is a quadrilateral bounded by the four lines $L_{\CO(d)}$ for $d\in\{m-1,m,m+1,m+2\}$. This is an example of an outer diamond.
See Figure \ref{Fig: Diamonds}.

For $i=1,2$, we call the rays $\fod_{i}$ with direction vectors $m_i$, the left and right \textit{generators} (or the left and right \textit{generating rays}) of the outer diamond. The generators generate any other rays (dense or discrete) at the base vertex $v$, i.e., any such ray has direction vector
a positive linear combination of the direction vectors of
$\fod_{1}$ and $\fod_{2}$. See Figure \ref{Fig: Diamond}.
\end{definition}

An outer diamond at $v$, $\Diamond_v^{\mathrm{out}}$ or $\Diamond^{\mathrm{out}}$ can alternatively be defined as follows: it is bounded by the roofs of the dense diamond and the left and right generators (rather than the irrational rays). Note that any outer diamond contains a unique dense diamond. The complement of the dense diamond in an outer diamond consists of \emph{discrete triangles}. See Figure \ref{Fig: Diamond}.

\begin{definition}[Discrete and dense ray or object] A ray (or equivalently the object corresponding to the ray) is called a \emph{dense ray or object} if the ray lies in $\foD_{\mathrm{dense}}$. A ray (or
equivalently an object corresponding to the ray) is called a
\emph{discrete ray or object} if the ray lies in
$\foD_{\mathrm{in}}^{\mathrm{stab}}\cup \foD_{\mathrm{discrete}}$. We note that the discrete objects are exceptional bundles $\CE$ and shifts of
such $\CE[1]$.
\end{definition}

    \begin{figure}[h]
    \centering
    \includegraphics[width=14.8cm]{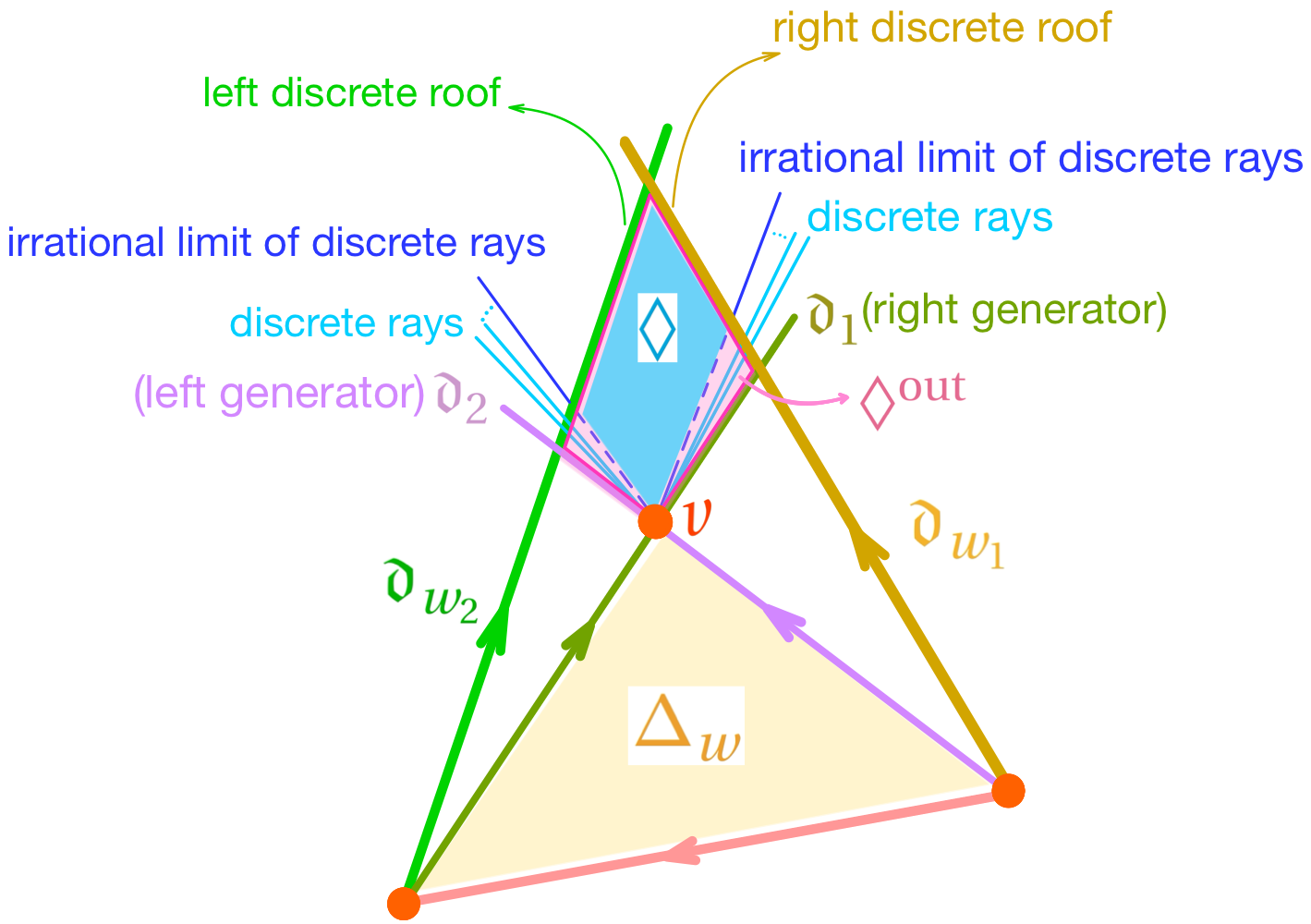}
    \caption{The dense diamond $\Diamond$ at vertex $v$. It is \textit{dense}, in the sense that it consists of all rays generated at $v$ with the slope between the two irrational limits. The outer diamond at $v$, $\Diamond^{\mathrm{out}}$, is given by the left and right generators ($\fod_2$ and $\fod_1$) together with the left and right roofs ($\fod_{w_1}$ and $\fod_{w_2}$). The triangle
$\Delta_w$ is the base triangle for both $\Diamond$ and $\Diamond^{\mathrm{out}}$.} 
    \label{Fig: Diamond}
\end{figure}

\begin{definition}[$R_{\Diamond}$,  $R_{\mathrm{bdd}}$ and $R_{\mathrm{unbdd}}$]\label{Def: RDiamond}
Let 
\[
R:=\big\{(x,y)\in \BR^2\,|\, \hbox{$y\ge nx -3n^2/2$ for some $n\in \BZ$}\big\}.
\]
We define
\[
R_{\Diamond}:= \bigcup_{k\in\BZ} T^k\left(
\bigcup_v\Diamond_v\right),
\]
where the union is over all incoming vertices $v$ of triangles $\Delta_w$
and the initial vertex which is the intersection of $\fod_0^+$ and
$\fod_1^-$. We write
\[
R_{\mathrm{bdd}}:=R_{\Delta}\cup R_{\Diamond}
=\bigcup_{k\in\BZ} T^k\left(\bigcup_v \Diamond^{\mathrm{out}}_v\right),
\]
which we call the \emph{bounded region}.

Finally, we denote by $R_{\mathrm{unbdd}}$ the complement of $R_{\mathrm{bdd}}$ in $R$, and call it the \textit{unbounded region}.
\end{definition}

\begin{remark}
    [Uniqueness of diamond at each vertex]\label{rem: uniquenessDiamond}
    By definition, each vertex $v$ uniquely determines the diamond at $v$.

\end{remark}

\begin{definition}[Initial triangle] We say that a triangle in the scattering diagram is \textit{initial} if it is of the form $T^k(\Delta_{w_0})$ for
some $k\in\BZ$.
In this case, all the sides are contained in unions of initial rays. We denote such a triangle by $\Delta^{\mathrm{init}}$. See Figure \ref{Fig: Diamonds}.
\end{definition}

\begin{definition}[Super-diamond]\label{Def: superDiamond} 
 A \textit{super-diamond}, $\Diamond^{\mathrm{sup}}$, is a quadrilateral generated by the lines corresponding to $\CO(m-1),\CO(m),\CO(m+2),\CO(m+3)$. See Figure \ref{Fig: Diamonds}.
\end{definition}

\begin{definition}[Base triangle]\label{def: baseTriangle} For non-initial (outer or dense) diamonds, the triangle $\Delta_w$ with its incoming vertex being the base of the diamond is called \emph{base triangle} of the diamond. See Figure \ref{Fig: Diamond}.

\end{definition}

  \begin{figure}[h]
 \subcaptionbox*{}[.69\linewidth]{%
    \includegraphics[width=\linewidth]{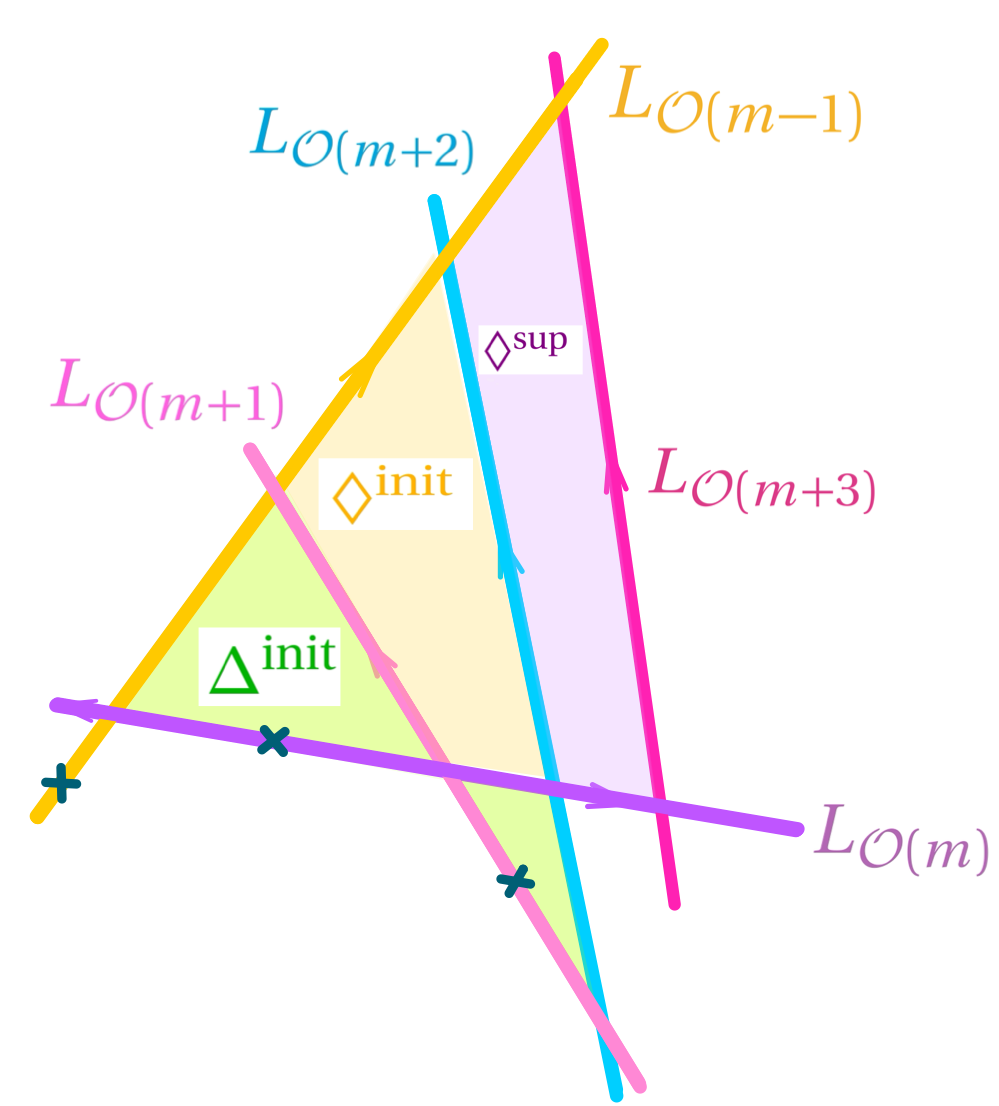} }%
 
  \caption{Super-diamond, $\Diamond^{\sup}$,   initial diamond, $\Diamond^{\mathrm{init}}$, and initial triangle, $\Delta^{\mathrm{init}}$. All the non-initial (outer or dense) diamonds are contained in the super-diamond. $\Diamond^{\mathrm{init}}$ is an example of an outer diamond.}

  \label{Fig: Diamonds}
\end{figure}

\begin{definition}[Degenerate diamond] \label{Def: positiveZeroDiamond}
 
We define a \textit{degenerate diamond} to be the intersection of
a dense ray with endpoint $v$ with the roof of $\Diamond_v$. 
We call the vertical line through a degenerate diamond the
\emph{vertical diagonal} of the degenerate diamond.

  See Figure \ref{Fig: ZeroPositiveDiamond}.

  \begin{figure}[h]
    \centering
    \includegraphics[width=9.9cm]{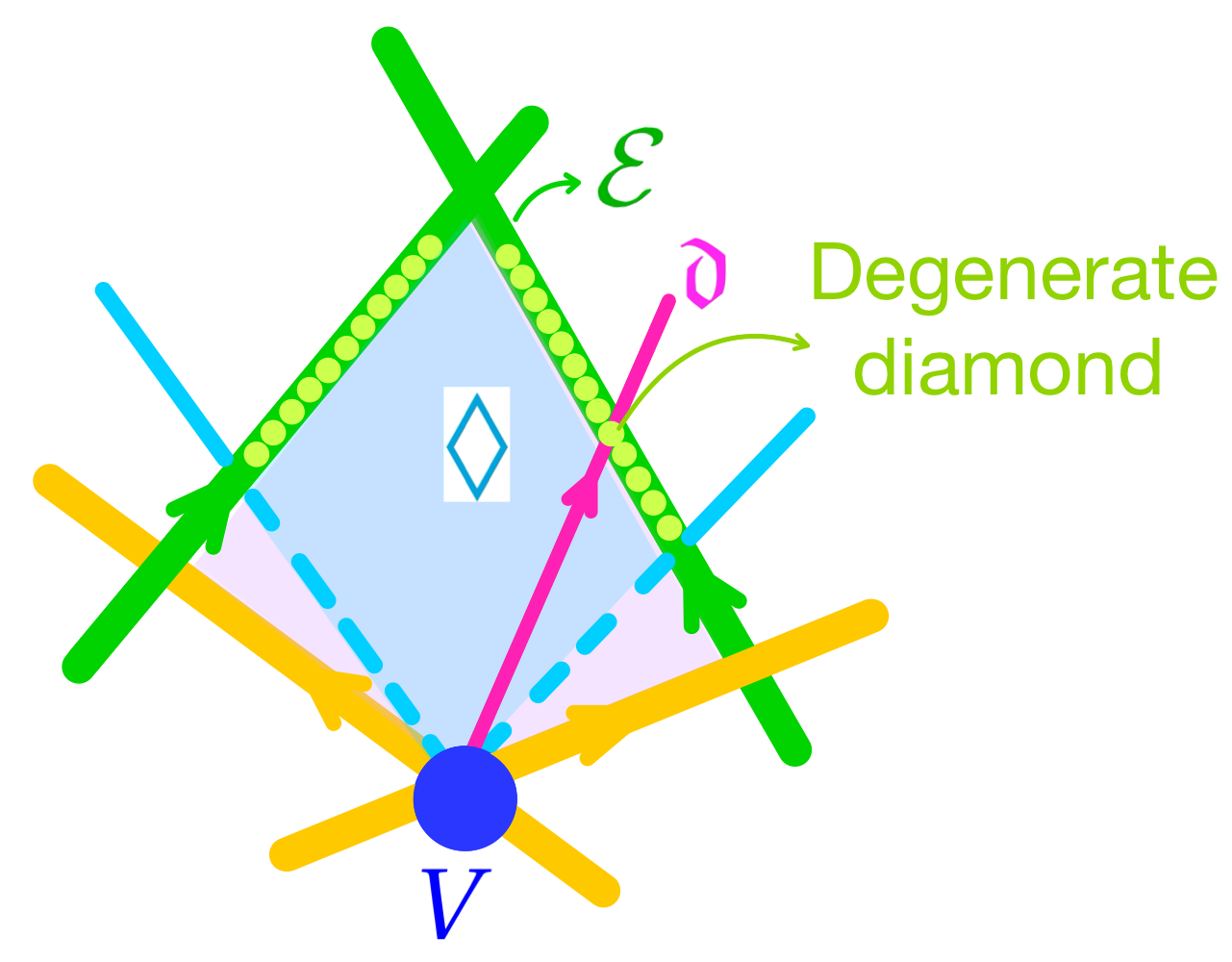}
    \caption{The intersection points of the roof $\CE$ with any rational dense ray $\fod$ is a degenerate diamond.}
    \label{Fig: ZeroPositiveDiamond}
\end{figure}

\end{definition}

\begin{remark}
    Note that passing from a triangle contained in one outer diamond
to an adjacent triangle contained in a neighbouring outer diamond
corresponds to traversing a hybrid edge of $\CT$. On the other hand,
passing from a triangle contained in an outer diamond to an adjacent
triangle contained in the same outer diamond corresponds to
traversing an outgoing edge of $\CT$.
See Figure \ref{Fig: Twn}.
\end{remark}

\section{Vertical diagonals and vertex-freeness of the outer diamonds}\label{Section: verticalDiagonal}

In this section, we prove the vertex-freeness for outer diamonds. 

This means that no two rays of $\foD^{\mathrm{stab}}$
intersect in the interior of an outer diamond.

\begin{theorem}
[Vertical diagonals]\label{thm: verticalDiagonal}
For an (outer or dense) positive diamond, the diagonal joining the base $v$ of
the diamond to the vertex of the diamond which is the intersection of the
two roofs is  vertical. 
\end{theorem}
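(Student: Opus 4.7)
The plan is to reduce the claim to the following direct computation: for any object $\CE$ of non-zero rank, the two lines $L_\CE$ and $L_{\CE(-3)[1]}$ intersect at a point whose $x$-coordinate is $d(\CE)/r(\CE)-3/2$, which is precisely the $x$-coordinate of $V_\CE$. The content of the proof is then to identify the two roofs of $\Diamond_v$ as segments contained in such a pair of lines, with base $v$ equal to $V_\CE$ for the appropriate $\CE$.

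In the non-initial case, $v$ is the incoming vertex of a triangle $\Delta_w$ corresponding to a strong exceptional triple $\CC=(\CE_0,\CE_1,\CE_2)$, so $v=V_{\CE_1}$ by Theorem~\ref{thm:mu build}. The identities \eqref{eq:ddr1}--\eqref{eq:ddr4} established in the proof of that theorem say that the opposite vectors of the two roofs $\fod_{w_1}$ and $\fod_{w_2}$ are $m_{\CE_1}$ and $m_{\CE_1(-3)[1]}$, respectively. Combined with the observation that $V_{\CE_2}\in L_{\CE_1}$ and $V_{\CE_0}\in L_{\CE_1(-3)[1]}$ (both of which follow from the vanishings $\chi(\CE_2,\CE_1)=0=\chi(\CE_0,\CE_1(-3))$ via Serre duality and the pairing formula \eqref{eqn: PairingGamma}), this identifies the roofs as lying in $L_{\CE_1}$ and $L_{\CE_1(-3)[1]}$. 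In the initial case, the $T$-equivariance of $\foD^{\mathrm{stab}}$ (Lemma~\ref{Lem: invarianceDStab}) lets us reduce to $v=(1/2,0)=V_{\CO(2)}$; the roofs $\fod_{-1}^+$ and $\fod_2^-$ lie in $L_{\CO(-1)}$ and $L_{\CO(2)}$, and since $\CO(-1)=\CO(2)(-3)$ these are again of the form $L_\CE, L_{\CE(-3)[1]}$ with $\CE=\CO(2)$.

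For the final step, $\ch(\CE(-3))=e^{-3H}\ch(\CE)$ gives $\ch_2(\CE(-3))=\ch_2(\CE)-3d+\tfrac{9}{2}r$ where $r=r(\CE)$, $d=d(\CE)$, so by \eqref{eqn:LGamma} the two line equations read
\[
ry+dx=\ch_2(\CE), \qquad ry+(d-3r)x=\ch_2(\CE)-3d+\tfrac{9}{2}r.
\]
Subtracting eliminates $y$ and yields $3rx=3d-\tfrac{9}{2}r$, i.e., $x=d/r-3/2$, which is exactly the $x$-coordinate of $V_\CE=v$. I do not expect real obstacles: the only delicate point is the bookkeeping required to identify which exceptional object $\CE$ is associated to the base $v$, and this identification is already encoded in the inductive construction underlying Theorem~\ref{thm:mu build}. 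Once the two lines are named, the vertical diagonal drops out of a two-equation linear algebra computation exploiting the fact that $L_\CE$ and $L_{\CE(-3)[1]}$ share the same coefficient of $y$.
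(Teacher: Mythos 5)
Your proposal is correct and follows essentially the same route as the paper: identify the two roofs as segments of $L_{\CE_1}$ and $L_{\CE_1(-3)[1]}$ (via the identities \eqref{eq:ddr1}--\eqref{eq:ddr4}, i.e., Lemma~\ref{lem:prince}), then solve the two line equations, which share the coefficient of $y$, to find that the apex has $x$-coordinate $d_1/r_1-3/2$, matching $V_{\CE_1}$. The only cosmetic difference is that the paper verifies the initial case by directly naming the two vertices $(1/2,0)$ and $(1/2,1)$, whereas you fold it into the same computation by observing $\CO(-1)=\CO(2)(-3)$.
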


\begin{proof}

Note that the twist operator $T$ takes vertical line segments to
vertical line segments. Thus for an initial
diamond, we only need to check one such diamond. The proof of
Lemma \ref{lem:prince} shows that the two vertices in question for
$n=0$ are $(1/2,0)$ and $(1/2,1)$, thus giving a vertical diagonal.
See Figure \ref{Fig:verticalProof}.

\begin{figure}[h]
    \centering
   \includegraphics[width=9.5cm]{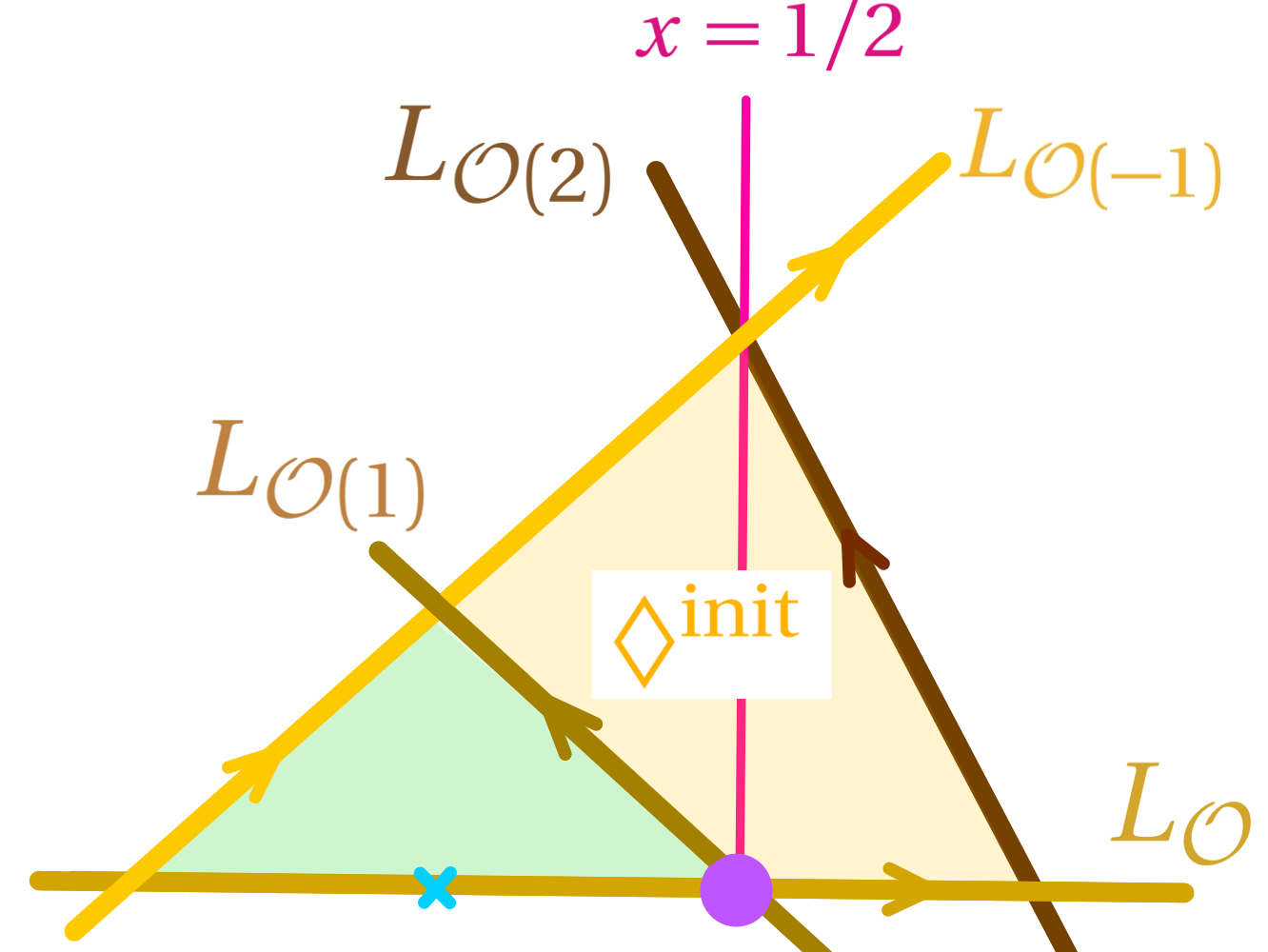}
   \caption{
   $\Diamond^{\mathrm{init}}$ has a vertical diagonal. 
   }
\label{Fig:verticalProof}
\end{figure}

Further, following the first paragraph of the
proof of Lemma~\ref{lem:prince}, a diamond based at an incoming vertex $V_{\CE_1}$
has a vertex at the intersection of the two roofs $\fod_{w_1}\cap\fod_{w_2}$
the intersection of the lines $L_{\CE_1(-3)[1]}$ and $L_{\CE_1}$. Writing
$(r_1,d_1,e_1):=(r(\CE_1),d(\CE_1),e(\CE_1))$, we have
$(r(\CE_1(-3)),d(\CE_1(-3)),e(\CE_1(-3)))=(r_1,d_1-3r_1, e_1-3d_1+9r_1/2)$.
Thus the point of intersection is the solution $(x,y)$ to the equations
\begin{align*}
r_1y+d_1x= {} & e_1,\\
r_1y + (d_1-3r_1)x = {} &  e_1-3d_1+9r_1/2,
\end{align*}
i.e.,
\begin{equation}
\label{eq:apex coordinates}
(x,y)=\left( {d_1\over r_1}-{3\over 2}, {e_1\over r_1} -{d_1^2\over r_1^2} 
+{3d_1\over 2r_1}
\right).
\end{equation}
Note this has the same $x$-coordinate as $V_{\CE_1}$, and thus this diagonal
is vertical as promised.

\end{proof}

\begin{definition}
\label{def:jth discrete ray}
Let $v$ be the incoming vertex of a triangle $\Delta_w$ and $\fod$ one
of the two initial or discrete rays containing $v$ in its interior. Let
$v'$ be the vertex of $\Delta_w$ not contained in $\fod$, and let
$\fod_1$ be the ray containing the edge of $\Delta_w$ with endpoint $v,v'$.
Let $\fod_2,\fod_3,\ldots$ be the sequence of discrete rays with endpoint $v'$
encountered going clockwise (resp.\ counterclockwise) from $\fod_1$
around $v'$ if
$v'$ is a right-most (resp.\ left-most) vertex of $\Delta_w$. Let
$(a_i,b_i)$, $(c,d)$ be the direction vectors of $\fod_i$ and $\fod$
respectively. Let $v_i$ be the
intersection point of $\fod_i$ with $\fod$.
\begin{enumerate}
\item
By the \textit{$j$-th discrete ray $\fod^j_i$ generated along $\fod$
at $v_i$}, we mean the ray with endpoint $v_i$ with direction vector
$R_{j+1}(D_i){\begin{pmatrix}
c\\
d\\
\end{pmatrix}}+R_j(D_i){\begin{pmatrix}
a_i\\
b_i\\
\end{pmatrix}}$,
where $D_i$ is the determinant of $v_i$. We write
$\fod_i^{\infty}$ for the limit of the rays $\fod_i^j$ as $j\rightarrow\infty$;
this is a ray of irrational slope. 
\item By the 
\textit{$j$-th opposite discrete ray $\fod^j_{i,\mathrm{op}}$ generated 
along $\fod$ at $v_i$}, we mean the ray with endpoint $v_i$ and 
direction vector $R_{j}(D_i){\begin{pmatrix}
c\\
d\\
\end{pmatrix}}+R_{j+1}(D_i){\begin{pmatrix}
a_i\\
b_i\\
\end{pmatrix}}$. 
\end{enumerate}
Note that $\fod^0_i$ and $\fod^0_{i,\mathrm{op}}$
are the rays with direction vectors $(c,d)$ and $(a_i,b_i)$ respectively.
\end{definition}

\begin{definition}
Let $\fod$ be a ray in $\foD^{\mathrm{stab}}$.
If its direction vector $-m_{\fod}$ has 
negative (resp.\ positive) $x$-coordinate, we call it a left-moving 
(resp.\ right-moving) ray. 
\end{definition}

\begin{corollary}[Decreasing slopes]\label{cor: decreasingSlope}

Fix an initial or discrete ray $\fod$.
\begin{itemize}
    \item[(1)] For a fixed $i$ and every $j$, the slope of $\text{\:}\fod^1_i$ is greater than the 
slope of $\text{\:}\fod^{j}_{i+1}$, in the
sense that the angle between $\fod$ and $\text{\:}\fod^1_i$ is larger than the
angle between $\fod$ and $\text{\:}\fod^j_{i+1}$.  In particular, the slope of 
$\text{\:}\fod^1_i$ is greater than the slope of $\text{\:}\fod^{\infty}_{i+1}$. 
    \item[(2)] 

For a fixed $i$ and every $j,j'\ge 0$, along $\text{\:}\fod$, the slope of $\text{\:}\fod^j_{i,\mathrm{op}}$ is greater than the slope of $\text{\:} \fod^{j'}_{i+1,\mathrm{op}}$, in the sense
that the angle between $\fod$ and $\fod^j_{i,\mathrm{op}}$ is
larger than the angle between $\fod$ and $\fod_{i+1,\mathrm{op}}^{j'}$.
In particular, the slope of $\text{\:}\fod^{\infty}_{i,\mathrm{op}}$ is greater than the slope of $\text{\:}\fod^0_{i+1,\mathrm{op}}$.
\end{itemize}

\end{corollary}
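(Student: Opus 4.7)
The plan is to reduce both parts to algebraic comparisons of slopes, using the monotonicity of the ratios $R_{j+1}(D)/R_j(D)$ in $j$ to collapse each universal quantifier over $j$ to a comparison at a single extreme value ($j=\infty$ on one side, $j=1$ or $j=0$ on the other), and then to verify the resulting finite inequality from the three-term recursion governing the direction vectors $(a_i,b_i)$ at $v'$.

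Fix coordinates so that $(c,d)=(1,0)$ and write $(a_i,b_i)=(\alpha_i,\beta_i)$ with $\beta_i>0$, so that all the rays $\fod_i$ lie on one side of $\fod$. The direction of $\fod_i^j$ is then $\bigl(R_{j+1}(D_i)+R_j(D_i)\alpha_i,\,R_j(D_i)\beta_i\bigr)$, so the tangent of the angle it makes with $\fod$ equals
\[
s_{i,j}\;=\;\frac{\beta_i}{R_{j+1}(D_i)/R_j(D_i)+\alpha_i}.
\]
The recursion $R_{j+1}(D)=3DR_j(D)-R_{j-1}(D)$ together with Corollary~\ref{cor: RjIncreasing} imply that $R_{j+1}(D)/R_j(D)$ is strictly decreasing in $j$ with limit $r_\infty(D)$; hence $s_{i,j}$ is strictly increasing in $j$ with supremum $s_{i,\infty}=\beta_i/(r_\infty(D_i)+\alpha_i)$. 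A parallel computation for the opposite rays gives
\[
s_{i,j}^{\mathrm{op}}\;=\;\frac{\beta_i}{R_j(D_i)/R_{j+1}(D_i)+\alpha_i},
\]
which is strictly decreasing in $j$ with infimum $s_{i,\infty}^{\mathrm{op}}=\beta_i r_\infty(D_i)/(1+\alpha_i r_\infty(D_i))$.

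With these monotonicities in hand, Part~(1) reduces to the single inequality $s_{i,1}>s_{i+1,\infty}$ and Part~(2) reduces to $s_{i,\infty}^{\mathrm{op}}>s_{i+1,0}^{\mathrm{op}}=\beta_{i+1}/\alpha_{i+1}$. Both are established from the three-term recursion for the direction vectors of the discrete rays at $v'$ supplied by Theorem~\ref{thm:local structure}: writing $D'$ for the determinant at $v'$, one has $(a_{i+1},b_{i+1})=3D'(a_i,b_i)-(a_{i-1},b_{i-1})$, whence $\alpha_{i+1}=3D'\alpha_i-\alpha_{i-1}$ and $\beta_{i+1}=3D'\beta_i-\beta_{i-1}$, and (using that $D_i$ is a universal positive multiple of $\beta_i$ via $D_i=|(a_i,b_i)\wedge(c,d)|$) a corresponding recursion for the $D_i$. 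Substituting, each of the two inequalities becomes a purely arithmetic statement in $D',\alpha_i,\beta_i$ and $r_\infty(D_i),r_\infty(D_{i+1})$, which is verified directly using the defining equation $r_\infty(D)^2-3Dr_\infty(D)+1=0$ together with an induction on $i$ whose inductive step is exactly the positivity of $\beta_{i+1}=3D'\beta_i-\beta_{i-1}$.

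The main obstacle is the bookkeeping of cases at $v'$: depending on whether $v'$ is an outgoing vertex, a hybrid vertex of $\Delta_w$, or (when $v$ lies on an initial ray) an initial vertex, the two ``initial'' rays at $v'$ that generate the sequence $\fod_1,\fod_2,\ldots$ differ, and therefore so does the initial data $(a_0,b_0),(a_1,b_1)$ of the Markov recursion above. In every case the recursion has the same three-term form, so after the substitution described the algebraic verification is uniform; only the identification of $(a_0,b_0)$ and $(a_1,b_1)$ in terms of $\fod$ and $\fod_1$ needs to be checked case-by-case.
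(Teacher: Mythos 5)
Your overall strategy is the same as the paper's: write the direction of $\fod_i^j$ as $R_{j+1}(D_i)(c,d)+R_j(D_i)(a_i,b_i)$, compare slopes by cross-multiplying, and exploit the three-term recursion. Your one real reorganization --- using the monotonicity of $r_j(D)=R_{j+1}(D)/R_j(D)$ in $j$ to collapse each part to a single extremal inequality involving $r_\infty$ --- is sound and simply packages the paper's family of finite-$j$ inequalities into their limiting case. After cross-multiplying, using $r_\infty(D)+r_\infty(D)^{-1}=3D$, and using the constancy of the wedge $\beta_i\alpha_{i+1}-\alpha_i\beta_{i+1}=D'$ (which does follow from your recursion), part (1) reduces to $D'\,r_\infty(D_{i+1})>D_i$ and part (2) to $D_{i-1}\,r_\infty(D_i)>D'$; these are exactly the $j\to\infty$ limits of the inequalities \eqref{eq:desiredEq} and \eqref{eq:ineqEquiv} in the paper's proof.

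The place where your sketch falls short of a proof is the assertion that these are ``verified directly \ldots{} together with an induction on $i$ whose inductive step is exactly the positivity of $\beta_{i+1}=3D'\beta_i-\beta_{i-1}$.'' Positivity of $\beta_{i+1}$ is far too weak. Part (1) needs $D_{i+1}>D_i$, i.e., that the determinants increase along $\fod$, whose inductive step is $D_i>D_{i-1}$ (not mere positivity) together with a base case read off from the triangle structure. More seriously, part (2) needs $D_{i-1}\ge D'$, i.e., that the determinant at $v'$ is the smallest entry of the Markov triple $(D',D_{i-1},D_i)$. That is precisely Proposition~\ref{prop:det order} (incoming $>$ hybrid $>$ outgoing), a structural fact about which vertex of each triangle is incoming; it cannot be extracted from the recursion $\beta_{i+1}=3D'\beta_i-\beta_{i-1}$ alone, and the paper invokes it explicitly at the corresponding step. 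Finally, you should be explicit about signs: in part (2) the relevant denominators ($\alpha_{i+1}$ and $R_j(D_i)/R_{j+1}(D_i)+\alpha_i$ in your normalization) are negative, so the cross-multiplication and the translation between ``tangent'' and ``angle with $\fod$'' need the case analysis the paper carries out via its figures; otherwise an inequality can silently reverse.
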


\begin{proof}

(1) Let $\fod$ be an arbitrary (discrete or initial) ray along which we want to show the statement. Without loss of generality, we show the statement if
$\fod$ is moving to the right. Using the notation of 
Definition~\ref{def:jth discrete ray}, we need thus need to show

the inequality $\frac{3D_id+b_i}{3D_ic+a_i}> \frac{R_{j+1}(D_{i+1})d+R_j(D_{i+1})b_{i+1}}{R_{j+1}(D_{i+1})c+R_j(D_{i+1})a_{i+1}}$. As in Figure \ref{Fig: slopeDecreasing}(1) and without loss of generality, we can  assume that $3D_ic+a_i>0$,  $3D_id+b_i>0$, $R_{j+1}(D_{i+1})d+R_j(D_{i+1})b_{i+1}>0$ and $R_{j+1}(D_{i+1})c+R_j(D_{i+1})a_{i+1}>0$. 
With $\fod$ right-moving, $(a_{i+1},b_{i+1}),(a_i,b_i)$ form an oriented
basis for $M_{\BR}$, and hence $b_ia_{i+1}-a_ib_{i+1}>0$. The desired 
inequality then simplifies to 
  \begin{align}\label{eq:desiredEq}
  (b_ia_{i+1}-a_ib_{i+1})R_j(D_{i+1})>
D_i(3D_{i+1}R_j(D_{i+1})-R_{j+1}(D_{i+1}))=
D_iR_{j-1}(D_{i+1}),
\end{align}
the latter equality by Definition \ref{Def: Rj}. By
the same definition, we have $R_j(D_{i+1})=3D_{i+1}R_{j-1}(D_{i+1})-R_{j-2}(D_{i+1})$. Hence, \eqref{eq:desiredEq}, becomes
  \begin{align}\label{eq:desiredEq2}
      (b_ia_{i+1}-a_ib_{i+1})\big(3D_{i+1}R_{j-1}(D_{i+1})-R_{j-2}(D_{i+1})\big)>D_iR_{j-1}(D_{i+1}).
  \end{align}
However, using $D_{i+1}>D_i$, $b_ia_{i+1}-a_ib_{i+1}\ge 1$,
and Corollary \ref{cor: RjIncreasing}, we have
\[
0<(b_ia_{i+1}-a_ib_{i+1})(D_{i+1}-D_i)R_{j-1}(D_{i+1})
+(b_ia_{i+1}-a_ib_{i+1})(2D_{i+1}R_{j-1}(D_{i+1})-R_{j-2}(D_{i+1})),
\]
yielding \eqref{eq:desiredEq2}, proving the result.

Finally, since we have the statement for every $j$, taking the limit of $\{\fod^j_{i+1}\}$, when $j\to\infty$ we get the final statement.

Part (2) is similar as in Figure \ref{Fig: slopeDecreasing}(2). 

In this case we can show an even stronger statement: We will show that, along $\fod$, $(a_{i+1},b_{i+1})$ (hence, each $\fod^{j}_{i+1,\mathrm{op}}$) has a slope smaller than $\fod^{j}_{i,\mathrm{op}}$. For this, we need to show
\begin{align}\label{eq:firsteq}
    \frac{R_j(D_i)d+R_{j+1}(D_i)b_i}{R_j(D_i)c+R_{j+1}(D_i)a_i}>
    \frac{b_{i+1}}{a_{i+1}}.
\end{align}
Note the from Figure \ref{Fig: slopeDecreasing}(2) and without loss of generality, we can assume $a_{i+1}<0$ and $R_j(D_i)c+R_{j+1}(D_i)a_i<0$; hence, \eqref{eq:firsteq} is equivalent to 
\begin{align}\label{eq:2ndeq}
    (a_{i+1}b_i-b_{i+1}a_i)R_{j+1}(D_i)>(b_{i+1}c-a_{i+1}d)R_{j}(D_i).
\end{align}
But as in  Figure \ref{Fig: slopeDecreasing}(2), we can assume $D=a_{i+1}b_i-b_{i+1}a_i>0$ and $D_{i+1}=b_{i+1}c-a_{i+1}d>0$. Hence, \eqref{eq:2ndeq} is equivalent to $DR_{j+1}(D_i)-D_{i+1}R_{j}(D_i)>0$. By definition, this is equivalent to the positivity of
\begin{align}\label{eq:ineqEquiv}
    D(3D_iR_j(D_i)-R_{j-1}(D_i))-D_{i+1}R_{j}(D_i)=(3DD_i-D_{i+1})R_j(D_i)-DR_{j-1}(D_i).
\end{align}

Note that since $(D,D_i,D_{i+1})$ and $(D,D_{i-1},D_i)$ are Markov triples
related by mutation, we have
$3DD_{i}=D_{i+1}+D_{i-1}$.
Thus, \eqref{eq:ineqEquiv} is equivalent to
$$D_{i-1}R_j(D_i)-DR_{j-1}(D_i),$$
which is positive since $D_{i-1}\geq D$ (by Proposition~\ref{prop:det order})
and $R_j(D_i)>R_{j-1}(D_i)$ (Corollary \ref{cor: RjIncreasing}).

  \begin{figure}[h]
  \subcaptionbox*{(1)}[.9\linewidth]{%
    \includegraphics[width=\linewidth]{
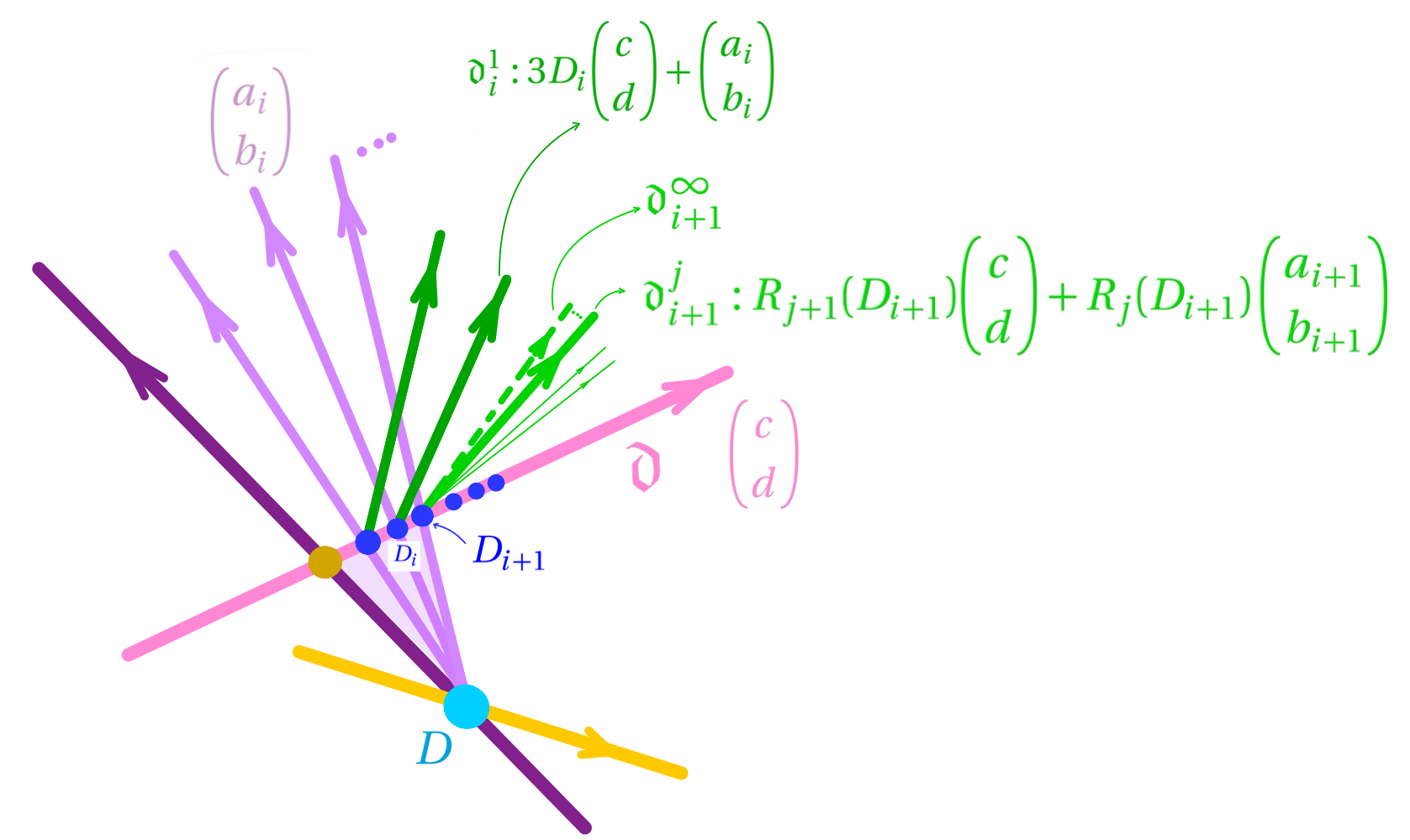}%
  }%
  \hskip5.0ex
  \subcaptionbox*{(2)}[.8\linewidth]{%
    \includegraphics[width=\linewidth]{
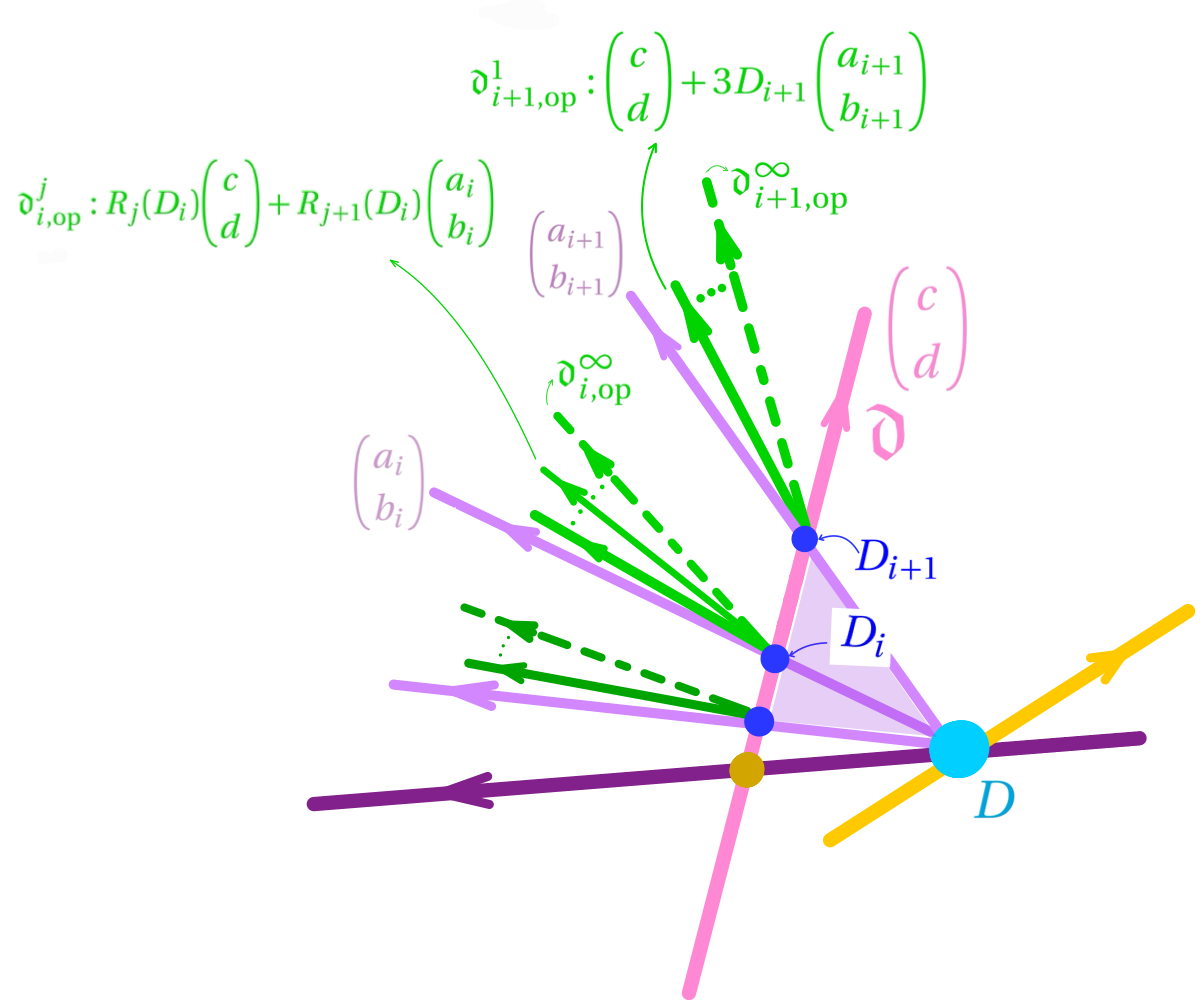}%
  }
  \caption{(1) Along $\fod$, $\fod^{\infty}_{i+1}$ (hence, each $\fod^{j}_{i+1}$) has a slope smaller than $\fod^{j}_{i}$.  (2) Along $\fod$, $(a_{i+1},b_{i+1})$ (hence, each $\fod^{j}_{i+1,\mathrm{op}}$) has a slope smaller than $\fod^{j}_{i,\mathrm{op}}$. Here $D_i=b_ic-a_id>0$ and $D=a_{i+1}b_i-a_ib_{i+1}>0$ are the corresponding determinants at the vertices.}
  \label{Fig: slopeDecreasing}
\end{figure}
\end{proof}

The following statement is analogous to \cite[Lem.~3.10]{Prince-20}.

\begin{corollary}
\label{cor:bigger slopes}
For $w$ a vertex of the Markov tree $\CT$, let $v,v',v''$ be the incoming,
left-most and right-most vertices of the triangle $\Delta_w$. Denote
by $\fod_{v}^-$ and $\fod_v^+$ the left-hand and right-hand rays
respectively generating the cone $C_v$. Let $\fod^1_{v'}$ be the 
first ray containing $v'$ counter-clockwise from the edge of $\Delta_w$
joining $v$ and $v'$, and let $\fod^1_{v''}$ be the first ray containing
$v''$ clockwise from the edge of $\Delta_w$ joining $v$ and $v''$.

Then the slope of $\text{\:}\fod_{v'}^1$ is greater than the slope
of $\text{\:}\fod_{v}^+$ and the slope of $\text{\:}\fod_{v''}^1$ is less than the
slope of $\text{\:}\fod_v^-$. See Figure~\ref{Fig:SlopeDense}.
\end{corollary}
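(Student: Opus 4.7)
The plan is to deduce both inequalities from Corollary \ref{cor: decreasingSlope} by identifying the rays $\fod^1_{v'}$ and $\fod_v^+$ (and their symmetric counterparts) with specific rays in the notation of Definition \ref{def:jth discrete ray}. By symmetry, I focus on the first inequality.

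I would apply Definition \ref{def:jth discrete ray} at the incoming vertex $v$ of $\Delta_w$, taking $\fod$ to be the initial or discrete ray containing the edge $vv''$ (so that $v$ lies in the interior of $\fod$ and $v'\notin\fod$), and $\fod_1$ the ray containing the edge $vv'$. Then $v_1=\fod_1\cap\fod=v$, and the sequence $\fod_2,\fod_3,\ldots$ of discrete rays with endpoint $v'$ encountered counter-clockwise from $\fod_1$ around $v'$ satisfies $\fod_2=\fod^1_{v'}$ by direct comparison of definitions. By inspection of direction vectors, the right-hand boundary $\fod_v^+$ of $C_v$ is the limit $\fod^{\infty}_{1,\mathrm{op}}$ at $v_1=v$, this being the boundary tilted toward $\vec{\fod_1}$, i.e., toward the side of $\fod$ containing $v'$; symmetrically $\fod_v^-=\fod^{\infty}_1$.

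With these identifications, Corollary \ref{cor: decreasingSlope}(2), applied at $i=1$, compares the slopes (in the angular sense, with respect to $\fod$) of $\fod^{\infty}_{1,\mathrm{op}}$ and $\fod^0_{2,\mathrm{op}}$, the latter having the same direction vector as $\fod_2=\fod^1_{v'}$. Translating this angular comparison into the ordinary $y/x$ slopes of the respective rays yields the claimed inequality $\mathrm{slope}(\fod^1_{v'}) > \mathrm{slope}(\fod_v^+)$. The second inequality follows by an entirely symmetric argument, with the roles of $v'$ and $v''$ exchanged, $\fod$ now taken to contain the edge $vv'$, $\fod_1$ containing $vv''$, using clockwise rotation around $v''$ (as $v''$ is right-most), and $\fod_v^-$ identified with $\fod^{\infty}_1$.

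The main obstacle will be orientation bookkeeping. The unconditional positivity $DR_{j+1}(D_1)-D_2R_j(D_1)>0$ established inside the proof of Corollary \ref{cor: decreasingSlope}(2) immediately yields the angular inequality, but its translation into an ordinary $y/x$ slope inequality depends on the signs of the direction vector components, and hence on whether $\fod$ is left-moving or right-moving and on which side of $\fod$ the vertex $v'$ lies. The principal verification is thus that the geometric configuration forced by the triangle $\Delta_w$ and the cone $C_v$ produces exactly the sign pattern that sends the angular inequality from Corollary \ref{cor: decreasingSlope}(2) to the $y/x$ slope inequality in the direction claimed; once this is confirmed for the first inequality, the second follows by the symmetric setup above.
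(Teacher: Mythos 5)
Your single-setup strategy --- always taking $\fod$ to be the ray containing the edge $vv''$, making $v'$ the apex, and invoking Corollary \ref{cor: decreasingSlope}(2) at $i=1$ --- has a gap that the ``orientation bookkeeping'' you identify as the main obstacle does not capture. With your choice, $\Delta_w$ itself plays the role of the triangle $(v',v_0,v_1)$ that precedes the sequence $v_1=v,v_2,\ldots$ along $\fod$, so running the argument at $i=1$ forces the proof of Corollary \ref{cor: decreasingSlope}(2) to be executed with $D_{i-1}=D_0=D_{v''}$ and $D=D_{v'}$. That proof reduces the required positivity to $D_{i-1}R_j(D_i)-DR_{j-1}(D_i)>0$ and justifies it by the inequality $D_{i-1}\ge D$, quoted from Proposition \ref{prop:det order}. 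But $D_{v''}\ge D_{v'}$ holds only when $v''$ is the hybrid vertex of $\Delta_w$; when the left-most vertex $v'$ is the hybrid one and $v''$ is outgoing, one has $D_{v''}<D_{v'}$ (e.g.\ the triangle with vertices $(1/10,3/5)$, $(0,1/2)$, $(1/2,0)$ and determinants $5,2,1$, where $D_0=1<2=D$), and Corollary \ref{cor: decreasingSlope}(2) is simply not established at $i=1$ by the argument given for it. The inequality you need is in fact still true there --- it can be salvaged from $R_j(D_1)>(3D_1-1)R_{j-1}(D_1)$ together with $D<D_1\le 3D_1-1$ --- but that supplementary estimate appears nowhere, and your proposal gives no sign of noticing that the hybrid configuration falls outside the proved range of the corollary you cite.

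This is precisely the case split the paper makes and you omit. When $v'$ is hybrid, the paper abandons your setup entirely: it applies Definition \ref{def:jth discrete ray} with $\fod$ the ray containing the edge $vv'$ and with $v''$ as the apex, so that $\fod^1_{v'}$ is identified with $\fod^1_i$ and $\fod^+_v$ with $\fod^{\infty}_{i+1}$, and then quotes Corollary \ref{cor: decreasingSlope}(1), whose proof has no analogue of the $D_{i-1}\ge D$ hypothesis; only when $v'$ is outgoing does it use your configuration and part (2). (The initial triangle, which the paper checks by hand with slopes $3/2$ and $\sqrt{2}$, is harmless for your setup since there $D_0=D=1$.) To repair your argument, either reinstate the hybrid/outgoing dichotomy and argue each case as the paper does, or first prove the extension of Corollary \ref{cor: decreasingSlope}(2) to $i=1$ without the hypothesis $D_0\ge D$.
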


\begin{figure}[h]
    \centering
   \includegraphics[width=9.1cm]{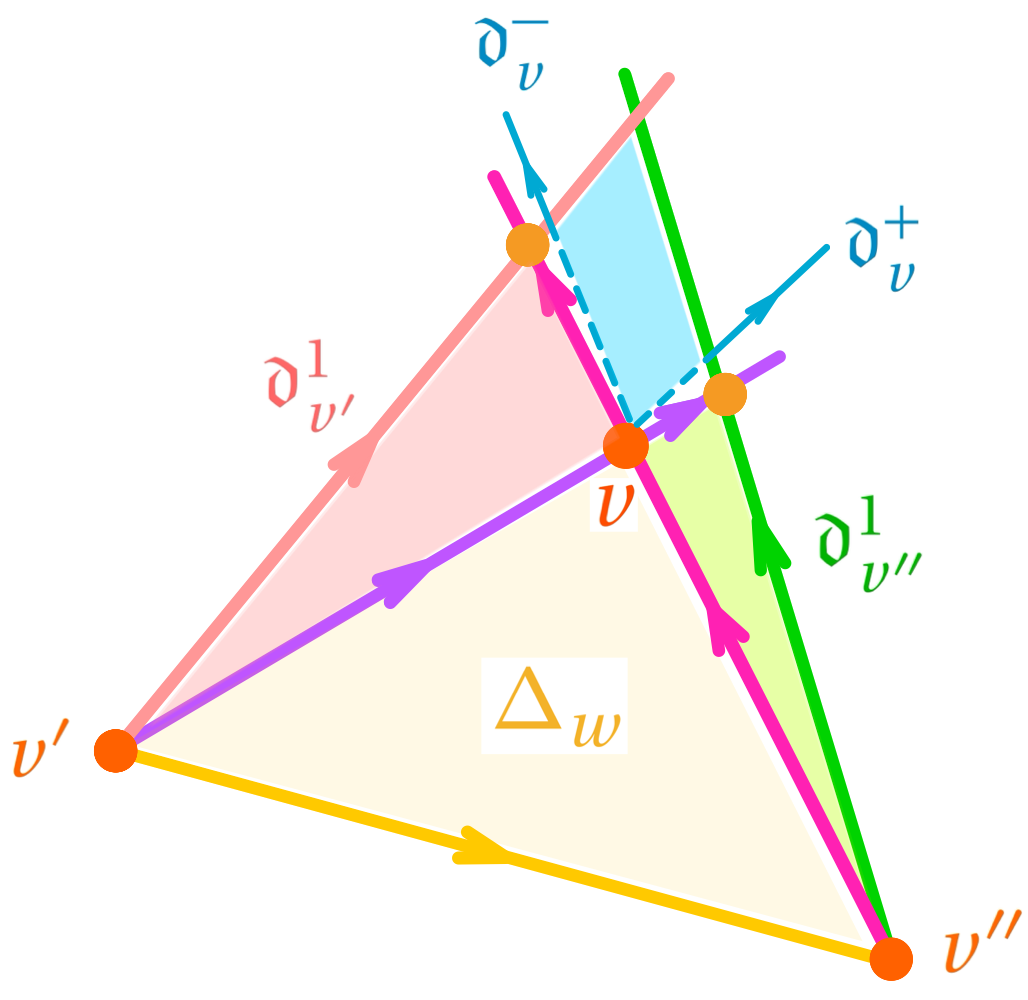}
   \caption{}
    \label{Fig:SlopeDense}
\end{figure}

\begin{proof}
We consider the case of $v'$, as the situation is symmetric. If $\Delta_w$ is
the initial triangle, then one immediately checks this by hand, the
slope of $\fod_{v'}^1$ being $3/2$
and the slope of $\fod_v^+$ being $\sqrt{2}$.

Otherwise, the argument depends on whether $v'$ is a hybrid or outgoing
vertex. If it is hybrid, then $\Delta_w$ is as in Figure
\ref{Fig: slopeDecreasing},(1), with vertices $v,v',v''$ having degrees
$D_{i+1},D_i$ and $D$ respectively. The claim then follows from
Corollary~\ref{cor: decreasingSlope},(1). On the other hand, if $v'$ is
an outgoing vertex, the picture is as in Figure \ref{Fig: slopeDecreasing},(2)
(reflected about the $y$-axis), with $\Delta_w$ corresponding to
the triangle with vertices $v,v',v''$ having degrees
$D_{i+1},D$ and $D_i$ respectively. The result in this case
follows from Corollary~\ref{cor: decreasingSlope},(ii).
\end{proof}

We will need the following:

  \begin{theorem}
      [Inclusion in super-diamonds] \label{thm: Inclusion}
\begin{enumerate}
\item 
Let $\fod\in\foD^0_{\mathrm{discrete}}$. 
If $\fod$
is right-moving the slope of $\fod$ is less than $2$, while if $\fod$
is left-moving then the slope of $\fod$ is greater than $-2$. 
\item
Every non-initial outer diamond is contained in a super-diamond.
\end{enumerate}
  \end{theorem}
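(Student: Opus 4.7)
For part (1), the argument is direct. By Step III of the proof of Theorem~\ref{thm:mu build}, every ray in $\foD^0_{\mathrm{discrete}}$ is contained in a line of the form $L_\CE$ or $L_{\CE(-3)[1]}$, where $\CE$ is an exceptional bundle of rank $r\geq 2$ arising via iterated mutation of $(\CO(1),\CT_{\BP^2},\CO(2))$. By Lemma~\ref{lem:slope inequalities}(3) together with the normal form \eqref{eq:mu epsilon}, the slope $\mu(\CE)=d/r$ of any such $\CE$ lies strictly in $(1,2)$. Hence $L_\CE$ has slope $-\mu(\CE)\in(-2,-1)$ and $L_{\CE(-3)[1]}$ has slope $3-\mu(\CE)\in(1,2)$, so every right-moving ray in $\foD^0_{\mathrm{discrete}}$ has slope strictly less than $2$, and every left-moving ray has slope strictly greater than $-2$.

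For part (2), the plan is to combine (1) with the twist symmetry and an explicit description of a single super-diamond. Since the twist $T$ of Lemma~\ref{Lem: invarianceDStab} preserves $\foD^{\mathrm{stab}}$ and carries $\Diamond^{\mathrm{sup}}_m$ to $\Diamond^{\mathrm{sup}}_{m+1}$, it suffices to prove $\Diamond^{\mathrm{out}}_v\subseteq \Diamond^{\mathrm{sup}}_{-1}$ for every incoming vertex $v$ of a triangle $\Delta_w\subset R^0_{\Delta}$. Explicitly, $\Diamond^{\mathrm{sup}}_{-1}$ is the convex quadrilateral with vertices $(-\frac{1}{2},1),(0,\frac{1}{2}),(\frac{1}{2},1),(0,2)$, cut out by $y\geq \frac{1}{2}+|x|$ (from $L_{\CO(\pm 1)}$) and $y\leq 2-2|x|$ (from $L_{\CO(\pm 2)}$). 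Meanwhile $\Diamond^{\mathrm{out}}_v$ is a convex quadrilateral with base $v=V_\CE$, apex at $(v_x,v_y+1/r^2)$ by Theorem~\ref{thm: verticalDiagonal} and \eqref{eq:apex coordinates}, and two side vertices obtained as intersections of a generator and a roof; by (1), all four edges have slopes in $(-2,-1)\cup(1,2)$.

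To complete the proof I would verify that each vertex of $\Diamond^{\mathrm{out}}_v$ lies in $\overline{\Diamond^{\mathrm{sup}}_{-1}}$. Using the exceptionality identity $r^2-d^2+2re=1$, one finds $v_x=\mu-\frac{3}{2}$ and $v_y=\frac{1}{2}(-\mu^2+3\mu-1-1/r^2)$ with $\mu=d/r$, so the lower-boundary inequality $v_y\geq \frac{1}{2}+|v_x|$ becomes the key estimate $\frac{5}{4}-(1+|\mu-\frac{3}{2}|)^2\geq 1/r^2$. I would establish this by induction on the depth of $w$ in $\CT$, using the Markov recursion (Proposition~\ref{prop:det order}, Lemma~\ref{Lem: degree=rank}) to control the growth of $r$ against the drift of $|v_x|$ toward $\frac{\sqrt{5}}{2}-1$, and Corollaries~\ref{cor: decreasingSlope} and \ref{cor:bigger slopes} to track how these quantities propagate under mutation. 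Containment of the remaining three vertices then follows for essentially formal reasons, since the apex sits only a vertical distance $1/r^2$ above $v$ with the same $x$-coordinate, and the side vertices lie within a small neighborhood of $v$ controlled by the slope bounds of (1). The main obstacle is the tightness of $\frac{5}{4}-(1+|\mu-\frac{3}{2}|)^2\geq 1/r^2$: both sides tend to zero along mutation chains converging to an irrational slope, so the induction must be set up with enough care that the Markov growth of $r$ beats the convergence $|\mu-\frac{3}{2}|\to \frac{\sqrt{5}}{2}-1$ at every step.
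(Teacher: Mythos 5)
Your part (1) is correct and is essentially the paper's own argument: every ray of $\foD^0_{\mathrm{discrete}}$ lies in $L_{\CE}$ (left-moving) or $L_{\CE(-3)[1]}$ (right-moving) for an exceptional bundle $\CE$ obtained by mutation from $(\CO(1),\CT_{\BP^2},\CO(2))$, whose slope lies in $(1,2)$ once $\CE$ is not a line bundle, giving the slope bounds.

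For part (2) your reduction is reasonable but contains a genuine gap. The setup is fine: the super-diamond is the convex region $\{\tfrac12+|x|\le y\le 2-2|x|\}$ after twisting to $n=0$, the outer diamond is the convex hull of its four vertices, and the upper inequality $y\le 2-2|x|$ does follow easily from your coordinate formulas. But the whole content of the theorem then sits in the lower inequality $\tfrac54-(1+|\mu-\tfrac32|)^2\ge 1/r^2$, equivalently $|\mu(\CE)-\tfrac32|\le\sqrt{5/4-1/r^2}-1$, and you do not prove it — you only describe an induction you \emph{would} run, and you yourself flag its tightness as ``the main obstacle.'' It really is the obstacle: equality holds for infinitely many exceptional bundles (ranks $2,5,13,34,\dots$, exactly those with $V_{\CE}\in L_{\CO(\pm1)}$), so there is no slack anywhere along those mutation chains and the induction would have to reproduce an exact identity, not just an estimate. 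Your claim that the two side vertices are handled ``for essentially formal reasons'' because they lie near $v$ is also not right as stated: those vertices are $V_{\CE_0}$ and $V_{\CE_2}$, so they need the same unproven inequality applied to those bundles. The paper avoids the quantitative estimate entirely with two soft observations: for the upper bound, the intersection point of a right-moving ray of slope $<2$ and a left-moving ray of slope $>-2$, each with endpoint below both $L_{\CO(2)}$ and $L_{\CO(-2)}$, again lies below both lines (an elementary check using the slope bounds of part (1)), and this propagates inductively through the construction of all vertices and roofs; for the lower bound, $V_{\CE}$ is a vertex of the triangulation and therefore cannot lie in the interior of the outer diamonds based at $(\pm 1/2,0)$, which forces $V_{\CE}$ on or above $L_{\CO(\pm1)}$ with no computation. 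If you want to keep your coordinate approach, the cleanest repair is to observe that your key inequality \emph{is} the statement that $V_{\CE}$ lies on or above $L_{\CO(\pm1)}$ and to prove that geometrically, rather than by racing Markov growth of $r$ against the drift of $\mu$.
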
 
  
\begin{figure}[h]
 \subcaptionbox*{}[.99999999\linewidth]{%  
 \includegraphics[width=\linewidth]{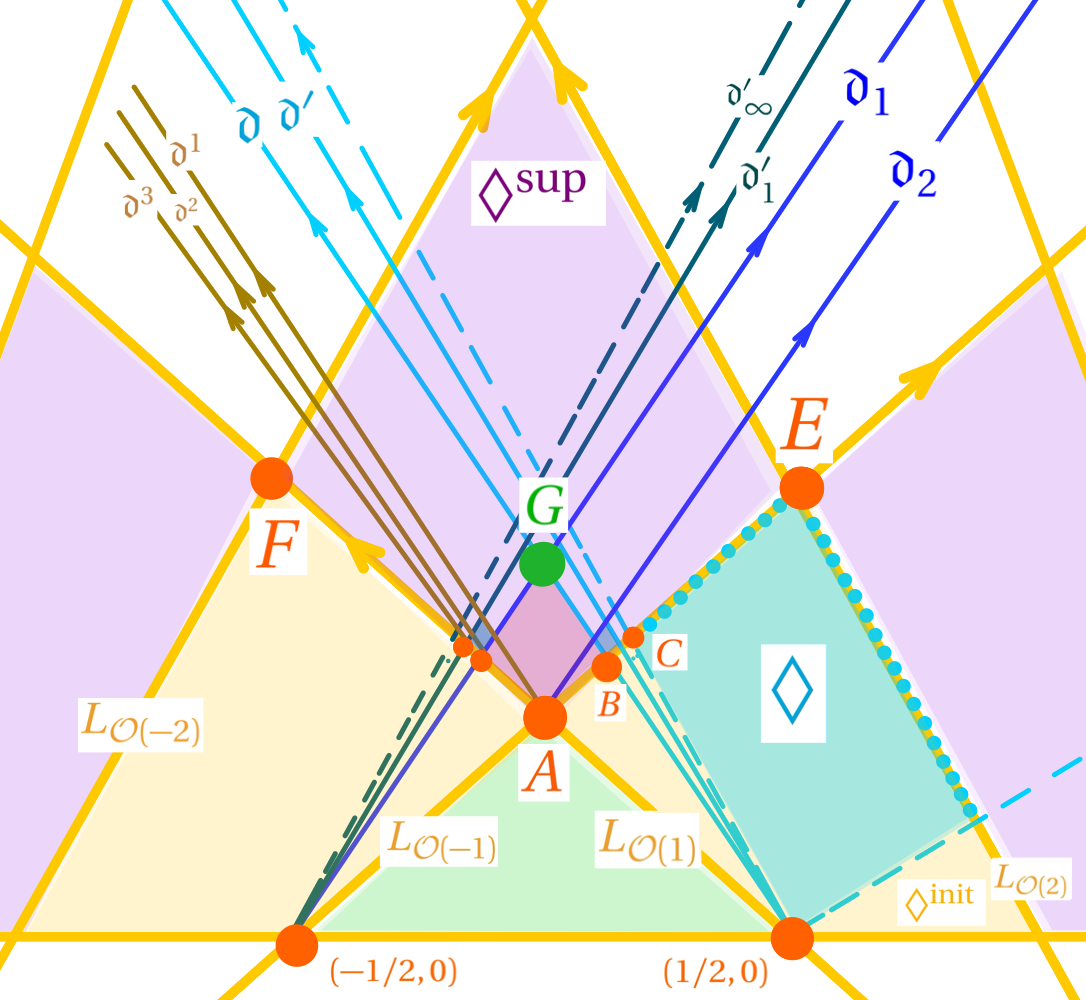} }%
  \caption{$L_{\CO(-1)}$ bounds an initial diamond, $\Diamond^{\mathrm{init}}$ (its big diamond) at $(1/2,0)$ on the one side and infinitely many on the other.}
  \label{Fig: init}
\end{figure}

  \begin{proof}
By the analysis of \S\ref{subsec:moduli emptiness}, and in 
particular from Figure~\ref{Fig: RL}, every ray of
$\foD^0_{\mathrm{discrete}}$ is contained in $L_{\CE(-3)[1]}$
for a right-moving ray or in $L_{\CE}$ for a left-moving ray, for some
exceptional bundle $\CE$ obtained
via mutation from the strong exceptional triple $\CC:=(\CO_{\BP^2}(1), \CT_{\BP^2},
\CO_{\BP^2}(2))$. Since $\mu(\CO_{\BP^2}(1))=1$, $\mu(\CO_{\BP^2}(2))=2$,
we have by Lemma~\ref{lem:slope inequalities},(2) that $\mu(\CE)\in [1,2]$
(with $\mu(\CE)=1$ or $2$ if and only if $\CE=\CO_{\BP^2}(1)$ or
$\CO_{\BP^2}(2)$ respectively).
The slope of the line $L_{\CE(-3)[1]}$ is $-\mu(\CE(-3))=
-\mu(\CE)+3 \in [1,2]$. The slope of the line $L_{\CE}$ is $-\mu(\CE)
\in [-2,-1]$. This shows (1). Note further this shows that if
$\fod,\fod'\in \{\fod_{-1}^+,\fod^-_1\}\cup
\foD^0_{\mathrm{discrete}}$ are a right-moving and left-moving
ray respectively, and the endpoints of these rays each lie below $L_{\CO(2)}$
and $L_{\CO(-2)}$, then the intersection $\fod\cap\fod'$, if non-empty, also
lies below $L_{\CO(2)}$ and $L_{\CO(-2)}$.

For (2), first
note that the projection of a super-diamond to the $x$-axis is
$[n-1/2,n+1/2]$ if the superdiamond is bounded by the lines
$L_{\CO(n-2)}, L_{\CO(n-1)}, L_{\CO(n+1)}, L_{\CO(n+2)}$, see 
Figure \ref{Fig: init}. Focusing without loss of generality on the case
$n=0$, it is thus enough to show the following.
Let $\CE$ be an exceptional bundle of rank $>1$
for which the $x$-coordinate
of $V_{\CE}$ lies in $(-1/2,1/2)$, i.e., $\mu(\CE)\in (1,2)$. Then the
outer diamond based at $V_{\CE}$ is contained in the super-diamond
$\Diamond^{\mathrm{sup}}$ corresponding to $n=0$. 

Denote by $\mathbf{Exc}$ the set of exceptional
bundles on $\BP^2$ obtained via mutation from $\CC$ (including
the three elements of $\CC$ itself). For $\CE\in \mathbf{Exc}$
not a line bundle, the vertex $V_{\CE}$ is the intersection of a left-moving
and right-moving ray in $\{\fod_{-1}^+,\fod^-_1\}\cup
\foD^0_{\mathrm{discrete}}$, and hence $V_{\CE}$
lies below $L_{\CO(-2)}$ and $L_{\CO(2)}$ respectively provided the
endpoints of these two rays do. Thus, starting from $V_{\CE}$ for
$\CE$ in the strong exceptional triple $\CC$, we see inductively that for all
$\CE\in \mathbf{Exc}$, $V_{\CE}$ lies below
$L_{\CO(-2)}$ and $L_{\CO(2)}$. Similarly, again
consulting Figure~\ref{Fig: RL}, we see inductively that the roofs
of the outer diamond based at $V_{\CE}$ lie below $L_{\CO(-2)}$ and
$L_{\CO(2)}$. Thus $\Diamond^{\mathrm{out}}_\CE$ lies below these
two lines.

On the other hand, provided that $\CE$ is rank $>1$, then $V_{\CE}$
lies on or above the lines $L_{\CO(1)}$ and $L_{\CO(-1)}$, as
clearly $V_{\CE}$ does not lie in the interior of the outer diamond based at 
$(-1/2,0)$ or $(1/2,0)$, see Figure~\ref{Fig: init}. Thus we see that
$\Diamond^{\mathrm{out}}_{\CE}$ lies in $\Diamond^{\mathrm{sup}}$.
\end{proof}

\begin{lemma}
\label{lem:no cross}
Let $\fod$ be an initial or discrete ray of $\foD^{\mathrm{stab}}$. 
If $\fod$ is right-moving (resp.\ left-moving),
then there is no $\sigma\in\Int(\fod)$ and object $\CE\in\CA^{\sigma}$
which is semistable with respect to $\sigma$ and has direction vector
$-m_{\CE}$ pointing to the right (resp.\ left) of $\fod$.
\end{lemma}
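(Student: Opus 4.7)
The plan is to argue by contradiction, exploiting the moduli emptiness of triangle interiors proved in Theorem~\ref{thm:bousseau generalization}. Suppose $\CE\in\CA^{\sigma}$ is $\sigma$-semistable with $\sigma\in\Int(\fod)$, where $\fod$ is right-moving, and $-m_{\CE}$ points into the open half-plane to the right of $\fod$; the left-moving case will follow by the symmetry $(x,y)\mapsto(-x,y)$ of $\foD^{\mathrm{stab}}$. Passing to a Jordan--H\"older filtration of $\CE$ at $\sigma$ with stable factors $\CF_{i}$ of multiplicities $n_{i}\in\BZ_{>0}$, one has $m_{\CE}=\sum_{i}n_{i}m_{\CF_{i}}$. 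Since the open half-plane to the right of $\fod$ is a convex cone and a positive integer combination of vectors none of which lies in it cannot lie in it either, some $-m_{\CF_{i}}$ also lies in this half-plane; replacing $\CE$ by that factor, I may assume $\CE$ is $\sigma$-stable. Because stability of a fixed class is an open condition on $\mathrm{Stab}(\BP^{2})$ (within a chamber where the heart $\CA^{\sigma}$ is constant), and since $m_{\CE}$ is tangent to $L_{\CE}$, the point $\sigma' := \sigma-\epsilon m_{\CE}$ for $\epsilon>0$ sufficiently small lies on $L_{\CE}$ (so $\Re Z^{\sigma'}(\CE)=0$), lies strictly to the right of $\fod$, and still has $\CE$ as a $\sigma'$-stable object.

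The principal case is when $\sigma$ lies in the interior of an edge of some triangle $\Delta_{w}\subseteq R_{\Delta}$: then locally the right side of $\fod$ at $\sigma$ is precisely $\Int(\Delta_{w})$, so for small enough $\epsilon$ we obtain $\sigma'\in\Int(\Delta_{w})$. Theorem~\ref{thm:bousseau generalization} then forbids any $\sigma'$-stable object in $\CA^{\sigma'}$ with vanishing real part of central charge, a direct contradiction.

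The main obstacle is the remaining case, where $\sigma$ lies on the portion of $\fod$ past the triangle $\Delta_{w_{j}}$, namely on a roof of a dense diamond or beyond the apex of an outer diamond. Here $\sigma'$ may land inside a dense diamond rather than inside any triangle of $R_{\Delta}$, so Theorem~\ref{thm:bousseau generalization} does not apply verbatim. To handle this, I will continue tracking $L_{\CE}$ in the direction $-m_{\CE}$ beyond the small perturbation $\sigma'$, using the slope comparisons of Corollaries~\ref{cor: decreasingSlope} and~\ref{cor:bigger slopes} together with the inclusion of outer diamonds in super-diamonds from Theorem~\ref{thm: Inclusion} to force $L_{\CE}$ to exit the dense diamond through one of its generating rays and thereby enter the interior of a descendant triangle of $R_{\Delta}$, where the principal case applies. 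Showing that $\CE$ remains stable along this longer excursion, and in particular ruling out destabilising walls that could arise from the rays dense in the diamond, is the technical heart of the argument.
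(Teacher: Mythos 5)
Your reduction to a stable Jordan--H\"older factor and the small perturbation $\sigma'=\sigma-\epsilon m_{\CE}$ correctly dispose of the case where $\sigma$ lies in the interior of an edge shared by two triangles of $R_{\Delta}$: there both sides of $\fod$ are triangle interiors and Theorem~\ref{thm:bousseau generalization} applies. But the case you defer to the end --- $\sigma$ on the roof of a dense diamond, or at or beyond its apex --- is not a residual technicality; it is exactly the situation for which the lemma is invoked in the proof of Theorem~\ref{Thm:nonEmptiness} (nothing crosses the roof of $\Diamond_v$, nothing passes through the apex), and your sketch for it rests on a premise that fails. An object does \emph{not} in general ``remain stable along this longer excursion'': the locus on $L_{\CE}$ where a fixed object stays semistable is the finite segment $E(\CE,\sigma)$ of \S\ref{subsec:wall crossing}, and inside a dense diamond $L_{\CE}$ meets a dense set of potential walls. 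Any argument that the interior of the dense diamond carries no actual walls for $\CE$ is essentially Theorem~\ref{Thm:nonEmptiness}(1), which is proved \emph{after}, and by means of, this lemma --- so that route is circular. Moreover, when $\sigma$ lies at or past the apex, $\sigma'$ lands in $R_{\mathrm{unbdd}}$, and there is no mechanism in your sketch forcing $L_{\CE}$ back into a triangle at all.

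The paper's proof is organized quite differently precisely to avoid preserving $\CE$. It follows the destabilization process: each time the object leaves its stability segment it is replaced by its right generator (the last HN quotient), producing a chain of segments $E_0',E_1',\dots$ that must terminate on the parabola $y=-x^2/2$; in parallel, $\fod$ is continued downward through left-hand triangle edges to another point $(d,-d^2/2)$ of the parabola. The contradiction is then global: strict slope monotonicity $\mu(E_i')>\mu(E_{i+1}')$ (proved by a maximum argument on a putative closed loop of edges, using \eqref{eq:varphi increasing}), the bound of Theorem~\ref{thm: Inclusion}(1) forcing $d'\ge -1$, the topological fact that the two chains must cross away from $\sigma_0$, and the elimination of every possible crossing via Theorem~\ref{thm:bousseau generalization} and Corollary~\ref{cor:bigger slopes}. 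None of this machinery appears in your proposal, so as written the proof is incomplete at its essential point.
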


\begin{proof}
After twisting, we may assume that $\fod\in \{\fod_{-1}^+,\fod_0^{\pm},
\fod_1^-\}\cup \foD^0_{\mathrm{discrete}}$. Now suppose that there
exists
$\sigma_0\in \Int(\fod)$, $\CE\in
\CA^{\sigma}$ which violates the statement of the lemma. We will first build
a sequence of line segments or rays $E_0,\ldots,E_n$ and 
$E_0',\ldots,E_{n'}'$ from this data.

Without loss of generality, let us focus on the right-moving case.
Set $E_0=\fod$. If
$\fod$ is an initial ray, we stop.
Otherwise, we define the sequence $E_1,\ldots,E_n$ inductively.
The endpoint
of $\fod$ is an incoming vertex $v_1$ of some triangle $\Delta_1$. 
Take $E_1$ to be the left-hand edge of this triangle, with other endpoint
$v_2$. If $E_1$ contains one of the points of the form $(d,-d^2/2)$ from which initial
rays emanate (so that $E_1$ is the edge of an initial triangle), then
we replace $E_1$ with the subsegment of $E_1$ with endpoints $(d,-d^2/2)$
and $v_1$. At this
point we are done. Otherwise, we continue the process 
inductively. If we have chosen $E_i$ with vertices $v_i,v_{i+1}$ and
$E_i$ does not contain a point of the form  $(d,-d^2/2)$, then
$v_{i+1}$ is an incoming vertex of a triangle $\Delta_{i+1}$, and we
take $E_{i+1}$ to be the left-hand edge of $\Delta_{i+1}$ containing
$v_{i+1}$. If $E_{i+1}$ contains a point of the form $(d,-d^2/2)$,
then we replace $E_{i+1}$ with the line segment joining
$(d,-d^2/2)$ and $v_i$ and are done. Thus we have the data of a ray $E_0$
with endpoint $v_1$, and line segments $E_1,\ldots,E_n$ with $E_n$
having endpoints $v_i,v_{i+1}$ and $v_{n+1}=(d,-d^2/2)$ for some
integer $d$. Since $\fod$ was chosen to be right-moving and
in $\{\fod_{-1}^+,\fod_0^{\pm}, \fod_1^-\}\cup 
\foD^0_{\mathrm{discrete}}$, note that $d=-1$ or $0$, and $d=0$ only
if $\fod=\fod_0^+$.

We may similarly follow the procedure given in \S\ref{subsec:wall crossing},
starting with $\CE$ and obtaining a sequence of quotient objects
$\CQ_1,\ldots,\CQ_{n'}$
and edges $E_0'=E(\CE,\sigma_0)$, $E_1',\ldots,E'_{n'}$. 
This sequence terminates at a point $(d',-(d')^2/2)$ for $d'$ some integer.
See Figure~\ref{Fig: ConcaveOuterRay} for the first steps of this process.

\begin{figure}[h]
 \subcaptionbox*{}[0.95\linewidth]{%
   \includegraphics[width=\linewidth]{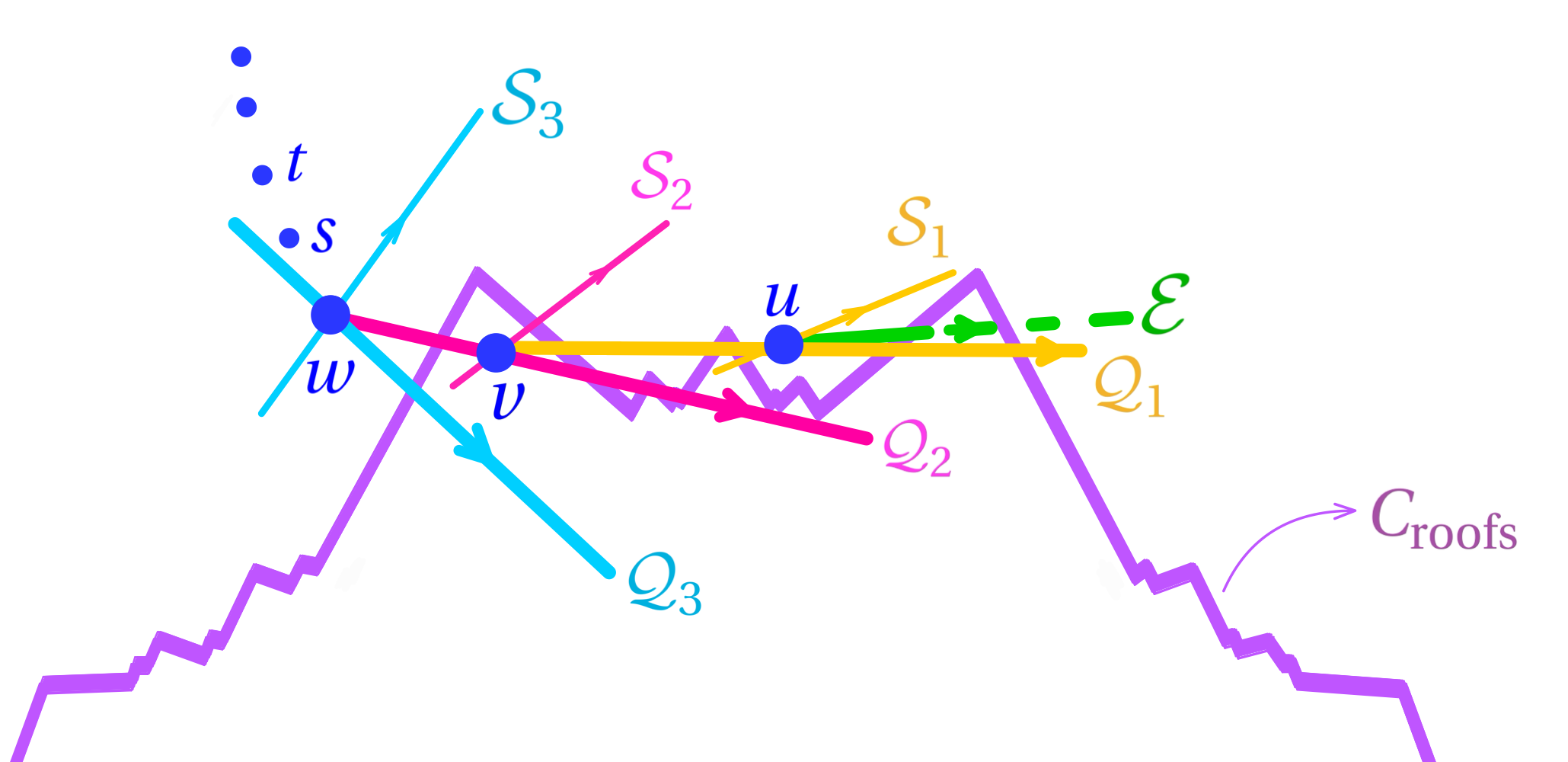}%
  }%
  \caption{}

 \label{Fig: ConcaveOuterRay} 
\end{figure}

For $E=E_i$ or $E_i'$, write $\mu(E)$ for the slope of the line segment
$E$. Note that if $E\subseteq L_{\CE}$ for some object $\CE$, then
$\mu(E)=-\mu(\CE)$. By construction
$\mu(E_i)\ge \mu(E_{i+1})$ for all $i$. We will now show similarly that
\begin{equation}
\label{eq:mu increasing}
\mu(E_i')>\mu(E_{i+1}')
\end{equation}
for all $i$. 

Indeed, this almost follows from the fact that at each step
of the construction of the $E_i'$,
$\CQ_i$ is chosen as a right generator, as in Figure \ref{Fig: Convexity}.
However, \eqref{eq:mu increasing} would fail if $m_{\CQ_i}$ points to the
left but $m_{\CQ_{i+1}}$ points to the right. To rule out this case, we
proceed as follows. Extend $E_0'$ in the direction $-m_{\CE}$,
i.e., replace $E_0'$ with the ray $\sigma_1-\BR_{\ge 0}m_{\CE}$.
Since the
sequence of edges terminates at a point on the parabola $y=-x^2/2$ and
all edges $E_i'$ (except the last one which has endpoint on this parabola)
lie about this parabola, there is no choice but for this sequence
of edges to cross itself, see Figure \ref{Fig: loop}.
Thus there exists some $i,j$ with $j>i+1\ge 1$ for which
$E_i'\cap E_j'\not=\emptyset$. After subdividing these two edges,
we may assume they intersect in a vertex. Thus $E_i',E_{i+1}',\ldots
E_j'$ form a closed loop. Since this loop is compact, there is a point
$(x_0,y_0)$ on this loop
for which the function $h(x,y)=y+x^2/2$ achieves its maximum. This means
that all $E_k'$ with $i\le k \le j$ containing $(x_0,y_0)$ must lie in the same half-plane
with boundary the line $x_0(x-x_0) + (y-y_0)=0$. If $(x_0,y_0)$ lies
in the interior of an edge $E_k'$, then $E_k'$ must lie in this
line, contradicting \eqref{eq:varphi increasing}, as then 
$\varphi_{(x_0,y_0)}(m_{\CQ'_k})=0$.
If this maximum occurs at the intersection of two adjacent edges
$E_k',E_{k+1}'$, then 
$\varphi_{(x_0,y_0)}(m_{\CQ_k})$ and
$\varphi_{(x_0,y_0)}(m_{\CQ_{k+1}})$ have the opposite sign, 
again contradicting
\eqref{eq:varphi increasing}. The same contradiction occurs 
if $(x_0,y_0)$ is the intersection point of $E_i'$ and $E_j'$. This proves
that \eqref{eq:mu increasing} holds.

\begin{figure}[h]
 \subcaptionbox*{}[.67\linewidth]{%  
 \includegraphics[width=\linewidth]{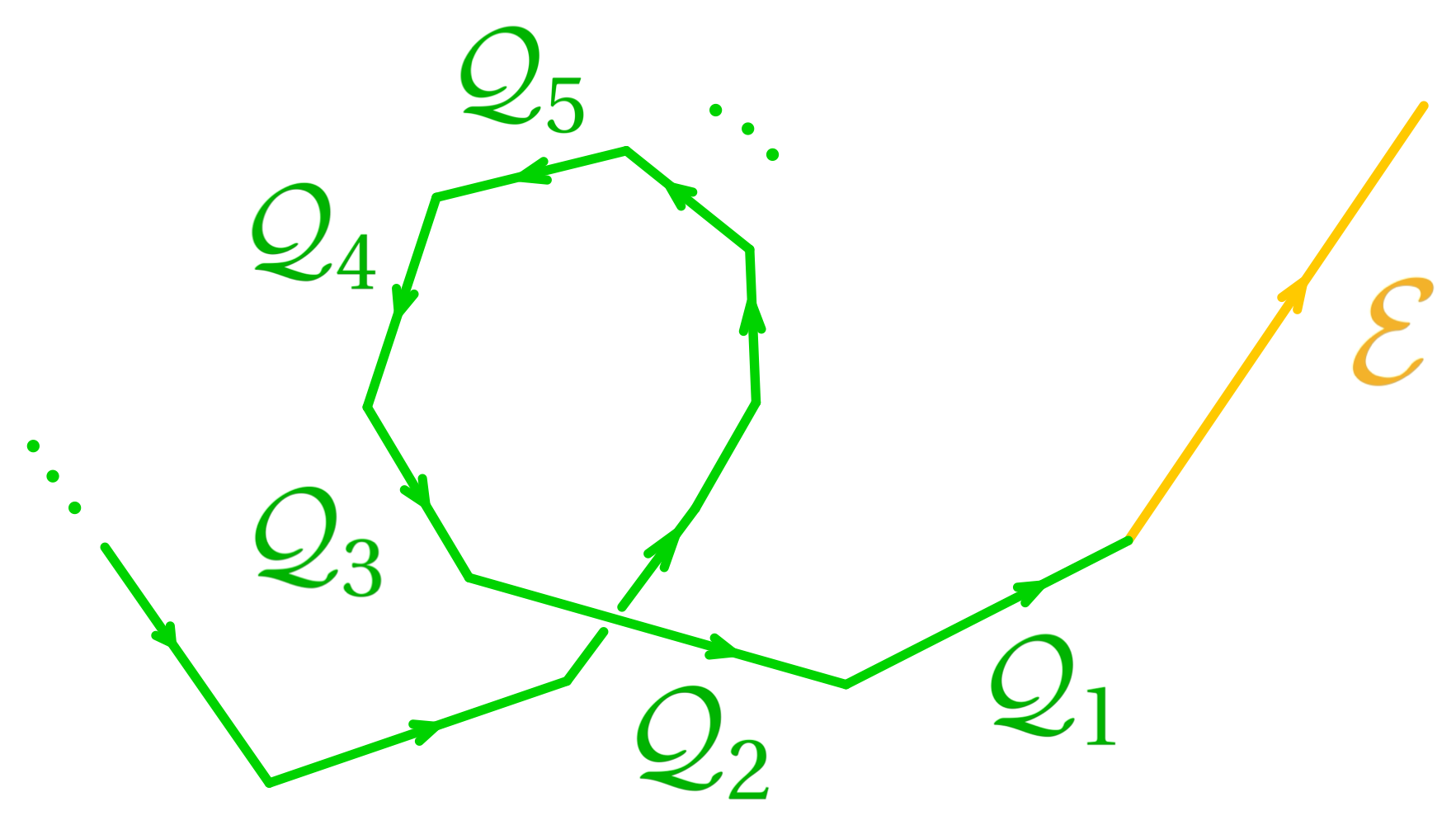} }%
  \caption{}
  \label{Fig: loop}
\end{figure}

Now note that $d'\ge -1$. Indeed, if $d'\le -2$, then $\mu(E_{n'}')\ge 2$.
Using \eqref{eq:mu increasing} repeatedly, we thus have $\mu(E_1')\ge 2$.
But by Theorem \ref{thm: Inclusion},(1), $\mu(\fod)<2$. Thus $-m_{\CE}$
cannot point to the right of $\fod$, contradicting our assumption. 

It thus follows that $\cup_i E_i$ and $\cup_i E'_i$ must have 
an intersection point other than $\sigma_0$. Thus, there is some $i$
for which there exists a $j$ with
$E_i'\cap E_j\not=\emptyset$ and $\mu(E_i')>\mu(E_j)$.
Take $i$ as small as possible, and given $i$, take $j$ as small as possible. 
If $j=0$, then by \eqref{eq:mu increasing},
$\mu(E_i')<\mu(E_1')$, but by assumption $\mu(E_1')<\mu(E_0)$ since
$E_1'$ crosses $\fod$ from the left. This is a contradiction.

Otherwise, if $j>0$, then $E_j$ is a edge (or partial edge if $E_j=E_n$)
of a triangle $\Delta_w$ for which
$v_j$ is an incoming vertex. 
If $E_i'$ intersects $E_j$ in its interior, then since $E_j$ is the
intersection of two triangles, $E_i'$ intersects the interior of
one of these two triangles. 
By Theorem \ref{thm:bousseau generalization}, this is impossible. Thus
$E_i'$ must intersect $E_j$ at a vertex. Since $j$ was chosen to be
as small as possible, this vertex is $v_{j+1}$. Thus $E_i'$
must be a line segment contained in one of the rays through $v_{j+1}$
in $\foD^{\mathrm{stab}}$. Indeed, $E_i'$ cannot lie between two 
adjacent discrete rays passing through $v_{j+1}$ as this would violate
Theorem \ref{thm:bousseau generalization}. Hence, if $E_i'$ is not 
contained in a discrete ray, it must be contained in the dense cone
$C_{v_{j+1}}$, and hence in a dense ray of $\foD^{\mathrm{stab}}$.
It then
follows from Corollary~\ref{cor:bigger slopes} that the slope of $E_i'$
is greater than the slope of $\fod_{v_j}^+$. We may then use this
corollary repeatedly to see that the slope of $E_i'$ is greater than 
the slope of $\fod$, again a contradiction.
\end{proof}

\begin{theorem}[Vertex-freeness]\label{Thm:nonEmptiness} Let $\Diamond_v$ be a dense diamond based at $v$.
\begin{enumerate}
\item For $\sigma$ in the interior of $\Diamond_v$ and $\gamma$
a Chern character with $Z^{\sigma}(\gamma)\in i\BR_{>0}$, then
$\CM_{\gamma}^{\sigma}$ is non-empty only if $L_{\gamma}$ passes through
$v$. Further, there are no actual walls in the interior of $\Diamond_v$
on any ray passing through $v$.
\item The only rays meeting the roof of $\Diamond_v$ are those passing
through $v$.
\end{enumerate}
\end{theorem}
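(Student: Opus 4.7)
The plan is to establish (1) by contradiction using the wall-crossing iteration of \S\ref{subsec:wall crossing} together with Lemma \ref{lem:no cross} and the triangle emptiness of Theorem \ref{thm:bousseau generalization}; part (2) will then follow quickly.

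Suppose for contradiction that there exists $\sigma \in \Int(\Diamond_v)$ and a $\sigma$-semistable object $\CE$ with $L_\CE$ not passing through $v$. Run both the right- and left-generator iterations of \S\ref{subsec:wall crossing} starting from $(\CE, \sigma)$. As in the proof of Lemma \ref{lem:no cross}, each produces a broken line with edges $E_k' \subseteq L_{\CQ_k}$ for successive semistable $\CQ_k$ (with $\CQ_0 = \CE$), terminating at a point $(d', -(d')^2/2)$ on the parabola $y = -x^2/2$, and with strictly monotone slopes by \eqref{eq:mu increasing}. Since $\Diamond_v \subset U$ lies strictly above the parabola, each broken line must exit $\Diamond_v$. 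The crux of the argument is to show that the initial segment $E_0' \subseteq L_\CE$ cannot exit $\Diamond_v$ except through $v$: it cannot cross the interior of a triangle of $R_\Delta$ (for instance a discrete triangle inside $\Diamond_v^{\mathrm{out}} \setminus \Diamond_v$), since $\CE$ would then be semistable in a triangle interior, contradicting Theorem \ref{thm:bousseau generalization}; and it cannot cross the interior of a roof $\fod_{w_i}$ transversally, since at such a crossing $-m_\CE$ would point from $\Int(\Diamond_v)$ across the initial or discrete ray $\fod_{w_i}$, which is exactly the side of $\fod_{w_i}$ forbidden by Lemma \ref{lem:no cross}. Transversal crossings through other vertices of $\Diamond_v$ reduce to these two cases via the local structure of Theorem \ref{thm:local structure}. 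Hence either $v \in E_0'$ --- immediately giving $v \in L_\CE$, a contradiction --- or $E_0'$ is contained in $\Diamond_v$ and terminates at a wall $\sigma_1 \in \Diamond_v$ where $\CE$ decomposes via Harder--Narasimhan into factors $\CS_1,\CQ_1$ whose lines all pass through $\sigma_1$. Induction on a suitable complexity (e.g.\ rank plus $c_2$ of the HN factors) applied to $\CS_1$ and $\CQ_1$ forces $v \in L_{\CS_1} \cap L_{\CQ_1}$, and since $Z^v(\CE) = Z^v(\CS_1) + Z^v(\CQ_1) \in i\BR$, we conclude $v \in L_\CE$, again a contradiction. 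The second assertion of (1) follows at once: an actual wall in $\Int(\Diamond_v)$ on a ray through $v$ would produce, via Harder--Narasimhan, a destabilizing factor $\CF$ semistable near the wall with $L_\CF \neq L_\gamma$; applying the first assertion to $\CF$ yields the contradiction.

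For (2), let $\fod \in \foD^{\mathrm{stab}}$ be a non-trivial ray meeting the interior of a roof $\fod_{w_i}$ at a point $p$, and suppose $\fod$ does not pass through $v$. Then $\fod$ cannot coincide with $\fod_{w_i}$ (else the endpoint structure from Theorem \ref{thm:local structure} would force $\fod$ through one of the vertices of $\Delta_w$, not through an interior point of $\fod_{w_i}$ disjoint from $v$), so $\fod$ is transverse to $\fod_{w_i}$ at $p$ and enters $\Int(\Diamond_v)$ on one side. Passing to the finer scattering diagram $\foD_{u,v}^{\BP^2}$ of Remark \ref{rem:Duv}, some Chern character $\gamma$ with $L_\gamma \supseteq \fod$ satisfies $\CM_\gamma^\sigma \neq \emptyset$ for $\sigma \in \fod \cap \Int(\Diamond_v)$; since $v \notin L_\gamma$, this contradicts (1).

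The main obstacle I expect is the case analysis of roof crossings in the proof of (1), specifically verifying that the direction in which $E_k'$ would cross a roof lands on the side of the roof forbidden by Lemma \ref{lem:no cross}. This will rest on the monotonicity \eqref{eq:mu increasing} together with the slope comparisons of Corollary \ref{cor: decreasingSlope}, and requires carefully matching the orientation of each broken-line edge against the left- or right-moving orientation of the roof being crossed.
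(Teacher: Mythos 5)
Your overall skeleton is the paper's: argue by contradiction, follow the destabilization process of \S\ref{subsec:wall crossing} to produce a chain of maximal-semistability segments, and rule out any segment exiting $\Diamond_v$ through a roof (Lemma~\ref{lem:no cross}) or through a lower edge into a triangle interior (Theorem~\ref{thm:bousseau generalization}); the deductions of the second assertion of (1) and of (2) from the first assertion are also essentially as in the paper. The genuine gap is the step you use when $E_0'$ stays inside $\Diamond_v$ and terminates at a wall: the proposed ``induction on a suitable complexity (e.g.\ rank plus $c_2$ of the HN factors)'' is not well-founded. Harder--Narasimhan factors in the tilted heart $\CA^\sigma$ routinely have larger invariants than the object they destabilize --- for instance a rank-zero object $\CO_C(n)$ is destabilized by pieces of ranks $r$ and $-r$ with $r$ arbitrary large in other examples, and $|\ch_2|$ of the factors is likewise uncontrolled --- so neither rank nor $\ch_2$ (nor their sum) decreases. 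Without a discreteness statement making some measure strictly decrease in a well-ordered set, the recursion on HN factors need not terminate, and you supply no such statement.

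The paper avoids this induction entirely by propagating the invariant forward rather than pulling it back: at each wall $\sigma_{i+1}$ it observes that $m_{\CE_i}$ is a positive linear combination of the opposite vectors of the left and right generators, and since $m_{\CE_i}$ is \emph{not} tangent to the line through $v$ and $\sigma_{i+1}$, at least one generator also has this property; that generator is chosen as $\CE_{i+1}$. This produces a single chain of segments $E_1,E_2,\ldots$, none lying on a line through $v$, which (being contained in rays of the refined diagram $\foD^{\BP^2}_{u,v}$) must descend to the parabola $y=-x^2/2$ and hence must eventually exit $\Diamond_v$ --- at which point your boundary analysis applies and gives the contradiction directly, with no recursion on factors. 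If you want to keep your version, you must either justify termination of the recursion (e.g.\ via local finiteness of walls along the relevant segment) or switch to the forward-propagation argument. A smaller point: in (2) you only treat rays crossing the interior of a roof transversally into $\Int(\Diamond_v)$; a ray with endpoint on the roof pointing away from the diamond, or one meeting only the apex without entering the interior, is not covered by an appeal to (1) and must be excluded separately by Lemma~\ref{lem:no cross}, as the paper does.
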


\begin{proof}
Let $\fod$, $\fod'$ be the discrete rays containing the
left and right roofs of $\Diamond_v$ respectively. For (1), suppose given
$\sigma=\sigma_1$, $\gamma$ as in the statement with $\CM_{\gamma}^{\sigma_1}$
non-empty and $L_{\gamma}$ not passing through $v$. Let
$\CE_1$ be an object in $\CM_{\gamma}^{\sigma}$. We will construct
a sequence of edges $E_1=E(\CE_1,\sigma_1), E_2,\ldots,E_n$. If the other
endpoint of $E_1$ lies outside of $\Diamond_v$, we stop. Otherwise,
let $L_1$ be the line through $v$ and $\sigma_2$. If $\CE_2,\CE_2'$
are the left and right generators of $\CE_1$ in the language of
\S\ref{subsec:wall crossing}, at least one of $m_{\CE_2},m_{\CE_2'}$
is not tangent to $L_1$ since $m_{\CE_1}$ is not tangent to $L_1$ and
is a positive linear combination
of $m_{\CE_2},m_{\CE_2'}$. Interchanging $\CE_2$ and $\CE_2'$ if necessary,
we can assume $m_{\CE_2}$ is not tangent to $L_1$. We then take
$E_2=E(\CE_2,\sigma_2)$. Repeating this process, we obtain $E_1,\ldots,E_n$
where $E_n$ has initial endpoint $\sigma_n$ inside $\Diamond_v$ and final
endpoint $\sigma_{n+1}$ outside of $\Diamond_v$. In particular, $E_n$
must pass through one of the four edges of $\Diamond_v$. If it passes through
the roof, i.e., through $\fod\cap\Diamond_v$ or  $\fod'\cap\Diamond_v$, 
then this violates
Lemma~\ref{lem:no cross}. If $E_n$ passes through the interior
of one of the lower edges, then it must enter the interior of a triangle,
violating Theorem~\ref{thm:bousseau generalization}. We thus obtain a
contradiction and the first statement of (1) holds. If an actual wall
occured in the interior of $\Diamond_v$, then we would now have an object
$\CE$ with $L_{\CE}$ passing through $v$, with destabilizing subobject
or quotient object $\CF$ having $L_{\CF}$ not passing through $v$. This
contradicts the first statement of (1).

For (2), after taking (1) into account, the only possibilities for
$\fod\in \foD^{\mathrm{stab}}$ meeting the roof of $\Diamond_v$ but not
passing through $v$ is
if $\fod$ meets the apex of $\Diamond_v$ but does not enter the interior.
However, this is also ruled out by Lemma~\ref{lem:no cross}.
\end{proof}

The following gives a more refined picture of the diamonds.

Fix a (non-initial) discrete ray $\fod\in\foD_{\mathrm{discrete}}$ with endpoint $v$. 
We let $\fod^{\circ}$ be the \emph{previous ray}, which we define to be
the first ray counterclockwise (resp.\ clockwise) from $\fod$ through $v$
if $\fod$ is a left-moving (resp.\ right-moving) ray. Note that $v$ may or may not
be the endpoint of $\fod^{\circ}$. We say a point $p\in\fod^{\circ}$ is 
\emph{past $v$} if $p\in v-\BR_{>0} m_{\fod^{\circ}}$. We have:

\begin{prop}
Every outer diamond whose base lies on
$\fod^{\circ}$ past $v$
lies between $\fod^{\circ}$ and $\fod$. 
\end{prop}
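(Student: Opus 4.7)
The plan is to combine the structural results on the triangulation with the slope-monotonicity estimates to confine the outer diamond $\Diamond^{\mathrm{out}}_{v'}$ to the wedge $W$ bounded at $v$ by $\fod$ and $\fod^{\circ}$. Throughout, let $v'$ be a base on $\fod^{\circ}$ past $v$, i.e., the incoming vertex of a triangle $\Delta_{w'}$ with $v' \in v - \BR_{>0} m_{\fod^{\circ}}$.

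First I would pin down one generator of $\Diamond^{\mathrm{out}}_{v'}$. By Theorem~\ref{thm:local structure}, the two discrete rays through the incoming vertex $v'$ are precisely the two rays containing the edges of $\Delta_{w'}$ at $v'$; since $v' \in \fod^{\circ}$, one of these must be $\fod^{\circ}$ itself. The corresponding generator $v' - \BR_{\ge 0} m_{\fod^{\circ}}$ of $\Diamond^{\mathrm{out}}_{v'}$ is then a sub-ray of $\fod^{\circ}$ past $v'$, lying on the boundary of $W$. Moreover, by tracking the inductive construction in Construction~\ref{susection: construction}, the cone $C_{v'}^{\mathrm{out}}$ opens away from $\Delta_{w'}$ onto the $\fod$-side of $\fod^{\circ}$, so $\Diamond^{\mathrm{out}}_{v'}$ is entirely on that side of $\fod^{\circ}$.

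The main step is to show that the remaining three bounding segments of $\Diamond^{\mathrm{out}}_{v'}$ (the other generator and the two roofs) do not cross $\fod$. Suppose for contradiction that one of them meets $\fod$ at a point $q$. Since each such bounding segment is itself a ray of $\foD^{\mathrm{stab}}$, we have $q \in \mathrm{Sing}(\foD^{\mathrm{stab}})$. By Theorem~\ref{thm:bousseau generalization}, $q$ cannot lie in the interior of any triangle adjacent to $\fod$; and $q \ne v$ because $v' \ne v$ and the bounding segments at $v'$ point into $C_{v'}^{\mathrm{out}}$, away from $v$. So $q$ would have to be an incoming vertex of a triangle interior to $\fod$, but Theorem~\ref{thm:local structure} enumerates all rays of $\foD^{\mathrm{stab}}$ through such a $q$, and the slope inequalities of Corollary~\ref{cor: decreasingSlope} together with Corollary~\ref{cor:bigger slopes} rule out one of them being an extension of a bounding segment of $\Diamond^{\mathrm{out}}_{v'}$.

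The hardest part will be the slope control for the roofs, which emanate from the outgoing vertices of $\Delta_{w'}$ rather than from $v'$ itself. I expect this is best handled by induction on the position of $v'$ along $\fod^{\circ}$: the base case is when $v'$ is the incoming vertex closest to $v$, where containment in $W$ follows directly from the local scattering around $v$ via Lemma~\ref{lem:no cross}. For the inductive step, each subsequent outer diamond shares one of its generators with a roof of its predecessor, so the slope-decreasing estimate of Corollary~\ref{cor: decreasingSlope} propagates the containment in $W$ down the chain of outer diamonds along $\fod^{\circ}$.
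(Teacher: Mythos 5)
Your plan has the right key ingredient, Corollary~\ref{cor: decreasingSlope}, and the induction sketched in your final paragraph is close in spirit to what the paper does, but as written the central step is not actually established. The contradiction argument in your ``main step'' does not close: a putative crossing point $q$ of a bounding segment with $\fod$ need not be an incoming vertex of a triangle on $\fod$ --- it could be a hybrid or outgoing vertex, or (more seriously) it could lie in the part of $\fod$ beyond the triangulated region, i.e.\ in the effective interval of $\fod$ inside its big diamond, where Theorems~\ref{thm:bousseau generalization} and~\ref{thm:local structure} say nothing. The final sentence of that paragraph (``the slope inequalities \ldots{} rule out one of them being an extension of a bounding segment'') is an assertion, not an argument. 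Likewise, the inductive step in your last paragraph is stated in one sentence without verifying that the shared ray actually stays below $\fod$ along its whole relevant length, which is exactly the content that needs proving.

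The paper's proof is much more direct and needs neither a contradiction argument nor an induction. The observation that does all the work is that the roofs facing $\fod$ of the successive diamonds based at points of $\fod^{\circ}$ past $v$ are precisely the first discrete rays $\fod^1_1,\fod^1_2,\ldots$ generated along $\fod^{\circ}$ in the sense of Definition~\ref{def:jth discrete ray}, with $\fod$ itself playing the role of the initial term of this sequence. Corollary~\ref{cor: decreasingSlope},(1) then says in one stroke that the angles these roofs make with $\fod^{\circ}$ decrease along $\fod^{\circ}$, so every such roof lies below $\fod$; since each diamond is pinched between $\fod^{\circ}$ and its roof, the containment in the wedge follows immediately. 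If you want to salvage your write-up, replace the middle paragraph entirely by this identification of the roofs, and drop both the appeal to Lemma~\ref{lem:no cross} (whose orientation hypothesis you have not checked) and the induction.
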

    
\begin{proof}
By symmetry, we can assume $\fod$ is a left-moving ray. Figure \ref{Fig: init}
shows one possible choice for $\fod$, with, in this case, $\fod^{\circ}$ being
the left-moving ray contained in $L_{\CO(1)}$.
   
Using Corollary \ref{cor: decreasingSlope}
along $\fod^{\circ}$, the angles of $\fod$, $\fod^1, \fod^2, \fod^3,\ldots$ 
with
$\fod^{\circ}$ are decreasing in the direction of $\fod^{\circ}$. Here, 
$\fod^1,\fod^2,\ldots$ denotes the
first discrete rays generated along $\fod^{\circ}$, i.e., $\fod_1^1,\fod_2^1,
\ldots$ in the notation of Definition \ref{def:jth discrete ray}.
These rays are the right roofs of the diamonds with base on $\fod^{\circ}$, and
hence these right roofs all lie below $\fod$. Thus in particular
all the diamonds with base past $v$ on $\fod^{\circ}$ lie between $\fod^{\circ}$
and $\fod$, as claimed.
\end{proof}

\begin{remark}
\label{rem:various diamonds}
We have the following picture of diamonds along any
discrete ray $\fod$. Such a ray will always contain half the roof of some
outer diamond $\Diamond_v^{\mathrm{out}}$. This diamond will lie
to the left (resp.\ right) of $\fod$ if $\fod$ is left-moving
(resp.\ right-moving).
On the other hand, before $\fod$
reaches the dense diamond $\Diamond_v$ based at $v$, there will be
a sequence of outer diamonds based at points on $\fod$. These will
all lie on the opposite side of $\fod$, i.e., on the right (resp.\ left) side of
$\fod$ if $\fod$ is left-moving (resp.\ right-moving). See Figure
\ref{Fig: effectiveIntervalETC}. We use the following language.
\end{remark}    

\begin{definition}[Effective interval, big diamond, and relatively big diamond]\label{Def:effectiveIntervalETC} Let $\fod$ be a discrete ray as above. 
We assign the following notions to $\fod$ (for which we refer to the above discussion and Figure \ref{Fig: effectiveIntervalETC}):
\begin{itemize}
    \item[(i)] The outer diamond that $\fod$ contains part of the roof
of is called \emph{big diamond} associated to $\fod$.
    \item[(ii)] The intersection of $\fod$ with the big diamond
is called the \emph{effective interval} of $\fod$.
    \item[(iii)] The first and largest diamond on the side of $\fod$ 
opposite the big diamond
is called \emph{relatively big diamond} associated with $\fod$.
   
\end{itemize}
See Figure \ref{Fig: effectiveIntervalETC}.
\end{definition}

Note that each initial half-line bounds two initial diamonds, but only one of them (the one that has infinitely many diamonds on its other side along the half-line) is its big diamond. For example, in Figure \ref{Fig: init}, the big diamond associated to the half-line corresponding to $\CO(-1)$ is the one based at $(0,1/2)$. The diamond based at $(-1/2,0)$ is another initial diamond bounded by this half-line, but it is not its associated big diamond (it is in fact the big diamond associated to the half-line corresponding to $\CO(1)$).

\begin{figure}[h]
    \centering
    \includegraphics[width=13cm]{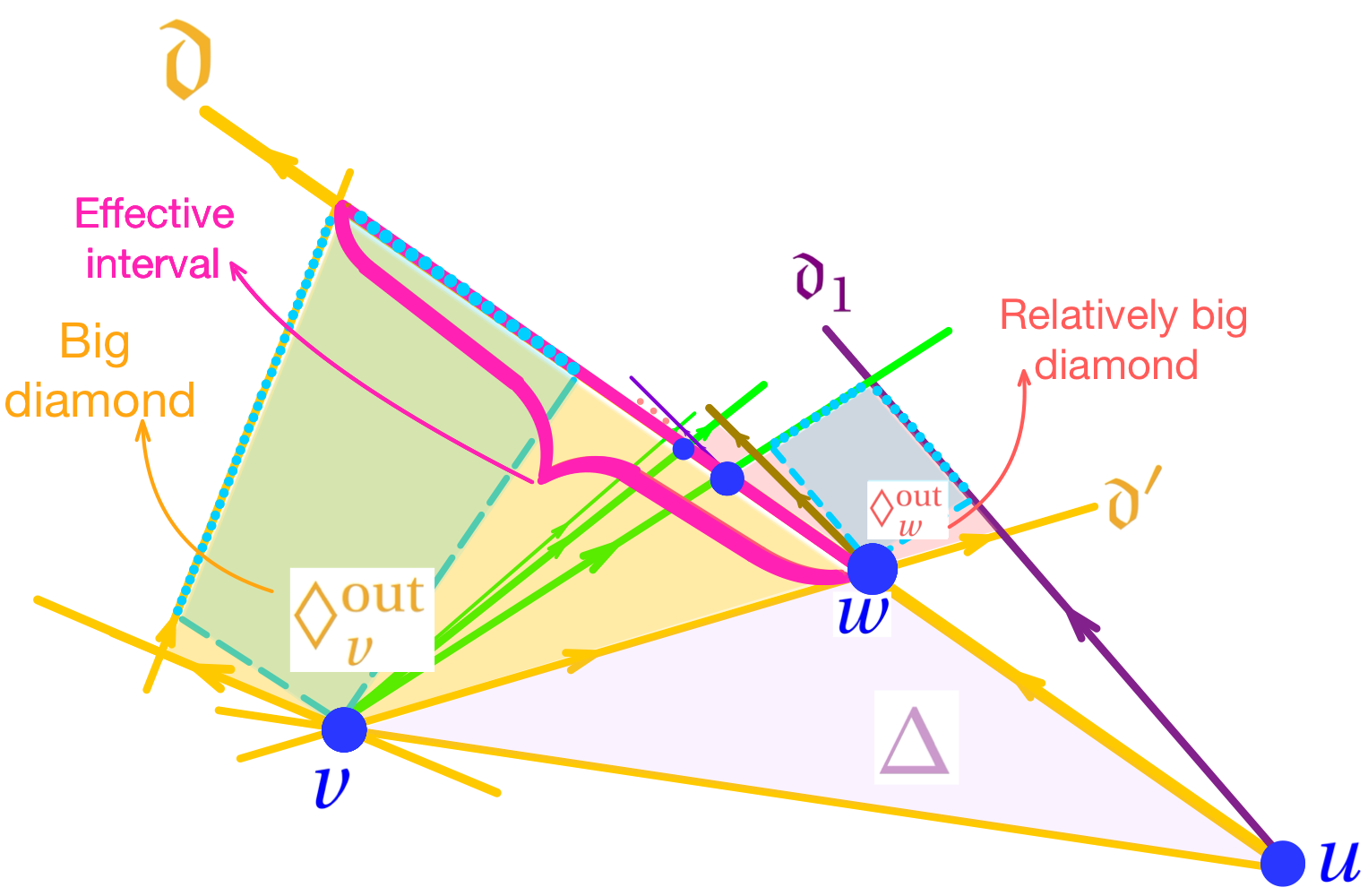}
    \caption{Effective interval, big diamond $\Diamond_v^{\mathrm{out}}$,  and relatively big diamond $\Diamond_w^{\mathrm{out}}$. 
    }
    \label{Fig: effectiveIntervalETC}
\end{figure}

\begin{notation}
    We denote the curve given by the union of the roofs of the dense diamonds by $C_{\mathrm{roofs}}$  (see Figure \ref{Fig:boundary}). 
\end{notation}

\begin{corollary}[Boundedness]\label{Cor: boundedness}
The union of all outer diamonds is bounded above by the union of roofs
of all super-diamonds. More precisely, the 
upper boundary of the union of all  outer diamonds is $C_{\mathrm{roofs}}$,
which lies below the union of roofs of all super-diamonds. 
\end{corollary}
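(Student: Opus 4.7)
The plan is to establish two assertions: (i) the upper boundary of $R_{\mathrm{bdd}}=\bigcup_{k\in\BZ,v}T^k(\Diamond^{\mathrm{out}}_v)$ coincides with $C_{\mathrm{roofs}}$; (ii) $C_{\mathrm{roofs}}$ lies below the union of the roofs of all super-diamonds.

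First I would record the elementary observation that for every base vertex $v$, the roof of the outer diamond $\Diamond^{\mathrm{out}}_v$ coincides with the roof of the dense diamond $\Diamond_v$. Both regions are obtained by removing $\fod_{w_1}\cup\fod_{w_2}$ from a cone based at $v$ (namely $C^{\mathrm{out}}_v$ and $C_v$ respectively) and taking the closure of the component containing $v$. The two cones differ only along their lower boundaries (rational generators vs.\ the irrational limit rays of Definition~\ref{Def:dense cone}); above the rays $\fod_{w_1}\cup\fod_{w_2}$ the two components agree, so they share the same upper boundary. Consequently $C_{\mathrm{roofs}}=\bigcup_v\mathrm{roof}(\Diamond^{\mathrm{out}}_v)$, which already gives $C_{\mathrm{roofs}}\subseteq\partial R_{\mathrm{bdd}}$.

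For the converse direction of (i), fix a point $p$ in the interior of, say, $\fod_{w_1}\cap\Diamond^{\mathrm{out}}_v$; I need to exhibit points arbitrarily close to $p$ on the side of $\fod_{w_1}$ opposite to $v$ that lie outside every outer diamond. By Remark~\ref{rem:various diamonds}, the unique outer diamond whose roof uses part of $\fod_{w_1}$ is the big diamond of $\fod_{w_1}$, namely $\Diamond^{\mathrm{out}}_v$ itself. All other outer diamonds adjacent to $\fod_{w_1}$ are based at points of $\fod_{w_1}$ strictly past the apex of $\Diamond^{\mathrm{out}}_v$, and touch $\fod_{w_1}$ only outside the effective interval of $\fod_{w_1}$; in particular none of them contains a neighbourhood of $p$ on its far side. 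This shows $p\in\partial R_{\mathrm{bdd}}$ and finishes (i).

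For (ii), I invoke Theorem~\ref{thm: Inclusion}(2): every non-initial outer diamond $\Diamond^{\mathrm{out}}_v$ is contained in a super-diamond $\Diamond^{\mathrm{sup}}$, so the corresponding piece of $C_{\mathrm{roofs}}=\mathrm{roof}(\Diamond^{\mathrm{out}}_v)$ lies inside $\Diamond^{\mathrm{sup}}$, hence below its upper boundary. For initial outer diamonds, $T$-invariance reduces the claim to one representative, and a direct inspection (Figure~\ref{Fig: init}) shows the initial-diamond roof lies below the super-diamond roof. The main obstacle is the converse inclusion in (i)---certifying that $R_{\mathrm{bdd}}$ does not leak past a roof segment on its outer side---which rests precisely on the structural content of Remark~\ref{rem:various diamonds}, ultimately a consequence of the slope-decreasing Corollary~\ref{cor: decreasingSlope} and the determinant ordering of Proposition~\ref{prop:det order}.
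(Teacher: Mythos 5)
Your overall strategy (reduce the boundedness statement to Theorem~\ref{thm: Inclusion}(2) plus a direct check for initial diamonds, and separately identify the upper boundary of $R_{\mathrm{bdd}}$ with $C_{\mathrm{roofs}}$) matches the paper's, which disposes of the corollary in two lines by exactly this reduction. However, your ``elementary observation'' is false, and the error propagates. The roof of $\Diamond_v^{\mathrm{out}}$ does \emph{not} coincide with the roof of $\Diamond_v$: both are contained in $\fod_{w_1}\cup\fod_{w_2}$, but since $C_v\subsetneq C_v^{\mathrm{out}}$ the outer diamond meets $\fod_{w_1}$ in the longer segment from the rational endpoint $V_{\CE_2}$ of $\fod_{w_1}$ up to the apex, whereas the dense diamond meets it only from its (quadratic irrational) right vertex $v_R$ up to the apex. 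Concretely, for the initial diamond based at $(1/2,0)$ the outer roof along $L_{\CO(2)}$ runs from $(1,0)$ to the apex $(1/2,1)$, while the dense roof starts at an irrational point strictly between them. So $\bigcup_v \mathrm{roof}(\Diamond_v^{\mathrm{out}})$ strictly contains $C_{\mathrm{roofs}}$; indeed, by Theorem~\ref{thm:roof projection} the projections of the dense roofs are pairwise disjoint intervals $I_{\CE}$, which would fail if they were the full outer roofs.

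Your converse step then misquotes Remark~\ref{rem:various diamonds}: the chain of outer diamonds based at points of $\fod_{w_1}$ sits on the segment from $V_{\CE_2}$ to $v_R$, i.e.\ \emph{before} $\fod_{w_1}$ reaches the dense diamond, hence \emph{inside} the effective interval and before the apex, not ``strictly past the apex'' and ``outside the effective interval'' as you assert. These diamonds lie on the side of $\fod_{w_1}$ opposite to $\Diamond_v^{\mathrm{out}}$ and flank precisely the extra portion $[V_{\CE_2},v_R]$ of the outer roof on its far side; that is exactly why this portion is not in the upper boundary of $R_{\mathrm{bdd}}$ and why the upper boundary consists only of the \emph{dense} roofs. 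As written, your argument would ``prove'' that the upper boundary is the strictly larger set $\bigcup_v\mathrm{roof}(\Diamond_v^{\mathrm{out}})$, which is not $C_{\mathrm{roofs}}$. Part (ii) of your argument (containment in super-diamonds via Theorem~\ref{thm: Inclusion}(2), with the initial case by inspection) is correct and is all the paper itself uses.
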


\begin{proof} All the initial diamonds lie below the union of roofs
of the superdiamonds, and all other diamonds are contained in superdiamonds
by Theorem \ref{thm: Inclusion}, hence the first statement. The second
statement is then clear.

\end{proof}

  \begin{figure}[h]
  \subcaptionbox*{(1)}[0.45\linewidth]{%
    \includegraphics[width=\linewidth]{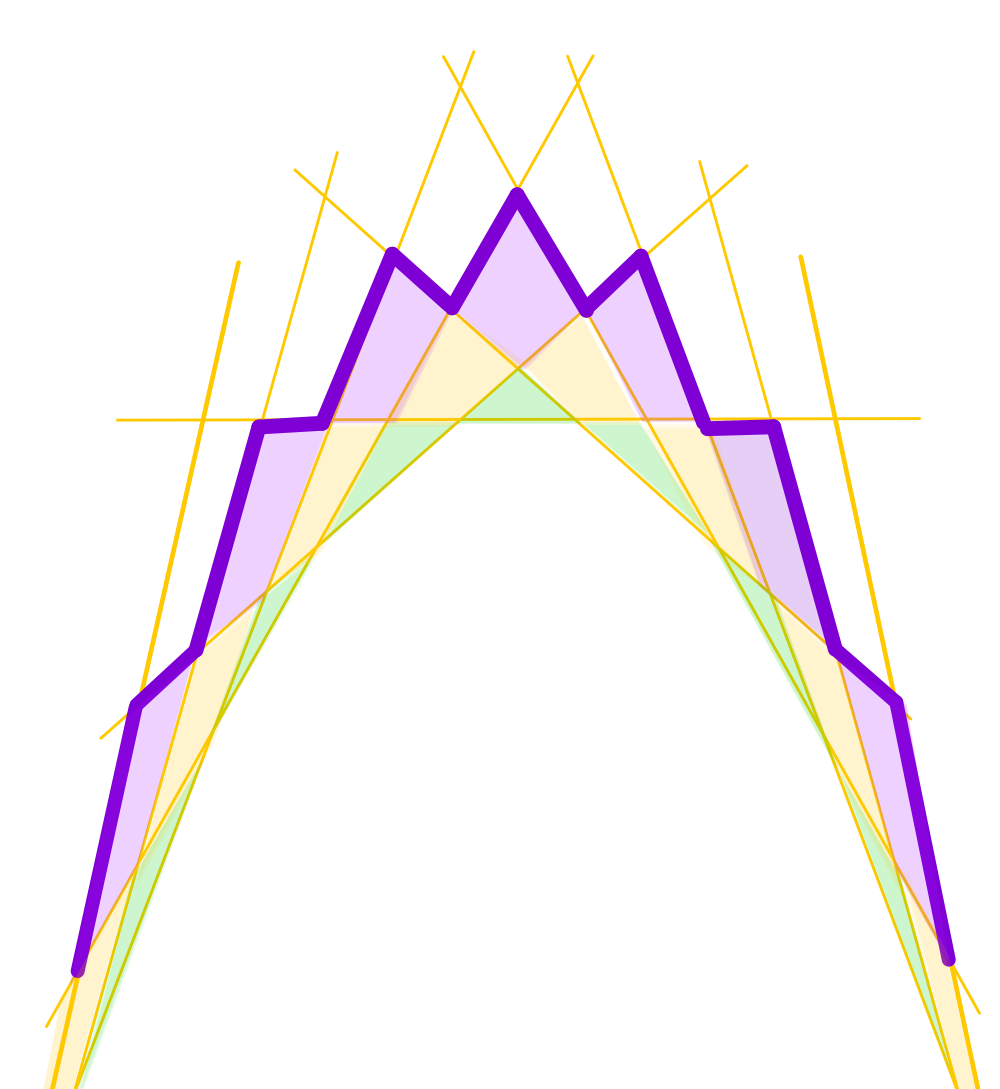}%
  }%
  \hskip3.1ex
  \subcaptionbox*{(2)}[.45\linewidth]{%
    \includegraphics[width=\linewidth]{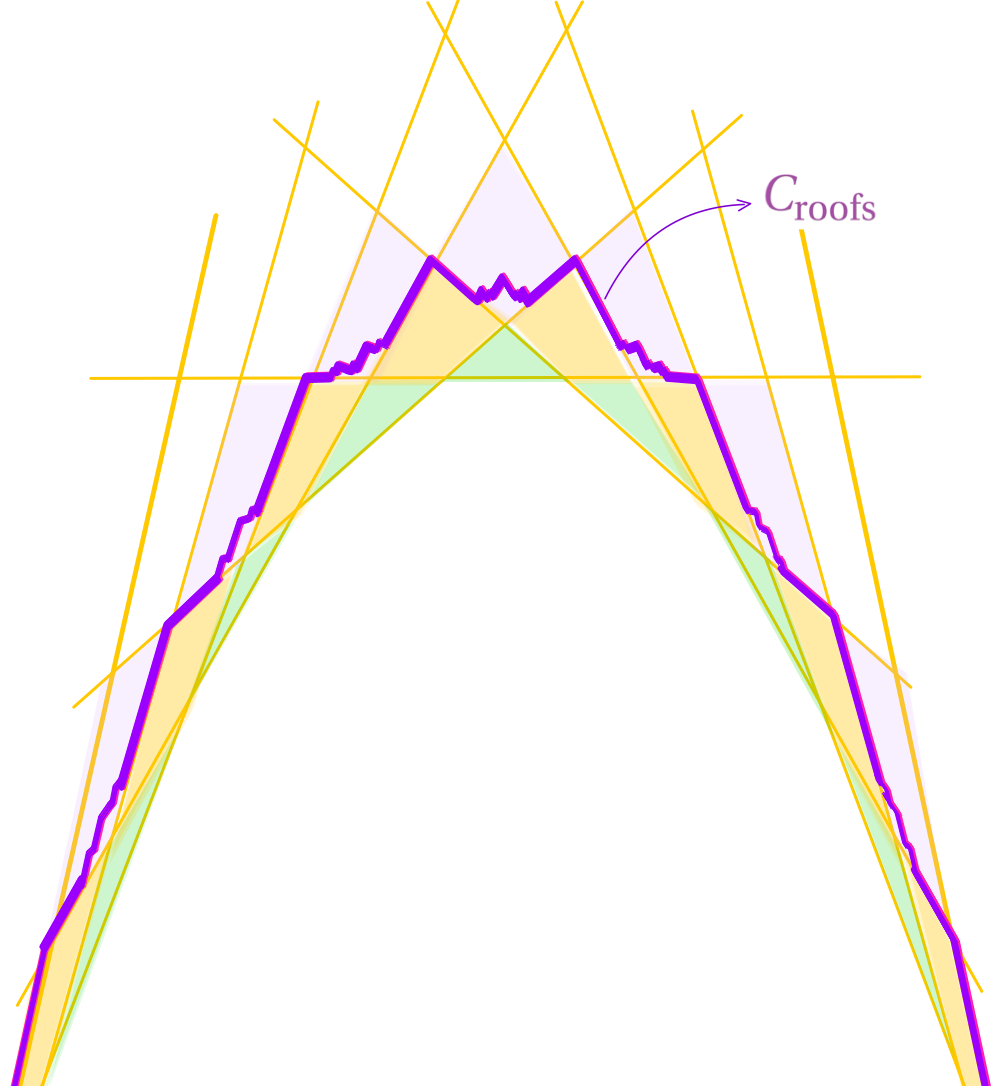}%
  }
    \caption{(1) The roofs of the super-diamonds provide an upper boundary for the space of all the outer diamonds. (2) A finer version fractal-like of the upper boundary, $C_{\mathrm{roofs}}$, is a curve consists of the roofs of all the dense diamonds. 
    }
    \label{Fig:boundary}
\end{figure}

\begin{definition}[Bounded region]\label{Def:bddRegion} We define the
\emph{bounded region}, $R_{\mathrm{bdd}}$, to be $R_{\Delta}\cup R_{\Diamond}$.
Note this is the union of all outer diamonds, and
that the upper boundary of this region is the union of the roofs of all the dense diamonds, $C_{\mathrm{roofs}}$.
\end{definition}

\begin{corollary}
\label{Thm:Equivalence}
The scattering diagrams $\foD^{\mathrm{stab}}$ and
$\foD^{\mathrm{stab}}_{\mathrm{in}}\cup\foD_{\mathrm{discrete}}
\cup\foD_{\mathrm{dense}}$ are equivalent in $R_{\mathrm{bdd}}$.
\end{corollary}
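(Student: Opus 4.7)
The plan is to decompose the bounded region into pieces already handled. Recall that $R_{\mathrm{bdd}}=R_\Delta\cup R_\Diamond$, and by Definitions~\ref{def: diamond} and~\ref{Def:OuterInitial}, each outer diamond $\Diamond^{\mathrm{out}}_v$ decomposes as the union of the dense diamond $\Diamond_v$ and a collection of triangles $T^k(\Delta_w)$ of $R_\Delta$. So it suffices to verify the equivalence on the interior of each outer diamond, and to check compatibility along the shared boundaries (which are discrete rays or endpoints already handled).

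First I would dispose of the triangular part. By Corollary~\ref{Cor:equivofScatPartial}, $\foD^{\mathrm{stab}}$ and $\foD^{\mathrm{stab}}_{\mathrm{in}}\cup\foD_{\mathrm{discrete}}$ are already equivalent on $R_\Delta$. Since no ray of $\foD_{\mathrm{dense}}$ enters $R_\Delta$ (rays of $\foD_{\mathrm{dense}}$ have their endpoint at a base of some diamond, and by definition lie in the irrational-slope cone $C_v$ whose complement in an outer diamond consists of the discrete triangles, so a dense ray at $v$ cannot cross into any triangle by Theorem~\ref{Thm:nonEmptiness}(2) together with Theorem~\ref{thm:bousseau generalization}), adding $\foD_{\mathrm{dense}}$ does not change the equivalence on $R_\Delta$.

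Next, for the dense part, fix a base vertex $v$ of a diamond $\Diamond_v$ and consider $\sigma\in \mathrm{Int}(\Diamond_v)$. By Theorem~\ref{Thm:nonEmptiness}(1) (together with its proof, which implies that every ray of $\foD^{\mathrm{stab}}$ meeting $\mathrm{Int}(\Diamond_v)$ must have endpoint $v$), every $(\fod,H_{\fod})\in\foD^{\mathrm{stab}}$ with $\sigma\in\fod$ satisfies $v\in\fod$. Hence
\[
H_{\foD^{\mathrm{stab}},\sigma}=\sum_{(\fod,H_\fod)\in\foD^{\mathrm{stab}}_v,\ \sigma\in\fod} H_{\fod}.
\]
On the other hand, by Theorem~\ref{thm:local structure}, the local scattering diagram $\foD^{\mathrm{stab}}_v$ is equivalent to the diagram described in Lemma~\ref{lem:basic scattering2}, whose rays are precisely the two initial/discrete lines through $v$, the discrete rays $\{\fod_{i,j}\}$ (all of which lie in $\foD^{\mathrm{stab}}_{\mathrm{in}}\cup\foD_{\mathrm{discrete}}$), and the dense rays in $\foD_{\sigma,\mathrm{dense}}$. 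By Definition~\ref{DDense}, the latter are exactly the rays of $\foD_{\mathrm{dense}}$ with endpoint $v$. Thus $H_{\foD^{\mathrm{stab}},\sigma}$ agrees with the analogous sum for $\foD^{\mathrm{stab}}_{\mathrm{in}}\cup\foD_{\mathrm{discrete}}\cup\foD_{\mathrm{dense}}$, giving the desired equivalence at $\sigma$.

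The main (mild) obstacle is bookkeeping on the boundaries: we must verify that $\mathrm{Supp}'(\foD^{\mathrm{stab}})\cap R_{\mathrm{bdd}}=\mathrm{Supp}'(\foD^{\mathrm{stab}}_{\mathrm{in}}\cup\foD_{\mathrm{discrete}}\cup\foD_{\mathrm{dense}})\cap R_{\mathrm{bdd}}$, i.e.\ that no spurious support is introduced or lost along the shared edges of the decomposition. Along the two lower irrational-slope edges of $\Diamond_v$ there are by construction no rays of $\foD^{\mathrm{stab}}$ (these are limits of dense rays and are not themselves rays); along the two roof edges, Theorem~\ref{Thm:nonEmptiness}(2) shows that only rays through $v$ may contribute, so compatibility is inherited from the analysis above; and along the discrete edges contained in triangles, compatibility is inherited from Corollary~\ref{Cor:equivofScatPartial} and Theorem~\ref{thm:mu build}(3). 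Assembling these verifications over all outer diamonds $T^k(\Diamond^{\mathrm{out}}_v)$ (using the $T$-invariance of $\foD^{\mathrm{stab}}$ from Lemma~\ref{Lem: invarianceDStab}) yields the equivalence on all of $R_{\mathrm{bdd}}$.
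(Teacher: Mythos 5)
Your argument is correct and is essentially the paper's proof written out in full: the paper disposes of the corollary in one sentence by citing Theorem~\ref{Thm:nonEmptiness} (vertex-freeness), which shows the initial, discrete and dense rays account for all rays in the bounded region, with the triangular part already settled by Corollary~\ref{Cor:equivofScatPartial}. Your more explicit bookkeeping on the boundaries and your appeal to Theorem~\ref{thm:local structure} and Definition~\ref{DDense} to match the rays through each base vertex are just the details the paper leaves implicit.
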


\begin{proof}
This now follows from Theorem \ref{Thm:nonEmptiness}, which shows the initial,
discrete and dense rays account for all rays in the bounded region.
\end{proof}   

\begin{theorem}
\label{thm:roof projection}
Denote by $\mathrm{pr}_1:M_{\BQ}\rightarrow \BQ$ the projection
to the $x$-axis. Then
\[
\pr_1|_{C_{\mathrm{roofs}}\cap M_{\BQ}}:
C_{\mathrm{roofs}}\cap M_{\BQ}\rightarrow\BQ
\]
is a bijection.  In particular, any vertical
line $x=q$ for $q\in\BQ$ intersects $C_{\mathrm{roofs}}$ at one point.
\end{theorem}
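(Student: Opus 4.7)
The theorem combines two assertions: (a) any rational point on $C_{\mathrm{roofs}}$ has rational $x$-coordinate (trivial), and (b) for every $q\in\BQ$ there exists a unique rational point on $C_{\mathrm{roofs}}$ with $x$-coordinate $q$. I would split the argument into injectivity, surjectivity, and $y$-rationality.

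\emph{Injectivity.} By Corollary \ref{Cor: boundedness}, $C_{\mathrm{roofs}}$ is the upper boundary of the bounded region $R_{\mathrm{bdd}}=\bigcup_{k\in\BZ}T^k\bigl(\bigcup_v\Diamond_v^{\mathrm{out}}\bigr)$. I would argue that this boundary is the graph of a (continuous) function $\BR\to\BR$: the outer diamonds tile a connected planar region, and its upper frontier is single-valued in $x$ because two roofs of distinct diamonds either share a common endpoint (an apex or side corner) or belong to disjoint $x$-ranges. Hence each vertical line meets $C_{\mathrm{roofs}}$ in at most one point.

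\emph{Surjectivity and $y$-rationality.} Given $q\in\BQ$, by the translation invariance under $T$ (Lemma \ref{Lem: invarianceDStab}) I reduce to $q\in[-1/2,1/2]$. The plan is then to locate a dense or initial diamond $\Diamond_v$ whose roof has $x$-projection containing $q$. Once located, the relevant roof lies on a rational line of the form $L_{\CE}$ or $L_{\CE(-3)[1]}$ for an exceptional bundle $\CE$ (Step II of the proof of Theorem \ref{thm:mu build}), so rational $x=q$ forces a rational $y$; and when $q=x_v$ coincides with a base $x$-coordinate, rationality of the apex follows from \eqref{eq:apex coordinates}. To find $\Diamond_v$, I would descend the tree $\CT$ from the root $w_0$: at each vertex $w$ with $q\in I_w=\pr_1(\Delta_w)$, I test whether $q$ lies in the closed interval $J_w=[x_{v_w}-\delta_w,x_{v_w}+\delta_w]$, where $\delta_w=3/2-\sqrt{9r_w^2-4}/(2r_w)$ is the roof half-length given by Theorem \ref{SufficientRank0Dene} (together with its left-roof analogue), and also in the analogous roof projections of the initial/hybrid vertex diamonds at this level. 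If yes, I stop; otherwise $q$ falls into a gap that is contained in the $x$-projection of a unique child triangle, and I descend.

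\emph{The main obstacle.} The delicate point is proving that this descent terminates when $q$ is rational. Suppose it did not. By Proposition \ref{prop:det order} and Lemma \ref{Lem: degree=rank}, the middle rank $r_w$ tends to infinity along an infinite chain, so $|I_w|=(r_0+r_2)/(r_0 r_1 r_2)\to 0$ and $x_{v_w}\to q$. Writing $q=a/b$ in lowest terms and $x_{v_w}=d/r-3/2$ with $\gcd(d,r)=1$, the standard rational-approximation bound $|q-x_{v_w}|\ge 1/(2br)$ (whenever $d/r\neq(2a+3b)/(2b)$) must be reconciled with the requirement $|q-x_{v_w}|>\delta_w=2/(r(3r+\sqrt{9r^2-4}))\sim 1/(3r^2)$ and the upper bound $|q-x_{v_w}|\le|I_w|$. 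In balanced descents $|I_w|$ decays like $1/r^2$ and one obtains an immediate contradiction once $r$ is large relative to $b$; the hard case is unbalanced descent, where $|I_w|$ only decays like $1/r$ and one must exploit the Markov identity $r_0^2+r_1^2+r_2^2=3r_0r_1r_2$ together with the consecutive-slope formula $\mu_i-\mu_{i-1}=1/(r_{i-1}r_i)$ to show that the limiting slope along any infinite unbalanced chain in $\CT$ is necessarily irrational (a fixed point of the Markov mutation dynamics on slopes). This contradicts the assumed rationality of $q+3/2=\lim d_i/r_i$, forcing the descent to terminate and producing the required rational point of $C_{\mathrm{roofs}}$ above $q$.
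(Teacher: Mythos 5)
Your overall architecture (injectivity from disjointness of roof projections, surjectivity by descending the tree $\CT$ until a roof captures $q$) is reasonable, and the first two pieces could be repaired along the lines the paper actually uses: disjointness of the intervals $I_{\CE}=\pr_1(\text{roof of }\Diamond_{V_{\CE}})$ is exactly what Theorem~\ref{Thm:nonEmptiness} delivers (no $V_{\CE'}$ may lie in the cone $C_{V_{\CE}}$ or below the roof), and the endpoints of $I_{\CE}$ are quadratic irrationals, so rational points on a roof automatically have both coordinates rational since each roof lies on a rational line $L_{\CE}$ or $L_{\CE(-3)[1]}$. The paper then finishes by quoting the Markov-fraction description of slopes of exceptional bundles (\cite{veselov2025}, \cite{springborn}) and, crucially, Drezet's theorem (\cite{drezet}) that $\bigcup_{\CE} I_{\CE}\supseteq\BQ$.

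The genuine gap is precisely at that last point, which you correctly identify as ``the main obstacle'' but do not resolve. First, your quantitative setup for the ``balanced'' case does not close: writing the rank triple of $\Delta_w$ as a Markov triple $(r_0,r_1,r_2)$ with $r_1=r$ largest, one has $|I_w|=\mu_2-\mu_0=(r_0^2+r_2^2)/(r_0r_1r_2)\ge 2/r$ (your formula $(r_0+r_2)/(r_0r_1r_2)$ is incorrect, and $|I_w|$ does \emph{not} tend to $0$ along one-sided chains --- along the Fibonacci chain it tends to $(3-\sqrt5)/2$). Since the rational-approximation bound only gives $|q-x_{v_w}|\ge 1/(2br)$ and $2/r>1/(2br)$ for every $r$, the two inequalities are compatible for all $r$ and no ``immediate contradiction'' arises. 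Second, for the unbalanced case you assert that the limit of any infinite descent is ``a fixed point of the Markov mutation dynamics on slopes'' and hence irrational; this mischaracterizes the limit set (as the paper remarks after the theorem, citing \cite{CoskunHuizengaWoolf}, the complement of $\bigcup_{\CE}I_{\CE}$ consists of quadratic irrationals \emph{together with transcendental numbers}, so general limits are not fixed or periodic points of any mutation), and in any case the irrationality of all such limits is exactly the content of Drezet's theorem --- restating it as something ``one must show'' is not a proof. As written, the surjectivity half of your argument therefore rests on an unproved (and, in its sketched form, unprovable-by-these-means) claim; you would either need to import \cite{drezet} as the paper does, or supply a genuine argument for the covering property.
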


\begin{proof}
Denote by 
\[
S:=\{\mu(\CE)\,|\,\hbox{$\CE$ is an exceptional bundle}\}
\]
the set of slopes of exceptional bundles. 
\cite[Thm.~3.1]{veselov2025} states that $S$ coincides with the set of
Markov fractions. Also, the set of $x$-coordinates of the points
$V_{\CE}$ for $\CE$ ranging over exceptional bundles coincides with
$S-3/2$, the shift of $S$ to the left by $3/2$. 

For a given exceptional bundle $\CE$,
the left and right vertices of the dense diamond 
$\Diamond_{V_{\CE}}$ are the intersection
of a line $ax+by=c$ with $a,b,c\in \BQ$ and a line through $V_{\CE}\in M_{\BQ}$
of quadratic irational slope. Hence the left and right vertices have
quadratic irrational $x$-coordinates. 

For a given $\CE$, let $I_{\CE}\subseteq\BR$ be the projection of
the roof of the dense diamond $\Diamond_{V_{\CE}}$, excluding the
left and right vertices, so that this is an open interval with quadratic
irrational endpoints.
Note that $\mathrm{pr}_1(V_{\CE})$ is a rational element of $I_{\CE}$.

First we claim there
is no exceptional bundle $\CE'\not=\CE$ such that $\mu(\CE')-3/2
\in I_{\CE}$. Indeed, by Lemma~\ref{lem:slope inequalities},(8),
$\mu(\CE)\not=\mu(\CE')$. However, if $V_{\CE'}$ lies below the roof of
$\Diamond_{V_{\CE}}$, the vertical ray $\RR_{\ge 0}(0,1)+V_{\CE'}$ is
a ray in $\foD_{\mathrm{dense}}$ which intersects the
interior of $\Diamond_{V_{\CE}}$ but
neither contains a roof nor has endpoint $V_{\CE}$, violating 
Theorem~\ref{Thm:nonEmptiness}. Similarly, if $V_{\CE'}$ lies on or above
the roof of $\Diamond_{V_{\CE}}$, then $V_{\CE'}\in C_{V_{\CE}}$ and this
again contradicts Theorem~\ref{Thm:nonEmptiness}. Thus $I_{\CE}
\cap (S-3/2)=\{\mathrm{pr}_1(V_{\CE})\}$.

Second, from Remark~\ref{rem:various diamonds}, there is a 
sequence of diamonds
$\Diamond_{v_i}$, $i\ge 1$ whose base lies on the discrete ray containing
the left (resp.\ right) 
roof of $\Diamond_{V_{\CE}}$ and whose bases approach the left (resp.\ right)
vertex of $\Diamond_v$. Hence $I_{\CE}$ is the largest open interval 
containing $\mathrm{pr}_1(V_{\CE})$ and no other point of $S-3/2$.
By \cite[Lem.~4.16]{springborn}, we then have
\[
I_{\CE}=(\mu(\CE)-\delta_{\CE}-3/2,\mu(\CE)+\delta_{\CE}-3/2),
\]
where
\[
\delta_{\CE}=\frac{3}{2}-\sqrt{\frac{9}{4}-\frac{1}{r(\CE)^2}}.
\]
However, by \cite{drezet}, $\bigcup_{\CE} I_{\CE}$ contains all
rational numbers, hence proving surjectivity. Since $\mathrm{pr}_1$ is 
certainly injective on the roof of each diamond, and $I_{\CE}\cap I_{\CE'}=
\emptyset$ whenever $\CE\not=\CE'$ by the above disussion, we
obtain injectivity.
\end{proof}

\begin{remark}
In Theorems \ref{SufficientRank0Dene} and \ref{SufficientRank0DeneInitial}, 
we recover the description
of $I_{\CE}$ via an explicit calculation rather than from using the 
general theory of Markov fractions. However, this theory is quite beautiful
and it is worth drawing attention to it.

Also, we remark that \cite{CoskunHuizengaWoolf} showed that the complement of
$\bigcup_{\CE} I_{\CE}$ consists of the quadratic irrational endpoints
of the intervals along with a set of transcendental numbers. So it is a quite
subtle set.
\end{remark}

\begin{figure}[h]
    \centering
    \includegraphics[width=10.9cm]{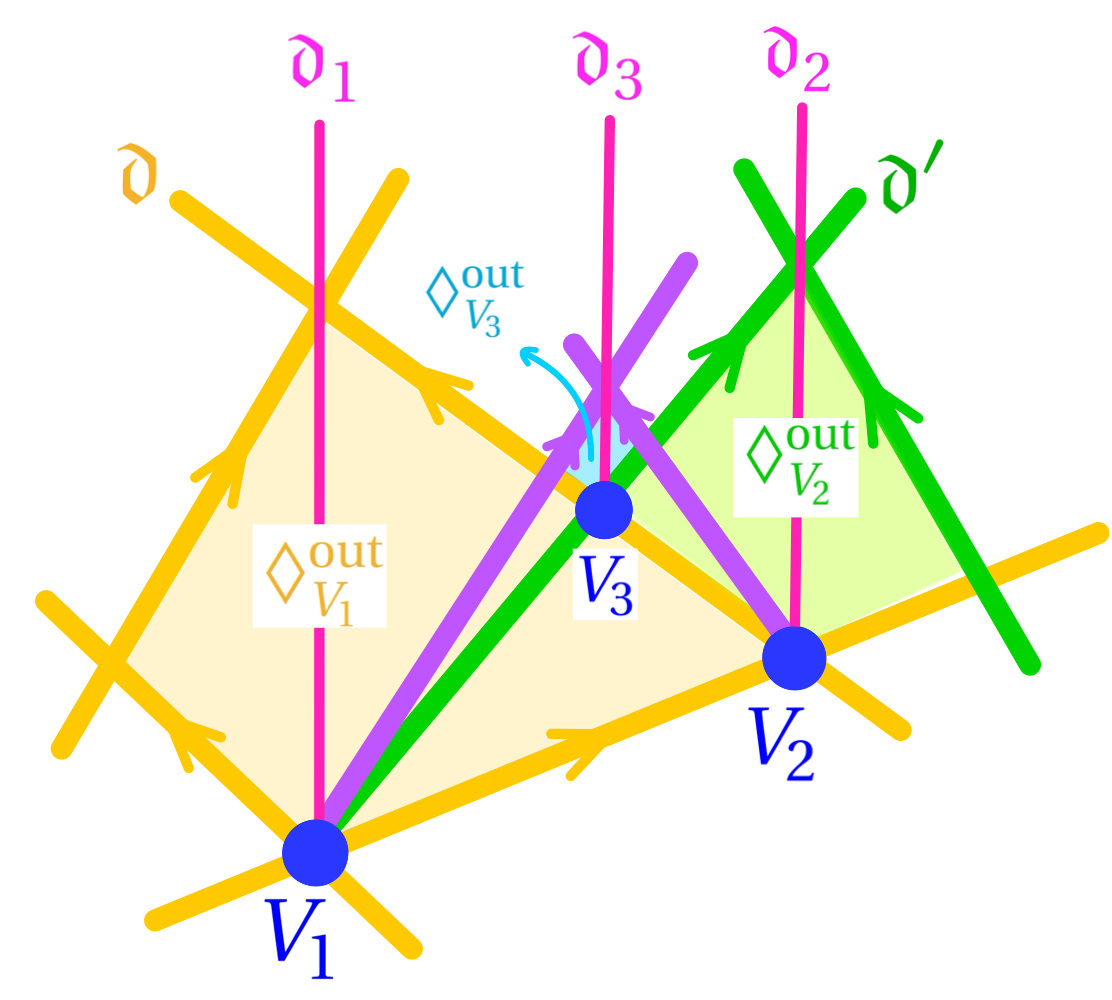}
    \caption{There is a diamond $\Diamond_{V_3}^{\mathrm{out}}$ between any two outer diamonds $\Diamond_{V_1}^{\mathrm{out}}$ and $\Diamond_{V_2}^{\mathrm{out}}$. 
}
    \label{Fig: rational}
\end{figure}

\section{Characterization of $R_{\mathrm{unbdd}}$ and  moduli non-emptiness}
\label{sec:unbounded}
In this section, we show that the behaviour in the unbounded region,
$R_{\mathrm{unbdd}}$, is extremely chaotic.
We do this by analyzing scattering at points of $C_{\mathrm{roofs}}$, showing
that there are cones based at each such point in which every ray of rational
slope appears as the support of a ray in $\foD^{\mathrm{stab}}$. We
have two different behaviours, depending on whether the point is
an \emph{apex point}, i.e., the intersection of the left and right roofs
of the diamond, or a degenerate diamond.

First, we prove several results about scattering in the setting
of the scattering diagrams introduced at the beginning of 
\S\ref{subsec:discrete scat structure}.

\begin{lemma}
\label{lem:dense scattering}
Let $m_1,m_2\in M$ be linearly independent and primitive, and let
\[
\foD_{\mathrm{in}}=\{\fod_1=(\BR m_1, (1+t_1 z^{-m_1})^3),
\fod_2=(\BR m_2, \prod_{k=1}^{\infty} (1+t_2^k z^{-km_2})^{a_k})\}
\]
for $a_k$ positive integers for all $k$. Then 
${\mathsf S}(\foD_{\mathrm{in}})$ contains a non-trivial ray with
support $\BR_{\ge 0}(am_1+bm_2)$ for all $a,b>0$ relatively prime with
$a/b \le 3 |m_1\wedge m_2|$. Further, ${\mathsf S}(\foD_{\mathrm{in}})$
contains no rays with support $\BR_{\ge 0}(am_1+bm_2)$ with $a/b> 3|m_1
\wedge m_2|$.
\end{lemma}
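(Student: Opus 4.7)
The strategy is to first reduce to $|m_1\wedge m_2|=1$ via the change of lattice trick used in the proof of Lemma \ref{lem:basic scattering2}. Passing to the rank-two sublattice $M'\subseteq M$ spanned by $m_1,m_2$ (of index $D$), the wall functions become $(\BR m_1,(1+t_1z^{-m_1})^{3D})$ and $(\BR m_2,\prod_k(1+t_2^kz^{-km_2})^{a_kD})$ on the same supports, and the set of supports of rays in $\mathsf{S}(\foD_{\mathrm{in}})$ is preserved by the reduction. The point is that the bound $a/b\le 3D$ now corresponds exactly to the degree $3D$ of the first wall function in the finer lattice.

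For the existence claim, given coprime $a,b>0$ with $a/b\le 3D$, I would proceed by truncating: set $f_2^N:=\prod_{k=1}^N(1+t_2^kz^{-km_2})^{a_kD}$, so that both wall functions become polynomials. Then $\mathsf{S}(\{\fod_1,(\BR m_2,f_2^N)\})$ stabilises $\fom$-adically to $\mathsf{S}(\foD_{\mathrm{in}})$ as $N\to\infty$, reducing the problem to two-line scattering with polynomial wall functions, for which existence and supports of rays are controlled by a Markov-type recursion analogous to Lemma \ref{lem:basic scattering} together with \cite[Thm.~1.4]{Graefnitz-Luo2023}. For slopes close to $3D$ (the ``boundary'' case) one uses iterated commutators of $H_1$ with the leading term of $H_2$ to produce rays at $(a,1)$ for $1\le a\le 3D$; intermediate slopes are reached by a further induction, and positivity of all the $a_k$ ensures non-cancellation of the relevant coefficients.

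For the non-existence claim, the key structural input is that $f_1$ is a polynomial of degree $3D$ in the variable $u:=t_1z^{-m_1}$. In the finer lattice, every monomial in $f_1$ sits in weight $(a,0)$ with $a\in\{0,\dots,3D\}$, while every monomial in $f_2$ sits in weight $(0,b)$ with $b\ge 0$. Although the Lie bracket does not preserve the naive filtration by $\nu(am_1+bm_2):=a-3Db$, one can interpret the resulting scattering as controlled by the representation theory of a generalised Kronecker-type quiver (with $3D$ arrows on one side and arrows of multiplicities encoded by the $a_kD$ on the other), and then the coefficient of any monomial supported on $\BR_{\ge 0}(am_1+bm_2)$ with $a>3Db$ vanishes because the associated moduli of stable representations of that dimension vector is empty: the slope $a/b$ would exceed the maximal Harder--Narasimhan bound $3D$ coming from the degree of $f_1$. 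Consequently any such putative ray has trivial wall function.

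The main obstacle is the non-existence part: while the ``slope $\le 3D$'' cap is intuitive from the degree of $f_1$, proving the precise cancellation of contributions to a hypothetical ray at slope $>3D$ rigorously --- either via the representation-theoretic route or by a direct combinatorial analysis of the Kontsevich--Soibelman algorithm --- is delicate, since individual commutators in the algorithm do produce non-zero terms at such weights that must cancel through the Jacobi identity.
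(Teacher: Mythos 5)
Your proposal is close in spirit to the paper's argument (change of lattice, appeal to \cite{Graefnitz-Luo2023}), but it is missing the one idea that makes both halves of the proof go through: the \emph{deformation (perturbation) trick}. The paper replaces the single line $\fod_2$ carrying the product wall function $\prod_k(1+t_2^kz^{-km_2})^{a_k}$ by infinitely many \emph{parallel, separated} lines $\fod_2^k=(\BR m_2,(1+t_2^kz^{-km_2})^{a_k})$. After this perturbation every collision in the diagram is a local two-line collision in which each wall function is a power of a single binomial, which is exactly the setting of Lemma~\ref{lem:basic scattering} and \cite[Thm.~1.4, Prop.~3.13]{Graefnitz-Luo2023}. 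Your truncation $f_2^N=\prod_{k=1}^N(1+t_2^kz^{-km_2})^{a_kD}$ does not achieve this reduction: the truncated function is still a product of distinct binomial factors on a single line, and neither Lemma~\ref{lem:basic scattering} nor the cited results of Gr\"afnitz--Luo apply to such a wall function directly. So the step ``existence and supports of rays are controlled by a Markov-type recursion'' is not justified as stated; the separation of factors into distinct parallel lines is precisely what is needed before those results can be invoked (and then, for existence, one applies the change of lattice to $m_1'=m_1$, $m_2'=km_2$, obtains from \cite[Thm.~1.4]{Graefnitz-Luo2023} an explicit slope window for the collision of $\fod_1$ with $\fod_2^k$, and lets $k\to\infty$ to sweep out all $a/b<3D$, with positivity guaranteeing these rays survive in $\mathsf{S}(\foD_{\mathrm{in}})$).

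The non-existence half, which you correctly identify as the main obstacle, is also resolved by the same trick rather than by any representation-theoretic or Jacobi-identity cancellation argument. In the perturbed diagram, \cite[Prop.~3.13]{Graefnitz-Luo2023} bounds the slopes of rays emerging from each initial collision $\fod_1\cap\fod_2^k$ by $a/b\le 3D$, and every subsequent collision involves only rays whose direction vectors already satisfy this bound; since the output of any collision lies in the convex cone spanned by the incoming directions, no ray with $a/b>3D$ can ever be produced. This is an elementary convexity observation, not a delicate cancellation, and it is unavailable in your setup precisely because you never separate the factors of $f_2$ into distinct collision points. As written, your proposal therefore has a genuine gap in both the existence argument (unjustified reduction) and the non-existence argument (acknowledged but not filled).
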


\begin{proof}
We may first use the deformation trick as carried out in
\cite{Graefnitz-Luo2023} by replacing the line $(\BR m_2, \prod_{k=1}^{\infty} (1+t_2^k z^{-km_2})^{a_k})$ with an infinite number of lines
$\fod_2^k:=(\BR m_2, (1+t_2^k z^{-km_2})^{a_k})$ for $k\ge 1$, and then perturbing these
lines to be be parallel to each other, see Figure \ref{Fig:d2k}.
We analyze the collision of $\fod_1$ with $\fod_2^k$.
Set $\foD^k_{\mathrm{in}}=\{\fod_1,\fod_2^k\}$.
Now let $D=|m_1\wedge m_2|$.
We first show that for any $a/b<3|m_1\wedge m_2|$, there is some
$k$ such that $\mathsf{S}(\foD^k_{\mathrm{in}})$ has a non-trivial ray
with support $\fod=\BR_{\ge 0}(a m_1+b m_2)$. 

\begin{figure}[h]
    \centering
   \includegraphics[width=12cm]{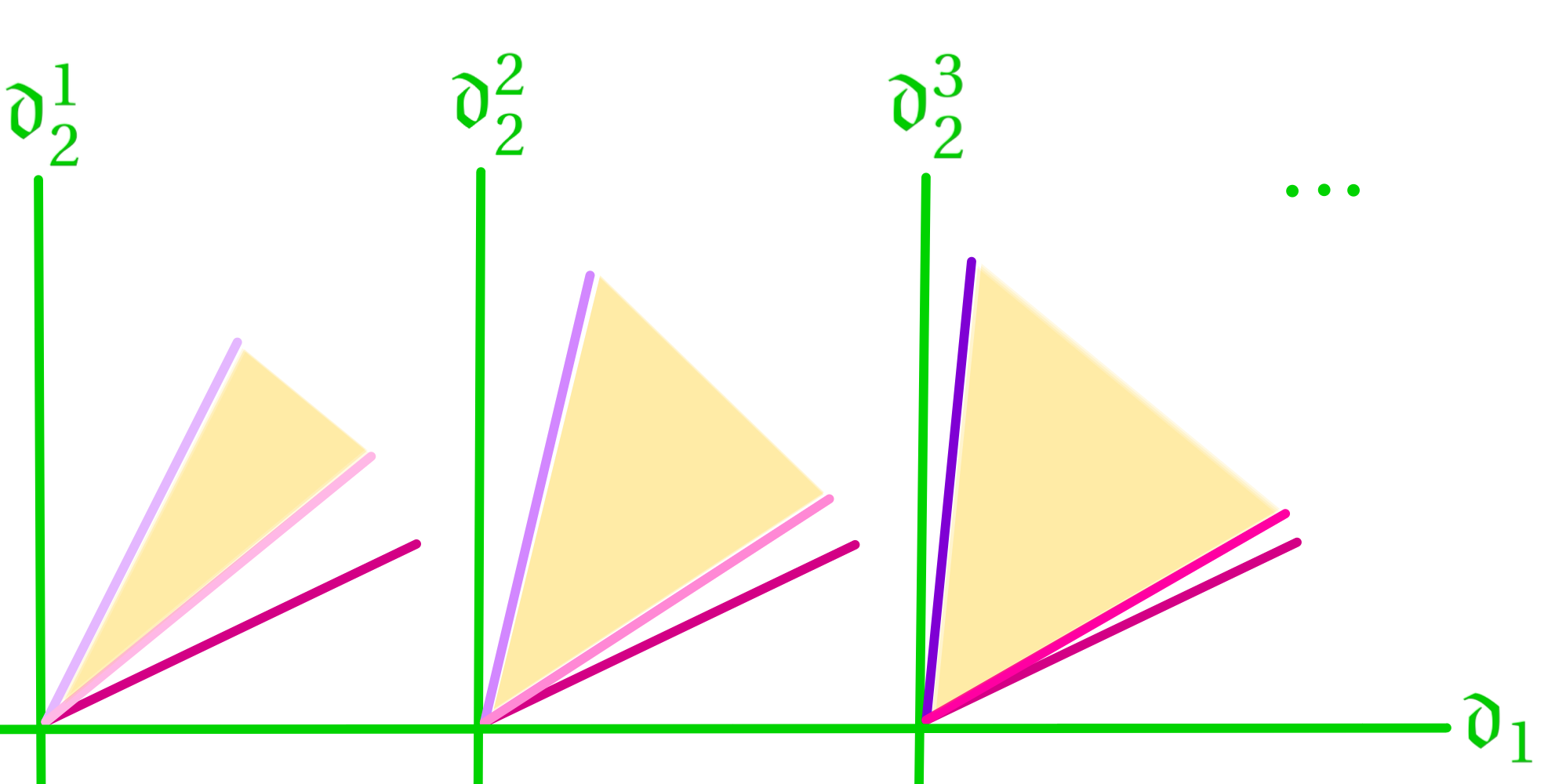}
   \caption{}
    \label{Fig:d2k}
\end{figure}

As in the proof of Lemma~\ref{lem:basic scattering2}, we may pass to a
sublattice $M'$ spanned by $m_1'=m_1$ and $m_2'=km_2$, at the cost
of replacing $\fod_1$ and $\fod_2^k$ with
$(\BR m_1,(1+t_1z^{-m_1'})^{3kD})$
and $(\BR m_2,(1+t_2^kz^{- m'_2})^{a_kkD})$ respectively. 
By \cite[Thm.~1.4]{Graefnitz-Luo2023} $\mathsf{S}(\foD^k_{\mathrm{in}})$
has non-trivial rays with support 
$\BR_{\ge 0}(am_1+bm_2)=\BR_{\ge 0}(am'_1+(b/k)m_2')$ 
for
\[
\frac{3k^2D^2a_k-\sqrt{3k^2D^2a_k(3k^2D^2a_k-4)}}{6kD}<
\frac{b}{ak}
<\frac{3k^2D^2a_k+\sqrt{3k^2D^2a_k(3k^2D^2a_k-4)}}{6kD}.
\]
After multiplying all three quantities by $k$, we make take $k\rightarrow
\infty$. The left-hand side converges to $(3D)^{-1}$ while the right
hand side converges to $\infty$. This shows the desired existence in the
given range. These rays then necessarily contribute non-trivially to
$\mathsf{S}(\foD_{\mathrm{in}})$ by positivity of the scattering
process (see \cite[Prop.~C.13]{GHKK} as applied in 
\cite[Prop.~2.9]{Graefnitz-Luo2023}). On one hand, by \cite[Thm.~1.4]{Graefnitz-Luo2023}, $\mathsf{S}(\foD^k_{\mathrm{in}})$
contains a non-trivial ray with support
$\BR_{\ge 0}(3kD m_1' + m_2')=\BR_{\ge 0}(3Dm_1+m_2)$ for any $k$.
Again, by positivity these rays contribute. On the other hand,
\cite[Prop.~3.13]{Graefnitz-Luo2023} states that this is the smallest
slope of a ray appearing in $\mathsf{S}(\foD^k_{\mathrm{in}})$, i.e., 
there is no ray with support $\BR_{\ge 0}(a m_1+bm_2)$ with $a/b>
3D$. 

This implies that $\mathsf{\foD_{\mathrm{in}}}$ can't contain any
non-trivial rays with support $\BR_{\ge 0}(a m_1+bm_2)$ with $a/b>
3D$. Indeed, in the deformed scattering diagram, all collisions
except the initial collisions of $\fod_1$ and $\fod_2^k$ for various $k$
must involve rays with support $\BR_{\ge 0}(a m_1+bm_2)$ with $a/b\le 3D$.
Thus they cannot produce new rays with $a/b>3D$.
\end{proof}

\begin{lemma}[Denseness at degenerate diamonds of $C_{\mathrm{roofs}}$]\label{Lem:DensnessNonApex}
Let $\sigma\in M_{\BR}$ coincide with a degenerate diamond. Then 
$\foD^{\mathrm{stab}}$ contains precisely two rays 
$\fod_{\mathrm{discrete}},\fod_{\mathrm{dense}}$ which contain $\sigma$
in their interior. Here $\fod_{\mathrm{discrete}}\in
\foD_{\mathrm{discrete}}$ and 
$\fod_{\mathrm{dense}}\in\foD_{\mathrm{dense}}$. Letting 
$m_{\mathrm{discrete}}$ and $m_{\mathrm{dense}}$ be the opposite vectors of these two rays and $D_{\sigma}=|m_{\mathrm{discrete}}\wedge m_{\mathrm{dense}}|$, 
we define the \emph{dense cone at $\sigma$} to be the 
cone with vertex $\sigma$ given by 
\[
C_{\sigma}:=
\{\sigma - (am_{\mathrm{discrete}}+bm_{\mathrm{dense}})\,|\,
a,b\in\BR_{>0}, a/b \le 3D_{\sigma}\}.
\]
Then there is a non-trivial ray $(\fod,H_{\fod})\in\foD^{\mathrm{stab}}$
with $\fod$ having endpoint $\sigma$ if and only if $\fod$ lies
in $C_{\sigma}$.
\end{lemma}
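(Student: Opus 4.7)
The plan is to separate the statement into (i) the uniqueness of the two rays through $\sigma$ and (ii) the characterization of rays with endpoint $\sigma$, with the latter reducing to the abstract scattering computation of Lemma~\ref{lem:dense scattering}.

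For (i), $\sigma$ lies in the interior of the roof of $\Diamond_v$, and this roof is contained in a unique discrete ray $\fod_{\mathrm{discrete}}$ by Theorem~\ref{thm:mu build}(3). On the other hand, the unique dense ray $\fod_{\mathrm{dense}}$ with endpoint $v$ in the direction $\sigma - v$ also passes through $\sigma$ in its interior; this is exactly the data encoded by the degenerate diamond (Definition~\ref{Def: positiveZeroDiamond}). To see that there are no other rays containing $\sigma$ in their interior, I would observe that any such ray must extend on the $\Diamond_v$-side of $\sigma$ into the interior of $\Diamond_v$; by Theorem~\ref{Thm:nonEmptiness}(1)--(2) such a ray either coincides with the discrete roof or passes through $v$, and the rays through $v$ are exhaustively enumerated by Theorem~\ref{thm:local structure}, among which only $\fod_{\mathrm{dense}}$ has direction vector pointing at $\sigma$.

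For (ii), I would analyze the local scattering diagram $\foD^{\mathrm{stab}}_\sigma$. Since $\foD^{\mathrm{stab}}$ is consistent at $\sigma$ and (by (i)) only the two lines induced by $\fod_{\mathrm{discrete}}$ and $\fod_{\mathrm{dense}}$ contain $\sigma$ in interior, $\foD^{\mathrm{stab}}_\sigma$ is the consistent completion of precisely these two lines. The function on $\fod_{\mathrm{discrete}}$ is $-\li_2(-z^{m_{\mathrm{discrete}}})$, which under the passage to the $\bar\theta_f$-form of Remark~\ref{rem:H auto} corresponds to the cubic $(1+z^{m_{\mathrm{discrete}}})^3$, matching the $\fod_1$-input of Lemma~\ref{lem:dense scattering}. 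For the function on $\fod_{\mathrm{dense}}$, I would invoke Theorem~\ref{thm:local structure}: the local scattering at the base $v$ is equivalent to the standard two-cubic-line diagram of Lemma~\ref{lem:basic scattering2}, and Lemma~\ref{lem:basic scattering} gives each dense ray a function of the form $\prod_{k\ge 1}(1+t^{k}z^{km_{\mathrm{dense}}})^{c_k}$ with strictly positive integer exponents $c_k$. Since wall functions are constant along their rays, the same product form persists at $\sigma$, matching the $\fod_2$-input of Lemma~\ref{lem:dense scattering}.

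Lemma~\ref{lem:dense scattering}, applied with $D=D_\sigma=|m_{\mathrm{discrete}}\wedge m_{\mathrm{dense}}|$, then asserts precisely that $\foD^{\mathrm{stab}}_\sigma$ contains a non-trivial ray with support proportional to $am_{\mathrm{discrete}}+bm_{\mathrm{dense}}$ (for coprime $a,b>0$) if and only if $a/b\le 3D_\sigma$, which after translating opposite vectors to direction vectors is exactly the statement that $\fod\subseteq C_\sigma$. The main difficulty will be the bookkeeping of sign and lattice conventions so that Lemma~\ref{lem:dense scattering} transfers to the local scattering diagram at $\sigma$ without modification of $D_\sigma$ or of the product form of $H_{\fod_{\mathrm{dense}}}$; this is formal, but must be handled with care given the rescaling $t^{\varphi_\sigma(\cdot)}$ introduced in Definition~\ref{def:local scatter}, as well as the asymmetric role played by the two input lines in Lemma~\ref{lem:dense scattering}.
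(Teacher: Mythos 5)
Your proposal is correct and follows essentially the same route as the paper: uniqueness of the two rays via the vertex-freeness theorem (Theorem~\ref{Thm:nonEmptiness}) together with the definition of a degenerate diamond, and then the characterization of outgoing rays by feeding the cubic function on $\fod_{\mathrm{discrete}}$ and the positive-integer-exponent product form on $\fod_{\mathrm{dense}}$ (coming from Lemmas~\ref{lem:basic scattering} and~\ref{lem:basic scattering2}) into Lemma~\ref{lem:dense scattering}. The bookkeeping caveat you flag at the end is exactly the point the paper also addresses — the change-of-lattice trick may alter the exponents $c_k$, but they remain positive integers, which is all Lemma~\ref{lem:dense scattering} requires.
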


\begin{proof}
The description of the two rays of $\foD^{\mathrm{stab}}$ follows
immediately from the definition of degenerate diamond and Theorem
\ref{Thm:nonEmptiness}.  
Given this, we look at the local
scattering diagram $\foD^{\mathrm{stab}}_{\sigma}$. By definition of
degenerate diamond, $\sigma$ is contained in the roof of a dense
diamond $\Diamond_v$ contained in an outer diamond $\Diamond_v^{\mathrm{out}}$, with discrete generating rays $\fod_1,\fod_2$ with opposite
vectors $m_1,m_2$. Then we may write the opposite vector of
$\fod_{\mathrm{dense}}$ as $a_1 m_1+a_2m_2$ for some relatively prime positive
integers $a_1,a_2$.

If $H^{\sigma}_{\mathrm{dense}}$ is the function
attached to the tangent line of $\fod_{\mathrm{dense}}$ in the
local scattering diagram $\foD^{\mathrm{stab}}$, then the corresponding
function $f_{\mathrm{dense}}$ as derived from $H^{\sigma}_{\mathrm{dense}}$
as in Remark~\ref{rem:H auto} must take the form
\[
f_{\mathrm{dense}}=\prod_{k\ge 1} (1+t^{\alpha k} z^{a_1 k m_1+a_2 k m_2})^{c_k}
\]
for some positive integers $c_k$ and some real number $\alpha>0$. 
This form arises from the description
of rays in $\foD'_{\mathrm{dense}}$ in Lemma~\ref{lem:basic scattering}
and the proof of Lemma~\ref{lem:basic scattering2}. We note that
the $c_k$ may not agree with the $c_{ka_1,ka_2}$ of 
Lemma~\ref{lem:basic scattering} because the change of lattice trick
might change these exponents. On the other hand,
\[
f_{\mathrm{discrete}}=(1+t^{\beta} z^{m_{\mathrm{discrete}}})^3
\]
for some real number $\beta>0$.
The result now follows from Lemma \ref{lem:dense scattering}.
\end{proof}

\begin{corollary}[Denseness at apex points of $C_{\mathrm{roofs}}$]\label{cor: DensenessApex} Let $\sigma\in M_{\BR}$ be the apex of a diamond
$\Diamond$. Then 
$\foD^{\mathrm{stab}}$ contains precisely

three rays 
$\fod_{\mathrm{discrete}},\fod'_{\mathrm{discrete}},\fod_{\mathrm{dense}}$ 
which contain $\sigma$
in their interior. Here 
$\fod_{\mathrm{discrete}},\fod'_{\mathrm{discrete}}
\in \foD_{\mathrm{discrete}}$ are the discrete rays forming the left and
right roofs of $\Diamond$, and $\fod_{\mathrm{dense}}$ is the vertical
dense ray generated at the base of $\Diamond$. Letting 
$m=(r,-d)$ and
$m'=(-r',d')$ be the direction vectors of 
$\fod_{\mathrm{discrete}},\fod'_{\mathrm{discrete}}$,
with $r,r'>0$, 
define the \emph{dense cone at $\sigma$} to be the cone with vertex $\sigma$
given by 
\[
C_{\sigma}:=
\{\sigma+(a m+ (0,b))\,|\, a,b\in\BR_{>0},  a/b\le 3r\}
\cup
\{\sigma+(a m'+ (0,b))\,|\, a,b \in \BR_{>0}, a/b\le 3r'\}.
\]
Then there is a non-trivial ray $(\fod,H_{\fod})\in\foD^{\mathrm{stab}}$
with $\fod$ having endpoint $\sigma$ if and only if $\fod$ lies
in $C_{\sigma}$.
\end{corollary}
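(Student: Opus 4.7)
The plan is to extend the proof of Lemma~\ref{Lem:DensnessNonApex}, which handles the two-line local scattering at a degenerate diamond, to the three-line case at an apex. The extra ingredient is to show that the scattering between the two discrete roof lines does not produce rays outside $C_\sigma$.

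First I would identify the three rays. Any ray of $\foD^{\mathrm{stab}}$ containing $\sigma$ in its interior either coincides with one of the two roof rays or, by Theorem~\ref{Thm:nonEmptiness}(2), passes through the base vertex $v$. The vertical diagonal (Theorem~\ref{thm: verticalDiagonal}) shows that the only ray through $v$ reaching $\sigma$ is the vertical dense ray. All three extend past $\sigma$, so $\sigma$ lies in the interior of each.

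Next I would set up the local scattering diagram $\foD^{\mathrm{stab}}_\sigma$. From the identification of the roofs with $L_{\CE_1}$ and $L_{\CE_1(-3)[1]}$ in the proof of Theorem~\ref{thm:mu build}, one reads off $r = r' = r(\CE_1)$, $|m \wedge m'| = 3 r(\CE_1)^2 =: D$, and $|m \wedge (0,1)| = r = r' = |m' \wedge (0,1)|$. The wall functions are of cube form for the two discrete lines (from the $-\li_2$ factors via Remark~\ref{rem:H auto}) and of product form $\prod_k(1 + t^{\beta k} z^{-k(0,1)})^{c_k}$ with $c_k$ positive integers for the dense line, exactly as derived in the proof of Lemma~\ref{Lem:DensnessNonApex}. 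Applying Lemma~\ref{lem:dense scattering} separately to the pairs $(\fod_{\text{dense}}, \fod_{\text{discrete}})$ and $(\fod_{\text{dense}}, \fod'_{\text{discrete}})$ then produces, at every rational direction, non-trivial rays filling the right and left pieces of $C_\sigma$ with the sharp bounds $a/b \le 3r$ and $a/b \le 3r'$, and generates no rays outside these pieces from these two pairs.

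The main obstacle is ruling out rays from the pair $(\fod_{\text{discrete}}, \fod'_{\text{discrete}})$ and from iterated scatterings. By Lemma~\ref{lem:basic scattering2}, the discrete-discrete pair alone would produce rays of direction $am + bm'$ for $a/b$ in the Markov ratio sequence $\{R_{i+1}(D)/R_i(D),\, R_i(D)/R_{i+1}(D)\}_{i \ge 1}$ (discrete contribution $\fod_{i,j}$) or in the interval $(r_\infty(D)^{-1}, r_\infty(D))$ (dense contribution). Using the linear relation $m' = -(r'/r)\, m + (D/r)\,(0,1)$ (valid at the apex), any direction $am + bm'$ with $a/b > r'/r$ rewrites as $(a - br'/r)\, m + (bD/r)\,(0,1)$ with both coefficients positive; its cone-ratio satisfies the $C_\sigma$ constraint $\le 3r$ as long as $a/b \le r'/r + 3D$, which follows from the elementary inequality $r_\infty(D) < r'/r + 3D$, equivalent to $\sqrt{9D^2 - 4} < 3D + 2r'/r$ and trivially verified upon squaring. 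The symmetric case $a/b < r'/r$ lands in the left cone of $C_\sigma$ by the analogous argument with the roles of $m$ and $m'$ reversed. Positivity of the scattering procedure (cf.~\cite[Prop.~C.13]{GHKK}) then propagates the containment in $C_\sigma$ to all iterated scatterings, completing the characterization.
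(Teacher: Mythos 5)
Your proposal is correct and follows essentially the same route as the paper: identify the three rays via vertex-freeness (Theorem~\ref{Thm:nonEmptiness}) and the vertical diagonal, reduce to the local three-line scattering diagram, apply Lemma~\ref{lem:dense scattering} to the two dense--discrete collisions, and check that the output of the discrete--discrete collision stays inside $C_\sigma$ before invoking positivity for iterated scattering. Your containment check --- rewriting $am+bm'$ in the basis $m,(0,1)$ via $m'=-(r'/r)m+(D/r)(0,1)$ and bounding the resulting ratio --- is an equivalent (and slightly cleaner) packaging of the paper's direct slope comparison of $3D_\sigma m+m'$ against $3rm+(0,1)$; the only slip is that the extremal discrete ray $\fod_{1,1}$ has ratio $a/b=3D>r_\infty(D)$, so the inequality you actually need there is $3D\le 3D+r'/r$, which is even more immediate than the one you verify.
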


\begin{proof}
Let $v$ be the base of $\Diamond$.
Of course by definition, $\sigma$ is the intersection of the two 
discrete rays forming the roof of $\Diamond$. Further, by Lemma
\ref{lem:prince}, the vertical ray with endpoint $v$ is contained
in $C_v$, and hence supports a dense ray $\fod_{\mathrm{dense}}$. 
By \cref{Thm:nonEmptiness}, these are the only possible rays of $\foD^{\mathrm{stab}}$ 
containing $\sigma$ in their interior.

To analyze the scattering at $\sigma$, as in Lemma \ref{Lem:DensnessNonApex}, we may
pass to a scattering diagram of the sort considered in Lemma 
\ref{lem:dense scattering}, this time with 
\begin{align*}
\foD_{\mathrm{in}}= {} &
\left\{
\left(\BR m, (1+t^{\alpha}z^{-m})^3\right),
\left(\BR m', (1+t^{\beta}z^{-m'})^3\right),
\left(\BR (0,1), \prod_{k\ge 1} (1+t^{k\gamma}z^{(0,-k)})^{a_k}\right)\right\}\\
= {} & \{\fod,\fod',\fod''\}.
\end{align*}
We may then use the deformation trick to shift $\fod''$ as depicted in
Figure \ref{Fig:3d}. Note that this produces three initial collisions of these lines.
By Lemma \ref{lem:dense scattering}, the collision of $\fod$ and $\fod''$
produces only rays with direction vector positively 
proportional to $a m+(0,1)$ for $a\le 3 
|m\wedge (0,1)|=3r$. 

Similarly, the collision
of $\fod'$ and $\fod''$ produces only rays with direction vector
positively proportional to $a m'+(0,1)$ for
$a\le 3 |m'\wedge (0,1)|=3r'$. On the other hand,
using the change of lattice trick and applying 
\cite[Prop.~3.13]{Graefnitz-Luo2023}, we see that the collision of
$\fod$ and $\fod'$ produces rays with support
$\BR_{\ge 0}(am+bm')$
with $(3D_{\sigma})^{-1}\le a/b \le 3D_{\sigma}$, where $D_{\sigma}
=|m\wedge m'|$.

\begin{figure}[h]
    \centering
   \includegraphics[width=11cm]{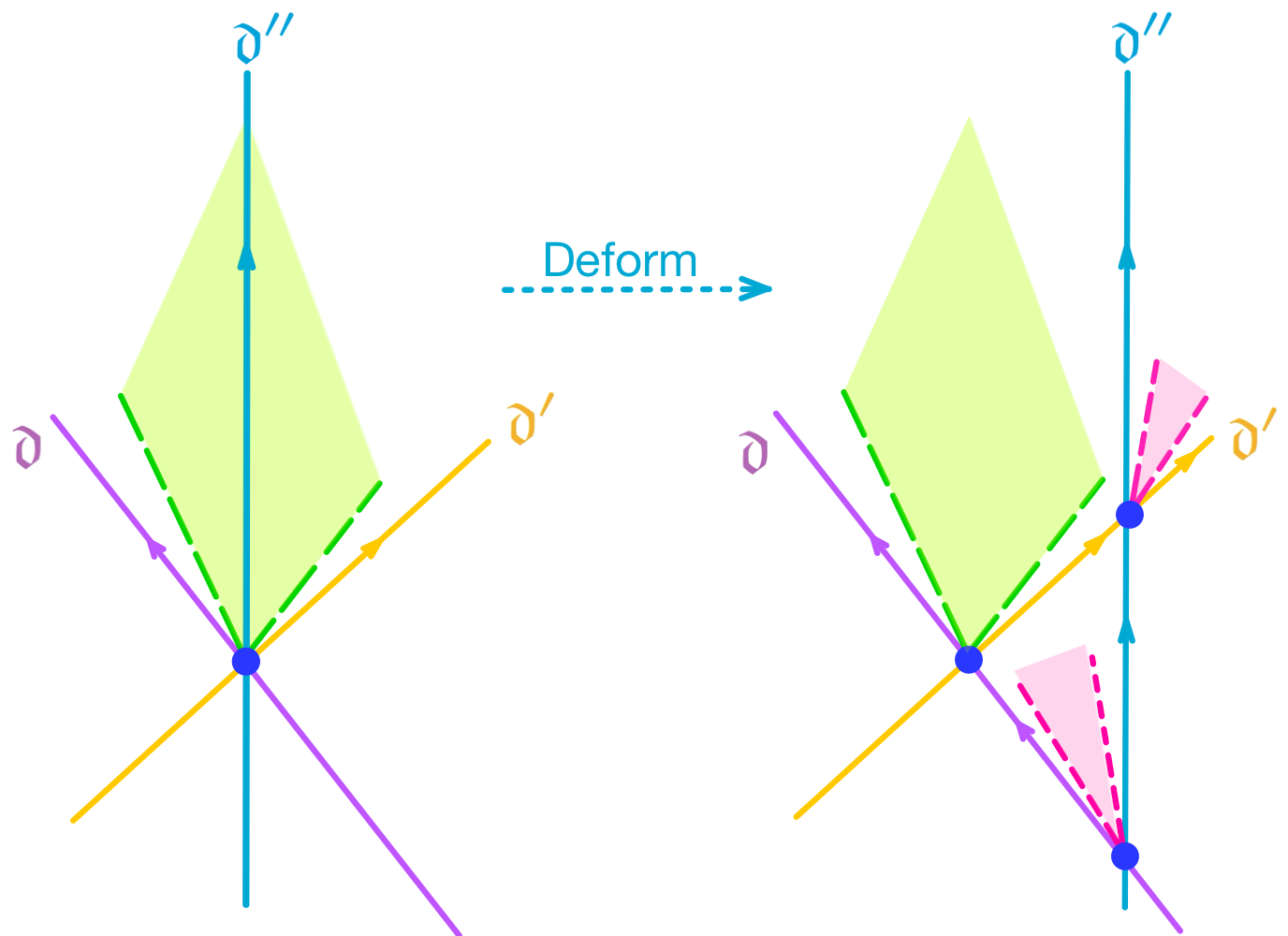}
   \caption{}
    \label{Fig:3d}
\end{figure}

We first show that any ray produced by the scattering of $\fod,\fod'$
must be contained in $C_{\sigma}$. For this, it is sufficient to show
that the rays $\sigma+\BR_{\ge 0}(3D_{\sigma}m+m')$ and
$\sigma + \BR_{\ge 0}(m+3D_{\sigma}m')$ are contained in the cone $C_{\sigma}$.
By acting via some power of $T$, we can without loss of generality assume
that the $x$-coordinate of $\sigma$ lies in the interval $[-1/2,1/2]$, and
then $d<0, d'>0$.
By symmetry, let us show that the former ray is contained in $C_{\sigma}$.
We may write $D_{\sigma}=rd'-r'd>0$ and 
$3D_{\sigma}m+m'=(3D_{\sigma}r-r', -3D_{\sigma}d+d')$. Noting that
$3D_{\sigma}r-r'>0$ by the assumption that $d<0,d'>0$, we need to show the 
slope of this vector is larger than the slope of $3rm+(0,1)=(3r^2,-3rd+1)$,
i.e.,
\begin{equation}
\label{eq:needed ineq}
\frac{-3D_{\sigma}d+d'}{3D_{\sigma}r-r'}> \frac{-3rd+1}{3r^2}.
\end{equation}
Note both numerators and denominators are positive. We calculate
\[
3r^2(-3D_{\sigma}d+d')-(3D_{\sigma}r-r')(-3rd+1)
= (3d' r -3d r')r-3D_{\sigma}r + r'=r'>0,
\]
hence \eqref{eq:needed ineq}.

It is now clear from the deformed scattering diagram Figure \ref{Fig:3d} that
all further scattering only produces rays whose asymptotic direction lie
in the asymptotic directions of the cone $C_{\sigma}$.

\end{proof}

\begin{corollary}[Unbounded region characterization]\label{Cor:unboundedCharacterization} 
Every rational point in $R_{\mathrm{unbdd}}$ is contained in the
interior of a countable number of non-trivial rays of $\text{\:}\foD^{\mathrm{stab}}$.
See Figure \ref{Fig: UnboundedCharacterization}.
\end{corollary}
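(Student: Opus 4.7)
The plan is to produce, for a rational $p\in R_{\mathrm{unbdd}}$, infinitely many distinct rays of $\foD^{\mathrm{stab}}$ containing $p$ in their interior by exhibiting infinitely many rational points $\sigma\in C_{\mathrm{roofs}}$ for which $p$ lies in the dense cone $C_\sigma$ provided by \cref{Lem:DensnessNonApex} (when $\sigma$ is a degenerate diamond) or \cref{cor: DensenessApex} (when $\sigma$ is an apex). Whenever $p\in C_\sigma$, the segment from $\sigma$ to $p$ extends to a ray with support in $C_\sigma$ and rational opposite vector, which by those two lemmas is the support of a non-trivial ray of $\foD^{\mathrm{stab}}$ with endpoint $\sigma$; rays arising in this way from distinct $\sigma$ are distinct, since their endpoints differ.

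By \cref{thm:roof projection}, the rational points of $C_{\mathrm{roofs}}$ are parameterized bijectively by their $x$-coordinate in $\BQ$; write $\sigma_q\in C_{\mathrm{roofs}}$ for the unique rational point of $x$-coordinate $q$, and set $\sigma^*:=\sigma_{p_x}$ where $p=(p_x,p_y)$. Since $p\in R_{\mathrm{unbdd}}$ lies strictly above $C_{\mathrm{roofs}}$, the vector $p-\sigma^*$ is strictly upward. The crucial step is to show that for all rational $q$ in some open interval around $p_x$ (punctured at $p_x$ if $\sigma^*$ is an apex), the direction $p-\sigma_q$ lies in $C_{\sigma_q}$. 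For $q$ near $p_x$, the $\sigma_q$ all lie on a single roof (the one through $\sigma^*$, or on the two roofs meeting at $\sigma^*$ when $\sigma^*$ is an apex), so the cones $C_{\sigma_q}$ share the common discrete opposite vector $m_{\mathrm{discrete}}$ of that roof while their dense opposite vector $m_{\mathrm{dense}}$ varies continuously with $q$ along the pencil of dense rays through the base $v$ of the diamond. A direct computation with the opposite vectors, the determinant $D_\sigma$, and the slope bound $\mu(\CE)<3$ for the exceptional bundle $\CE$ at $v$ shows that the upward vertical direction always lies in $\overline{C_\sigma}$; continuity then gives $p-\sigma_q\in C_{\sigma_q}$ for all rational $q$ sufficiently close to $p_x$.

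The main obstacle is the case when $\sigma^*$ is itself an apex. There, by \cref{cor: DensenessApex}, the vertical direction is only a boundary ray of $C_{\sigma^*}$, realised by the vertical dense ray through the base, and sits at the interface between the two half-cones. One must split rational $q$'s into $q<p_x$ and $q>p_x$: these place $\sigma_q$ on the two roofs meeting at $\sigma^*$, and in each case the slightly non-vertical direction $p-\sigma_q$ must be shown to lie strictly in the interior of the appropriate half-cone for $q$ close enough to $p_x$. Combined with the interior case, when $\sigma^*$ is a degenerate diamond and the vertical direction lies strictly inside $C_{\sigma^*}$ so that a direct continuity argument suffices, this produces infinitely many rational $q$ yielding rays with distinct endpoints $\sigma_q$ passing through $p$, giving the required countable infinity of rays of $\foD^{\mathrm{stab}}$ containing $p$ in their interior.
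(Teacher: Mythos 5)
Your overall strategy is the same as the paper's: both arguments produce the rays by exhibiting infinitely many rational degenerate-diamond points $\sigma'$ on the roof of the diamond lying below $p$ with $p\in C_{\sigma'}$, and then invoking Lemma~\ref{Lem:DensnessNonApex}. The difference lies in how the containment $p\in C_{\sigma_q}$ is justified, and this is where your argument has a genuine gap. The cones $C_{\sigma_q}$ do \emph{not} vary continuously in $q$: the determinant $D_{\sigma_q}$ and the primitive vector $m_{\mathrm{dense}}(q)$ jump as $q$ ranges over $\BQ$ (indeed $D_{\sigma_q}\to\infty$ as $q$ approaches the apex abscissa), so knowing that the vertical direction lies in $\overline{C_{\sigma_q}}$ for each individual $q$ does not, ``by continuity,'' give $p-\sigma_q\in C_{\sigma_q}$ for all $q$ near $p_x$. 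What does vary continuously is the convex sub-cone of $C_{\sigma_q}$ spanned by $-m_{\mathrm{dense}}(q)$ and $(0,1)$; but $p-\sigma_q$ lies in that sub-cone only when $\sigma_q$ sits between the vertical line through $p$ and the apex, i.e., for $q$ on the \emph{apex side} of $p_x$ only. That one-sided version is exactly the paper's argument: it checks that the extreme ray $-(3D_{\sigma'}m_{\mathrm{discrete}}+m_{\mathrm{dense}})$ has positive $x$-component, hence $(0,1)\in C_{\sigma'}$, and then places $p$ in $\sigma'+\BR_{\ge 0}(-m_{\mathrm{dense}})+\BR_{\ge 0}(0,1)$ for a one-sided interval of admissible $\sigma'$. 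Your claim that the containment holds for a whole punctured interval around $p_x$ is not delivered by the computation you describe.

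The gap bites hardest in the apex case you single out. There $p-\sigma_q$ approaches the vertical from the side \emph{opposite} to $-m_{\mathrm{dense}}(q)$ for $q$ on either side of $p_x$, so the continuously varying sub-cone never contains it; one must instead show that $p-\sigma_q$ lies between the vertical and the \emph{second} extreme ray of $C_{\sigma_q}$. Since $D_{\sigma_q}\to\infty$ as $q\to p_x$, this needs a bound keeping that extreme ray uniformly away from the vertical. Such a bound holds: writing $-m_{\mathrm{discrete}}=(a_1,a_2)$ and $-m_{\mathrm{dense}}(q)=(b_1,b_2)$ with $a_1,a_2\ge 1$, $b_1<0<b_2$, one has $|b_1|\le D_{\sigma_q}$ and $b_2\le D_{\sigma_q}$, so the extreme direction $3D_{\sigma_q}(a_1,a_2)+(b_1,b_2)$ has slope at most $(3a_2+1)/(3a_1-1)$, independent of $q$. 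But this estimate must actually be proved, and your write-up only records that the required strict containment ``must be shown.'' Supplying it (or, in the non-apex case, restricting to $q$ strictly between $p_x$ and the apex abscissa and using the convex sub-cone as the paper does) is what is needed to close the proof.
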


\begin{proof}

Let $\sigma=(x,y)\in R_{\mathrm{unbdd}}$ be a point with rational coordinates.
By Theorem~\ref{thm:roof projection}, this point will lie above a unique dense diamond $\Diamond_v$, with base 
$v$,
apex $v_A=(x_A,y_A)$ and left and right vertices $v_L=(x_L,y_L)$ and 
$v_R=(x_R,y_R)$.
Without loss of generality, we can assume that $x_L< x \le x_A$. Note
we have strict inequality on the left as $x_L$ is necessarily irrational.

Let $\sigma'=(x',y')$ 
be a rational point in the left roof of $\Diamond_v$ with $x<x'< x_A$
and such that $\sigma$ lies to the right of the 
ray $\fod'$ emanating from $v$ through $\sigma'$. There will be
an interval of points on the left roof with this property, hence 
a countable number of such possible $\sigma'$. Now for a given $\sigma'$,
consider the cone $C_{\sigma'}$ of Lemma~\ref{Lem:DensnessNonApex}.
We claim that $\sigma\in C_{\sigma'}$. Since $\sigma$ lies to the
left of $\sigma'$, it is enough to show that the cone
$\sigma'+\BR_{\ge 0} m'+\BR_{\ge 0}(0,1)$ is contained in $C_{\sigma'}$,
where $m'$ is the direction vector of $\fod'$.
Using the notation of Lemma~\ref{Lem:DensnessNonApex} with
$\fod_{\mathrm{discrete}}$ being the left roof and $\fod_{\mathrm{dense}}$
being $\fod'$, let $(a_1,a_2)$ the the direction vector for
$\fod_{\mathrm{discrete}}$ and $(b_1,b_2)$ be the direction vector for
$\fod_{\mathrm{dense}}$. Then $a_1>0$, $b_1<0$, and after twisting
so that the base of the diamond has $x$-coordinate in $[-1/2,1/2]$, we
can assume $a_2,b_2>0$.
The right edge of $C_{\sigma'}$ has direction vector 
\[
3(a_1b_2-a_2b_1)(a_1,a_2)+(b_1,b_2),
\]
which then clearly has positive $x$-coordinate. Thus the ray
$v+\BR_{\ge 0}(0,1)$ is contained in $C_{\sigma'}$. As $v+\BR_{\ge 0}m'$
is contained in $C_{\sigma'}$ by definition, we thus see that
$\sigma'+\BR_{\ge 0} m'+\BR_{\ge 0}(0,1)$ is contained in $C_{\sigma'}$
as claimed.

We thus see that for every $\sigma'$ as above, there is a ray in
$\foD^{\mathrm{stab}}$ with endpoint $\sigma'$ and passing through
$\sigma$. This proves the result.
\end{proof}

 \begin{figure}[h]
 \subcaptionbox*{}[0.94\linewidth]{%
    \includegraphics[width=\linewidth]{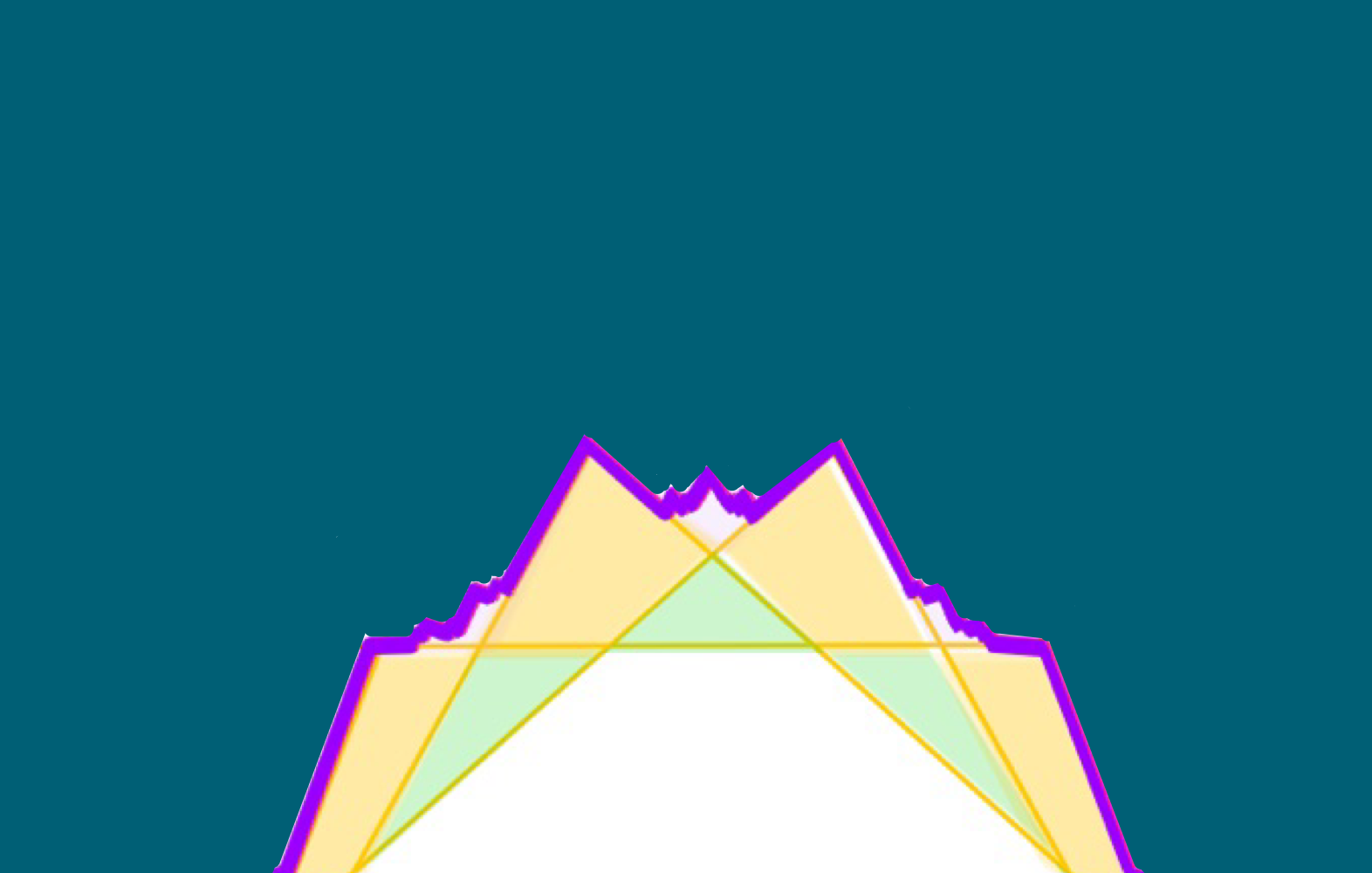}%
    }%
  \caption{The rational points in the unbounded region are all vertices.}
  \label{Fig: UnboundedCharacterization}
\end{figure}

\begin{corollary}
 [Classification of the rays generated in the unbounded region]\label{BoundedGenerating}Let $\gamma=(r,d,e)$ be the Chern character of an object
in $\mathrm{D}^b(\BP^2)$, such that $L_{\gamma}$ is not vertical and does not contain a discrete ray. 
Let $\sigma_0\in L_{\gamma}\cap C_{\mathrm{roofs}}$ be a point so that
$\sigma_0-\BR_{> 0}m_{\gamma}\subseteq R_{\mathrm{unbdd}}$.
Then the first generating point $\sigma$ of $\gamma$ lies in 
$R_{\mathrm{unbdd}}$ if and only if $\sigma_0-\BR_{>0}m_{\gamma}$ is
not contained in the dense cone $C_{\sigma_0}$ at 
$\sigma_0$.
\end{corollary}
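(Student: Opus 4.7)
The plan is to prove the contrapositive: the first generating point $\sigma$ lies in $R_{\mathrm{bdd}}$ if and only if $\sigma_0-\BR_{>0}m_\gamma\subseteq C_{\sigma_0}$, with the convention that this containment includes the boundary direction $-m_{\mathrm{dense}}$ corresponding to an existing ray of $\foD^{\mathrm{stab}}$ passing through $\sigma_0$ in its interior. The hypothesis $\sigma_0-\BR_{>0}m_\gamma\subseteq R_{\mathrm{unbdd}}$ orients $L_\gamma^+$ so that $-m_\gamma$ points away from the endpoint $\sigma_{\mathrm{init}}:=L_\gamma^+\cap\partial\overline U$, and the portion of $L_\gamma^+$ at or below $\sigma_0$ (closer to $\sigma_{\mathrm{init}}$) lies in $R_{\mathrm{bdd}}$. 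By the definition of first generating point, $\sigma\in R_{\mathrm{bdd}}$ is equivalent to the existence of a ray $\fod\subseteq L_\gamma^+$ of the form $v-\BR_{\geq 0}m_\gamma$ whose endpoint $v$ lies at or below $\sigma_0$; in this situation the support of $\fod$ automatically contains $\sigma_0$.

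For $(\Rightarrow)$, take such a ray $\fod$ with endpoint $v$ at or below $\sigma_0$. If $v=\sigma_0$, then Lemma~\ref{Lem:DensnessNonApex} or Corollary~\ref{cor: DensenessApex} yields $\fod\subseteq C_{\sigma_0}$ directly. If $v$ lies strictly below $\sigma_0$, then $\sigma_0$ is in the interior of $\fod$, so $\fod$ must coincide with one of the distinguished interior-passing rays classified by those same results. The hypotheses on $L_\gamma$ (not vertical, no discrete ray) exclude the discrete roof rays and the vertical dense ray at an apex. Hence $\sigma_0$ must be a degenerate diamond and $\fod$ the dense ray from the base $u$ of its underlying diamond, so $m_\gamma=m_{\mathrm{dense}}$ and $-m_\gamma$ lies on the boundary of $C_{\sigma_0}$. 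Conversely, for $(\Leftarrow)$, if $-m_\gamma$ is in the interior of $C_{\sigma_0}$, Lemma~\ref{Lem:DensnessNonApex} or Corollary~\ref{cor: DensenessApex} furnishes a ray of $\foD^{\mathrm{stab}}$ with endpoint $\sigma_0$ and direction $-m_\gamma$, giving $\sigma=\sigma_0\in R_{\mathrm{bdd}}$. The remaining boundary directions $-m_{\mathrm{discrete}}$ and the apex vertical direction are excluded by the hypotheses, leaving only $-m_\gamma=-m_{\mathrm{dense}}$ at a degenerate diamond; then the dense ray from the base $u$ of $\Diamond_u$ lies on $L_\gamma^+$ with endpoint $u\in R_{\mathrm{bdd}}$, so $\sigma\in R_{\mathrm{bdd}}$.

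The principal subtlety is the boundary case of $C_{\sigma_0}$: since Lemma~\ref{Lem:DensnessNonApex} uses strict inequalities $a,b>0$, the direction $-m_{\mathrm{dense}}$ is literally outside $C_{\sigma_0}$, yet the dense ray from $u$ passing through $\sigma_0$ in its interior still places $\sigma_0-\BR_{>0}m_\gamma$ in the support of an existing ray of $\foD^{\mathrm{stab}}$. Thus the containment ``$\sigma_0-\BR_{>0}m_\gamma\subseteq C_{\sigma_0}$'' must be read in the closed sense, namely $-m_\gamma$ in the positive span of $m_{\mathrm{discrete}}$ and $m_{\mathrm{dense}}$ under the slope constraint. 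Identifying the correct dense ray and verifying that $u\in L_\gamma^+$ lies below $\sigma_0$ along $L_\gamma^+$ follows because $\sigma_0$ is the roof intersection of the dense ray from $u$ and the orientation is pinned down by $\sigma_0-\BR_{>0}m_\gamma\subseteq R_{\mathrm{unbdd}}$.
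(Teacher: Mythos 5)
Your proof is correct and takes essentially the same route as the paper: your trichotomy (a ray of $\foD^{\mathrm{stab}}$ supported on $L_\gamma^+$ with endpoint at $\sigma_0$, with endpoint strictly below $\sigma_0$, or no such ray) matches the paper's Cases I, II, III, and both arguments rest entirely on the classification of rays through points of $C_{\mathrm{roofs}}$ given by Lemma~\ref{Lem:DensnessNonApex}, Corollary~\ref{cor: DensenessApex} and Theorem~\ref{Thm:nonEmptiness}. Your observation about the boundary case is well taken: when $L_\gamma$ passes through the base of the diamond, $-m_\gamma=-m_{\mathrm{dense}}$ lies outside $C_{\sigma_0}$ as literally defined (with $a,b>0$), so the containment in the statement must indeed be read in the closed sense — the paper's own proof implicitly adopts this reading by counting its Case II among the bounded cases.
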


\begin{proof}
Let $\sigma$ be the first generating point for $\gamma$, and let
$(\fod,H_{\fod})\in \foD^{\mathrm{stab}}$ be a ray with endpoint $\sigma$.
By assumption, $\fod$ is not a discrete ray.
By Theorem~\ref{Thm:nonEmptiness},
we only have the three following possible cases (see Figure \ref{Fig: allRaysGenInBdd}):

    \textbf{Case I.} The intersection of $L_\gamma$ with $C_{\mathrm{roofs}}$ is a point $p$ such that $p-\BR_{\ge 0}m_{\gamma}$ 
is contained in the dense cone $C_{p}$ at $p$. In this case,
from Lemma \ref{Lem:DensnessNonApex}, $p=\sigma$ is in fact the first generating point of $\fod$.  This also holds when $p$ is an apex (for which we use Lemma \ref{cor: DensenessApex}). 

    \textbf{Case II.} The intersection of $L_\gamma$ with $C_{\mathrm{roofs}}$ is a point $p$ so that $L_\gamma$ meets the base point $v$ of the corresponding  diamond (the dense diamond whose roof contains $p$). In this case $v$ is in fact the first generating point of $\fod$.

  \textbf{Case III.} If Cases I or II don't occur, then $\fod$ is in fact generated at some point $\sigma$ in $R_{\mathrm{unbdd}}$.

          \begin{figure}[H]
  \subcaptionbox*{Case I ($p$ non-apex)}[0.48\linewidth]{%
    \includegraphics[width=\linewidth]{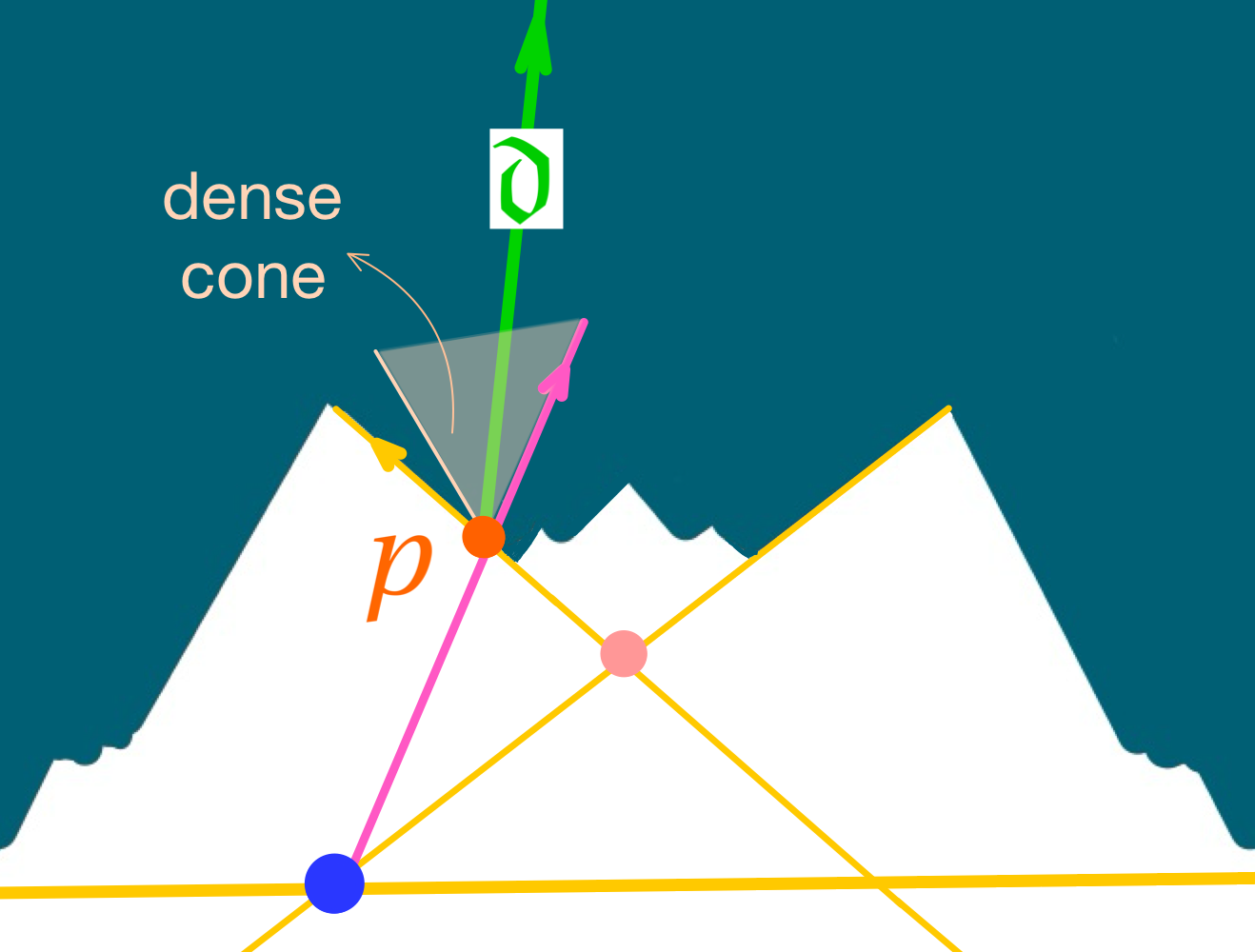}%
  }
  \hskip2.8ex
  \subcaptionbox*{Case I ($p$ apex)}[.433\linewidth]{%
    \includegraphics[width=\linewidth]{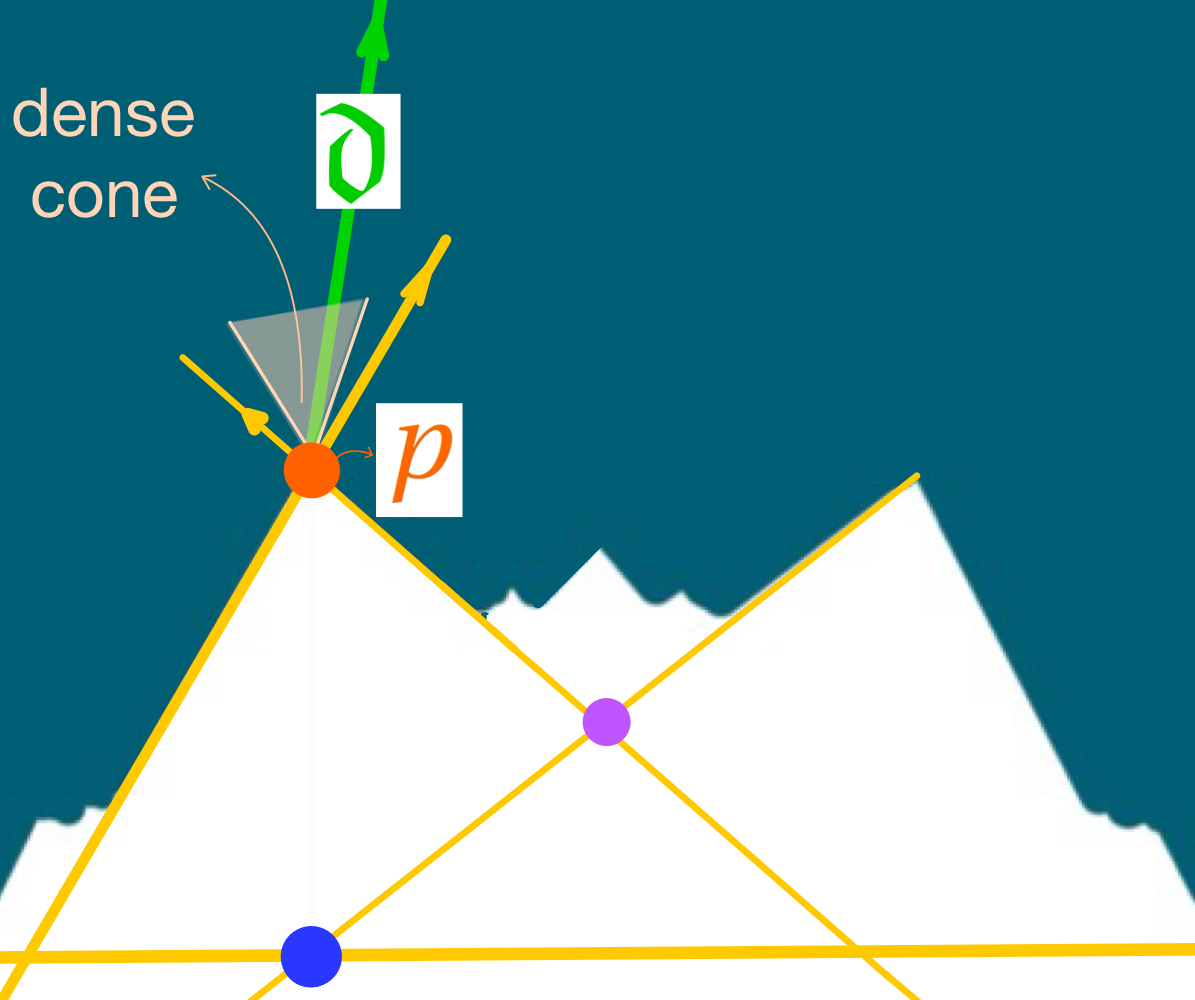}%
    }
  \hskip1.8ex
  \subcaptionbox*{Case II}[.453\linewidth]{%
    \includegraphics[width=\linewidth]{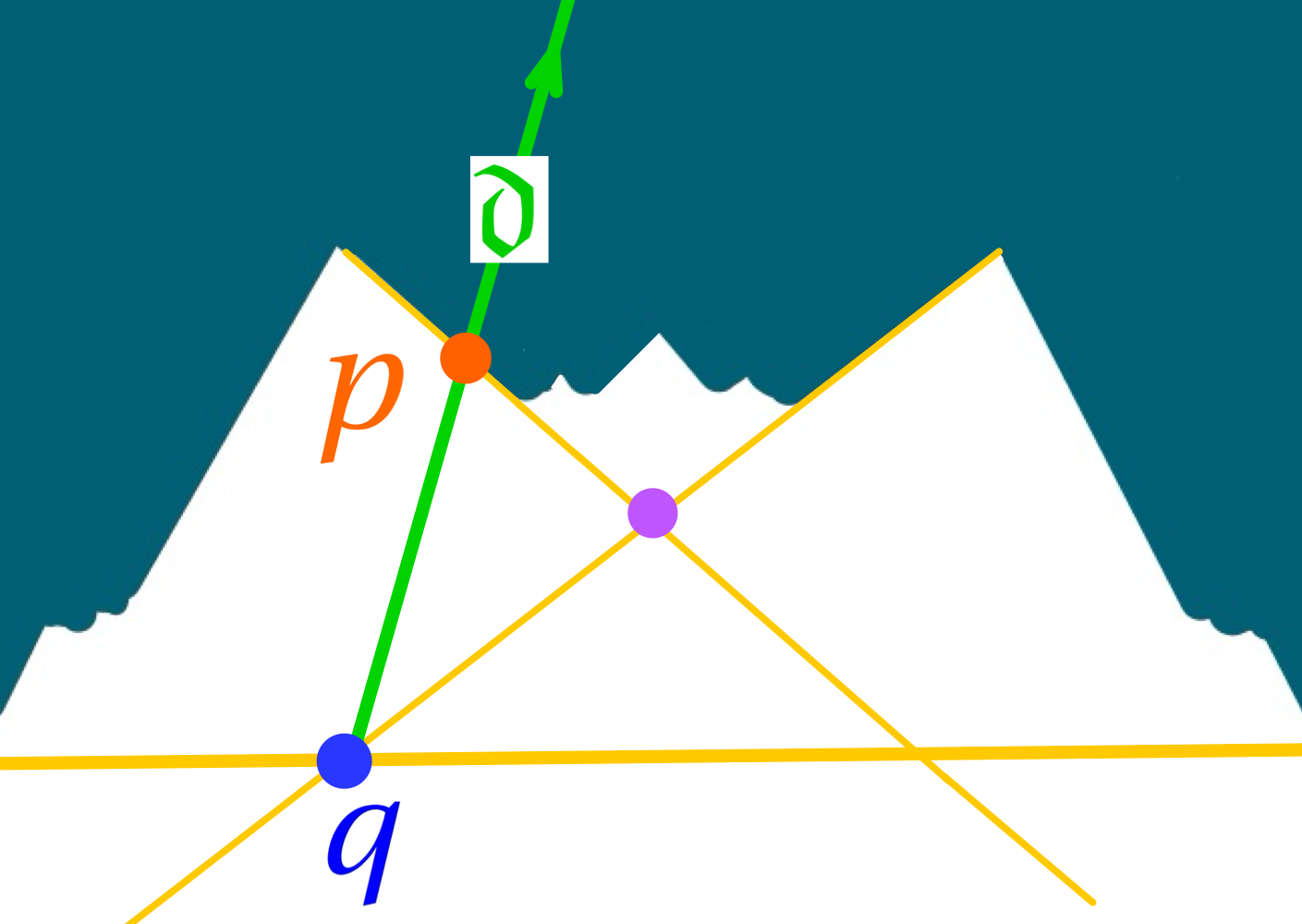}%
  }\hskip2.8ex
  \subcaptionbox*{Case III ($p$ non-apex)}[.453
  \linewidth]{%
    \includegraphics[width=\linewidth]{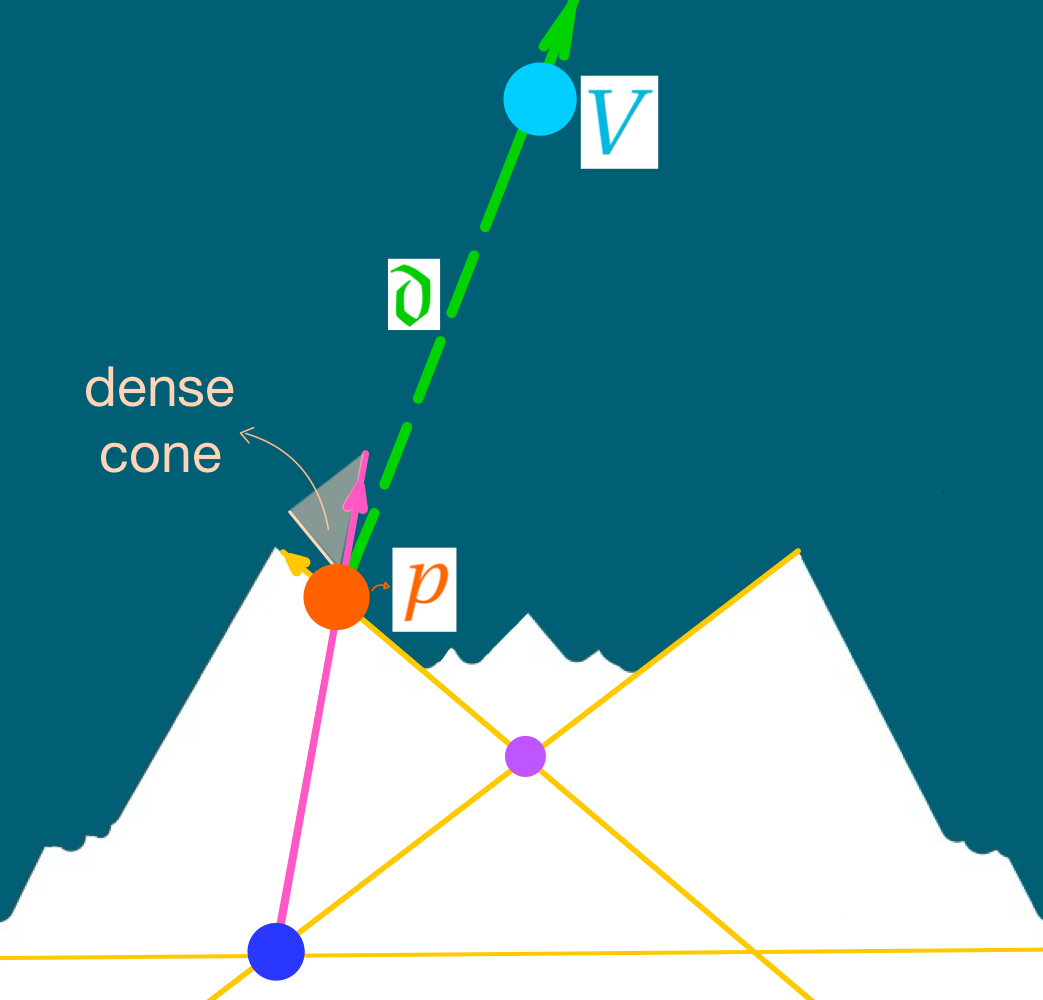}%
  }\hskip2.8ex
  \subcaptionbox*{Case III ($p$ apex)}[.453
  \linewidth]{%
    \includegraphics[width=\linewidth]{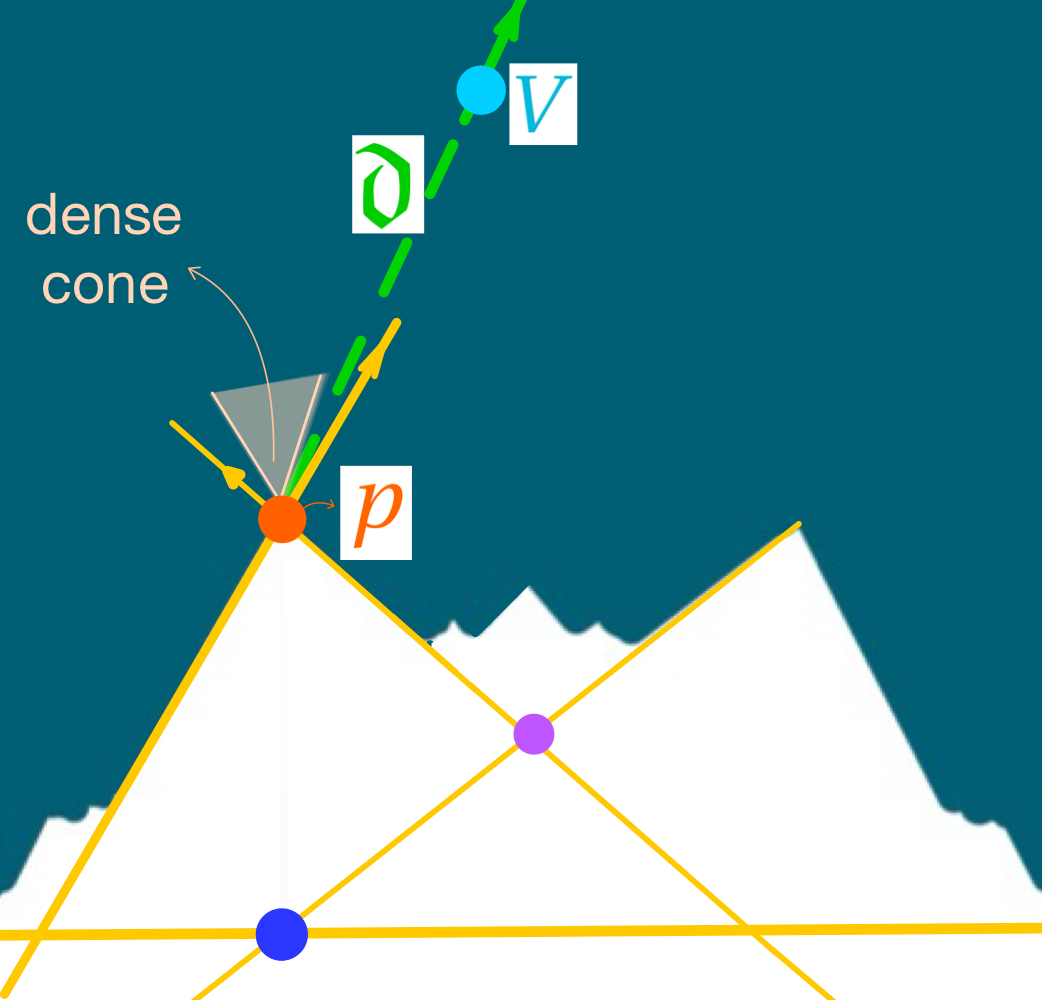}%
  }
    \caption{In Case I ($p$ non-apex) or Case I ($p$ apex), the first generating point of $\fod$ is $p$.  In Case II, the first generating point of $\fod$ the base point $q$, and in Case III ($p$ non-apex) or Case III ($p$ apex), the first generating point of $\fod$ is a point $V$ in the unbounded region.}
    
    \label{Fig: allRaysGenInBdd}
\end{figure}

\end{proof}

\begin{example}\label{Ex:ConceretBoundedGenerating}
    We give some explicit examples for the cases in the proof of Corollary \ref{BoundedGenerating} and Figure \ref{Fig: allRaysGenInBdd}.

    For  Case I ($p$ non-apex), we may take $\gamma=(5,-30,10)$
    
  For Case I ($p$ apex), we may take $\gamma=(2,-7,11/2)$. Note that the right boundary of the dense cone at $p=(-1/2,1)$ is given by $3y-7x=13/2$. Hence, $L_{\gamma}$ is  contained in the dense cone. 

   For  Case II, we may take $\gamma=(5,-20,10)$.

  For Case III ($p$ non-apex), we may take $\gamma=(5,-12,10)$.

  For Case III ($p$ apex), we may take $\gamma=(4,-9,17/2)$. Again since the right boundary of the dense cone at $p=(-1/2,1)$ is given by $3y-7x=13/2$, $L_{\gamma}$ is not contained in the dense cone.

\end{example}

\begin{remark}
    Note that if, in  Corollary \ref{BoundedGenerating}, the first generating
point lies in $R_{\unbdd}$, it is still possible for a moduli space
$\CM^{\sigma}_{m\gamma}$ to be non-empty before the first generating point.
This is because it is possible that the moduli space has zero Euler
characteristic in intersection cohomology but nevertheless be non-empty.
The analysis of the scattering at points of roofs of diamonds is purely
numerical.

Conversely, once $\sigma$ is past the first generating point, then
$\CM^{\sigma}_{m\gamma}$ is necessarily non-empty for some $m$.
\end{remark}

\section{An application: First wall-crossing for the moduli spaces of one-dimensional rank-zero objects}\label{section:rankZero}

In this section, we will show how the scattering diagram allows us to
easily determine the location of first (actual) walls for one-dimensional rank $0$
objects. Recall the $K$-theory lattice $\Gamma$ as in \eqref{eq:Gamma def}, 
which is identified via the Chern character with a sublattice of 
$\BZ\times\BZ\times\BZ/2$ as in \eqref{eq:chern image}.
In particular, the Chern characters of one-dimensional rank 0 
objects are of the form $(0,u,v)$ with either $u$ even and $v$ an integer,
or $u$ is odd and $v$ is of the form 
$(2k+1)/2$. We will focus on primitive Chern characters. The reason
for this 
is that we eventually want to get a correspondence between the rational 
vertical rays 
and the structure sheaves of schemes which are unions of $C\cup Z$ where
$C$ is a plane curve and $Z$ is a finite length scheme, with $C$ and $Z$
having as small a degree as possible.
We call an object with a primitive Chern character a \textit{primitive object}.

\begin{lemma}[Diamonds/one-dimensional rank $0$ objects correspondence]\label{lem: rank0DimondsCorrespondence}
The one-dimensional rank-zero objects of $\D^b(\BP^2)$ correspond to vertical lines.
In particular, there is a one-to-one correspondence between (outer or degenerate diamonds) diamonds and primitive Chern characters of one-dimensional 
rank-zero objects. 
\end{lemma}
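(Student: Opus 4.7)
The plan is to unpack the statement into its two assertions and reduce each to results already established in the paper. The first assertion (one-dimensional rank zero objects correspond to vertical lines) is almost immediate from the definition of $L_\gamma$. The second assertion (bijection between primitive Chern characters and outer/degenerate diamonds) follows by combining this with the bijection $\pr_1|_{C_{\mathrm{roofs}}\cap M_{\BQ}}\colon C_{\mathrm{roofs}}\cap M_{\BQ}\to \BQ$ from Theorem~\ref{thm:roof projection}, together with a small amount of arithmetic in the lattice $\Gamma$.

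First I would observe that for $\gamma=(0,u,v)$, the defining equation of $L_\gamma$ in \eqref{eqn:LGamma} reduces to $ux=v$, i.e.\ the vertical line $x=v/u$. Conversely, any vertical rational line $x=q$ forces $\ch_0=0$, since otherwise $L_\gamma$ has finite slope $-\ch_1/\ch_0$, and the defining equation then determines $v/u=q$. This gives a bijection between vertical lines in $M_{\BR}$ with rational $x$-intercept and positive rays in the set of rank zero one-dimensional Chern characters, and it remains to cut down by primitivity.

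Next, I would carry out the primitivity bookkeeping in the lattice $\Gamma$, whose image under $\ch$ is $\{(r,d,e)\in\BZ^2\oplus\tfrac12\BZ\,|\,d/2+e\in\BZ\}$ by \eqref{eq:chern image}. Writing $q=s/t$ with $\gcd(s,t)=1$, the general rank zero one-dimensional Chern character on $L_{\gamma}=\{x=q\}$ is $(0,ct,cs)$ with $c(t+2s)/2\in\BZ$. When $t$ is even, $c=1$ is admissible and $(0,t,s)$ is the primitive representative; when $t$ is odd, the smallest admissible $c$ is $c=2$, and $(0,2t,2s)$ is primitive in $\Gamma$ since in the lattice basis the coordinates become $(0,2t,2s+t)$ with $\gcd(2t,2s+t)=\gcd(t,2s)=1$. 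Either way, each rational $q$ gives a unique primitive rank zero one-dimensional Chern character and vice versa.

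To finish, I would combine this with Theorem~\ref{thm:roof projection}, which gives a bijection $C_{\mathrm{roofs}}\cap M_{\BQ} \leftrightarrow \BQ$ by projection. Rational points of $C_{\mathrm{roofs}}$ split into two types: apex points of dense diamonds, which by Theorem~\ref{thm: verticalDiagonal} have the same $x$-coordinate as the base of the diamond and are thus in canonical bijection with outer diamonds; and rational points lying in the relative interiors of roofs, which by Definition~\ref{Def: positiveZeroDiamond} are exactly the degenerate diamonds. Chaining the three bijections (primitive characters $\leftrightarrow \BQ \leftrightarrow C_{\mathrm{roofs}}\cap M_{\BQ} \leftrightarrow$ outer/degenerate diamonds) yields the claim. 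No step presents a real difficulty; the only potential obstacle is keeping the parity casework in the second step straight, and making sure that the two cases of $C_{\mathrm{roofs}}\cap M_{\BQ}$ are exhaustive, which is guaranteed by the injectivity part of Theorem~\ref{thm:roof projection}.
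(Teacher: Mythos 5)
Your overall route is the same as the paper's: identify one-dimensional rank-zero classes with vertical lines via \eqref{eqn:LGamma}, use Theorem~\ref{thm:roof projection} to get a unique intersection point of that line with $C_{\mathrm{roofs}}$, and split that point into the apex case (outer diamond) and the interior-of-roof case (degenerate diamond). The paper's proof is exactly this, stated in three sentences, so you have reconstructed it; your additions are the explicit primitivity bookkeeping in $\Gamma$ and the invocation of Theorem~\ref{thm: verticalDiagonal} to match apexes with bases.

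One concrete slip in the bookkeeping: your parity casework implicitly takes the scalar $c$ to be an integer, but since $\ch_2$ lives in $\tfrac12\BZ$ it need not be. For $q=s/t$ with $t\equiv 2\pmod 4$ (e.g.\ $q=1/2$), the class $(0,t,s)$ is \emph{not} primitive: $(0,2,1)=2\cdot(0,1,1/2)$, and $(0,1,1/2)$ already satisfies the integrality condition $d/2+e\in\BZ$ of \eqref{eq:chern image}, so the primitive representative over $q=1/2$ is $(0,1,1/2)$ rather than $(0,2,1)$. (This is consistent with the rest of the paper, where the initial diamonds at $x=i/2$, $i$ odd, carry the primitive class $(0,1,i/2)$.) The correct statement is that for fixed $q$ the admissible values of $u$ form a set $u_0\BZ_{>0}$, so there is still exactly one primitive class over each rational $q$ and your chain of bijections goes through unchanged; only your identification of \emph{which} class is primitive is wrong in that subcase, and it should be repaired if you intend to use the explicit representative later (as the paper does in \S\ref{section:rankZero}).
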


\begin{proof} 
The first sentence is evident as a one-dimensional rank-zero object with Chern character $(0,u,v)$, for $u\neq 0$, corresponds to the line $x=-v/u$.
For the second statement, by Theorem~\ref{thm:roof projection}, the
line $x=-v/u$ intersects $C_{\mathrm{roofs}}$ at one point. If this point
is the apex of an outer diamond, we associate this rank-zero object
with this outer diamond. Otherwise the intersection is a degenerate
diamond in the sense of Definition~\ref{Def: positiveZeroDiamond}.
This gives the desired correspondence.
\end{proof}

\begin{remark}
It is worth noting that rank $0$ objects play a special role here. For example,
at a vertex $V_{\CE}$ for $\CE$ an exceptional bundle, one may check
that for any fixed $r\in \BZ^+$, there are infinitely many rays with endpoint
$V_{\CE}$ in $\foD_{\mathrm{dense}}$ corresponding to rank $r$ objects.
\end{remark}
   
  \begin{definition}[Discrete and dense rational number/line] 
  We call a rational number $q$, a \emph{discrete rational number}, if $x=q$ contains the vertical diagonal of an outer diamond. 
  If $x=q$ contains a degenerate diamond, then we call $q$ a \emph{dense rational number}.  Alternatively, we call $x=q$ a \emph{dense rational line}.
   \end{definition}

First, we need some lemmas, beginning with:

\begin{lemma}[A necessary condition for being generators of an outer diamond]\label{Lem: NecCondition}
    Let $\fod_1$ and $\fod_2$ be respectively the left and right generators of an outer diamond $\Diamond$ with respective primitive Chern characters $(r_1,d_1,e_1)$ and $(r_2,d_2,e_2)$ with $r_1,r_2>0$.
Then  
    \begin{align}
        D=\frac{2}{3}(r_1r_2-d_1d_2+r_1e_2+r_2e_1),
    \end{align}
    where $D=r_2d_1-r_1d_2>0$ is the  determinant at $\fod_1\cap\fod_2$.
    \end{lemma}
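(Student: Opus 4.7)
The strategy is to identify the two generators of the outer diamond with specific exceptional bundles $\CE_\ell, \CE_r$ forming a strong exceptional pair, and then extract the formula from the vanishing $\chi(\CE_\ell, \CE_r) = 0$.

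First I would identify the generators. Suppose $\Diamond$ is non-initial, based at the incoming vertex $v = V_{\CE_1}$ of a triangle $\Delta_w$ with associated strong exceptional triple $(\CE_0, \CE_1, \CE_2)$. By Step II of the proof of Theorem~\ref{thm:mu build} (together with Remark~\ref{rmk:left right exceptionals}), the edges of $\Delta_w$ adjacent to $v$ are contained in the lines $L_{\CE_2(-3)}$ (joining $v$ to $V_{\CE_0}$, down-left) and $L_{\CE_0}$ (joining $v$ to $V_{\CE_2}$, down-right). The generators $\fod_1, \fod_2$ of $\Diamond^{\mathrm{out}}$ extend these edges past $v$ upward, so the left generator $\fod_1$ (going up-left) lies in $L_{\CE_0}$ with primitive Chern character $(r_1, d_1, e_1) = \ch(\CE_0)$, and the right generator $\fod_2$ (going up-right) lies in $L_{\CE_2(-3)}$ with $(r_2, d_2, e_2) = \ch(\CE_2(-3))$. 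For initial outer diamonds based at $\fod_n^+ \cap \fod_{n+1}^-$, a direct check yields $(r_1, d_1, e_1) = \ch(\CO(n+1))$ and $(r_2, d_2, e_2) = \ch(\CO(n))$.

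Next, I would verify that $\chi(\CE_\ell, \CE_r) = 0$ in all cases. For initial diamonds this is immediate: $\chi(\CO(n+1), \CO(n)) = \chi(\CO(-1)) = 0$. For non-initial diamonds, Serre duality on $\BP^2$ (using $\omega_{\BP^2} = \CO(-3)$) gives
\[
\Ext^k(\CE_0, \CE_2(-3)) \cong \Ext^{2-k}(\CE_2, \CE_0)^{\vee},
\]
which vanishes for all $k$ since $(\CE_0, \CE_1, \CE_2)$ is an exceptional collection and hence $\Ext^{\bullet}(\CE_2, \CE_0) = 0$. Therefore $\chi(\CE_0, \CE_2(-3)) = 0$.

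Finally, I would expand $\chi(\CE_\ell, \CE_r)$ using the pairing~\eqref{eqn: PairingGamma} on $K_0(D^b(\BP^2))$ and Riemann--Roch~\eqref{eq:RR1} to convert $\chi_i$ to $e_i$. A short calculation yields
\[
\chi(\CE_\ell, \CE_r) = \bigl(r_1 r_2 - d_1 d_2 + r_1 e_2 + r_2 e_1\bigr) - \tfrac{3}{2}\bigl(r_2 d_1 - r_1 d_2\bigr),
\]
and setting this equal to zero rearranges to the claimed identity. Positivity of $D = r_2 d_1 - r_1 d_2$ follows from $D = r(\CE_0)\,r(\CE_2)\bigl(3 - (\mu(\CE_2) - \mu(\CE_0))\bigr) \geq 2\,r(\CE_0)\,r(\CE_2) > 0$ in the non-initial case, using the slope inequality $\mu(\CE_2) - \mu(\CE_0) \leq 1$ from Lemma~\ref{lem:slope inequalities}(3), and is immediate ($D = 1$) for initial diamonds. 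The main bookkeeping obstacle is the identification of the generators: the right generator's opposite vector equals $m_{\CE_2(-3)[1]}$, corresponding to a shifted object with \emph{negative} rank in $D^b(\BP^2)$, whereas the lemma's hypothesis uses the Chern character of the unshifted sheaf $\CE_2(-3)$ with positive rank; one must carefully reconcile these sign conventions before the computation goes through, after which the argument is routine.
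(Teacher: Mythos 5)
Your proof is correct and follows essentially the same route as the paper's: identify the left and right generators with $\CE_0$ and $\CE_2(-3)[1]$ via Remark~\ref{rmk:left right exceptionals}, deduce $\chi(\CE_0,\CE_2(-3))=0$ (the paper cites the orthogonality $\chi(\CE_i,\CS_j)=\delta_{ij}$ of \eqref{eq: Kronrcker}, which was itself established by exactly the Serre-duality computation you reproduce), and expand the pairing \eqref{eqn: PairingGamma} using $\chi_i = e_i + \tfrac{3}{2}d_i + r_i$. Your explicit treatment of the initial diamonds and the slope-based verification that $D = r(\CE_0)\,r(\CE_2)\bigl(3-\mu(\CE_2)+\mu(\CE_0)\bigr) > 0$ are minor refinements over the paper, which handles positivity by appeal to the figure and leaves the initial case implicit.
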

    \begin{proof}
First note that indeed $D$ is positive: the direction vectors for
$\fod_2$ and $\fod_1$ are $(r_2,-d_2)$ and $(-r_1,d_1)$ respectively, and
these form an oriented basis, as is visible in Figure~\ref{Fig: 1/D2}.
Let $\Delta_w$ be the triangle whose incoming vertex is the base
of $\Diamond$, corresponding to an strong exceptional triple $(\CE_0,\CE_1,\CE_2)$. 
Then recall from Remark~\ref{rmk:left right exceptionals} and 
Figure~\ref{Fig: RL} that $\fod_1$ corresponds to $\CE_0$ and
$\fod_2$ corresponds to $\CE_2(-3)[1]=\CS_2$. Let $\chi_1,\chi_2$
be the Euler characteristics of $\CE_0$, $\CE_2(-3)$ respectively.
Then by \eqref{eq: Kronrcker}, $0=\chi(\CE_0,\CS_2)=\chi(\CE_0,\CE_2(-3)[1])
=-\chi(\CE_0,\CE_2(-3))$, so
the pairing \eqref{eqn: PairingGamma} must vanish:
        \begin{align*}
            0=&\left((r_1,d_1,\chi_1),(r_2,d_2,\chi_2)\right) = -3d_1r_2-r_1r_2-d_1d_2+r_1\chi_2+\chi_1 r_2\\=&-3d_1r_2-r_1r_2-d_1d_2+r_1(e_2+r_2+\frac{3}{2}d_2)+r_2(e_1+r_1+\frac{3}{2}d_1)\\
            =&-\frac{3}{2}d_1r_2-d_1d_2+r_1e_2+\frac{3}{2}r_1d_2+r_2e_1+r_1r_2.
        \end{align*}
       Multiplying this by $\frac{2}{3}$ gives the result. 
    \end{proof}

\begin{lemma}[Characterization of the Chern character of the vertical diagonals of outer/dense diamonds]\label{lem: verticelPositive}
    Let $\gamma=(0,u,v)$ or $(0,-u,-v)$, for $u\in \BZ^{> 0}, v\in\frac{\BZ}{2}$, be primitive in $\Gamma$. Then $L_\gamma$ contains the vertical diagonal of an outer/dense diamond if and only if 
    \begin{align}\label{eq:(r,d,e)}
        \begin{cases}
            r=u,\\
            d=v+\frac{3}{2}u,\\
            e=\frac{4+5u^2+4v^2+12uv}{8u}=\frac{1+v^2}{2u}+\frac{5}{8}u+\frac{3}{2}v,
        \end{cases}
    \end{align}
    where $(r,d,e)\in\Gamma$ is the Chern character of the exceptional
bundle $\CE$ such that $V_{\CE}$ is the base of the diamond.
\end{lemma}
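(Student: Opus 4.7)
The plan is to reduce the lemma to two explicit identities: the $x$-coordinate formula for $V_\CE$ from \eqref{eq: VertexPE}, and the exceptional-bundle relation $r^2 - d^2 + 2re = 1$ from Lemma~\ref{lem:slope inequalities}(6).

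First I would observe that since $\gamma = (0, \pm u, \pm v)$, the line $L_\gamma$ is by \eqref{eqn:LGamma} the vertical line $x = v/u$. On the other hand, by Theorem~\ref{thm: verticalDiagonal}, any outer or dense diamond has a vertical diagonal, and from Definition~\ref{Def: V_E} its base $V_\CE$ has $x$-coordinate $d/r - 3/2$, where $(r,d,e)$ is the Chern character of the exceptional bundle $\CE$. Thus $L_\gamma$ contains the vertical diagonal of such a diamond if and only if
\[
\frac{v}{u} \;=\; \frac{d}{r} - \frac{3}{2}, \quad\text{equivalently}\quad \frac{d}{r} \;=\; \frac{v + \tfrac{3}{2}u}{u}.
\]

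For the forward direction, suppose such a diamond exists. I would then argue that the fraction on the right is already in lowest terms, i.e.\ $\gcd(u, v + \tfrac{3}{2}u) = 1$. This is precisely the primitivity of $\gamma$: writing $\gamma$ in the $(r,d,\chi)$ coordinates as $(0, u, v + \tfrac{3}{2}u)$ via Riemann--Roch \eqref{eq:RR1}, primitivity becomes $\gcd(u, v+\tfrac{3}{2}u)=1$, with a brief parity check to handle the two cases $u$ even (so $v$ odd) and $u$ odd (so $2v$ odd) coming from \eqref{eq:chern image}. Since $\CE$ is a vector bundle, $r>0$, and $\gcd(r,d) = 1$ by Lemma~\ref{lem:slope inequalities}(6), uniqueness of the reduced form forces $r = u$ and $d = v + \tfrac{3}{2}u$. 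Plugging these into $r^2 - d^2 + 2re = 1$ and solving for $e$ gives the stated closed form by a routine algebraic manipulation:
\[
e \;=\; \frac{1 - r^2 + d^2}{2r} \;=\; \frac{1 + v^2 + 3uv + \tfrac{5}{4}u^2}{2u} \;=\; \frac{4 + 5u^2 + 4v^2 + 12uv}{8u}.
\]

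For the converse, given an exceptional bundle $\CE$ whose Chern character $(r,d,e)$ satisfies \eqref{eq:(r,d,e)}, a direct substitution verifies $r^2 - d^2 + 2re = 1$ (as required by Lemma~\ref{lem:slope inequalities}(6)), and the $x$-coordinate of $V_\CE$ equals $d/r - 3/2 = v/u$. Hence $V_\CE \in L_\gamma$, and Theorem~\ref{thm: verticalDiagonal} gives that the vertical diagonal of the diamond based at $V_\CE$ is contained in $L_\gamma$. Here one should also remark that every exceptional bundle—line bundles included—gives rise to a diamond based at $V_\CE$ (an initial diamond in the line-bundle case and an outer/dense diamond otherwise), which follows from the analysis in \S\ref{Section: diamonds} and Construction~\ref{susection: construction}.

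The only real subtlety is the primitivity argument in the forward direction, since one must verify that $(2v+3u)/(2u)$ collapses to $(v+\tfrac{3}{2}u)/u$ and that this latter fraction is reduced; once this is in place, everything else is a short computation matching the identity for exceptional bundles.
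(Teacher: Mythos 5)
Your proposal is correct and follows essentially the same route as the paper: both directions reduce to the identity $v/u=d/r-3/2$ from \eqref{eq: VertexPE}, with primitivity plus $\gcd(r,d)=1$ from Lemma~\ref{lem:slope inequalities}(6) forcing $r=u$, $d=v+\tfrac{3}{2}u$, and the relation $r^2-d^2+2re=1$ determining $e$. The only cosmetic difference is that the paper passes to $(r,d,\chi)$-coordinates, where $\Gamma\cong\BZ^3$ and primitivity is literally $\gcd(u,\chi)=1$, whereas you argue $\gcd\bigl(u,v+\tfrac{3}{2}u\bigr)=1$ directly via a parity check — both are fine.
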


\begin{proof}
First suppose \eqref{eq:(r,d,e)} holds. Then as $L_{\gamma}$ is the line
$x=\frac{v}{u}$, and $\frac{v}{u}=\frac{d}{r}-\frac{3}{2}$ by 
\eqref{eq:(r,d,e)}, we see $L_{\gamma}$ passes through $V_{\CE}$ by
\eqref{eq: VertexPE} and hence contains the vertical diagonal of the diamond
based at $V_{\CE}$.

For the opposite direction, first, we show that $r=u$. Since $\chi=v+\frac{3}{2}u$, we can write $\gamma=(0,u,\chi-\frac{3}{2}u)$. Therefore, we obtain 
\begin{align}\label{dR=Xu}
    \frac{d}{r}=\frac{\chi}{u}.
\end{align}
Now, the primitivity of $\gamma=(0,u,v)$ in $\Gamma$
implies the primitivity of $(0,u,\chi)$, which in turn implies that $\gcd(u,\chi)=1$. On the other hand,
$\gcd(d,r)=1$ by Lemma~\ref{lem:slope inequalities},(6). Hence, from \eqref{dR=Xu}, we must have $r=u$ and $d=\chi$. Also, the latter immediately implies  $d=v+\frac{3}{2}u$.

       Finally, for $e=\frac{4+5u^2+4v^2+12uv}{8u}$, it is enough to use Lemma \ref{lem:slope inequalities}(6) and substitute $d,r$:
       \[e=\frac{d^2-r^2+1}{2r}=\frac{v^2+3vu+\frac{9u^2}{4}-u^2+1}{2u}=\frac{4v^2+12uv+5u^2+4}{8u}.\]

\end{proof}

Next, we have a useful statement on the length of the vertical diagonals of diamonds.

\begin{lemma}[Length of the vertical diagonal]\label{Lem: lengthDiagonal}The vertical diagonal corresponding to the primitive Chern character $(0,u,v)$ of a diamond based at a vertex $V_0=(x_0,y_0)$ dual to a ray (in the sense of
Definition~\ref{Def:vertexRayDuality}) $\fod$ of Chern character $(r,d,e)$ has length $\frac{1}{r^2}$, which by Lemma \ref{Lem: degree=rank} and and \eqref{eq:(r,d,e)}, is the same as $\frac{1}{D^2}$ and $\frac{1}{u^2}$, where $D$ is the determinant at the base, and $u$ is the first (primitive) Chern character associated to the vertical diagonal.
\end{lemma}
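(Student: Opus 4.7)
The plan is to compute the $y$-coordinates of the two endpoints of the vertical diagonal directly and take the difference, using the defining equation of an exceptional bundle to simplify. By Theorem~\ref{thm: verticalDiagonal}, the diagonal is genuinely vertical, so the length is simply the difference of $y$-coordinates of the base and the apex.

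First I would recall the two endpoints. The base of the diamond is the vertex $V_{\CE}$ for the dual exceptional bundle $\CE$ of Chern character $(r,d,e)$, which by Definition~\ref{Def: V_E} has $y$-coordinate $\frac{3d-\chi}{r}$, where $\chi=e+\tfrac{3}{2}d+r$ by Riemann–Roch \eqref{eq:RR1}. The apex was computed in the course of the proof of Theorem~\ref{thm: verticalDiagonal} in \eqref{eq:apex coordinates} to be the intersection of $L_{\CE}$ with $L_{\CE(-3)[1]}$, namely
\[
\left(\frac{d}{r}-\frac{3}{2},\ \frac{e}{r}-\frac{d^2}{r^2}+\frac{3d}{2r}\right).
\]
These share their $x$-coordinate (reaffirming verticality), so the length of the diagonal is the difference of $y$-coordinates:
\[
\ell = \frac{e}{r}-\frac{d^2}{r^2}+\frac{3d}{2r} - \frac{3d-\chi}{r}.
\]

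Next I would substitute $\chi=e+\tfrac{3}{2}d+r$ and simplify. The terms involving $d/r$ cancel, and what remains is
\[
\ell = \frac{2e}{r} - \frac{d^2}{r^2} + 1 = \frac{2er + r^2 - d^2}{r^2}.
\]
By Lemma~\ref{lem:slope inequalities}(6), an exceptional bundle satisfies $r^2 - d^2 + 2re = 1$, so the numerator equals $1$, giving $\ell = \frac{1}{r^2}$.

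Finally, I would identify the three claimed expressions for this length. By Lemma~\ref{Lem: degree=rank}, the determinant $D$ at the base vertex is the rank of the dual exceptional bundle, so $D=r$ and hence $\ell=\frac{1}{D^2}$. By Lemma~\ref{lem: verticelPositive}, the primitive rank-zero Chern character associated to the vertical diagonal satisfies $u=r$ (see \eqref{eq:(r,d,e)}), so $\ell=\frac{1}{u^2}$ as well. No step here is expected to present any real obstacle; the entire argument reduces to one algebraic identity, namely the Bogomolov-type relation $r^2-d^2+2re=1$ satisfied by every exceptional bundle.
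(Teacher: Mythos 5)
Your proof is correct and follows essentially the same route as the paper: both compute the difference of the $y$-coordinates of the apex (the intersection of the vertical line $x=x_0$ with the roof $L_{\CE}$, which is exactly the point \eqref{eq:apex coordinates}) and the base $V_{\CE}$, then reduce to the identity $r^2-d^2+2re=1$ of Lemma~\ref{lem:slope inequalities}(6). The only cosmetic difference is that you cite the apex formula from the proof of Theorem~\ref{thm: verticalDiagonal} while the paper recomputes that intersection point directly.
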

\begin{proof}

Let $V'=(x',y')$ be the intersection of $x=x_0$ and $\fod$. 
 Hence, we have $x'=x_0$ and $y'=\frac{e-dx_0}{r}=\frac{e-d\frac{v}{u}}{r}$. Then, using \eqref{eq: VertexPE} and \eqref{eq:(r,d,e)}, we get

\begin{align*}
y'-y_0=   2\frac{e}{r}-\frac{d}{r}\frac{v}{u}-\frac{3}{2}\frac{d}{r}+1=
   2\frac{e}{r}-\left(\frac{d}{r}\right)^2+1=\frac{1}{r^2}.
\end{align*}
The last equality comes from Lemma \ref{lem:slope inequalities}(6). Therefore, we have the result.  See Figure \ref{Fig: 1/D2}.

\begin{figure}[h]
    \centering
    \includegraphics[width=12cm]{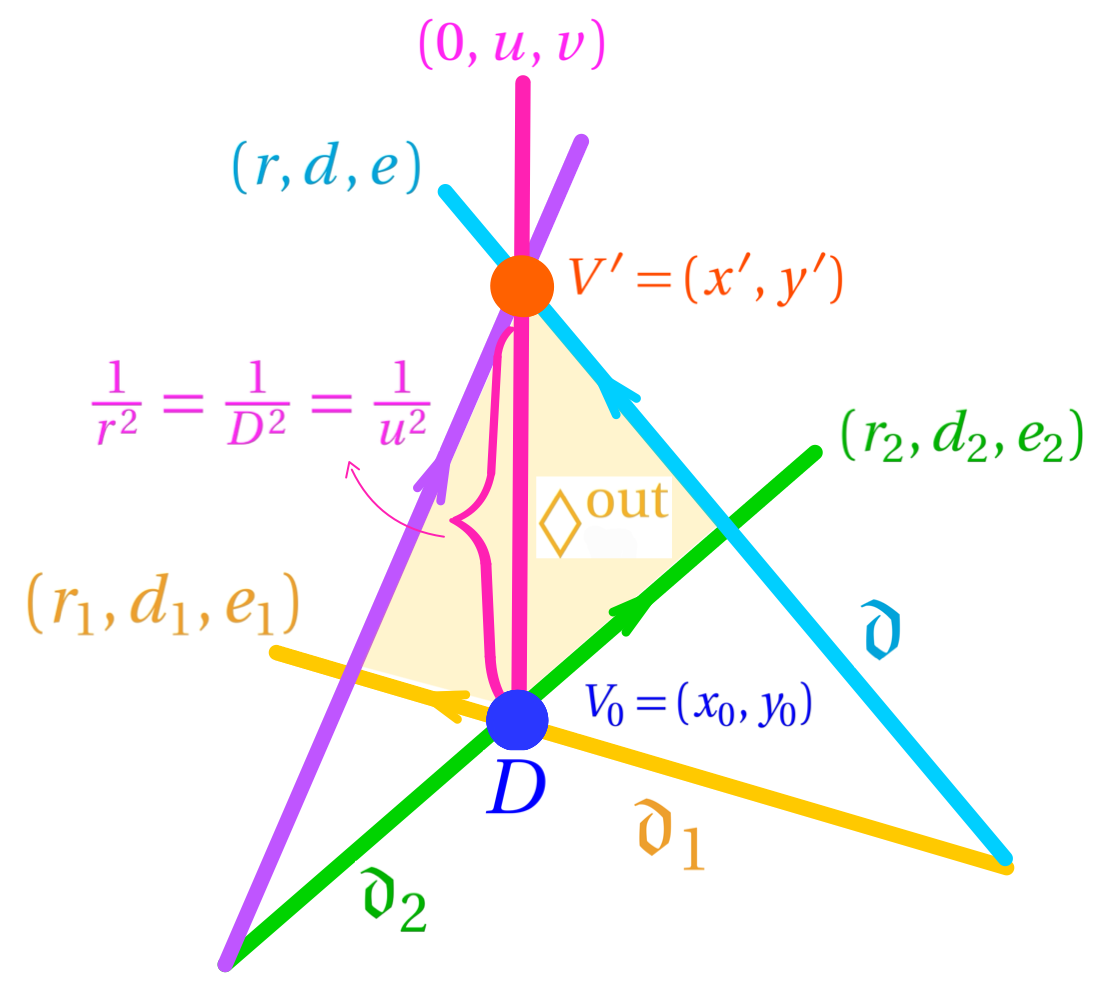}
    \caption{The length of the vertical diagonal of the diamond $\Diamond^{\mathrm{out}}$ based at the vertex $V_0$ dual to the ray $\fod$ of rank $r$ is $1/r^2=1/D^2=1/u^2$, where $D$ is the determinant at the base, and $u$ is the first (primitive) Chern character associated to the vertical diagonal.}
    \label{Fig: 1/D2}
\end{figure}
    
\end{proof}

Now, we can prove one of the main results of this section. First, we need some definitions.

\begin{definition}[Equivalent objects] We call objects $\CE,\CF$ in $\mathrm{D}^b(\BP^2)$ with the same Chern character \textit{equivalent objects} and will be denoted by $\CE\sim\CF$.       
\end{definition}

\begin{definition}[Dual objects] Let $\CE$ be an object in the derived category of Chern character $(r,d,e)$. The object $\CE^{\vee}={\bf R}\Hom(\CE,
\CO_{\BP^2})$ with Chern character $(r,-d,e)$ is called the \emph{dual object} of $\CE$.       
\end{definition}

\begin{definition}

We use the notation $\CO_{C_d\cup n}$ for the equivalence class of
objects containing the structure sheaf of a degree $d$ curve in $\BP^2$
union $n$ points. We use the convention that $\CO_{C_d\cup n}$ does not
exist if $n<0$.
\end{definition}

\begin{lemma}
\label{lem:Cherns}
The Chern character of $\CO_{C_d\cup n}$ is
$(0,d,n-d^2/2)$.
\end{lemma}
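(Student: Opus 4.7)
The plan is to compute the Chern character additively from the two pieces $C_d$ and the $n$ points, using the fact that the Chern character factors through $K_0(D^b(\BP^2))$ and is therefore additive on short exact sequences. I can choose any representative of the equivalence class, so I will take $n$ points disjoint from $C_d$, in which case $\CO_{C_d\cup n}=\CO_{C_d}\oplus\CO_Z$ with $Z$ the length-$n$ zero-dimensional subscheme.

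First I compute $\ch(\CO_{C_d})$ via the standard resolution
\[
0\to\CO_{\BP^2}(-d)\to\CO_{\BP^2}\to\CO_{C_d}\to 0,
\]
so that $\ch(\CO_{C_d})=\ch(\CO_{\BP^2})-\ch(\CO_{\BP^2}(-d))=(1,0,0)-(1,-d,d^2/2)=(0,d,-d^2/2)$, using $\ch(\CO_{\BP^2}(-d))=e^{-dH}=(1,-d,d^2/2)$. Next, for a length-$n$ zero-dimensional subscheme $Z$, we have $\ch(\CO_Z)=(0,0,n)$, which follows from additivity and the fact that $\ch(\CO_p)=(0,0,1)$ for a single reduced point $p$.

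Adding the two contributions gives
\[
\ch(\CO_{C_d\cup n})=(0,d,-d^2/2)+(0,0,n)=(0,d,n-d^2/2),
\]
as claimed.

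There is no genuine obstacle here; the only minor subtlety is the reduction to the case where the points are disjoint from the curve, which is legitimate because the statement is about the equivalence class in $K$-theory and any two schemes of the form $C_d\cup Z$ with $\deg C_d=d$ and $\length Z=n$ give the same class (e.g.\ by deforming or by noting that embedded/non-embedded points only affect the Chern character through $\length$). This deformation argument is the one step I would be slightly careful about writing out, but it is routine.
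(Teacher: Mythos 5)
Your computation is correct and follows exactly one of the two routes the paper itself indicates (its proof is a one-line remark offering either ``a corresponding short exact sequence'' or the arithmetic genus); you use the resolution $0\to\CO_{\BP^2}(-d)\to\CO_{\BP^2}\to\CO_{C_d}\to 0$ together with additivity and $\ch(\CO_Z)=(0,0,n)$, which gives $(0,d,n-d^2/2)$ as claimed. The reduction to points disjoint from the curve is the right reading of the notation (the $n$ points are meant as residual length $n$ beyond the curve), so there is no gap.
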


\begin{proof}A straightforward argument (either using a corresponding  short exact sequence, or using the definition of arithmetic genus) implies the claim.

\end{proof}

\begin{theorem}[First actual wall for moduli spaces
of discrete one-dimensional rank-zero objects]\label{Thm: FirstWallRank0}
    Let $\gamma=(0,u,v)$ or $(0,-u,-v)$, for $u\in \BZ^{> 0}, v\in\frac{\BZ}{2}$, be primitive, such that $L_{\gamma}$ contains the vertical diagonal of an outer/dense diamond. Then the associated primitive object to $\gamma$ is $\CO_{C_u\cup\left(u^2/2+v\right)}$ or $\CO^{\vee}_{C_u\cup\left(u^2/2-v\right)}$ in the two cases for $\gamma$.
    Also, the first actual wall for the moduli space of one-dimensional rank-zero objects in $\mathrm{D}^b(\BP^2)$ of Chern character (at least a multiple of)

$\gamma$ is given by the vertex $(\frac{v}{u},\frac{5}{8}-\frac{1+v^2}{2u^2})$.

\end{theorem}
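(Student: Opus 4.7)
The plan is to verify both claims by direct computation combined with the structural results already established.

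For the identification of the primitive object, I apply Lemma \ref{lem:Cherns}: the object $\CO_{C_u\cup n}$ has Chern character $(0,u,n-u^2/2)$, so matching this with $(0,u,v)$ forces $n=u^2/2+v$. For $\gamma=(0,-u,-v)$, dualization negates $\ch_1$ and preserves $\ch_2$, so $\CO^{\vee}_{C_u\cup n}$ has Chern character $(0,-u,n-u^2/2)$, giving $n=u^2/2-v$.

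For the coordinates of the first wall, Lemma \ref{lem: verticelPositive} forces the diamond base to be $V_{\CE}$ where $\CE$ has Chern character $(r,d,e)=(u,\,v+3u/2,\,(1+v^2)/(2u)+5u/8+3v/2)$. Plugging these into Definition \ref{Def: V_E} and using $\chi=e+3d/2+r$, a direct algebraic manipulation yields
\[
V_{\CE}=\left(\frac{d}{r}-\frac{3}{2},\ \frac{3d-\chi}{r}\right)
=\left(\frac{v}{u},\ \frac{5}{8}-\frac{1+v^2}{2u^2}\right),
\]
matching the advertised coordinates.

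To establish that $V_{\CE}$ is the first actual wall, I argue emptiness strictly below $V_{\CE}$ on $L_{\gamma}$ and non-emptiness just above. Since distinct exceptional bundles have distinct slopes by Lemma \ref{lem:slope inequalities}(7)--(8), the vertical $L_{\gamma}$ meets no other vertex of the triangle structure; and since every edge of a triangle is contained in some $L_{\CE'}$ or $L_{\CE'(-3)[1]}$ and hence has rational non-vertical slope, $L_{\gamma}$ only crosses edges transversally. Below $V_{\CE}$, a point $\sigma$ on $L_{\gamma}$ thus lies either in the interior of a triangle of $R_{\Delta}$, in which case Theorem \ref{thm:bousseau generalization} directly gives $\CM^{\sigma}_{n\gamma}=\emptyset$ for every $n\ge 1$, or at a single transverse intersection with a triangle edge. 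In the latter case, Step IV of the proof of Theorem \ref{thm:bousseau generalization} identifies every $\sigma$-semistable object with $Z^{\sigma}\in i\BR$ as an extension built from a single simple $\CS_i$, which is a shift of an exceptional bundle and hence has non-zero rank; no multi-extension of such an object can have rank zero, so $\CM^{\sigma}_{n\gamma}$ is empty here as well.

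For non-emptiness just above $V_{\CE}$, Lemma \ref{lem:prince} places the vertical ray from $V_{\CE}$ inside the dense cone $C_{V_{\CE}}$, and Theorem \ref{thm:local structure} together with Lemma \ref{lem:basic scattering2} produces a non-trivial dense ray of $\foD^{\mathrm{stab}}$ supported on this vertical ray. Any Chern character $\gamma'$ contributing to this ray in the sense of Theorem \ref{thm:bousseau main} must satisfy $L_{\gamma'}=L_{\gamma}$, forcing $\gamma'$ to be an integer multiple of the primitive $\gamma$. Invoking the refined statement in the $\foD^{\BP^2}_{u,v}$ setting from Remark \ref{rem:Duv}, non-triviality of this ray forces $\CM^{\sigma}_{n\gamma}$ to be non-empty for some $n\ge 1$ for every $\sigma$ strictly above $V_{\CE}$ on $L_{\gamma}$. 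The coordinate computation is routine; the main subtlety lies in handling the edge-crossing case via the internal structure of the quiver heart from Theorem \ref{thm:bousseau generalization}, and in passing from the numerical scattering diagram $\foD^{\mathrm{stab}}$ to the finer $\foD^{\BP^2}_{u,v}$ in order to rule out the pathology of non-empty moduli with vanishing Euler characteristic.
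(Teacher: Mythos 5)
Your treatment of the two computational claims is exactly the paper's: Lemma \ref{lem:Cherns} pins down the primitive objects, and Lemma \ref{lem: verticelPositive} together with Definition \ref{Def: V_E} gives the coordinates of the base vertex. Where you diverge is on the wall-location claim: the paper is extremely terse there (it records the coordinates and cites Lemma \ref{Lem:DensnessNonApex} for non-emptiness, leaving emptiness below the vertex implicit in the earlier structure results), whereas you argue both halves explicitly. Your non-emptiness argument (the vertical ray lies in $C_{V_{\CE}}$ by Lemma \ref{lem:prince}, is a non-trivial ray by Theorem \ref{thm:local structure}, and any class contributing to it must be a positive multiple of the primitive $\gamma$ since $L_{\gamma'}=L_{\gamma}$ is vertical) is sound; note only that the detour through $\foD^{\BP^2}_{u,v}$ is unnecessary for this direction, since a non-trivial ray of $\foD^{\mathrm{stab}}$ already forces some $Ie^-_{\gamma',\sigma}\neq 0$, hence a non-empty moduli space, by Theorem \ref{thm:bousseau main}. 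Your handling of the transverse edge crossings via the quiver heart of Theorem \ref{thm:bousseau generalization} --- every semistable object with purely imaginary central charge there is an iterated extension of the single simple $\CS_i$, whose class has non-zero rank --- is correct and is a detail the paper does not write out.

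There is one small uncovered case in your emptiness argument. You assert that every point of $L_{\gamma}$ strictly below $V_{\CE}$ lies either in the interior of a triangle of $R_{\Delta}$ or on a transversally crossed edge. This misses the final segment of $L_{\gamma}$ between the lower boundary of $R_{\Delta}$ (the horizontal bottom edge of the initial triangle, contained in $\fod_0^{+}\cup\fod_0^{-}$ up to twist) and the parabola $\partial\overline{U}$; moreover, when $u=1$ the base is the initial vertex $V_{\CO(d)}$, so the entire portion of $L_{\gamma}$ below it lies in this untriangulated region. Emptiness there is supplied by \cite[Lem.~4.10]{Bousseau-scatteringP2-22} (the regions $\overline{U}^{\mathrm{in}}_{0,L}$, $\overline{U}^{\mathrm{in}}_{0,R}$), which the paper invokes for precisely this purpose elsewhere; with that single citation added, your argument is complete.
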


\begin{proof}

By Lemma~\ref{lem:Cherns}, the Chern character of
$\CO_{C_u\cup (u^2/2+v)}$ is $(0,u,v)$. Similarly, the Chern character
of $\CO_{C_u\cup (u^2/2-v)}$ is $(0,u,-v)$, and hence the Chern character
of the dual object is $(0,-u,-v)$.

From \eqref{eq: VertexPE} and  \eqref{eq:(r,d,e)}, if
the base of the diamond is $V_{\CE}$ for $\CE$ an exceptional bundle
with Chern character $(r,d,e)$,  the coordinates $(x_0,y_0)$ of the first generating point are given by
 \begin{align*}
     \begin{cases}
         x_0=\frac{d}{r}-\frac{3}{2}=\frac{v}{u},\\
         y_0=\frac{3d}{2r}-\frac{e}{r}-1=\frac{3}{2}(\frac{3}{2}+\frac{v}{u})-1-\frac{3}{2}\frac{v}{u}-\frac{5}{8}-\frac{1}{2u^2}-\frac{v^2}{2u^2}=\frac{5}{8}-\frac{1+v^2}{2u^2},
     \end{cases}
 \end{align*}
 as claimed. {The non-emptiness of the moduli space comes from Lemma \ref{Lem:DensnessNonApex}.}
\end{proof}

We have the following corollaries for special cases.

\begin{corollary}[Rank-zero moduli spaces generated at the initial diamonds]Let $i$ be an odd integer.
\begin{itemize}
    \item[(1)] The upward vertical ray
contained in the line 
$x=i/2$ with endpoint $(\frac{i}{2},\frac{1-i^2}{8})$, corresponds to an initial diamond and the primitive object $\CO_{L \cup \frac{i+1}{2}}\sim\CO_{L}(\frac{i+1}{2})$, where $L$ is a line in the plane.

    \item[(2)] For $\CO_{L \cup \frac{1+i}{2}}\sim\CO_{L}((i+1)/2)$, the left generator is $\CO(\frac{i+1}{2})$ and the right generator is $\CO(\frac{i-1}{2})[1]$.
   
\end{itemize}
    
\end{corollary}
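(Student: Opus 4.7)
The plan is to realize both parts as applications of Theorem~\ref{Thm: FirstWallRank0} combined with the explicit correspondence between initial rays and (shifted) line bundles. First I would verify by direct parametrization that $(i/2,(1-i^2)/8)$ is the common intersection of the initial rays $\fod_{(i-1)/2}^+$ and $\fod_{(i+1)/2}^-$, and simultaneously coincides with $V_\CE$ for $\CE=\CO((i+3)/2)$ via \eqref{eq: VertexPE}. Consequently $(i/2,(1-i^2)/8)$ is the base of an initial diamond, and by Definition~\ref{Def:OuterInitial} this is an instance of an outer diamond, so falls within the scope of Lemma~\ref{lem: verticelPositive} and Theorem~\ref{Thm: FirstWallRank0}.

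For part~(1), I would invoke Lemma~\ref{lem: verticelPositive} with $\CE=\CO((i+3)/2)$; the formulas $r=u$ and $d=v+3u/2$ force $u=1$ and $v=i/2$, so the primitive Chern character $\gamma=(0,1,i/2)$ is associated to the vertical diagonal at this initial vertex. Theorem~\ref{Thm: FirstWallRank0} then identifies the first actual wall as $(i/2,(1-i^2)/8)$ and the associated primitive object as $\CO_{C_1\cup((i+1)/2)}=\CO_{L\cup(i+1)/2}$. The equivalence $\CO_{L\cup(i+1)/2}\sim\CO_L((i+1)/2)$ follows from the twist of the restriction sequence $0\to\CO((i-1)/2)\to\CO((i+1)/2)\to\CO_L((i+1)/2)\to 0$, which yields Chern character $(0,1,i/2)$ on both sides via Lemma~\ref{lem:Cherns}.

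For part~(2), I would identify the two generators of the initial diamond as the portions of $\fod_{(i-1)/2}^+$ and $\fod_{(i+1)/2}^-$ extending past the base $v$. Comparing opposite vectors with Chern characters of (shifted) line bundles, $\fod_n^+$ has opposite vector $(-1,n)=m_{\CO(n)[1]}$ (since $\CO(n)[1]$ has Chern character $(-1,-n,-n^2/2)$), while $\fod_n^-$ has opposite vector $(1,-n)=m_{\CO(n)}$. The direction vector of $\fod_n^+$ is $(1,-n)$, which has positive $x$-component, so its sub-ray past $v$ forms the right generator and corresponds to $\CO((i-1)/2)[1]$. Symmetrically, the left generator is contained in $\fod_{(i+1)/2}^-$ and corresponds to $\CO((i+1)/2)$.

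The only subtlety is verifying that Lemma~\ref{lem: verticelPositive} really extends to the rank-one case. Its proof invokes Lemma~\ref{lem:slope inequalities}(6), whose identity $r^2-d^2+2re=1$ and primitivity condition $\gcd(r,d)=1$ both hold verbatim for $\CE=\CO(m)$, so the argument carries over without modification. No genuine obstacle is anticipated.
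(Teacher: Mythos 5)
Your proposal is correct and follows essentially the same route as the paper: the paper likewise computes the base point as the intersection of $L_{\CO((i-1)/2)}$ and $L_{\CO((i+1)/2)}$ and reads off both the primitive Chern character and the two generators from the twisted restriction triangle $\CO(\frac{i-1}{2})\to\CO(\frac{i+1}{2})\to\CO_L(\frac{i+1}{2})$. Your extra detour through Lemma~\ref{lem: verticelPositive} and Theorem~\ref{Thm: FirstWallRank0} (with the check that the rank-one case is covered) is sound but not needed once the Chern character $(0,1,i/2)$ is read off the triangle directly.
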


\begin{proof}
 The generators of the initial diamonds are given by the lines corresponding to $\CO(\frac{i-1}{2})$ and $\CO(\frac{i+1}{2})$:
    \begin{align*}
        \begin{cases}
            y+\frac{i-1}{2}x=\frac{(\frac{i-1}{2})^2}{2},\\
            y+\frac{i+1}{2}x=\frac{(\frac{i+1}{2})^2}{2},
        \end{cases}
    \end{align*}
    which gives $x=\frac{i}{2}$ and $y=\frac{-i^2+1}{8}$.

    Twisting $\CO(-1) \to \CO \to \CO_L$ by $\frac{i+1}{2}$, we obtain the triangle:
    \[\CO\left(\frac{i-1}{2}\right) \to\CO\left(\frac{i+1}{2}\right) \to 
\CO_L\left(\frac{i+1}{2}\right),\]
    which concludes the statement about the primitive Chern character $(0,1,\frac{i}{2})$ for $\CO_L(\frac{i+1}{2})$, and the statement about the left and right generators.
\end{proof}

\begin{corollary}[Rank-zero moduli spaces generated at the center of super-diamonds]
    Let $i\in \BZ^{\geq 0}$. 

    \begin{itemize}
        \item[(1)] Consider the diamond $\Diamond_v$ based at $v=(i,\frac{1-i^2}{2})$. The
        upward vertical ray contained in the line 
$x=i$ generated at $v$ has corresponding primitive object $\CO_{C_2 \cup 2(1+i)}\sim\CO_{C_2}(i+1)$ or $\CO^{\vee}_{C_2 \cup 2(1-i)}$.  The latter only makes sense when $i=0,1$. 

  \item[(2)] 
  Consider the diamond $\Diamond_v$ based at $v=(-i,\frac{1-i^2}{2})$. The
        upward vertical ray contained in the line 
$x=-i$ generated at $v$ has corresponding primitive object $\CO^{\vee}_{C_2 \cup 2(1+i)}$ or $\CO_{C_2 \cup 2(1-i)}$.  Again, the latter only makes sense when $i=0,1$.

    \item[(3)] For $\CO_{C_2 \cup 2(1+i)}\sim\CO_{C_2}(i+1)$, the left generator is $\CO(i+1)$ and the right generator is $\CO(i-1)[1]$. For $\CO^{\vee}_{C_2 \cup 2(1-i)}$, the left generator is $\CO(-i+1)[-1]$ and the right generator is $\CO(-i-1)$.
    \end{itemize}
\end{corollary}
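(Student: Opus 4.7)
The plan is to reduce each claim to a direct Chern character computation combined with a standard Koszul-type exact sequence. First I would show that the vertex $v=(i,(1-i^2)/2)$ coincides with $V_{\CT_{\BP^2}(i)}$: writing $\CE$ for an exceptional bundle of rank $2$ with $V_\CE=v$, formula~\eqref{eq: VertexPE} forces $d=2i+3$ and Lemma~\ref{lem:slope inequalities}(6) forces $e=i^2+3i+3/2$, and a direct computation using the Euler sequence $0\to\CO\to\CO(1)^{3}\to\CT_{\BP^2}\to 0$ twisted by $i$ shows that $\CT_{\BP^2}(i)$ realises exactly this Chern character. By Theorem~\ref{thm: verticalDiagonal}, the vertical line $x=i$ then contains the vertical diagonal of the (outer or dense) diamond based at $v$. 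For part~(2), one argues identically using $V_{\CT_{\BP^2}(-i)}=(-i,(1-i^2)/2)$, or alternatively invokes the $(x,y)\mapsto(-x,y)$ symmetry of $\foD^{\mathrm{stab}}$ noted after Lemma~\ref{Lem: invarianceDStab}.

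Next, applying Lemma~\ref{lem: verticelPositive} with this $(r,d,e)$ gives the primitive Chern character on the vertical diagonal as $(0,u,v')=(0,2,2i)$ in part~(1) and $(0,2,-2i)$ in part~(2). Comparing with Lemma~\ref{lem:Cherns}, which says $\CO_{C_2\cup n}$ has Chern character $(0,2,n-2)$, one reads off $n=2(1+i)$ in part~(1), identifying the primitive sheaf with $\CO_{C_2\cup 2(1+i)}$. The equivalence $\CO_{C_2\cup 2(1+i)}\sim\CO_{C_2}(i+1)$ follows by comparing Chern characters via the twisted Koszul resolution
\[
0\to\CO(i-1)\to\CO(i+1)\to\CO_{C_2}(i+1)\to 0.
\]
The remaining primitive objects, with opposite-orientation characters $(0,-2,-2i)$ and $(0,-2,2i)$, match the duals $\CO^{\vee}_{C_2\cup 2(1-i)}$ and $\CO^{\vee}_{C_2\cup 2(1+i)}$ respectively by the dual computation (dualising a sheaf of Chern character $(0,d,e)$ flips $\mathrm{ch}_1$ but preserves $\mathrm{ch}_2$); the positivity requirement $n\ge 0$ restricts these dual cases to $i\in\{0,1\}$, exactly as claimed.

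Finally, for the generators in part~(3), I would rotate the triangle underlying the Koszul resolution above to obtain
\[
\CO(i+1)\to\CO_{C_2}(i+1)\to\CO(i-1)[1]\to\CO(i+1)[1],
\]
which yields a short exact sequence $0\to\CO(i+1)\to\CO_{C_2}(i+1)\to\CO(i-1)[1]\to 0$ in the relevant tilted heart at a stability condition on the vertical diagonal. By the subobject/quotient convention of~\S\ref{subsec:wall crossing}, this gives the left generator $\CO(i+1)$ and the right generator $\CO(i-1)[1]$. Dualising this triangle then produces
\[
\CO(1-i)[-1]\to\CO_{C_2}(i+1)^{\vee}\to\CO(-i-1)\to\CO(1-i),
\]
whose middle term has the same Chern character as $\CO^{\vee}_{C_2\cup 2(1-i)}$; from this one reads off the left and right generators in the dual case as $\CO(-i+1)[-1]$ and $\CO(-i-1)$. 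The main obstacle I expect is verifying that these Harder-Narasimhan factors are actually stable in the correct tilted heart near the base of the diamond; this should follow from the analysis of simple objects in Remark~\ref{rmk:left right exceptionals} and Theorem~\ref{thm:bousseau generalization} applied to the strong exceptional triple $(\CO(i+1),\CT_{\BP^2}(i),\CO(i+2))$.
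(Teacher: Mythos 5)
Your proof is correct and follows essentially the same route as the paper: the paper locates the base $v$ directly as $L_{\CO(i-1)}\cap L_{\CO(i+1)}$ and then uses the same triangle $\CO(i+1)\to\CO_{C_2}(i+1)\to\CO(i-1)[1]$ together with its dual to identify the primitive objects and the left/right generators, while you reach the same vertex via $V_{\CT_{\BP^2}(i)}$ and Lemma~\ref{lem: verticelPositive} — an inessential variation. One caveat, which you share with the paper's own proof: under the stated convention $\ch(\CE^{\vee})=(r,-d,e)$, the dual triangle's middle term $\CO_{C_2}(i+1)^{\vee}$ has Chern character $(0,-2,2i)$, which is that of $\CO^{\vee}_{C_2\cup 2(1+i)}$ (living on $x=-i$, i.e.\ the part~(2) object) rather than that of $\CO^{\vee}_{C_2\cup 2(1-i)}$ (which has character $(0,-2,-2i)$), so your identification of the middle term — like the labelling in part~(3) of the statement — is off by this relabelling unless $i=0$.
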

\begin{proof} We prove (1); then, (2) is similar. 

The lower edges of the central diamonds in the super-diamonds are given by the lines corresponding to $\CO(i-1)$ and $\CO(i+1)$:

    \begin{align*}
        \begin{cases}
            y+(i-1)x=\frac{(i-1)^2}{2},\\
           y+(i+1)x=\frac{(i+1)^2}{2},
        \end{cases}
    \end{align*}
    which gives $x=i$ and $y=\frac{1-i^2}{2}$.

For (3) and the second part of (1) and (2), we note that from 
    \[\CI_{C_2}(i+1)\to\CO(i+1)\to\CO_{C_2}(i+1),\]
    we have the exact triangle
    \[\CO(i+1)\to\CO_{C_2}(i+1)\to\CO(i-1)[1].\]
    Dualizing this, we get
\[\CO(-i-1)\leftarrow\CO^{\vee}_{C_2}(-i-1)\leftarrow\CO(-i+1)[-1].\]

Hence, (1) and (2) and the statement about the left and right generators in (3).

\end{proof}

\begin{lemma}
  [Ranks of the lower sides of outer diamonds are coprime]\label{Lem:ranksCoprime}
   Let $(r',d',e')$ and $(r'',d'',e'')$ be the primitive Chern characters corresponding the rays $\fod'$ and $\fod''$ that generate a diamond $\Diamond^{\mathrm{out}}$
with $r',r''>0$. Then we have
        \begin{align}\label{eq:ranksCoprime}
       \gcd(r',r'')=1.
    \end{align}
\end{lemma}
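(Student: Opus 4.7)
The plan is to reduce the coprimality claim to the classical fact that Markov triples have pairwise coprime entries, via the correspondences established in Section~\ref{sec:discrete}.

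First, I dispose of the initial case: if $\Diamond^{\mathrm{out}}$ is an initial diamond (Definition~\ref{Def:OuterInitial}), its two generators are a pair of initial rays $\fod_n^{\pm}$, each corresponding to a line bundle $\CO(n)$ (up to shift), so that $r' = r'' = 1$ and the conclusion is immediate.

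Otherwise, $\Diamond^{\mathrm{out}}$ has base $v = V_{\CE_1}$, the incoming vertex of a non-initial triangle $\Delta_w$ whose associated strong exceptional triple we denote $(\CE_0,\CE_1,\CE_2)$ via Theorem~\ref{thm:mu build}. The two generating rays $\fod',\fod''$ extend the two edges of $\Delta_w$ meeting at $V_{\CE_1}$; by Remark~\ref{rmk:left right exceptionals} and Step~II of the proof of Theorem~\ref{thm:bousseau generalization}, these edges lie in the lines $L_{\CE_0}$ and $L_{\CE_2(-3)[1]}$. Taking the primitive Chern character with positive rank on each of these lines gives the Chern character of $\CE_0$ and of $\CE_2(-3)$ respectively, and hence
\[
\{r', r''\} = \{r(\CE_0), r(\CE_2(-3))\} = \{r(\CE_0), r(\CE_2)\}.
\]

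By Lemma~\ref{Lem: TE to TM}, the triple $(r(\CE_0),r(\CE_1),r(\CE_2))$ is a permutation of a Markov triple in $\CT_M$. It therefore suffices to show that every Markov triple appearing in $\CT_M$ is pairwise coprime. I would prove this by induction on the distance from the root $(1,1,2)$: the base case is clear, and if $(x,y,z)$ is pairwise coprime then the mutation $(x,y,z)\mapsto (x,y,3xy-z)$ preserves pairwise coprimality since
\[
\gcd(x,3xy-z) = \gcd(x,z) = 1, \qquad \gcd(y,3xy-z) = \gcd(y,z) = 1,
\]
and analogously for mutations at $x$ and $y$. Applying this in the situation above yields $\gcd(r(\CE_0),r(\CE_2))=1$, which is exactly \eqref{eq:ranksCoprime}.

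I do not foresee a substantive obstacle here; the only point requiring any care is the identification in the second paragraph of which edge of $\Delta_w$ lies in which of the lines $L_{\CE_0}$, $L_{\CE_2(-3)[1]}$, but this is a direct consequence of results already established in the paper, and the remainder is a short induction on the Markov tree.
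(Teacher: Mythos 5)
Your proof is correct and follows essentially the same route as the paper: identify $r'$ and $r''$ with two entries of the Markov triple attached to the base triangle (the paper does this via the determinants $D',D''$ of Lemma~\ref{Lem: degree=rank}, which equal the same ranks you obtain from Lemma~\ref{Lem: TE to TM}) and then invoke pairwise coprimality of Markov triples. The only difference is that you prove that coprimality by a short induction on the tree rather than citing Frobenius, which is a perfectly fine substitute.
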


\begin{proof}
From Lemma~\ref{Lem: degree=rank} and Figure~\ref{Fig: RL}, we have $r'=D'$ and $r''=D''$, where $D^{(i)}$ for $i=1,2$ is the determinant at the vertex dual to the ray corresponding to the object with Chern character $(r^{(i)},d^{(i)},e^{(i)})$. But $D',D''$ are part of a Markov triple, and by \cite[Theorem B]{YingZhang2007} or \cite{Frobenius-1913}, we know that the elements of any Markov triple are pairwise coprime. Hence, $r'=D'$ and $r''=D''$ are coprime, as claimed.

\end{proof}

Starting from two generating rays of an arbitrary outer diamond, we can determine the one-dimensional rank-zero object.
\begin{theorem}
   Let $\gamma'=(r',d',e')$ and $\gamma''=(r'',d'',e'')$ be the Chern 
characters of the exceptional bundles corresponding to the rays 
$\fod'$ and $\fod''$ that generate a diamond $\Diamond^{\mathrm{out}}$, with $r',r''>0$.  If $D={r''d'-r'd''}>0$ is the determinant at $\fod'\cap\fod''$, then the vertical diagonal of $\Diamond^{\mathrm{out}}$ corresponds to $\CO_{C_{D}\cup \left({\frac{D^2}{2}}+r''e'-r'e''\right)}$ or $\CO^{\vee}_{C_{D}\cup \left({\frac{D^2}{2}}-r''e'+r'e''\right)}$. 

\end{theorem}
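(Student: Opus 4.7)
The plan is to directly compute the primitive Chern character $(0,u,v)$ of the one-dimensional rank-zero object on the vertical diagonal of $\Diamond^{\mathrm{out}}$, and then apply Theorem~\ref{Thm: FirstWallRank0} to identify it as a sheaf.

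First, I would observe that the base vertex $V$ of $\Diamond^{\mathrm{out}}$ is the intersection $L_{\gamma'}\cap L_{\gamma''}$. Writing these lines as $r'y+d'x=e'$ and $r''y+d''x=e''$, eliminating $y$ yields $(r''d'-r'd'')x=r''e'-r'e''$, so the $x$-coordinate of $V$ equals $(r''e'-r'e'')/D$. By Lemma~\ref{lem: rank0DimondsCorrespondence} the vertical diagonal corresponds to a primitive Chern character $(0,u,v)$ whose associated vertical line is $ux=v$, so $v/u$ must equal this $x$-coordinate.

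Next, I would pin down $u$. On the one hand, by \eqref{eq:det def} and the primitivity of the opposite vectors $(r',-d')$, $(r'',-d'')$ guaranteed by Lemma~\ref{lem:slope inequalities}(6), the determinant at $V$ equals $|r''d'-r'd''|=D$. On the other hand, Lemma~\ref{Lem: degree=rank} combined with Lemma~\ref{lem: verticelPositive} says this determinant equals the rank of the exceptional bundle at $V$, which is precisely $u$. Hence $u=D$ and consequently $v=D\cdot(r''e'-r'e'')/D=r''e'-r'e''$.

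Finally, Lemma~\ref{lem:Cherns} gives $\ch(\CO_{C_D\cup n})=(0,D,n-D^2/2)$, and matching against the primitive Chern character $(0,D,r''e'-r'e'')$ yields $n=D^2/2+r''e'-r'e''$. The alternative primitive representative $(0,-D,-(r''e'-r'e''))$ of the same vertical line, treated as the case $(0,-u,-v)$ in Theorem~\ref{Thm: FirstWallRank0}, is then identified with the dual sheaf $\CO^{\vee}_{C_D\cup(D^2/2-r''e'+r'e'')}$, yielding the second form in the statement. The only real content is the identification $u=D$ via the two lemmas cited above; beyond that everything reduces to linear algebra in the plane, so I foresee no serious obstacles in executing this plan.
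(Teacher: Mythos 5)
Your proposal is correct and amounts to the same computation as the paper: the paper simply takes the integer combination $r''\gamma'-r'\gamma''=(0,D,r''e'-r'e'')$, which automatically cuts out the vertical line through the base since both $L_{\gamma'}$ and $L_{\gamma''}$ pass through it, and then applies Lemma~\ref{lem:Cherns} exactly as you do. The only difference is in how primitivity of $(0,D,r''e'-r'e'')$ is justified — the paper invokes $\gcd(r',r'')=1$ from Lemma~\ref{Lem:ranksCoprime}, whereas your identification $u=D$ via Lemma~\ref{Lem: degree=rank} and Lemma~\ref{lem: verticelPositive} is arguably the more direct and complete route to that point.
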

\begin{proof}
  From Lemma \ref{Lem:ranksCoprime}, we have $\gcd(r',r'')=1$. 

We calculate
    \begin{align*}
     r''\gamma'-r'\gamma''
        =(0,{r''d'-r'd''},{r''e'-r'e''})=(0,D,r''e'-r'e''),
    \end{align*}
which is the Chern character of $\CO_{C_{D}\cup\left({\frac{D^2}{2}}+r''e'-r'e''\right)}$ 
by Lemma~\ref{lem:Cherns}.
   
   Similarly, if we consider $r'\gamma''-r''\gamma'$ instead, we end up with 
$(0,-D,-r''e'+r'e'')$, which is the Chern character for 
$\CO^{\vee}_{C_{D}\cup \left({\frac{D^2}{2}}-r''e'+r'e''\right)}$. 
   
   This implies the result in both cases. 
\end{proof}

\begin{theorem}[Discrete rational numbers determine outer diamonds]\label{thm:VerticalPositiveDiamond}
    The reduced rational numbers associated to discrete vertical rays (or the discrete reduced rational numbers) determine the Chern characters of exceptional
bundles associated to the generating rays of the
corresponding outer diamond.
More precisely, 
let $(0,u,v)$ be the primitive Chern character corresponding
to a vertical line $x=q$ for $q$ a discrete rational number, $q\not\in
\BZ/2$.
Then the Chern characters $(r',d',e')$ and $(r'',d'',e'')$ 
of the exceptional bundles associated to the generators
of the corresponding diamond are given by
    \begin{align}\label{eq:rr'dd'ee'}
    \begin{cases}
        r'=&D',\\
        r''=&D'',\\
        d'=& (\frac{v}{u}+\frac{3}{2})D'-\frac{D''}{u},\\
        d''=& \frac{D'}{u}+(\frac{v}{u}-\frac{3}{2})D'',\\
        e'=&D'\left({5u^2+12uv+4v^2-4\over 8u^2}\right) - D''{v\over u^2},\\
         e''=&D''\left({5u^2-12uv+4v^2-4\over 8u^2}\right)+D'{v\over u^2},
    \end{cases}
\end{align}
where $D$ is the determinant at the base of the diamond, and $(D',D'',D)$ is 
the Markov triple of determinants of vertices of the
base triangle of the diamond, ordered $D>D''\geq D'$,
(see Figure \ref{Fig: denseRational}).
\end{theorem}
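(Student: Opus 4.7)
My plan is as follows. By Lemma~\ref{lem: verticelPositive}, the base of $\Diamond^{\mathrm{out}}$ is $V_{\CE_1}$, where $\CE_1$ is the middle member of the strong exceptional triple $(\CE_0,\CE_1,\CE_2)=\CC(w)$ associated with the base triangle $\Delta_w$, and has Chern character $(u,\,v+3u/2,\,e_1)$ with $e_1$ as given there. By Step~II of the proof of Theorem~\ref{thm:mu build} (see also Remark~\ref{rmk:left right exceptionals}), the two edges of $\Delta_w$ at $V_{\CE_1}$ lie on the lines $L_{\CE_0}$ (the edge joining $V_{\CE_1}$ to $V_{\CE_2}$) and $L_{\CE_2(-3)[1]}$ (the edge joining $V_{\CE_1}$ to $V_{\CE_0}$); the discrete rays containing these edges extend past $V_{\CE_1}$ to form the left and right generators of $\Diamond^{\mathrm{out}}$, respectively. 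Consequently I identify $(r',d',e')=\ch(\CE_0)$ (left generator) and $(r'',d'',e'')=\ch(\CE_2(-3))$ (right generator). The equalities $r'=r(\CE_0)=D'$ and $r''=r(\CE_2)=D''$ then follow from Lemma~\ref{Lem: degree=rank}, and the Markov relation $(D')^2+(D'')^2+u^2=3D'D''u$ (with $D=u=r(\CE_1)$) from Lemma~\ref{Lem: TE to TM}.

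To pin down $d'$ and $d''$, I exploit that $V_{\CE_1}=(v/u,y_0)$, with $y_0=\tfrac{5}{8}-\tfrac{1+v^2}{2u^2}$ by Theorem~\ref{Thm: FirstWallRank0}, lies on both $L_{\CE_0}$ and $L_{\CE_2(-3)}$. Combining the line-membership equation $r'y_0+d'(v/u)=e'$ with the exceptional-bundle identity $(r')^2-(d')^2+2r'e'=1$ of Lemma~\ref{lem:slope inequalities}(6) produces a quadratic in $d'$ whose discriminant, after substituting $y_0$, reduces to $\bigl[(r')^2(9u^2-4)-4u^2\bigr]/u^2$. The Markov relation $(D')^2+(D'')^2+u^2=3D'D''u$ is precisely what collapses this to the perfect square $(3r'u-2r'')^2/u^2$, yielding $d'=r'(v/u)\pm(3r'u-2r'')/(2u)$. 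The correct sign is forced by geometry: by Remark~\ref{rem:bottom exceptionals} together with Lemma~\ref{lem:slope inequalities}(2), $V_{\CE_2}$ lies strictly down-right of $V_{\CE_1}$, so the slope $-d'/r'$ of $L_{\CE_0}$ is negative, compelling $d'>0$ and hence the $+$ branch. Rearranging gives $d'=(v/u+3/2)D'-D''/u$. An entirely analogous computation for the right generator (with $r''$ and $r'$ interchanged in the quadratic) yields $d''=r''(v/u)\pm(3r''u-2r')/(2u)$; this time $L_{\CE_2(-3)}$ has positive slope (since $V_{\CE_0}$ sits down-left of $V_{\CE_1}$), forcing $d''<0$, selecting the $-$ branch and giving $d''=D'/u+(v/u-3/2)D''$.

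The formulas for $e'$ and $e''$ then follow immediately: substituting the closed-form expressions for $d'$ and $d''$ into $e=ry_0+d(v/u)$ and collecting terms over the common denominator $8u^2$ produces the stated expressions after a short algebraic simplification. The main technical obstacle is the discriminant simplification in the previous paragraph, which is precisely where the Markov identity for the ranks of the base triangle is essential -- this is exactly the number-theoretic condition that makes a Chern character of the given rank correspond to an honest exceptional bundle. The sign selections, which distinguish the left from the right generator formula, are dictated purely by the geometric fact that the incoming vertex $V_{\CE_1}$ sits strictly above its two non-base neighbors in $\Delta_w$.
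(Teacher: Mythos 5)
Your proposal is correct --- I have checked the discriminant identity $(r')^2(9u^2-4)-4u^2=(3r'u-2r'')^2$ against the Markov relation and the resulting formulas for $d',d'',e',e''$ all agree with \eqref{eq:rr'dd'ee'} --- but the middle step is genuinely different from the paper's. The paper pins down $d'$ and $d''$ \emph{linearly}: it computes the determinants of the other two vertices of the base triangle in two different triangles containing them, obtaining $D''=r'd-rd'$ and $D'=rd''-dr''+3rr''$ (wedge products of opposite vectors), which invert immediately to give $d'=\frac{d}{r}D'-\frac{D''}{r}$ and $d''=\frac{D'}{r}+\frac{d}{r}D''-3D''$; only afterwards does it use the line-membership equations through $V_0$ to get $e',e''$, and Lemma~\ref{lem:slope inequalities}(6) never enters the $d$-step. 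You instead combine line membership with the Diophantine identity $r^2-d^2+2re=1$ to get a quadratic in $d'$, and the Markov equation $(D')^2+(D'')^2+u^2=3D'D''u$ is exactly what makes the discriminant a perfect square. This is a nice conceptual trade: your route makes explicit that the two generators are cut out by the same arithmetic constraint and that the Markov identity is the reason a rank-$D'$ exceptional bundle through the base exists at all, at the cost of a branch selection that the paper's linear method avoids entirely (and the paper's wedge-product relations are manifestly invariant under the twist $T$, since $\det T=1$).

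One small repair is needed in your sign argument. The claim that $V_{\CE_2}$ lies strictly below $V_{\CE_1}$ does not follow from Remark~\ref{rem:bottom exceptionals} plus Lemma~\ref{lem:slope inequalities}(2) alone, and it is false for arbitrary twists: e.g.\ for $T^{-2}(\Delta_{w_0})$ the right-hand vertex sits \emph{above} the incoming vertex, and correspondingly $d'=d(\CE_0)$ can be negative. You should first observe that both sides of \eqref{eq:rr'dd'ee'} transform compatibly under $T$ (which sends $v\mapsto v+u$ and $d'\mapsto d'+r'$), reduce to the fundamental domain where the triple is obtained by mutation from $(\CO(1),\CT_{\BP^2},\CO(2))$, and then use $\mu(\CE_0),\mu(\CE_2)\in[1,2]$ to conclude $d'>0$ and $d''=d(\CE_2(-3))<0$ directly; your branch analysis then goes through verbatim, since in that range the $+$ branch for $d'$ is strictly positive and the $-$ branch strictly negative (and symmetrically for $d''$).
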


\begin{proof}Let $(0, u, v)$ be the associated Chern character of the vertical ray, which has the first generating point at $(\frac{v}{u},\frac{5}{8}-\frac{1+v^2}{2u^2})$ (see Theorem \ref{Thm: FirstWallRank0}). Also, if $V_{\CE}$ is the
base of the diamond, let $(r,d,e)$ be the Chern character of $\CE$. 
Via Figure \ref{Fig: RL}, we see that $L_{\CE}$ is the right roof of the
diamond, and $L_{\CE(-3)}$ is the left roof of the diamond.
Using \eqref{eq:(r,d,e)}, one sees the Chern character of $\CE(-3)$ is
$(r,d-3u,e-3v)$.

Let $(D',D'',D)$ be the 
Markov triple 
corresponding to the base triangle $\Delta$ of $\Diamond^{\mathrm{out}}$, 
ordered so that $D>D''\geq D'$. In particular, if $\Delta$ is not an
initial triangle, then by
Proposition~\ref{prop:det order}, $D$ is the determinant
of the incoming vertex of $\Delta$, 
the hybrid vertex
of $\Delta$ has determinant $D''$, the outgoing vertex has determinant
$D'$, and $D''>D'$. 

If, on the other hand, $\Delta$ is an initial triangle,
obviously $D'=D''=1$ and $D=2$ is still the determinant of the incoming vertex.

Without loss of generality, we assume that the vertex with determinant $D''$ is to the right of $V_{\CE}$, and the vertex with determinant $D'$ is to the left of it, as depicted in Figure \ref{Fig: denseRational}.  
Let $(r^{(i)},d^{(i)},e^{(i)})$ be the primitive Chern characters of the generators of the
diamond with $r',r''>0$.

Recall from Lemma~\ref{Lem: degree=rank} that the determinant of a vertex can
be computed in any triangle containing it.
Hence, in Figure \ref{Fig: denseRational}, we can calculate $D',D''$ as follows. First, we have $D'=rd''-dr''+3rr''$ and
    $D''=r'd-rd'$. From Lemma \ref{Lem: degree=rank}, we have $D^{(i)}=r^{(i)}$ for $0\leq i \leq 2$. Hence, we obtain
\begin{align}
  \label{eq:d''}  d''=& \frac{D'}{r}+\frac{d}{r}D''-3D'',\\
  \label{eq:d'} d'=& \frac{d}{r}D'-\frac{D''}{r}.
\end{align}

On the other hand, since for $i=1,2$, the rays corresponding to the objects with Chern characters $(r^{(i)},d^{(i)},e^{(i)})$ intersect at $V_0=(x_0,y_0)=(\frac{v}{u},\frac{5}{8}-\frac{1+v^2}{2u^2})$ from Theorem~\ref{Thm: FirstWallRank0}, and using \eqref{eq:d'} and \eqref{eq:d''}, we have 
\begin{align}
 \label{eq:e'}   e'=&D'\left({5u^2+12uv+4v^2-4\over 8u^2}\right) - D''{v\over u^2},\\
    \label{eq:e''}   e''=&D''\left({5u^2-12uv+4v^2-4\over 8u^2}\right)+D'{v\over u^2}.
\end{align}

Then, combining  \eqref{eq:d'},\eqref{eq:d''},\eqref{eq:e'}, \eqref{eq:e''} with Lemma \ref{Lem: degree=rank}
and \eqref{eq:(r,d,e)}, we obtain the result.

\end{proof}

\begin{remark}
The Frobenius unicity conjecture \cite{Frobenius-1913} states that the
Markov triple $(D',D'',D)$ is determined by its largest element.
Thus in Theorem \ref{thm:VerticalPositiveDiamond}, one expects that
the only input needed is $u$ and $v$ as $D=u$.
\end{remark}

We assume the notation in the proof of Theorem \ref{thm:VerticalPositiveDiamond} and Figure \ref{Fig: denseRational}, and obtain the following important corollary, which describes the left and right generators  at the first generating point of discrete vertical rays.

\begin{corollary}[Explicit left/right generators for discrete vertical rays] Let
$\gamma=(0,u,v)$ be a primitive Chern character of a one-dimensional rank-zero object such that $L_{\gamma}$ contains the vertical diagonal of a outer/dense diamond. Then the Chern character of the left and right generators at the first generating point for the moduli space of objects with Chern character $(0,u,v)$ are
\begin{align*}
    \left(D', (\frac{v}{u}+\frac{3}{2})D'-\frac{D''}{u},D'\left({5u^2+12uv+4v^2-4\over 8u^2}\right) - D''{v\over u^2}\right),\\ \left(D'', \frac{D'}{u}+(\frac{v}{u}-\frac{3}{2})D'',D''\left({5u^2-12uv+4v^2-4\over 8u^2}\right)+D'{v\over u^2}\right),
\end{align*}
where $D',D''$ are as in the statement of Theorem~\ref{thm:VerticalPositiveDiamond}.
\end{corollary}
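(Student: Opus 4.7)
The plan is to assemble Theorems~\ref{Thm: FirstWallRank0}, \ref{Thm:nonEmptiness}, and \ref{thm:VerticalPositiveDiamond} directly. First, by Theorem~\ref{Thm: FirstWallRank0} the first generating point of any positive multiple of $\gamma=(0,u,v)$ on $L_{\gamma}$ is the base vertex $V_{\CE}=\left(\frac{v}{u},\,\frac{5}{8}-\frac{1+v^2}{2u^2}\right)$ of the diamond whose vertical diagonal lies in $L_{\gamma}$. Definition~\ref{Def: FirstGeneratingPoint} together with the wall-crossing description in \S\ref{subsec:wall crossing} then forces the lines $L_{\CF_1}$ and $L_{\CF_n}$ of the destabilizing Harder--Narasimhan subobject $\CF_1$ (the left generator) and quotient object $\CF_n$ (the right generator) both to pass through $V_{\CE}$.

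Second, I would pin down $\CF_1$ and $\CF_n$ using Theorem~\ref{Thm:nonEmptiness}: the only non-trivial rays of $\foD^{\mathrm{stab}}$ with endpoint $V_{\CE}$ extending into the interior of the outer diamond $\Diamond^{\mathrm{out}}_{V_{\CE}}$ are the two discrete rays obtained by extending, through $V_{\CE}$, the two edges of the base triangle $\Delta_w$ meeting at $V_{\CE}$. By Lemma~\ref{lem:slope inequalities}(2) together with Definition~\ref{Def: V_E}, the vertices $V_{\CE_0},\,V_{\CE_1}=V_{\CE},\,V_{\CE_2}$ of $\Delta_w$ have strictly increasing $x$-coordinates, and $V_{\CE_1}$ sits above the segment $V_{\CE_0}V_{\CE_2}$ by Remark~\ref{rem:bottom exceptionals}. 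Hence the edge $V_{\CE}V_{\CE_2}$ goes down-and-to-the-right, and its continuation through $V_{\CE}$ into $\Diamond^{\mathrm{out}}_{V_{\CE}}$ heads up-and-to-the-left, so (relative to the upward direction $-m_{\gamma}=(0,u)$ with $u>0$) this is the left generator $\CF_1$; symmetrically, the extension of the edge $V_{\CE}V_{\CE_0}$ through $V_{\CE}$ is the right generator $\CF_n$.

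Third, Theorem~\ref{thm:VerticalPositiveDiamond} provides the Chern characters \eqref{eq:rr'dd'ee'} of the exceptional bundles attached to these two rays. In the notation of that proof, $(r',d',e')$ is the Chern character of $\CE_0$, whose associated line $L_{\CE_0}$ contains the edge $V_{\CE}V_{\CE_2}$ by Step~II of the proof of Theorem~\ref{thm:bousseau generalization}. Hence the upward-leftward extension of this edge---which is $\CF_1$---has Chern character $(r',d',e')$. Symmetrically, $(r'',d'',e'')$ is the Chern character of $\CE_2(-3)$, the primitive positive-rank object on the line $L_{\CE_2(-3)[1]}$ containing the edge $V_{\CE}V_{\CE_0}$; hence $\CF_n$ has Chern character $(r'',d'',e'')$. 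The only point requiring genuine care is this last matching of the $(',\, '')$ labels to the left/right wall-crossing labels; once that orientation check is made, the formulas follow by direct substitution into~\eqref{eq:rr'dd'ee'}.
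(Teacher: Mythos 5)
Your argument is correct and is essentially an expanded version of the paper's own proof, which simply cites the formulas \eqref{eq:rr'dd'ee'} and Figure \ref{Fig: denseRational}: the same three ingredients you invoke (Theorem \ref{Thm: FirstWallRank0} for the location of the first generating point, Theorem \ref{Thm:nonEmptiness} to see that only the two discrete rays through the base can destabilize there, and Theorem \ref{thm:VerticalPositiveDiamond} for their Chern characters) are exactly what the figure encodes. Your orientation check identifying $(r',d',e')=\ch(\CE_0)$ with the left generator and $(r'',d'',e'')=\ch(\CE_2(-3))$ with the right one is the step the paper delegates to the picture and is carried out correctly; the only caveat is that, as an object of the heart, the right generator is a multiple of $\CE_2(-3)[1]$, so its actual Chern character is a positive multiple of $-(r'',d'',e'')$ --- the corollary, like Lemma \ref{Lem:ranksCoprime} and Lemma \ref{Lem: NecCondition}, is reporting the positive-rank primitive representative, and your proof follows that same convention.
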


\begin{proof}
    This comes from \eqref{eq:rr'dd'ee'} and Figure \ref{Fig: denseRational}.
\end{proof}

\begin{theorem}
[The first generating point for dense one-dimensional rank-zero moduli spaces (non-initial base triangle)]
\label{SufficientRank0Dene}
Let $\Diamond^{\mathrm{out}}$ be an outer diamond based at a non-initial triangle with base $V_{\CE}$ for
an exceptional bundle $\CE$ with Chern character $(r,d,e)$, so that $L_{\CE}$
contains the right roof of $\Diamond^{\mathrm{out}}$.
Let the primitive Chern character of the vertical diagonal of $\Diamond^{\mathrm{out}}$ be $(0,u,v)$, where $u=r$ and $v=d-\frac{3}{2}r$ as in
\eqref{eq:(r,d,e)}. Also, we fix the notation for determinants as above. Then,
the $x$-coordinates $q$ of the degenerate diamonds contained in the right
roof of $\Diamond^{\mathrm{out}}$ are in the range 
\begin{align}\label{eq:denseRationalRange}
    \frac{v}{u}<q<\frac{v}{u}+\Big(\frac{3}{2}-\frac{\sqrt{9r^2-4}}{2r}\Big).
\end{align}

In particular, the first generating point for the Chern character
$\gamma=(0,u',v')$ with $q=v'/u'$ in this range is a degenerate diamond
on the right roof of $\Diamond^{\mathrm{out}}$ and 
the object with Chern character $(r,d,e)$ corresponding to $\fod$ is the left generator at this point.
\end{theorem}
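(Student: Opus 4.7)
Proof proposal: The statement has two parts: the range of $x$-coordinates of degenerate diamonds on the right roof of $\Diamond^{\mathrm{out}}$, and the ``in particular'' identification of the first generating point for $\gamma = (0, u', v')$ with $v'/u'$ in this range.

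For the first part, my plan is to invoke Theorem~\ref{thm:roof projection}. Its proof, via \cite[Lem.~4.16]{springborn}, identifies the $x$-projection of the (dense) roof of $\Diamond_{V_\CE}$, excluding its left and right vertices, as the open interval $I_\CE$ of radius $\delta_\CE = \tfrac{3}{2} - \tfrac{\sqrt{9r^2-4}}{2r}$ centered at $v/u$. The right half $(v/u, v/u + \delta_\CE)$ is thus the projection of the right roof. By the bijection of Theorem~\ref{thm:roof projection}, each rational $q$ in this range corresponds to a unique point of $C_{\mathrm{roofs}}$; this point lies on $L_\CE$ and must be a degenerate diamond, because the bases of other diamonds correspond to shifted slopes $\mu(\CE') - 3/2$ of exceptional bundles $\CE' \neq \CE$, and these do not meet the interior of $I_\CE$.

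For the second part, fix $\gamma = (0, u', v')$ primitive with $q = v'/u' \in (v/u, v/u + \delta_\CE)$. By Part~1, $L_\gamma$ meets the right roof at a degenerate diamond $\sigma_0$. By Lemma~\ref{Lem:DensnessNonApex}, the local scattering at $\sigma_0$ consists of exactly two rays -- the discrete ray along $L_\CE$ and a dense ray from $V_\CE$ through $\sigma_0$ -- whose opposite vectors span the dense cone $C_{\sigma_0}$. I would then check that the upward direction $-m_\gamma = (0, u')$ lies in $C_{\sigma_0}$: since $L_\CE$ has negative slope $-d/r$ and the dense ray from $V_\CE$ has positive slope at $\sigma_0$, expressing $(0, u')$ as a positive combination $-a m_\CE - b m_{\mathrm{dense}}$ gives ratio $a/b = |m_{\mathrm{dense},x}|/r$, and the constraint $a/b \le 3 D_{\sigma_0}$ reduces, via the identity $D_{\sigma_0} = -(r m_{\mathrm{dense},y} + d m_{\mathrm{dense},x})$, to an inequality that holds automatically since $3rd > 1$. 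Then by Corollary~\ref{BoundedGenerating}, $\sigma_0$ is the first generating point of $\gamma$. Finally, because $L_\CE$ tilts to the upper-left from $\sigma_0$, it lies to the left of $L_\gamma$ relative to the direction $-m_\gamma$, identifying $\CE$ as the left generator per the convention of Section~\ref{subsec:wall crossing}.

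The main obstacle is the cone-verification in Part~2: once Theorem~\ref{thm:roof projection} is invoked for Part~1, the bulk of the work lies in writing $(0, u')$ explicitly as a positive combination of the opposite vectors $m_\CE$ and $m_{\mathrm{dense}}$, checking the ratio bound from Lemma~\ref{Lem:DensnessNonApex}, and tracking orientations so that $\CE$ (rather than the dense-ray object) is correctly identified as the left generator.
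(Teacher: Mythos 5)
Your proposal is essentially correct, but it reaches the interval bound by a genuinely different route from the paper. The paper's proof is a self-contained computation: it writes the direction vector of the right boundary of the dense cone at $V_{\CE}$ as $r_{\infty}(D)^{-1}(-r',d')+(r'',-d'')$ using the data of Theorem~\ref{thm:VerticalPositiveDiamond}, intersects the resulting ray with $L_{\CE}$ ($ry+dx=e$), and then verifies via the identity $R^2=3DR-1$ for $R=r_{\infty}(D)^{-1}$ that the resulting $x$-coordinate equals $\frac{v}{u}+\frac{3}{2}-\frac{\sqrt{9r^2-4}}{2r}$. You instead import the interval from Theorem~\ref{thm:roof projection}, whose proof rests on the external theory of Markov fractions (\cite{springborn}, \cite{drezet}). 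There is no circularity — Theorem~\ref{thm:roof projection} does not use the present theorem — so your argument is logically sound; but note the Remark following Theorem~\ref{thm:roof projection} states that Theorems~\ref{SufficientRank0Dene} and \ref{SufficientRank0DeneInitial} are meant to \emph{recover} the description of $I_{\CE}$ by explicit calculation rather than by appeal to that theory, so your route trades away the independent derivation that is part of the point of the statement. For the ``in particular'' clause, both you and the paper rest on Lemma~\ref{Lem:DensnessNonApex}; your verification that the upward vertical direction lies in the dense cone $C_{\sigma_0}$ (reducing to $3rd\ge 1$) is a worthwhile elaboration of what the paper leaves implicit, and your identification of $\CE$ as the left generator via the orientation convention of \S\ref{subsec:wall crossing} matches the paper's (unelaborated) claim.

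One small correction: you cannot cite Corollary~\ref{BoundedGenerating} verbatim, since its hypotheses explicitly exclude vertical $L_{\gamma}$, which is exactly the case at hand. What you actually need is the reasoning of Case~I in its proof — namely that below $\sigma_0$ the line $L_{\gamma}$ traverses the interior of $\Diamond_{V_{\CE}}$ without passing through $V_{\CE}$, so Theorem~\ref{Thm:nonEmptiness} rules out any ray of $\foD^{\mathrm{stab}}$ supported on $L_{\gamma}$ there, while Lemma~\ref{Lem:DensnessNonApex} produces the ray at $\sigma_0$. Stating that directly (as the paper does by citing Lemma~\ref{Lem:DensnessNonApex}) closes the gap.
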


\begin{proof}
From Lemma \ref{lem: verticelPositive}, we have $u=r$. Also from Definition \ref{Def: V_E}, we get $v=d-\frac{3}{2}r$.

Recall from Definition~\ref{Def:dense cone} that $r_{\infty}(D)^{-1}=\frac{3D-\sqrt{9D^2-4}}{2}$ is the slope of the right boundary of the dense diamond $\Diamond$ based at a vertex of determinant $D$, with respect to the basis $m_1,m_2$
of Definition~\ref{Def:dense cone}. For short-hand, we omit the $D$ and write $r_{\infty}^{-1}$. Also, recall that the left and the right lower sides of $\Diamond^{\mathrm{out}}$ correspond to objects of Chern characters $(r',d',e')$ and $(r'',d'',e'')$, respectively. See Figure \ref{Fig: denseRational}. Without loss of generality, we consider the ray $\fod$ corresponding to the right roof of a diamond, the direction vector of the right boundary of the dense diamond is given by
\begin{align*}
          r_{\infty}^{-1}\begin{pmatrix}
        -r'\\d'
    \end{pmatrix}+\begin{pmatrix}
        r''\\-d''
    \end{pmatrix}=\begin{pmatrix}
        -r_{\infty}^{-1}D'+D'' \\ r_{\infty}^{-1}(\frac{dD'-D''}{r})-\frac{D'}{r}-(\frac{d}{r}-3)D'')
    \end{pmatrix},
\end{align*}
using \eqref{eq:d''},\eqref{eq:d'}.
Therefore, the ray is given by
\begin{align*}
    \left(r_{\infty}^{-1}D'-D''\right)y+&\left( r_{\infty}^{-1}\left(\frac{dD'-D''}{r}\right)-\frac{D'}{r}-\left(\frac{d}{r}-3\right)D''\right)x\\=& \left( r_{\infty}^{-1}D'-D''\right)\left(\frac{3d}{2r}-1-\frac{e}{r}\right)+\left( r_{\infty}^{-1}\left(\frac{dD'-D''}{r}\right)-\frac{D'}{r}-\left(\frac{d}{r}-3\right)D''\right)\left(\frac{d}{r}-\frac{3}{2}\right).
\end{align*}

Intersecting this with the equation of $\fod$, $ry+dx=e$, applying
Lemma \ref{lem:slope inequalities}(6), and then using $u=r$, $v=d-\frac{3}{2}r$, and Lemma \ref{lem:slope inequalities}(6), we get the following $x$-coordinate
\begin{align*}
    x= \frac{-r_\infty^{-1}(vD''+D')-vD'+(1+3uv)D''}{-r_\infty^{-1}uD''-uD'+3u^2D''},
\end{align*}
which determines all the dense rational rays on $\fod$, i.e., any rational vertical ray $x=q$, with
\begin{align*}
\frac{v}{u}<q< & \frac{-r_\infty^{-1}(vD''+D')-vD'+(1+3uv)D''}{-r_\infty^{-1}uD''-uD'+3u^2D''}\\
= {} & \frac{v}{u}+ \frac{-r_{\infty}(D)^{-1}D'+D''}{D\big(-r_{\infty}(D)^{-1}D''+3DD''-D'\big)}
\end{align*}
has its first generating point on the right roof of the dense diamond $\Diamond$. 
\medskip

\emph{Claim.} We have  $\frac{-r_{\infty}(D)^{-1}D'+D''}{D\big(-r_{\infty}(D)^{-1}D''+3DD''-D'\big)}=\frac{3}{2}-\frac{\sqrt{9r^2-4}}{2r}$.

\begin{proof}
    We use the notation $R:=r_{\infty}(D)^{-1}=\frac{3D-\sqrt{9D^2-4}}{2}$. Then,we have
    \begin{align*}
        R^2=\left(\frac{3D-\sqrt{9D^2-4}}{2}\right)^2=\frac{9D^2-2-3D\sqrt{9D^2-4}}{2}=3DR-1.
    \end{align*}
From this, it follows that
\[
-R^2D''+3RDD''-RD'=-RD'+D''.
\]
However, this is equivalent to 
\[
\frac{R}{D}=\frac{-RD'+D''}{D(-RD''+3DD''-D')}
\]
In turn, this is equivalent to
\[
\frac{-r_{\infty}(D)^{-1}D'+D''}{D\big(-r_{\infty}(D)^{-1}D''+3DD''-D'\big)}=\frac{3}{2}-\frac{\sqrt{9r^2-4}}{2r},
\]
which is the claim.
\end{proof}

The last statement about the left generator is also a straightforward implication of this.
The fact that this degenerate diamond is a first generating point
comes from Lemma \ref{Lem:DensnessNonApex}.

\begin{figure}[h]
    \centering
    \includegraphics[width=12.2cm]{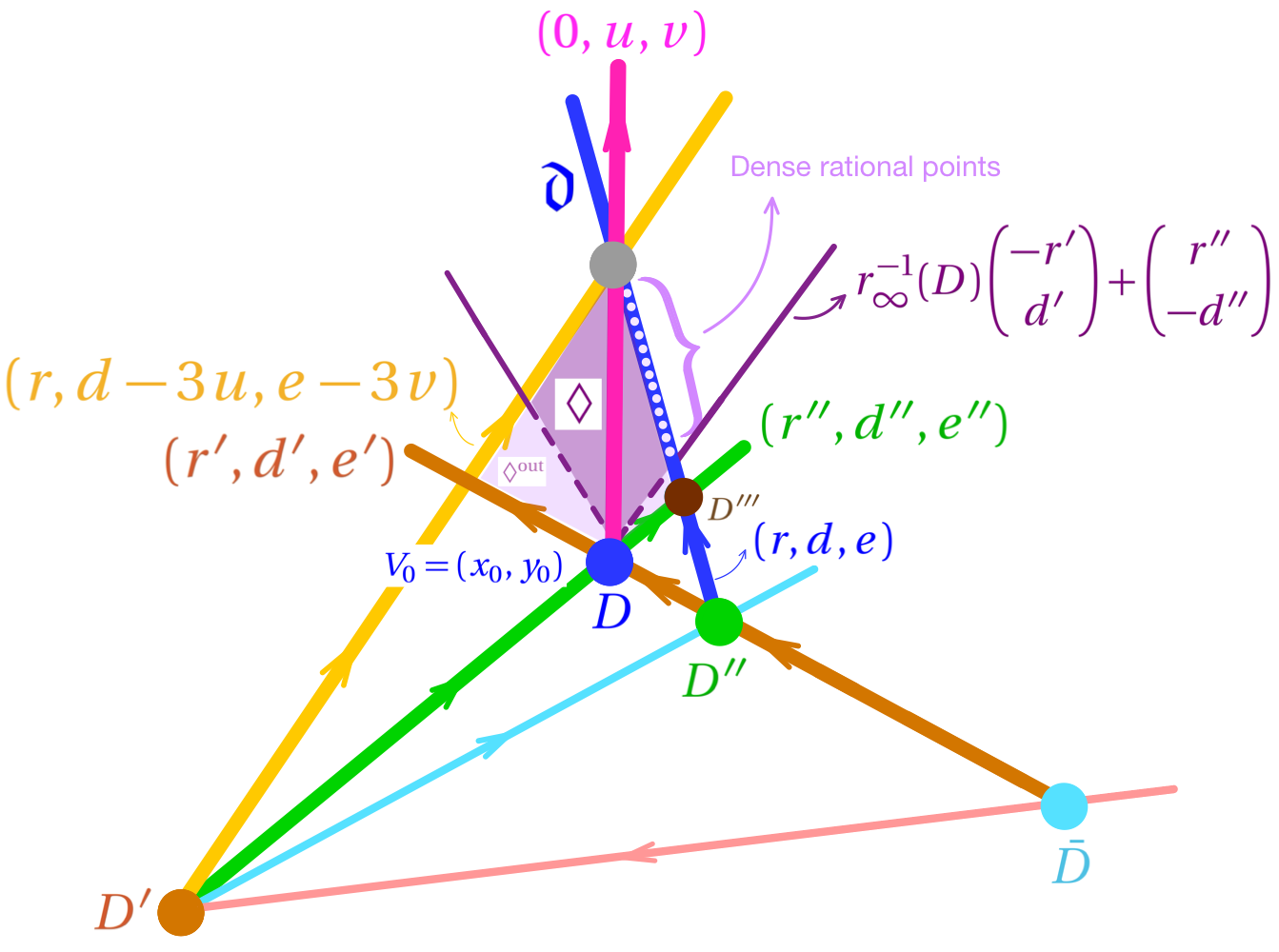}
    \caption{The Chern character of the objects corresponding to lower rays can be determined in terms of $u,v,D^{(i)}$ (see \eqref{eq:rr'dd'ee'}). Moreover, we can determining all the rational dense points on a given ray $\fod$ (Theorem \ref{SufficientRank0Dene}).} 
    \label{Fig: denseRational}
\end{figure}
\end{proof}

By a direct calculation, we can obtain the following results for diamonds at the center of super-diamonds and the initial diamonds, respectively (see Figure \ref{Fig: denseRationalInitial}):

\begin{theorem}
    [The first generating point for dense one-dimensional rank-zero moduli spaces (initial base triangle)]\label{SufficientRank0DeneInitial}% \textcolor{red}
Let $\Diamond^{\mathrm{out}}$ be an outer diamond based at an initial triangle with base  $(m,\frac{1-m^2}{2})$, the intersection point of $L_{\CO(m+1)}$ and $L_{\CO(m-1)}$.  Then,
the $x$-coordinates $q$ of the degenerate diamonds contained in the right
roof of $\Diamond^{\mathrm{out}}$ are in the following range 
\begin{align}\label{eq:denseRationalRangeSuperDiamond}
    m<q<m+\frac{3-2\sqrt{2}}{2}.
\end{align}
\end{theorem}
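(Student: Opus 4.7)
The strategy is to carry out, for the initial base triangle, exactly the same computation as in Theorem~\ref{SufficientRank0Dene}, with the Markov triple $(D',D'',D) = (1,1,2)$ in place of the general non-initial Markov triple. First, by the $T$-equivariance of $\foD^{\mathrm{stab}}$ (Lemma~\ref{Lem: invarianceDStab}) and Remark~\ref{Rem: invarianceDStab}, it suffices to prove the statement for $m=0$: the general case is obtained by applying $T^m$, which sends the base $(0,1/2)$ to $(m,(1-m^2)/2)$ and shifts the $x$-axis by $m$, turning the interval $(0,\tfrac{3-2\sqrt{2}}{2})$ into $(m,m+\tfrac{3-2\sqrt{2}}{2})$.

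Next, I would make the local structure at $v=V_{\CT_{\BP^2}}=(0,1/2)$ explicit: $v$ is the incoming vertex of $\Delta_{w_0}$ with determinant $D=2$, and the two edges of $\Delta_{w_0}$ meeting $v$ lie in the initial rays $\fod_{-1}^+$ (along $L_{\CO(-1)}$) and $\fod_1^-$ (along $L_{\CO(1)}$). Extended through $v$, these become the two generators of the outer diamond $\Diamond^{\mathrm{out}}_v$. By the mutation formula \eqref{eq:dwj def} applied to the two children of $w_0$, the right roof $\fod_{w_2}$ has endpoint $V_{\CO(2)}=(1/2,0)$ and opposite vector $3\cdot 1\cdot (1,-1)-(1,0)=(2,-3)$, so it is supported on the line $2y+3x=3/2$, which is $L_{\CT_{\BP^2}}$, consistently with Remark~\ref{rmk:left right exceptionals}.

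Then I would repeat the central computation in the proof of Theorem~\ref{SufficientRank0Dene}: parameterize the right irrational boundary of the dense cone $C_v$ (see Definition~\ref{Def:dense cone}) using slope $r_\infty(D)^{-1}=3-2\sqrt{2}$ relative to the opposite vectors $m_1=(-1,-1),m_2=(1,-1)$ of the two generators, and intersect with the right-roof line $2y+3x=3/2$. This gives the $x$-coordinate of the right endpoint of the range of degenerate diamonds on the right roof. In fact, the ``Claim'' at the end of the proof of Theorem~\ref{SufficientRank0Dene} — that
\[
\frac{-r_{\infty}(D)^{-1}D'+D''}{D\big(-r_{\infty}(D)^{-1}D''+3DD''-D'\big)}
=\frac{3}{2}-\frac{\sqrt{9r^2-4}}{2r}
\]
— was proved using only the algebraic identity $R^2=3DR-1$ and is valid for any Markov triple; specializing to $(D',D'',D)=(1,1,2)$ and $r=D=2$ gives $\tfrac{3-2\sqrt{2}}{2}$.

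The assertion that these dense rational $q\in(m,m+\tfrac{3-2\sqrt{2}}{2})$ correspond to first generating points that are degenerate diamonds on the right roof of $\Diamond^{\mathrm{out}}$ then follows from Lemma~\ref{Lem:DensnessNonApex} exactly as in the non-initial case. The main subtlety — and the only nontrivial bookkeeping — is matching the sign/orientation conventions when the generators are the initial rays $\fod_{\pm 1}^{\mp}$ (corresponding to $\CO(\mp 1)[1]$ or $\CO(\pm 1)$) rather than discrete rays produced by mutation; this is handled just by choosing the primitive Chern characters with $r',r''>0$ (here both equal to $1$) and noting that the formula for $C_v$ in Definition~\ref{Def:dense cone} uses only the opposite vectors, not any positivity on the ranks themselves.
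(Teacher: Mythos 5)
Your proposal is correct and matches the paper's intended argument: the paper records this statement with only the remark that it follows ``by a direct calculation'' analogous to Theorem~\ref{SufficientRank0Dene}, and your reduction to $m=0$ via $T$-equivariance followed by the explicit computation for the determinant $D=2$ (right roof supported on $L_{\CT_{\BP^2}}$, i.e.\ $2y+3x=3/2$, meeting the irrational boundary ray of slope $\sqrt{2}$ emanating from $(0,1/2)$ at $x=\tfrac{3-2\sqrt{2}}{2}$) is exactly that calculation. Your observation that the Claim in the proof of Theorem~\ref{SufficientRank0Dene} uses only the identity $R^2=3DR-1$, so that specializing to $r=D=2$ yields $\tfrac{3}{2}-\tfrac{\sqrt{32}}{4}=\tfrac{3-2\sqrt{2}}{2}$, correctly closes the one point where the non-initial proof formally excluded the initial triangle.
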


\begin{theorem}
    [The first generating point for dense one-dimensional rank-zero moduli spaces (initial diamonds)]\label{SufficientRank0DeneInitialDiamond}% \textcolor{red}
Let $\Diamond^{\mathrm{init}}$ be an initial diamond with base  $(m+\frac{1}{2},\frac{-m-m^2}{2})$, the intersection point of $L_{\CO(m)}$ and $L_{\CO(m+1)}$.  Then,
the $x$-coordinates $q$ of the degenerate diamonds contained in the right
roof of $\Diamond^{\mathrm{init}}$ are in the following range 
\begin{align}\label{eq:denseRationalRangeSuperDiamond}
   m+\frac{1}{2}<q<m+\frac{1}{2}+\frac{3-\sqrt{5}}{2}.
\end{align}
\end{theorem}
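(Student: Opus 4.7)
The plan is to reduce to the case $m=0$ using the $T$-equivariance of $\foD^{\mathrm{stab}}$ established in Lemma~\ref{Lem: invarianceDStab} and Remark~\ref{Rem: invarianceDStab}. Since $T^m$ sends $(x,y)\mapsto(x+m,-mx+y-m^2/2)$, it takes the initial diamond based at $(1/2,0)$ to the initial diamond based at $(m+1/2,-m(m+1)/2)$ and shifts every $x$-coordinate by $m$; it also permutes the rays $\fod_n^{\pm}$ among themselves and sends $L_{\CO(2)}$ to $L_{\CO(m+2)}$. Hence proving the claimed range $(1/2,\,1/2+(3-\sqrt{5})/2)$ in the case $m=0$ yields the general statement.

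For $m=0$, the initial diamond has base $v=(1/2,0)$, and (as already observed in the proof of Theorem~\ref{thm: verticalDiagonal} and Lemma~\ref{lem:prince}) its apex is $\fod_{-1}^+\cap\fod_2^-=(1/2,1)$. The right roof is the segment of $\fod_2^-\subset L_{\CO(2)}:\;y+2x=2$ running from this apex down to the right vertex $v_R$, where $v_R$ is by definition the intersection of $\fod_2^-$ with the right boundary ray of the dense cone $C_v$ of Definition~\ref{Def:dense cone}. For $n=0$, that boundary ray is $v+s\bigl((-1+\sqrt{5})/2,\,(3-\sqrt{5})/2\bigr)$, $s\geq 0$, and substituting into $y+2x=2$ gives the short computation $s(1+\sqrt{5})/2=1$, hence $s=(\sqrt{5}-1)/2$ and $v_R=\bigl((4-\sqrt{5})/2,\sqrt{5}-2\bigr)$. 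Thus the projection of the right roof to the $x$-axis is the closed interval $[1/2,\,1/2+(3-\sqrt{5})/2]$.

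To finish, I will invoke Theorem~\ref{thm:roof projection}, which states that the projection $C_{\mathrm{roofs}}\cap M_{\BQ}\to\BQ$ is a bijection. Combined with the projection interval above, this shows that the rational points of $C_{\mathrm{roofs}}$ lying on the right roof of $\Diamond^{\mathrm{init}}$ are precisely those with rational $x$-coordinate in $[1/2,\,1/2+(3-\sqrt{5})/2]$. The left endpoint has irrational $x$-coordinate (so is not a rational point of $C_{\mathrm{roofs}}$), while the right endpoint $1/2$ corresponds to the apex, which by Corollary~\ref{cor: DensenessApex} is not a degenerate diamond. Therefore the degenerate diamonds on the right roof are in bijection with the rationals in the open interval $(1/2,\,1/2+(3-\sqrt{5})/2)$, as claimed.

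There is no substantive obstacle: the argument closely parallels the proofs of Theorems~\ref{SufficientRank0Dene} and~\ref{SufficientRank0DeneInitial}, and the only real calculation is the one yielding $v_R$. The mildly delicate point is simply to distinguish the apex (excluded from degenerate diamonds) from the genuine interior intersections, but this is immediate once the interval has been identified and Theorem~\ref{thm:roof projection} invoked.
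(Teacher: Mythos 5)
Your proposal is correct and amounts to exactly the ``direct calculation'' the paper alludes to for this theorem: as in the proof of Theorem~\ref{SufficientRank0Dene}, one intersects the right boundary of the dense cone $C_v$ (here given explicitly in Definition~\ref{Def:dense cone} and Lemma~\ref{lem:prince}(2) for the initial vertex, with $D=1$) with the line $L_{\CO(2)}$ containing the right roof, and your arithmetic giving $v_R=\bigl((4-\sqrt{5})/2,\sqrt{5}-2\bigr)$ checks out; the $T^m$-equivariance reduction to $m=0$ is also valid by Lemma~\ref{Lem: invarianceDStab}. The only cosmetic remark is that the appeal to Theorem~\ref{thm:roof projection} is not needed — for the stated containment it suffices that dense rays lie strictly inside $C_v$, and for the converse that every rational-slope ray in $C_v$ with endpoint $v$ is a non-trivial dense ray (the remark following Definition~\ref{Def:dense cone}).
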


\begin{figure}[h]
    \centering
    \includegraphics[width=13.3cm]{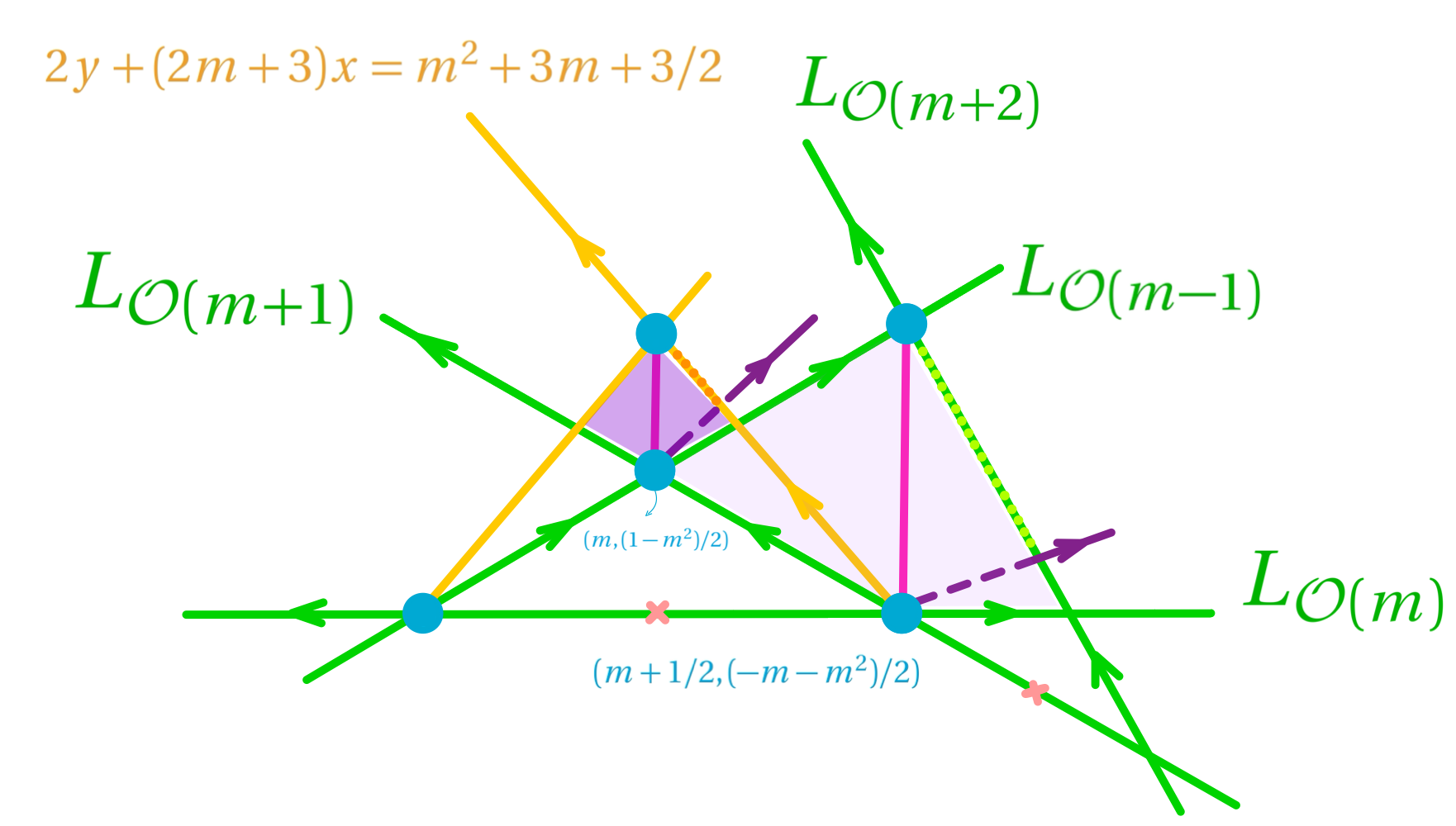}
    \caption{} 
    \label{Fig: denseRationalInitial}
\end{figure}

\begin{example} Considering the rays generated by $L_{\CO(1)}$ and $L_{\CO(2)}$ at the vertex $(3/2,-1)$, we can show that vertical diagonals of the outer diamonds in the super-diamond, $\Diamond^{\mathrm{sup}}$ along the line corresponding to $\CO$, are explicitly given by $x=\frac{3}{2}-\frac{F_{2k+1}}{F_{2k+3}}$. The primitive Chern character for the vertical diagonal $x=\frac{3}{2}-\frac{F_{2k+1}}{F_{2k+3}}$, is $(0, F_{2k+3},\frac{3}{2}F_{2k+3}-F_{2k+1})$. One can show that the primitive object for this is  $\CO_{C_{F_{2k+3}}\cup \#}(m)$, a one-dimensional rank-zero object, where $C_{F_{2k+3}}\cup \#$ is a curve of degree $F_{2k+3}$, together with $\#=\frac{F_{2k+3}(F_{2k+3}+3)}{2}-F_{2k+1}$ number of points. 
\end{example}

\begin{corollary}
\label{cor:interval}
Let $\CE$ be an exceptional bundle of rank $r$ and first Chern class
$d$, $u=r$, $v=d-\frac{3}{2}r$. Let $\Diamond$ be the (dense) diamond
based at $V_{\CE}$. Then the image of the union of the left and right roofs
of $\Diamond$ under the projection to the $x$-axis is
\[
\left[ \frac{u}{v}-\left(\frac{3}{2}-\frac{\sqrt{9r^2-4}}{2r}
\right),
\frac{u}{v}+\left(\frac{3}{2}-\frac{\sqrt{9r^2-4}}{2r}
\right)\right].
\]
\end{corollary}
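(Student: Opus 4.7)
\begin{proofof}{Corollary~\ref{cor:interval} (plan)}
The plan is to deduce this immediately from Theorem~\ref{SufficientRank0Dene} (for non-initial base triangles), together with Theorems~\ref{SufficientRank0DeneInitial} and \ref{SufficientRank0DeneInitialDiamond} for the initial cases, combined with the verticality of the diagonal of $\Diamond$ established in Theorem~\ref{thm: verticalDiagonal}. Note that although the stated formula reads $u/v$, by \eqref{eq: VertexPE} the base $V_{\CE}$ has $x$-coordinate $d/r-3/2=v/u$, so the claimed interval should be read as being centered at $v/u$; this is the only coordinate consistent with the diagonal being vertical at $V_{\CE}$.

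First I would handle the case in which the base triangle of $\Diamond$ is not initial. Theorem~\ref{SufficientRank0Dene} already identifies the open set of rational $x$-coordinates on the right roof of $\Diamond$ as the open interval
\[
\left(\frac{v}{u},\ \frac{v}{u}+\frac{3}{2}-\frac{\sqrt{9r^2-4}}{2r}\right).
\]
Since the right roof is a closed line segment with endpoints at the apex of $\Diamond$ (whose $x$-coordinate is $v/u$ by Theorem~\ref{thm: verticalDiagonal} and \eqref{eq:apex coordinates}) and at the right vertex of $\Diamond$ (whose $x$-coordinate is the irrational upper bound above, by Definition~\ref{Def:dense cone} of $C_v$ and an elementary slope computation), the projection of the right roof to the $x$-axis is exactly the corresponding closed interval. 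Symmetrically, replacing the right roof with the left roof and using the vertical-diagonal symmetry of $\Diamond$ about $x=v/u$ (Theorem~\ref{thm: verticalDiagonal}), the left roof projects onto $\bigl[\tfrac{v}{u}-(\tfrac{3}{2}-\tfrac{\sqrt{9r^2-4}}{2r}),\tfrac{v}{u}\bigr]$. Taking the union yields the claimed interval.

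For the remaining cases where the base triangle is initial — namely $V_{\CE}$ is the center of a super-diamond (so $r=2$ and $\CE$ is a twist of $T_{\BP^2}$) or $V_{\CE}$ is the incoming vertex of an initial diamond (so $r=1$ and $\CE$ is a line bundle) — I would run the same argument with Theorems~\ref{SufficientRank0DeneInitial} and \ref{SufficientRank0DeneInitialDiamond} respectively replacing Theorem~\ref{SufficientRank0Dene}. A direct check shows $\frac{3}{2}-\frac{\sqrt{9\cdot 4-4}}{4}=\frac{3-2\sqrt{2}}{2}$ and $\frac{3}{2}-\frac{\sqrt{9-4}}{2}=\frac{3-\sqrt{5}}{2}$, so the bounds in those theorems specialise to exactly the general formula at $r=2$ and $r=1$, and the three cases glue into the single statement.

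There is essentially no obstacle here since everything reduces to ready citations; the only point requiring care is verifying that the endpoints of the projected interval are attained. The upper endpoint on the right (and by symmetry the lower on the left) is the $x$-coordinate of the irrational vertex of $\Diamond$, which I would confirm by intersecting the right-roof line with the right boundary of $C_{V_{\CE}}$ in Definition~\ref{Def:dense cone} and simplifying exactly as in the ``Claim'' inside the proof of Theorem~\ref{SufficientRank0Dene}, where the identity $-r_\infty^{-1}(D)\cdot D'+D''=D\cdot r_\infty^{-1}(D)\cdot\bigl(-r_\infty^{-1}(D) D''+3DD''-D'\bigr)/D$ already rewrites the bound in the form $\frac{3}{2}-\frac{\sqrt{9r^2-4}}{2r}$ using $r=D=u$.
\end{proofof}
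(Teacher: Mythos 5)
Your proposal is correct and follows essentially the same route as the paper, which simply derives the right half from Theorem~\ref{SufficientRank0Dene} and the left half by symmetry. Your extra observations (the $u/v$ vs.\ $v/u$ typo in the statement, the closed-endpoint check, and the separate treatment of initial base triangles via Theorems~\ref{SufficientRank0DeneInitial} and \ref{SufficientRank0DeneInitialDiamond}) are sound refinements of the same argument.
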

\begin{proof}
    The right half is an immediate corollary of Theorem \ref{SufficientRank0Dene}, and the left half is obtained by symmetry.
\end{proof}

\section{An application: Connection to \cite{Li-17} and the
Le Potier curve} \label{Section: LePotier} 

\cite{Li-17} considers the geography of coherent sheaves on $\BP^2$ by
using what the author calls
the $\{1,\frac{\ch_1}{\ch_0},\frac{\ch_2}{\ch_0}\}$-plane. 
Omitting the $1$, we identify this plane with $\BR^2$, with coordinates
\[
X=\frac{\ch_1}{\ch_0},\quad Y=\frac{\ch_2}{\ch_0}.
\]
Any coherent
sheaf $\CE$ of non-zero rank gives a point 
\[
V_{\CE}^{\mathrm{Li}}:=(\ch_1(\CE)/\ch_0(\CE),
\ch_2(\CE)/\ch_0(\CE))
\]
in this plane. 

Via \eqref{eq: VertexPE}, there
is a clear relationship between the plane $M_{\BR}$ (with coordinates $x,y$)
and the $\{1,\frac{\ch_1}{\ch_0},\frac{\ch_2}{\ch_0}\}$-plane. 
We define an affine linear transformation 
\begin{align*}
\Upsilon:&\BR^2\rightarrow M_{\BR},\\ 
&(X,Y)\mapsto (X-3/2,3X/2-Y-1).
\end{align*}

Using $\chi=\ch_2+\frac{3}{2}\ch_1+\ch_0$ and \eqref{eq: VertexPE}, we see
that
\[
\Upsilon(V^{\mathrm{Li}}_{\CE})=V_{\CE}.
\]
The inverse map is
\[
\Upsilon^{-1}(x,y)=(x+3/2, -y+3x/2+5/4).
\]

\cite{Li-17} introduces the parabolas
\[
\overline\Delta_a=\left\{(X,Y)\,\bigg|\,\frac{1}{2}X^2-Y=a\right\}.
\]
Note that if 
$V^{\mathrm{Li}}_{\CE}\in\overline{\Delta}_0$,
then the Bogomolov-Gieseker inequality is an equality for $\CE$. If
$\CE$ is stable, then $V^{\mathrm{Li}}_{\CE}$
lies on or below $\overline{\Delta}_0$.

One calculates that
\[
\Upsilon(\overline\Delta_a)=\partial_b:=\{(x,y)\in M_{\BR}\,|\,
y=-x^2/2+b\}
\]
with $b=a+1/8$. 

In particular, $\Upsilon(\overline\Delta_0)$ is  the upward translation of the
boundary of the closure of $U$ by $1/8$. For any $\CE$ stable of
non-zero rank, the Bogomolov-Gieseker inequality now implies that 
$V_{\CE}$ lies on or above $\partial_{1/8}$. Further, this curve
contains $V_{\CO(n)}=(n-3/2,-(n-1)(n-2)/2)$ for all $n\in \BZ$,
being the hybrid vertices of the initial triangles. On the other hand, $V_{\CE}$
for $\CE$ an exceptional bundle of rank greater than one lies
above the curve $\partial_{1/8}$, as follows easily from 
Lemma~\ref{lem:slope inequalities},(6). In particular,
$\partial_{1/8}$ lies below all diamonds and contains the bases of
the initial diamonds, and hence $\partial_{1/8}$ provides the best
lower bound for the outer diamonds among all parabolas $\partial_b$.

Similarly, translating the boundary of the closure of $U$ by $9/8$ to
obtain $\partial_{9/8}=\Upsilon(\overline\Delta_1)$, we obtain a parabola
which meets each initial diamond in its apex. Further, the apex of every other
diamond lies strictly below $\partial_{9/8}$. Indeed, if the base of
a diamond $\Diamond_v$ is $V_{\CE}$ for some exceptional bundle $\CE$,
then the apex has coordinates $(d/r-3/2,(3d-\chi)/r+1/r^2)$ by 
Lemma \ref{Lem: lengthDiagonal}. Plugging this into the inequality 
$y\le -x^2/2+9/8$, writing $\chi$ in terms of the second Chern class
$e$  and simplifying, we obtain the inequality $-d^2+2er+2r^2-2\ge 0$.
Using Lemma \ref{lem:slope inequalities}(6), this reduces to $r^2\ge 1$.
Thus the inequality always holds and is an equality only when $V_{\CE}$
is the base of an initial diamond.
See Figure \ref{fig: LPcurve}.

\cite[Def.~1.6]{Li-17} defines the Le Potier curve in the
$\{1,\frac{\ch_1}{\ch_0},\frac{\ch_2}{\ch_0}\}$-plane, which we 
paraphrase as follows. For an exceptional bundle $\CE$, set
\[
V^+_{\CE}:=V^{\mathrm{Li}}_{\CE}-(0,1/\ch_0(\CE)^2).
\]
If $(\CE_0,\CE_1,\CE_2)$ is a strong exceptional triple, define
$V^{l}_{\CE_1}$ (resp.\ $V^{r}_{\CE_1}$) to be the intersection of the line 
segment with endpoints $V^{\mathrm{Li}}_{\CE_0},V^+_{\CE_1}$ 
(resp.\ $V^{\mathrm{Li}}_{\CE_2},V^+_{\CE_1}$)
with the parabola $\overline\Delta_{1/2}$. The Le Potier curve $C_{LP}$ is then
the union of all line segments $[V^l_{\CE_1},V^+_{\CE_1}]$
joining $V^l_{\CE_1}$ to $V^+_{\CE_1}$
and $[V^r_{\CE_1},V^+_{\CE_1}]$ joining
$V^r_{\CE_1}$ to $V^+_{\CE_1}$, 
running over all strong exceptional triples.
By construction, this curve lies between $\overline\Delta_{1/2}$ and
$\overline\Delta_1$. As shown in \cite{Drezet-LePotier}, it has the fundamental
property that if $\gamma=(r,d,e)\in\Gamma$ satisfies $(d/r,e/r)$ lying on or
below $C_{LP}$, then there is a slope semistable coherent sheaf of 
Chern character $\gamma$. Conversely, a slope semistable coherent sheaf
$\CE$ of positive rank is either exceptional, or $V^{\mathrm{Li}}_{\CE}$
lies below $C_{LP}$.

\begin{prop}
$C_{LP}$ corresponds to $C_{\mathrm{roofs}}$, i.e., 
\[
\Upsilon(C_{LP})=C_{\mathrm{roofs}}.
\]
\end{prop}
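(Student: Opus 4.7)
The plan is to verify that $\Upsilon$ sends each piece of the Le Potier curve to the corresponding roof of a diamond, bundle by bundle. Since $\Upsilon$ is affine linear, it sends lines to lines; from the formulas defining $\Upsilon$ one immediately checks $\Upsilon(V^{\mathrm{Li}}_{\CE}) = V_{\CE}$ and $\Upsilon(\overline\Delta_{1/2}) = \partial_{5/8}$. The first key computation is that $\Upsilon(V^+_{\CE}) = V_{\CE} + (0,1/\ch_0(\CE)^2)$, which by Lemma~\ref{Lem: lengthDiagonal} is precisely the apex of the diamond $\Diamond_{V_{\CE}}$ (and by Theorem~\ref{thm: verticalDiagonal} this apex sits vertically above $V_{\CE}$ at the correct height). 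Thus $V^+_{\CE_1}$ maps to the apex for every exceptional bundle $\CE_1$, including the line bundle case handled by the initial triple $(\CO(m-1),\CO(m),\CO(m+1))$.

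The second, central step is to identify the line through $V^{\mathrm{Li}}_{\CE_0}$ and $V^+_{\CE_1}$ (and, symmetrically, through $V^{\mathrm{Li}}_{\CE_2}$ and $V^+_{\CE_1}$) with the line containing the left (resp.\ right) roof of $\Diamond_{V_{\CE_1}}$. For this, I will show that $V_{\CE_0}$ lies on $L_{\CE_1(-3)[1]}$ and $V_{\CE_2}$ lies on $L_{\CE_1}$. By \eqref{eq:Z chi}, $V_{\CE_0} \in L_{\CE_1(-3)[1]}$ is equivalent to $\chi(\CE_0, \CE_1(-3)[1])=0$, and Serre duality on $\BP^2$ gives $\chi(\CE_0,\CE_1\otimes K_{\BP^2}) = \chi(\CE_1,\CE_0)$, which vanishes because $(\CE_0,\CE_1,\CE_2)$ is an exceptional collection; the statement $V_{\CE_2} \in L_{\CE_1}$ follows from $\chi(\CE_2,\CE_1)=0$ by the same reasoning. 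Meanwhile, the apex itself lies on both $L_{\CE_1}$ and $L_{\CE_1(-3)[1]}$, as one verifies by substituting the coordinates \eqref{eq:apex coordinates} into each line equation and simplifying using Lemma~\ref{lem:slope inequalities}(6). Hence the two Le Potier lines through $V^+_{\CE_1}$ map under $\Upsilon$ exactly onto the lines $L_{\CE_1(-3)[1]}$ and $L_{\CE_1}$ that carry the two roofs.

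To finish, I will verify that the endpoints match: the Le Potier segments terminate at $V^l_{\CE_1},V^r_{\CE_1}\in\overline\Delta_{1/2}$, so their images lie on $\partial_{5/8}$, and I must show these images agree with the endpoints of the roofs of $\Diamond_{V_{\CE_1}}$. Solving $L_{\CE_1}\cap\partial_{5/8}$ gives $x = d_1/r_1\pm \sqrt{9r_1^2-4}/(2r_1)$, where the discriminant is cleared up by Lemma~\ref{lem:slope inequalities}(6); the analogous computation on $L_{\CE_1(-3)[1]}$ yields $x = d_1/r_1 - 3\pm\sqrt{9r_1^2-4}/(2r_1)$. These match exactly the two extreme $x$-coordinates of the projection of the roofs of $\Diamond_{V_{\CE_1}}$ given in Corollary~\ref{cor:interval}, so the roof endpoints on the boundary of the dense cone $C_v$ lie on $\partial_{5/8}$. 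Together with the bijection between strong exceptional triples and diamonds provided by Lemma~\ref{lem:slope inequalities}(8) (each exceptional bundle appears as the middle term of a unique triple) and the corresponding bijection with diamonds via $\CE_1 \mapsto \Diamond_{V_{\CE_1}}$, this gives $\Upsilon(C_{LP})=C_{\mathrm{roofs}}$. I expect the main bookkeeping obstacle to be treating the initial triples $(\CO(m-1),\CO(m),\CO(m+1))$ uniformly with the non-initial ones; the argument above goes through in that case because the formula for the apex, the pairing vanishings and the discriminant identity all hold equally for line bundles.
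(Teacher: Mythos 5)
Your proposal is correct and follows essentially the same route as the paper's proof: identify $\Upsilon(V^+_{\CE_1})$ with the apex via Lemma~\ref{Lem: lengthDiagonal}, match the two Le Potier line segments with the lines $L_{\CE_1}$ and $L_{\CE_1(-3)[1]}$ carrying the roofs, and pin down the endpoints by intersecting with $\partial_{5/8}$ and comparing with Corollary~\ref{cor:interval}. The only real difference is local: you justify $V_{\CE_0}\in L_{\CE_1(-3)[1]}$ and $V_{\CE_2}\in L_{\CE_1}$ by the Euler pairing plus Serre duality, where the paper reads these collinearities off its inductive construction (Figure~\ref{Fig: RL}); both are valid, and your explicit treatment of the initial triples is consistent with Theorem~\ref{SufficientRank0DeneInitialDiamond}.
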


\begin{proof}
Let $(\CE_0,\CE_1,\CE_2)$ be a strong exceptional triple, and let $\Diamond$
be the
dense diamond with base $V_{\CE_1}$. Let $v$ be the apex of this
diamond. By Lemma~\ref{Lem: lengthDiagonal}, $v=V_{\CE_1}+(0,1/r_1^2)$,
where $r_1$ is the rank of $\CE_1$. Let $L,R$ be the line segments
which are the left and right roofs
of $\Diamond$.
By the definition of $C_{\mathrm{roofs}}$, it is sufficient to show that
\[
\Upsilon([V^l_{\CE_1},V^+_{\CE_1}])=L,
\quad
\Upsilon([V^r_{\CE_1},V^+_{\CE_1}])=R.
\]
To show this, first note that 
\[
\Upsilon(V^+_{\CE_1})=\Upsilon(V^{\mathrm{Li}}_{\CE_1}-(0,1/r_1^2))=
V_{\CE_1}+(0,1/r_1^2)=v.
\]
Second, we will show that $\Upsilon(V^r_{\CE_1})$ is the right-hand
vertex of $\Diamond$. From Figure \ref{Fig: RL}, we see that the
line through $v$ and $V_{\CE_2}$ is $L_{\CE_1}$, and hence this
is the image of the line through $V^+_{\CE_1}$ and $V^{\mathrm{Li}}_{\CE_2}$
under $\Upsilon$. Thus we need to check that the intersection point
of $\partial_{5/8}$ with $L_{\CE_1}$ is the right vertex of $\Diamond$.
The $x$-coordinate of this vertex is 
$\frac{d_1}{r_1}-\frac{\sqrt{9r_1^2-4}}{2r_1}$ by Corollary~\ref{cor:interval}.
So it is sufficient to compare this with the $x$-coordinate of 
$L_{\CE_1}\cap \partial_{5/8}$. Solving the equations
$r_1y+d_1x=e_1, y=-x^2/2+5/8$ and using Lemma~\ref{lem:slope inequalities}, we get
\begin{equation}
\label{eq:two intersection points}
x=\frac{d_1}{r_1}\pm \frac{\sqrt{9r_1^2-4}}{2r_1},
\end{equation}
and indeed there are two intersection points. However, one checks that
\[
1 < \frac{\sqrt{5}}{2} \le \frac{\sqrt{9r_1^2-4}}{2r_1} < \frac{3}{2}.
\]
Since the difference in $x$-coordinates of $V_{\CE_1}$ and $V_{\CE_2}$ is
less than $1$ by Lemma~\ref{lem:slope inequalities},(3), the intersection
point with the larger value of $x$ in fact lies to the right of
$V_{\CE_2}$. Hence the desired intersection point has $x$-coordinate
given by the minus sign in \eqref{eq:two intersection points}. This is the
desired value.

The argument for the left-hand intersection point is similar.
\end{proof}

To summarize, to go from the $\{1,\frac{\ch_1}{\ch_0},\frac{\ch_2}{\ch_0}\}$-plane to the stability scattering diagram, we decompose $\Upsilon$ as follows:  We first flip the $\{1,\frac{\ch_1}{\ch_0},\frac{\ch_2}{\ch_0}\}$-plane about the $X$-axis, and then shift everything up by $1/8$. At this stage $\overline{\Delta}_0$ goes to $\partial_{1/8}$ and $\overline{\Delta}_1$ goes to $\partial_{9/8}$.Then, using the following transformation only on the Le Potier curve,
\begin{align}
    (\alpha,\beta)\mapsto (\alpha-3/2,3/2\alpha+\beta-9/8),
\end{align}
the Le Potier curve $C_{LP}$ will go to the boundary of the bounded region of the scattering diagram, $C_{\mathrm{roofs}}$. Also, each exceptional bundle $\CE$ in the $\{1,\frac{\ch_1}{\ch_0},\frac{\ch_2}{\ch_0}\}$-plane goes to the dual object of the corresponding line/ray to $\CE$ in the scattering diagram.

See Figure \ref{fig: LPcurve}.

    \begin{figure}[h]
 \centering
    \includegraphics[width=14.9cm]{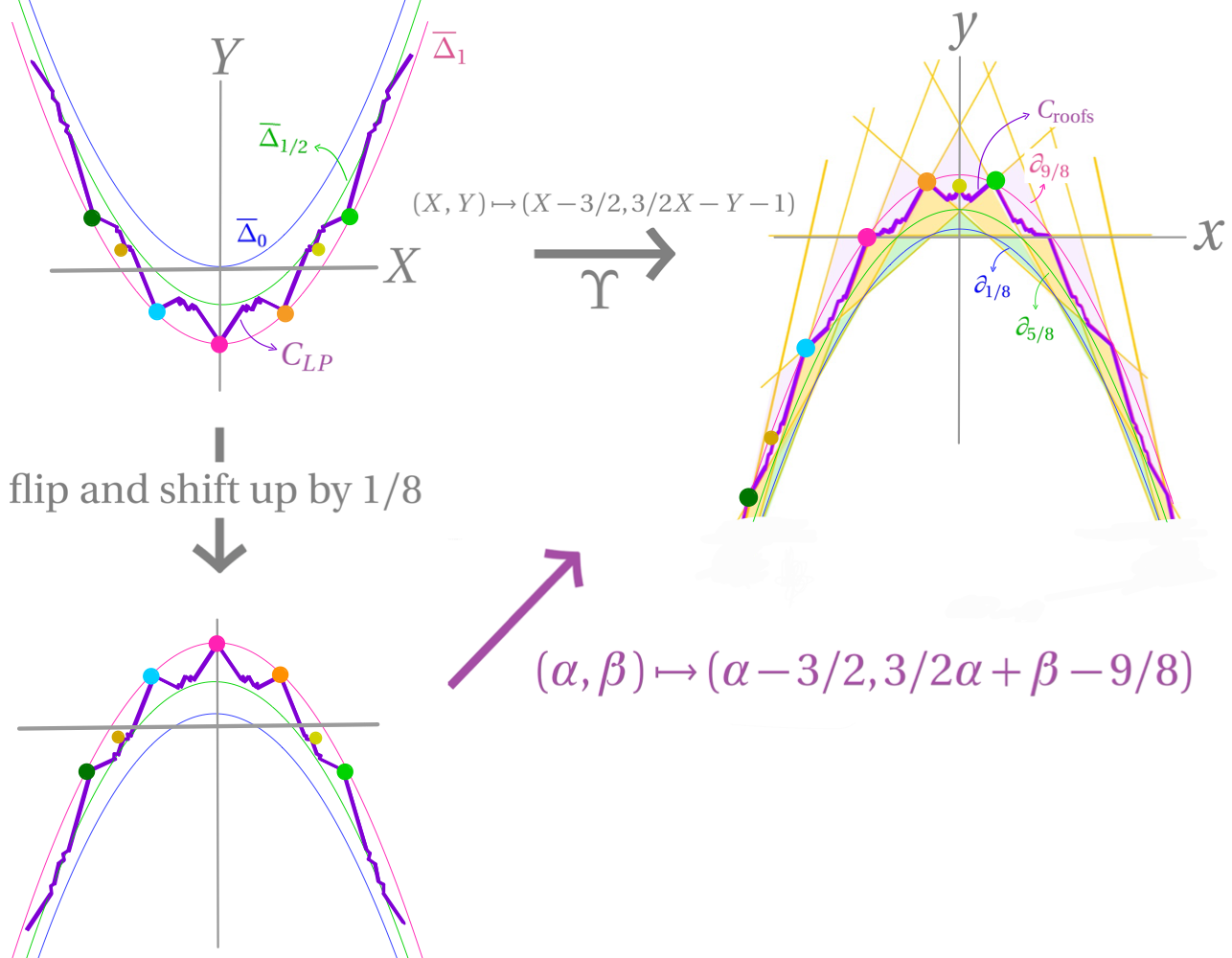}
  \caption{From the $\{1,\frac{\ch_1}{\ch_0},\frac{\ch_2}{\ch_0}\}$-plane to the stability scattering diagram: Under this transformation, the fractal Le Potier curve, $C_{LP}$, goes to the fractal upper boundary of the bounded region in the scattering diagram, $C_{\mathrm{roofs}}$. Also, $\overline\Delta_{0}$ goes to $\partial_{1/8}$, $\overline\Delta_{1/2}$ goes to $\partial_{5/8}$, and $\overline\Delta_{1}$ goes to $\partial_{9/8}$. Further, the points in the $\{1,\frac{\ch_1}{\ch_0},\frac{\ch_2}{\ch_0}\}$-plane transform to the dual vertices in the scattering diagram: e.g., in this picture, $\CT_{\BP^2}$ corresponding to the point $(3/2,3/4)$ in the $(X,Y)$ coordinate transforms to $V_{\CT_{\BP^2}}$ corresponding to the point $(0,1/2)$ in the $(x,y)$ coordinate.}
  \label{fig: LPcurve}
\end{figure}

\begin{corollary}\label{cor:vertex location}
Let $(x_0,y_0)\in U$ be a point with rational coordinates. Then 
$(x_0,y_0)$ is the intersection of two non-trivial rays of 
$\foD^{\mathrm{stab}}$ with distinct tangent lines if and only if
$(x_0,y_0)=\left(\frac{\ch_1(\CE)}{\ch_0(\CE)}-\frac{3}{2}, \frac{3d-\chi(\CE)}
{\ch_0(\CE)}\right)$ for 
some slope-stable coherent sheaf of positive rank $\CE$.
\end{corollary}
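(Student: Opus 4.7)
The plan is to combine the structural description of $\foD^{\mathrm{stab}}$ obtained in Sections \ref{sec:discrete}--\ref{sec:unbounded} with the Drezet--Le Potier classification of slope-stable sheaves on $\BP^2$, using the affine map $\Upsilon$ of Section \ref{Section: LePotier} to translate between $V_\CE$ and $V^{\mathrm{Li}}_\CE$ and between $C_{\mathrm{roofs}}$ and the Le Potier curve $C_{LP}$.

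For the direction ``$V_\CE \Rightarrow$ vertex'', I would split on whether $\CE$ is exceptional. If $\CE$ is exceptional, then $V_\CE$ is the base of the outer diamond $\Diamond^{\mathrm{out}}_{V_\CE}$ and is the common endpoint of its two discrete generators; these have linearly independent opposite vectors, so give rays with distinct tangent lines. If $\CE$ is slope-stable but not exceptional, Drezet--Le Potier forces $V^{\mathrm{Li}}_\CE$ to lie strictly below $C_{LP}$, hence $V_\CE$ lies strictly above $C_{\mathrm{roofs}}$, placing it in $R_{\mathrm{unbdd}}$. Corollary \ref{Cor:unboundedCharacterization} then puts $V_\CE$ in the interior of infinitely many rays, and inspection of that corollary's proof shows the rays are generated by joining $V_\CE$ to distinct rational points $\sigma'$ along a roof segment of positive length, so pairs of such rays have pairwise distinct tangent directions.

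For the converse I would analyze the position of a rational intersection point $(x_0, y_0)$ according to which region it lies in. In $R_{\Delta}$, Theorem \ref{thm:bousseau generalization} forbids rays in triangle interiors, and a point in the relative interior of a triangle edge supports only the single tangent direction of that edge; hence any multi-ray crossing with distinct tangent lines must occur at a triangle vertex, which by Theorem \ref{thm:mu build} is $V_\CE$ for some exceptional bundle $\CE$. In the interior of a dense diamond $\Diamond_v$ off $C_{\mathrm{roofs}}$, Theorem \ref{Thm:nonEmptiness}(1) ensures every ray meeting the interior emanates from $v$, so two such rays crossing at any other point are collinear, leaving $v = V_\CE$ (with $\CE$ the exceptional bundle dual to $\Diamond_v$) as the only vertex. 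On $C_{\mathrm{roofs}}$, Theorem \ref{thm:roof projection} together with Section \ref{section:rankZero} identifies the rational points as apexes and degenerate diamonds, whose $\Upsilon^{-1}$-images lie on $C_{LP}$. For $(x_0, y_0) \in R_{\mathrm{unbdd}}$, $\Upsilon^{-1}(x_0, y_0)$ lies strictly below $C_{LP}$. In the last two cases I would clear denominators in $\Upsilon^{-1}(x_0, y_0) = (d/r, e/r)$, subject to the half-integer constraint of \eqref{eq:chern image}, and invoke \cite{Drezet-LePotier} to produce a slope-stable $\CE$ of Chern character $(r, d, e)$ realizing $V_\CE = (x_0, y_0)$.

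The main obstacle I anticipate is the appeal to \cite{Drezet-LePotier} at points lying on $C_{LP}$ itself: one must invoke the refined result that slope-stable (not merely semistable) sheaves exist at every rational point of $C_{LP}$ -- exceptional bundles at the piecewise-linear vertices and generalized Steiner bundles along the open roof segments -- since the existence criterion recorded in Section \ref{Section: LePotier} is phrased only for semistable sheaves. Once this input is cited correctly, the remainder is a routine case analysis using the structural results already assembled in the paper.
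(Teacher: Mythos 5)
Your proposal follows essentially the same route as the paper's proof: the structural results (emptiness of triangle interiors, vertex-freeness of diamonds, denseness of rays at rational points on and above $C_{\mathrm{roofs}}$) pin down exactly which rational points are crossings of rays with distinct tangent lines, and the Drezet--Le Potier classification, transported by $\Upsilon$, identifies these with the points $V_{\CE}$ for $\CE$ slope-stable; the paper compresses this into a few lines, citing \cite[\S4.4]{Drezet-LePotier} and \cite[Thm.~1.8]{LZ2} for precisely the refined slope-stable (not merely semistable) existence statement that you correctly flag as the key external input. One small slip to fix: a non-exceptional slope-stable sheaf can have $V^{\mathrm{Li}}_{\CE}$ lying \emph{on} $C_{LP}$ (your own generalized Steiner bundles), not only strictly below it, so the forward-direction dichotomy should include the case $V_{\CE}\in C_{\mathrm{roofs}}$ --- which is already covered by the degenerate-diamond/apex analysis you invoke in the converse, where Lemma~\ref{Lem:DensnessNonApex} supplies a discrete and a dense ray with distinct tangent lines through any such point.
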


\begin{proof}
We have seen from Theorem~\ref{thm:bousseau generalization}
and Theorem~\ref{Thm:nonEmptiness}
that the points of $R_{\bdd}$ below $C_{\mathrm{roofs}}$
which are intersections of two rays are precise points of the
form $V_{\CE}$ for $\CE$ an exceptional bundle. Further,
every rational point on $C_{\mathrm{roofs}}$ or above is the intersection
of two rays by Corollary~\ref{Cor:unboundedCharacterization}.
On the other hand,
by \cite[\S4.4]{Drezet-LePotier} (see \cite[Thm.~1.8]{LZ2} for the statement
as needed here), a rational point $\sigma$ of $U$ satisfies
$\sigma=V_{\CE}$ for a slope-stable coherent sheaf $\CE$ if and only if
$\sigma=V_{\CE}$ for $\CE$ an exceptional bundle or $\sigma$ lies on
$C_{\mathrm{roofs}}$ or above. This proves the result.
\end{proof}

\bibliographystyle{amsplain-nodash} 
\bibliography{bib}

\ifx\undefined\bysame
\newcommand{\bysame}{\leavevmode\hbox to3em{\hrulefill}\,}
\fi
\begin{thebibliography}{10}

\bibitem{ABCH}
D.~Arcara, A.~Bertram, I.~Coskun, and J.~Huizenga, {\em The minimal model program for the {H}ilbert scheme of points on $\mathbb{P}^2$ and {B}ridgeland stability}, Adv. Math. {\bf 235} (2013), no.~3, 580–626.

\bibitem{BM-LocalProjPlane}
A.~Bayer and E.~Macr\`{i}, {\em The space of stability conditions on the local projective plane}, Duke Mathematical Journal {\bf 160} (2011), no.~2, 263--322.

\bibitem{BMT14:stability_threefolds}
A.~Bayer, E.~Macr{\`{\i}}, and Y.~Toda, {\em Bridgeland stability conditions on threefolds {I}: {B}ogomolov-{G}ieseker type inequalities}, J. Algebraic Geom. {\bf 23} (2014), no.~1, 117--163.

\bibitem{BC13}
A.~Bertram and I.~Coskun, {\em The birational geometry of the {H}ilbert scheme of points on surfaces. birational geometry, rational curves, and arithmetic}, Springer, New York {\bf 173} (2013), 15--55.

\bibitem{BMW14}
A.~Bertram, C.~Martinez, and J.~Wang, {\em The birational geometry of moduli spaces of sheaves on the projective plane}, Geom. Dedicata {\bf 173} (2014), 37--64.

\bibitem{BHLRSWZ16}
B.~Bolognese, J.~Huizenga, Y.~Lin, E.~Riedl, B.~Schmidt, M.~Woolf, and X.~Zhao, {\em Nef cones of {H}ilbert schemes of points on surfaces}, Algebra Number Theory {\bf 10} (2016), no.~4, 907--930.

\bibitem{Bousseau-scatteringP2-22}
P.~Bousseau, {\em Scattering diagrams, stability conditions, and coherent sheaves on $\mathbb{P}^2$}, Journal of Algebraic Geometry {\bf 31} (2022), 593--686.

\bibitem{Bousseau-TakahashiConjecture-23}
P.~Bousseau, {\em A proof of {N}. {T}akahashi’s conjecture for $(\mathbb{P}^2,{E})$ and a refined sheaves/{G}romov–{W}itten correspondence}, Duke Math. J. {\bf 172} (2023), no.~15, 2895--2955.

\bibitem{Bousseau-Descombes-LeFloch-Pioline-22}
P.~Bousseau, P.~Descombes, B.~Le~Floch, and B.~Pioline, {\em {BPS} dendroscopy on local $\mathbb{P}^2$}, Commun. Math. Phys. {\bf 405} (2024), no.~108.

\bibitem{Bridgeland-stability}
T.~Bridgeland, {\em Stability conditions on triangulated categories}, Ann. of Math. (2) {\bf 166} (2007), no.~2, 317--345.

\bibitem{CPS}
M.~Carl, M.~Pumperla, and B.~Siebert, {\em A tropical view on {L}andau-{G}inzburg models}, Acta Math. Sin. (Engl. Ser.) {\bf 40} (2024), no.~1, 329--382.

\bibitem{CH14b}
I.~Coskun and J.~Huizenga, {\em The ample cone of moduli spaces of sheaves on the plane}, Algebr. Geom. {\bf 3} (2016), no.~1, 106--136.

\bibitem{CoskunHuizengaWoolf}
I.~Coskun, J.~Huizenga, and M.~Woolf, {\em The effective cone of the moduli space of sheaves on the plane}, J. Eur. Math. Soc. (JEMS) {\bf 19} (2017), no.~5, 1421--1467.

\bibitem{drezet}
J.-M. Drezet, {\em Fibr\'es exceptionnels et vari\'et\'es de modules de faisceaux semi-stables sur {${\bf P}_2({\bf C})$}}, J. Reine Angew. Math. {\bf 380} (1987), 14--58.

\bibitem{Drezet-LePotier}
J-M. Drezet and J.~Le~Potier, {\em Fibr\'{e}s stables et fibr\'{e}s exceptionnels sur {${\bf P}_2$}}, Ann. Sci. \'{E}cole Norm. Sup. (4) {\bf 18} (1985), no.~2, 193--243.

\bibitem{Frobenius-1913}
G.~Frobenius, {\em Über die {M}arkoffschen {Z}ahlen}, Königliche Akademie der Wissenschaften (1913), 458–487.

\bibitem{GHS}
P.~Gallardo, César~L. Huerta, and B.~Schmidt, {\em On the {H}ilbert scheme of elliptic quartics.}, Michigan Math. J. {\bf 67} (2018), no.~4, 787--813.

\bibitem{Goro-Rud}
A.~L. Gorodentsev and A.~N. Rudakov, {\em Exceptional vector bundles on projective spaces}, Duke Math. J. {\bf 54} (1987), no.~1, 115--130.

\bibitem{GHKK}
M.~Gross, P.~Hacking, S.~Keel, and M.~Kontsevich, {\em Canonical bases for cluster algebras}, Journal of the American Mathematical Society {\bf 31} (2018), no.~2, 497--608.

\bibitem{Gross-Pandharipande-10}
M.~Gross and R.~Pandharipande, {\em Quivers, curves, and the tropical vertex}, Portugal. Math. (N.S.) Portugaliae Mathematica {\bf 67} (2010), no.~6, 211–259.

\bibitem{GPS-10}
M.~Gross, R.~Pandharipande, and B.~Siebert, {\em The tropical vertex}, Duke Math. J. {\bf 153} (2010), no.~2, 297--362.

\bibitem{Gross-Rezaee2}
M.~Gross and F.~Rezaee, {\em Bridgeland wall-crossing for $\mathrm{Hilb}^n(\mathbb{P}^2)$ from scattering diagram}, in preparation (2025).

\bibitem{GS11}
M.~Gross and B.~Siebert, {\em From real affine geometry to complex geometry}, Ann. of Math. {\bf 174} (2011), no.~3, 1301–1428.

\bibitem{GraefnitzTropical22}
T.~Gräfnitz, {\em Tropical correspondence for smooth del pezzo log {C}alabi-{Y}au pairs}, J. Alg. Geom. {\bf 31} (2022), no.~4, 687–749.

\bibitem{GraefnitzTheta22}
T.~Gräfnitz, {\em Theta functions, broken lines and 2-marked log {G}romov-{W}itten invariants}, manuscripta math. {\bf 176} (2025), no.~41.

\bibitem{Graefnitz-Luo2023}
T.~Gräfnitz and P.~Luo, {\em The dense region in scattering diagrams}, Bulletin of the Australian Mathematical Society (2025), 1--13.

\bibitem{Graefnitz-Ruddat-Zaslow-22}
T.~Gräfnitz, H.~Ruddat, and E.~Zaslow, {\em The proper {L}andau-{G}inzburg potential is the open mirror map}, Adv. Math. {\bf 447} (2024).

\bibitem{KS-06}
M.~Kontsevich and Y.~Soibelman, {\em Affine structures and non-{A}rchimedean analytic spaces}, Progr. Math. {\bf 244} (2006), 321–385.

\bibitem{Li-17}
C.~Li, {\em The space of stability conditions on the projective plane}, Selecta Mathematica {\bf 23} (2017), 2927--2945.

\bibitem{LZ}
C.~Li and X.~Zhao, {\em The {M}{M}{P} for deformations of {H}ilbert schemes of points on the projective plane}, Algebraic Geometry {\bf 5} (2018), no.~3, 328--358.

\bibitem{LZ2}
C.~Li and X.~Zhao, {\em Birational models of moduli spaces of coherent sheaves on the projective plane}, Geometry and Topology {\bf 23} (2019), 347--426.

\bibitem{MacA}
A.~Maciocia, {\em Computing the walls associated to {B}ridgeland stability conditions on projective surfaces}, Asian J. Math. {\bf 18} (2014), no.~2, 263--279.

\bibitem{Macri-Curves-07}
E.~Macr\`\i, {\em Stability conditions on curves}, Math. Res. Lett. {\bf 14} (2007), 657--672.

\bibitem{Prince-20}
T.~Prince, {\em The tropical superpotential for $\mathbb{P}^2$}, Algebr. Geom. {\bf 7} (2020), no.~1, 30--58.

\bibitem{R2}
F.~Rezaee, {\em Geometry of canonical genus four curves}, Proc. LMS {\bf 128} (2024), no.~1.

\bibitem{R1}
F.~Rezaee, {\em An interesting wall-crossing: Failure of the wall-crossing/{MMP} correspondence}, Selecta Mathematica, New Series {\bf 30} (2024), no.~102.

\bibitem{Rudakov89}
A.~N. Rudakov, {\em The {M}arkov numbers and exceptional bundles on $\mathbb{P}^2$}, Mathematics of the USSR-Izvestiya {\bf 32} (1989), no.~1, 99--112.

\bibitem{Sch14}
B.~Schmidt, {\em A generalized {B}ogomolov–{G}ieseker inequality for the smooth quadric threefold}, Bull. Lond. Math. Soc. {\bf 46} (2014), no.~5, 915–923.

\bibitem{Sch}
B.~Schmidt, {\em Bridgeland stability on threefolds---{F}irst wall crossings}, J. Algebraic Geom. {\bf 29} (2020), no.~2, 247--283.

\bibitem{springborn}
B.~Springborn, {\em The worst approximable rational numbers}, J. Number Theory {\bf 263} (2024), 153--205.

\bibitem{Toda13}
Y.~Toda, {\em Stability conditions and extremal contractions}, Math. Ann. {\bf 357} (2013), 631–685.

\bibitem{Toda14}
Y.~Toda, {\em Stability conditions and birational geometry of projective surfaces}, Compos. Math. {\bf 150} (2014), 1755–1788.

\bibitem{veselov2025}
A.~P. Veselov, {\em Markov fractions and the slopes of the exceptional bundles on $\mathbb {P}^2$}, 2025.

\bibitem{Xia}
B.~Xia, {\em Hilbert scheme of twisted cubics as a simple wall-crossing}, Trans. Amer. Soc. Math. {\bf 370} (2018), no.~8, 5535–5559.

\bibitem{YingZhang2007}
Y.~Zhang, {\em Congruence and uniqueness of certain {M}arkoff numbers}, Acta Arithmetica {\bf 128} (2007), no.~3, 295--301.

\end{thebibliography}
\end{document}